\documentclass[a4paper, 8pt]{article}

\usepackage{thesis}
\usepackage{stmaryrd}

\usepackage[margin=2.5cm]{geometry}

\DeclareFontFamily{U}{mathx}{\hyphenchar\font45}
\DeclareFontShape{U}{mathx}{m}{n}{<-> mathx10}{}
\DeclareSymbolFont{mathx}{U}{mathx}{m}{n}
\DeclareMathAccent{\widebar}{0}{mathx}{"73}
\newcommand{\proet}{\mathrm{pro\acute{e}t}}
\newcommand{\et}{\mathrm{\acute{e}t}}
\newcommand{\fet}{\mathrm{f\acute{e}t}}

\numberwithin{equation}{section}

\usepackage{amsthm}
\theoremstyle{definition}
\newtheorem{defin}{Definition}[subsection]
\newtheorem{rem}[defin]{Remark}
\newtheorem{ex}[defin]{Example}
\newtheorem{constr}[defin]{Construction}
\newtheorem{notat}[defin]{Notation}
\newtheorem{obs}[defin]{Observation}
\newtheorem{fact}[defin]{Fact}

\newtheorem*{conj}{Conjecture}

\theoremstyle{plain}
\newtheorem{lem}[defin]{Lemma}
\newtheorem{prop}[defin]{Proposition}
\newtheorem{thm}[defin]{Theorem}
\newtheorem{cor}[defin]{Corollary}

\theoremstyle{remark}
\newtheorem{claim}[defin]{Claim}

\usepackage{url}
\usepackage{hyperref}
\usepackage{cite}
\usepackage{adjustbox}
\renewcommand{\maketitle}{%
	\begin{center}
		{\Large\bfseries Borel's computation of the ranks of $K$-theory revisited\par}
		\vspace{0.5em}
		{\normalsize Jan Ole Hesse\par}
	\end{center}
}

\begin{document}
	 \maketitle
	\begin{abstract}
		We reprove Borel's computation of the rank of the $K$-groups of rings of algebraic integers $\mathcal{O}_F$ for certain number fields, e.g., $F = \QQ(\mu_{l^k})$ for $l$ an odd regular prime number. The idea is to recover a simple formula for $l$-adic étale cohomology groups of $\mathcal{O}_F$ in terms of Iwasawa theoretic objects without using Borel's result as an input.
	\end{abstract}
	\section{Introduction}
	For a ring $A$ denote by $K_*(A)$ Quillen's higher $K$-groups defined as the homotopy groups of the $K$-theory spectrum $K(A)$. In this article we give a new proof using classical techniques of
\begin{thm}
	Set $F = \QQ(\mu_{l^k})$ for $l$ a regular odd prime and let $k \in \NN$. Denote by $r_1(F)$ and $r_2(F)$ the number of real and complex places of $F$, respectively. For $i \geq 2$, we have
	\begin{equation}\label{I-2}
		\dim_\QQ K_i(\CalO_F)_\QQ = \begin{cases}
			r_1(F) + r_2(F) &\text{, if } i \equiv 1 \mod 4 \\
			r_2(F) &\text{, if } i \equiv 3 \mod 4 \\
			0 &\text{, else}
		\end{cases}
	\end{equation}
\end{thm}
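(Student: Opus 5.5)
We reduce the rank computation to a computation of $l$-adic étale cohomology ranks, and compute those via Iwasawa theory.

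\textbf{Reduction to étale cohomology.} Let $R=\CalO_F[1/l]$. Localization gives $K_i(\CalO_F)_\QQ\cong K_i(R)_\QQ$ for $i\ge 2$, since the $K$-groups of the residue fields of $F$ above $l$ are finite in positive degrees. We then invoke the Quillen--Lichtenbaum conjecture, now a theorem by Voevodsky--Rost and Rognes--Weibel; the input from motivic cohomology does not use Borel's theorem. It gives, for $i\ge 2$,
\[
K_{2n-1}(R)\otimes\ZZ_l\cong H^1_{\et}(R,\ZZ_l(n)),\qquad K_{2n-2}(R)\otimes\ZZ_l\cong H^2_{\et}(R,\ZZ_l(n)).
\]
The relevant $K$-groups are finitely generated by Quillen. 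Hence
\[
\dim_\QQ K_{2n-1}(\CalO_F)_\QQ=\mathrm{rk}_{\ZZ_l}H^1(R,\ZZ_l(n)),\qquad \dim_\QQ K_{2n-2}(\CalO_F)_\QQ=\mathrm{rk}_{\ZZ_l}H^2(R,\ZZ_l(n)).
\]
Note that $F$ is totally imaginary, so $r_1=0$ and $r_2=[F:\QQ]/2$. The claim therefore becomes: for $n\ge 2$, $H^2(R,\ZZ_l(n))$ is finite, and $H^1$ has rank $r_2$.

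\textbf{Euler characteristic.} Tate's global Euler characteristic formula gives
\[
\mathrm{rk}\,H^1-\mathrm{rk}\,H^2=r_2+\sum_{v\ real}\dim H^0(\RR,\QQ_l(n))=r_2 .
\]
Here $H^0=0$ for $n\ge1$ because $\mathrm{rk}\,H^0(R,\ZZ_l(n))=0$. So everything reduces to the finiteness of $H^2(R,\ZZ_l(n))$, equivalently the vanishing of $H^2(R,\QQ_l/\ZZ_l(n))$ corank. This is the main obstacle, the weak Leopoldt-type statement, and it is where regularity enters.

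\textbf{Iwasawa theory.} Let $F_\infty=\QQ(\mu_{l^\infty})$, $\Gamma=\mathrm{Gal}(F_\infty/F)\cong\ZZ_l$, and $\Lambda=\ZZ_l[[\Gamma]]$. Let $X=\mathrm{Gal}(M_\infty/F_\infty)$, where $M_\infty$ is the maximal abelian pro-$l$ extension unramified outside $l$.

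Since $\mathrm{cd}_l\le 2$ and $\mu_{l^\infty}\subset F_\infty$, the Hochschild--Serre spectral sequence together with a twist by $\ZZ_l(n)$ gives
\[
H^2(R,\QQ_l/\ZZ_l(n))\cong \big(H^2(R_\infty,\QQ_l/\ZZ_l)(n)\big)_\Gamma .
\]
Also $H^2(R_\infty,\QQ_l/\ZZ_l)=0$. This is weak Leopoldt for the cyclotomic $\ZZ_l$-extension, provable classically via Iwasawa's theorem that $\mu$-type invariants behave; alternatively one uses the exact sequence $H^1(R,\ZZ/l)$. In our regular case we argue directly: regularity gives $l\nmid h(F)$ for all $k$. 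There is a unique prime above $l$, and it is totally ramified in $F_\infty$. Hence the class groups $A_m$ of the layers vanish.

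It follows that $X$ has a presentation, via class field theory, from the local units $U_\infty$ modulo the global units $\overline{E}_\infty$. By Iwasawa's theorem, $U_\infty$ has $\Lambda$-rank $r_2$ up to a small torsion piece. The finiteness of $H^2(R,\ZZ_l/l)$ in the tower, i.e.\ $H^2(R_m,\mu_l)$, has bounded order in the tower: by Kummer theory it is $\mathrm{Pic}(R_m)/l\oplus\mathrm{Br}(R_m)[l]$, and the first summand vanishes. This forces $H^2(R_\infty,\QQ_l/\ZZ_l)$, a divisible cofinitely generated module, to be $\Lambda$-cotorsion-free and hence $0$.

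Consequently $H^2(R,\QQ_l/\ZZ_l(n))=0$, so $H^2(R,\ZZ_l(n))$ is finite. We then get $\mathrm{rk}\,H^1=r_2$. Equivalently, $X(-n)^\Gamma$ has rank $r_2$, as $X\cong\Lambda^{r_2}$ up to finite modules. This yields the stated formula: rank $r_2=r_1+r_2$ in odd degrees $i=2n-1\ge3$, and $0$ in even degrees.
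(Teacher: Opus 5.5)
Your reduction (Quillen--Lichtenbaum plus finite generation) and your use of Tate's Euler characteristic formula are fine for $k\geq 1$. The step that is supposed to give finiteness of $\mathrm{H}^2(R,\ZZ_l(n))$ fails, however.

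Since $\Gamma\cong\ZZ_l$ has cohomological dimension $1$, Hochschild--Serre degenerates to
\[
0\to \mathrm{H}^1(R_\infty,\QQ_l/\ZZ_l(n))_\Gamma\to \mathrm{H}^2(R,\QQ_l/\ZZ_l(n))\to \mathrm{H}^2(R_\infty,\QQ_l/\ZZ_l(n))^\Gamma\to 0 .
\]
Your identification drops the term $\mathrm{H}^1(\Gamma,\mathrm{H}^1(R_\infty,\QQ_l/\ZZ_l(n)))$, and it writes coinvariants where invariants belong. The vanishing $\mathrm{H}^2(R_\infty,\QQ_l/\ZZ_l)=0$ holds for every number field (Lemma \ref{2-1}). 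It therefore only gives $\mathrm{H}^2(R,\QQ_l/\ZZ_l(n))\cong(\mathrm{Hom}(X,\QQ_l/\ZZ_l)(n))_\Gamma$, which is the Pontryagin dual of $X(-n)^\Gamma$.

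Your formula would even give $\mathrm{H}^2(R,\QQ_l/\ZZ_l(1))=0$ for every $F$. That is false as soon as there are $s\geq 2$ primes above $l$, since this group surjects onto $\mathrm{Br}(R)[l^\infty]\cong(\QQ_l/\ZZ_l)^{s-1}$.

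So the real content is that $X(-n)^\Gamma$ is finite, i.e.\ that the $\Lambda$-torsion of $X$ contains no copy of $\ZZ_l(n)$. This is exactly what the paper extracts from Corollary \ref{M-5} (the characteristic ideal of the torsion of $M_\infty(-1)$ is $S(\mathrm{char}\,A'_\infty)$ up to powers of $T$) together with $A'_\infty=0$. Your closing claim that $X\cong\Lambda^{r_2}$ up to finite modules is precisely this missing statement, and you do not prove it. Vanishing of the $A_m$ gives $X\cong U_\infty/\overline{E}_\infty$, but not that $\overline{E}_\infty$ is saturated in $U_\infty$. That needs Kummer's lemma, Iwasawa's theorem on cyclotomic units, or a reflection argument as in the paper. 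Two further slips: $U_\infty$ has $\Lambda$-rank $[F:\QQ]=2r_2$, not $r_2$; and $\mathrm{H}^1$ is governed by the coinvariants $X(-n)_\Gamma$, not by $X(-n)^\Gamma$.

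The Kummer-theoretic ingredients you collect do close the argument if you apply them at the bottom of the tower instead of over $R_\infty$:
\begin{enumerate}
\item For $k\geq 1$ we have $\mu_l\subset F$, so $\mu_l^{\otimes n}\cong\mu_l$ on $\mathrm{Spec}(R)_{\et}$.
\item Kummer theory exhibits $\mathrm{H}^2(R,\mu_l^{\otimes n})$ as an extension of $\mathrm{Br}(R)[l]$ by $\mathrm{Pic}(R)/l$. The former vanishes because there is a single prime above $l$ (Lemma \ref{F-22}); the latter vanishes because $l\nmid h(F)$.
\item Hence $\mathrm{H}^2(R,\ZZ_l(n))/l\hookrightarrow\mathrm{H}^2(R,\mu_l^{\otimes n})=0$, so $\mathrm{H}^2(R,\ZZ_l(n))=0$ by Nakayama.
\item Your Euler characteristic step then gives $\mathrm{rk}\,\mathrm{H}^1(R,\ZZ_l(n))=r_2$.
\end{enumerate}
Repaired this way, your route is much shorter than the paper's for this particular theorem. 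Tate's formula replaces the $\Lambda$-rank computation of Lemma \ref{7-2}, and a direct Kummer argument replaces Theorem \ref{M-1} and Corollary \ref{M-5}. The paper builds its Iwasawa-theoretic machinery to obtain the criterion of Theorem \ref{8-2} for general $F$.

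Two smaller points. First, the case $k=0$, i.e.\ $F=\QQ$, which the paper explicitly covers, is not totally imaginary. Second, your Euler characteristic formula has the real places with the wrong sign; it should read $\mathrm{rk}\,\mathrm{H}^1-\mathrm{rk}\,\mathrm{H}^2=\mathrm{rk}\,\mathrm{H}^0+r_1+r_2-\sum_{v\text{ real}}\dim\mathrm{H}^0(\RR,\QQ_l(n))$. With that correction, $F=\QQ$ follows by taking invariants under $\mathrm{Gal}(\QQ(\mu_l)/\QQ)$, whose order is prime to $l$.
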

In \cite{Bor74} Borel gives a differential geometric proof for \eqref{I-2} for all number fields $F$. Our argument is more algebraic, as it proceeds via an étale cohomology computation.

\subsubsection{Reducing to étale cohomology computation assuming finite generation}
Quillen proved that the groups $K_*(\mathcal{O}_F)$ are degree-wise finitely generated (see \cite{Qui72} and \cite{Mur} for a summary of the proof). Assuming this finite generation result, a standard $\mathrm{lim}^1$-sequence computation shows that for any odd prime $l$:
\begin{equation*}
	\dim_\QQ K_*(\CalO_F)_\QQ = \dim_{\QQ_l} \pi_*(K(\mathcal{O}_F)^\wedge_l)_\QQ
\end{equation*}
Set $R := \mathcal{O}_F[1/l]$. By Quillen's localization Theorem we have a fiber sequence of spectra
\begin{equation}\label{I-5}
	\bigoplus\limits_{\mathfrak{p} \mid l} K(\mathcal{O}_F/\mathfrak{p})^\wedge_l \to K(\CalO_{F})^\wedge_l \to K(R)^\wedge_l
\end{equation}
Since $\mathcal{O}_F/\mathfrak{p}$ is a finite field, $\pi_*(K(\mathcal{O}_F/\mathfrak{p})^\wedge_l) = 0$ for $* \geq 1$ by Quillen's computation of the $K$-theory of finite fields. Thus the long exact sequence on homotopy groups associated to \eqref{I-5} shows
\begin{equation*}
	\dim_{\QQ_l}\pi_*(K(\mathcal{O}_F)^\wedge_l)_\QQ = \dim_{\QQ_l} \pi_*(K(R)^\wedge_l)_\QQ
\end{equation*}
Now recall the following theorem:
\begin{thm}[{\cite[Thm.7.13]{CM21}}]
	Let $X$ be a qcqs scheme of finite Krull dimension such that
	\begin{enumerate}
		\item $l \in \mathcal{O}_X^\times$.
		\item the Galois cohomological dimension of the residue fields is bounded by some constant $C$.
	\end{enumerate}
	Denote by $K^\et: X^\op_\et \to \mathrm{Sp}$ the étale sheafification of the $K$-theory presheaf studied in \cite{CM21}. Under these assumptions
	\begin{equation*}
		K(X)\lcomp \to K^\et(X)\lcomp
	\end{equation*}
	induces an equivalence on homotopy groups in degrees $\geq \max\{C-2, 0\}$.
\end{thm}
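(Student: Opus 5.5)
The plan is to work modulo $l$ and compare the two sides through motivic filtrations, using Beilinson--Lichtenbaum to control where they differ. Since both $K(X)\lcomp$ and $K^\et(X)\lcomp$ are $l$-complete, and $\pi_*$ of an $l$-complete spectrum is controlled by $\pi_*$ of its reduction mod $l$ via Bockstein sequences, it suffices to show that $K(X)/l \to K^\et(X)/l$ is an isomorphism on $\pi_j$ for $j \geq \max\{C-2,0\}$ and injective one degree below. Here $C$ is enlarged, if necessary, to a bound for the étale $l$-cohomological dimension of $X$. This is possible because $X$ has finite Krull dimension and the residue fields have Galois cohomological dimension at most $C$.

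First, I would put a motivic filtration on $K(-)/l$ over $\ZZ[1/l]$-schemes, in the style of Bhatt--Morrow--Scholze and Elmanto--Morrow, or of Friedlander--Suslin--Levine in the smooth case. Its graded pieces are $\ZZ/l(i)^{\mathrm{mot}}[2i]$. Because $l$ is invertible, the étale sheafification of $\ZZ/l(i)^{\mathrm{mot}}$ is $R\Gamma_\et(-,\mu_l^{\otimes i})$. The key comparison is then the Beilinson--Lichtenbaum statement, a consequence of Rost--Voevodsky: the map
\begin{equation*}
\ZZ/l(i)^{\mathrm{mot}}(X) \to R\Gamma_\et(X,\mu_l^{\otimes i})
\end{equation*}
identifies the source with $\tau^{\leq i}$ of the target. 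For general qcqs $X$, rather than smooth varieties over a field, I would reduce this by étale descent to strictly henselian local rings. There Gabber rigidity identifies $K(A)/l$ with $K(k)/l$ for the separably closed residue field $k$, whose $K$-theory mod $l$ is Suslin's $ku/l$-type answer. This agrees with the étale answer, so the map is an equivalence on stalks.

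Second, I would check that $K^\et/l$ is a hypercomplete étale sheaf whose homotopy can be computed by the descent spectral sequence $H^s_\et(X,\mu_l^{\otimes t/2}) \Rightarrow \pi_{t-s}$. This is where the cohomological dimension bound is used, since it makes the Postnikov towers converge. The filtration on $K^\et/l$ is then the étale sheafification of the one on $K/l$. Consequently, on graded pieces the fiber of $K/l \to K^\et/l$ is built from the cofibers of $\tau^{\leq i}R\Gamma_\et \to R\Gamma_\et$, shifted by $[2i]$. The group $H^j_\et(X,\mu_l^{\otimes i})$ with $j>i$ lands in homotopical degree $2i-j$. Since $H^j$ vanishes for $j > C$, a nonzero contribution requires $i < j \leq C$, which forces $2i-j \leq j-2 \leq C-2$. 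A strict inequality $2i - j < C-2$ holds except at the boundary, which I would track carefully to get the exact range $\geq \max\{C-2,0\}$. Convergence of the filtrations in each fixed degree follows from the same bound, so the discrepancy sits only in degrees below the stated range.

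The main obstacle I expect is the first step for arbitrary qcqs $X$. This means establishing the motivic filtration on $K/l$ with the Beilinson--Lichtenbaum identification in this generality, or equivalently proving that the fiber of $K/l \to K^\et/l$ is ``coconnective enough'' without a smoothness hypothesis. My first attempt would be to prove directly that the cofiber of $K/l \to K^\et/l$ has $\pi_j$ vanishing for $j \geq C-2$. I would do this by Zariski/Nisnevich induction on Krull dimension, using Nisnevich descent of $K$-theory together with rigidity at henselian local rings. This reduces the statement to fields, where the Beilinson--Lichtenbaum computation above applies with $C$ the Galois cohomological dimension.
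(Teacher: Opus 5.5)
The paper gives no argument for this statement beyond the citation of \cite{CM21}, so your proposal has to stand on its own. Its main line breaks at the first step. You want the global identification $\ZZ/l(i)^{\mathrm{mot}}(X)\simeq\tau^{\le i}R\Gamma_\et(X,\mu_l^{\otimes i})$ by ``\'etale descent to strictly henselian local rings''. But neither $K/l$ nor $\ZZ/l(i)^{\mathrm{mot}}$ satisfies \'etale descent; that failure is exactly what the theorem quantifies. Moreover, on strictly henselian stalks $K/l\to K^\et/l$ is an equivalence by the very definition of $K^\et$, so rigidity plus Suslin at those stalks says nothing about $\tau^{\le i}$. In fact the global formula is false. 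For $i=1$, the group $\mathrm{H}^2_{\mathrm{mot}}(X,\ZZ/l(1))$ contains $\Pic(X)/l$ (the hyperplane class on $\mathbb{P}^1$, or $\Pic(R)/l$ for $R=\CalO_F[1/l]$), and $\tau^{\le 1}$ kills it. Beilinson--Lichtenbaum truncates the complex of sheaves $R\epsilon_*\mu_l^{\otimes i}$ locally (Zariski/Nisnevich, for smooth schemes), not its global sections, and its real content sits at fields. The same global count is what forces you to enlarge $C$ to $\mathrm{cd}_l(X)$, and that proves a strictly weaker theorem: for a smooth proper $d$-fold over an algebraically closed field one has $C=d$ but $\mathrm{cd}_l(X)=2d$.

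The route sketched in your last paragraph is the right one, and it needs no induction on dimension.
The fiber $\mathcal{F}$ of $K/l\to K^\et/l$ is a Nisnevich sheaf. At a henselian local ring of $X$, Gabber rigidity for both $K/l$ and $K^\et/l$ identifies $\mathcal{F}$ with its value on the residue field $k(x)$. There Rost--Voevodsky and the motivic spectral sequence give $\pi_n\mathcal{F}(k(x))=0$ for $n\ge\mathrm{cd}_l\,k(x)-2$. Since Nisnevich sheaves on a qcqs scheme of finite Krull dimension are hypercomplete, $\mathcal{F}$ is $(C-3)$-truncated, hence so is $\mathcal{F}(X)$, with no loss of $\dim X$.

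But this yields an isomorphism only in degrees $\ge C-1$, with injectivity in degree $C-2$, and that is sharp. The boundary term $(i,j)=(C-1,C)$ you plan to ``track carefully'' is a genuine obstruction, not bookkeeping. Take $X=\mathrm{Spec}\,\QQ$ with $l$ odd, so $C=2$. Then $\pi_0(K^\et(\QQ)/l)$ contains $\mathrm{H}^2(\QQ,\mu_l)=\mathrm{Br}(\QQ)[l]\neq0$, which $K_0(\QQ)/l=\ZZ/l$ does not hit; $l$-adically, $\mathrm{H}^2(\QQ,\ZZ_l(1))\neq0$ is likewise missed in $\pi_0$. So neither your mod-$l$ target nor your claim that the cofiber vanishes in degrees $\ge C-2$ can be proved. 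The range as quoted is one degree too optimistic; the correct form is the one the paper actually uses (degrees $\ge 1$ when $C=2$).
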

The étale cohomological dimension of finite fields is $1$, since their Galois group is topologically cyclic. Away from $2$, the étale cohomological dimension of global fields is $2$ \cite[Cor. I.4.18]{Mil06}. So the above theorem tells us that
\begin{equation*}
	\pi_*(K(R)\lcomp) \xrightarrow{\simeq} \pi_*(K^\et(R)\lcomp)
\end{equation*}
is an equivalence in degrees $* \geq 1$. We remind the reader of the following result:
\begin{thm}\label{I-7}
	Let $X$ be a qcqs scheme with a uniform bound on the virtual $l$-Galois cohomological dimension of its residue fields with $l \in \CalO_X(X)^\times$. There is a spectral sequence
	\begin{equation*}
		E^2_{p, q} = \RH^{-p}_\et(X, \ZZ_l(q/2)) \Rightarrow \pi_{p+q}(K^\et(X)\lcomp)
	\end{equation*}
	in homological grading with the convention that $\ZZ_l(q/2) = 0$ if $2 \nmid q$ or $q < 0$.
\end{thm}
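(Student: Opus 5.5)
The plan is to build the spectral sequence from a filtration of $K^\et(X)\lcomp$ whose graded pieces are the Tate twists $\ZZ_l(n)[2n]$. Concretely, I would use the étale-sheafified motivic, or Bhatt--Morrow--Scholze/Elmanto--Morrow type, filtration, or more classically the Postnikov filtration of the étale sheaf $K\lcomp$ on $X_\et$. I will take the Postnikov route.

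\textbf{Step 1: stalks.} The stalks of $K^\et\lcomp$ at geometric points are $K(\mathcal{O}^{sh}_{X,\bar x})\lcomp$, with $l$ invertible in these strictly henselian local rings. By Gabber rigidity together with Suslin's computation for separably closed fields, these are $l$-adically equivalent to $ku\lcomp$ (connective complex $K$-theory completed at $l$). So the homotopy sheaves of $K\lcomp$ on $X_\et$ are
\begin{equation*}
	\pi_q \cong \ZZ_l(q/2) \text{ for } q \geq 0 \text{ even}, \qquad \pi_q = 0 \text{ otherwise.}
\end{equation*}
The twist is detected by the Galois action on $K_2$ of the stalks, where $K_2 \cong \mu_{l^\infty}$-adically; more precisely $K_{2n}(\bar x;\ZZ/l^\nu) \cong \mu_{l^\nu}^{\otimes n}$ via the Bott element.

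\textbf{Step 2: the spectral sequence.} Form the Postnikov tower $\tau_{\geq q}$ of the étale sheaf of spectra, taken in the hypercomplete topos or after checking that Postnikov towers converge. Apply global sections $\RGamma(X_\et,-)$. This gives a spectral sequence
\begin{equation*}
	E^2_{p,q} = \RH^{-p}(X_\et, \pi_q) \Rightarrow \pi_{p+q}\bigl(\RGamma(X, K\lcomp)\bigr),
\end{equation*}
in which $p \leq 0$ in homological grading. Substituting the computation of $\pi_q$ from Step 1 gives exactly the stated $E^2$-term, with $\ZZ_l(q/2) = 0$ for $q$ odd or negative. The remaining identification is that the target equals $K^\et(X)\lcomp$. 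This holds because $l$-completion commutes with étale sheafification in the range of interest. The justification is a bounded-cohomological-dimension argument: limits commute with completion once the relevant cohomology is bounded.

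\textbf{Step 3: convergence (main obstacle).} The main obstacle is proving that the spectral sequence converges and that $K^\et$ is Postnikov complete. Here the uniform bound $d$ on the virtual $l$-cohomological dimension of the residue fields enters. Combined with finite Krull dimension, or the qcqs hypotheses as in the framework of \cite{CM21}, it gives a uniform bound on the étale $l$-cohomological dimension of $X$, possibly after passing to $X[\zeta_l]$ or $X[\sqrt{-1}]$ and using a transfer argument. This bound implies the following:
\begin{enumerate}
	\item the $E^2$-page is concentrated in a horizontal strip $-d' \leq p \leq 0$;
	\item only finitely many differentials hit any given total degree;
	\item the Postnikov tower of $K\lcomp$ on $X_\et$ converges.
\end{enumerate}
Together these give strong convergence to $\pi_{p+q}(K^\et(X)\lcomp)$. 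The convergence argument can be run first with $\ZZ/l^\nu$-coefficients, where finiteness of the cohomological dimension controls $\lim^1$ terms. One then passes to the limit over $\nu$, using Mittag-Leffler for the resulting finite-type systems, to obtain the $\ZZ_l$-statement.
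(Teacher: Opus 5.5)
Your outline (Postnikov filtration, rigidity for the homotopy sheaves, \cite{CM21} for convergence) is the same as the paper's. However, Steps 1--2 are carried out on the wrong object, and as stated they fail. You take the Postnikov tower of the $l$-completed sheaf $K\lcomp$ (or $K^\et\lcomp$) on $X_\et$ and claim its stalks are $K(\CalO^{sh}_{X,\bar x})\lcomp$. Stalks are filtered colimits, and $l$-completion does not commute with them: the stalk is $\colim_U K(U)\lcomp$.

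This already goes wrong for $X=\mathrm{Spec}(\ZZ[1/l])$ in degree $1$. The unit $l$ gives a non-zero class in $\colim_U (\CalO(U)^\times)\otimes\ZZ_l \subseteq \colim_U \pi_1(K(U)\lcomp)$, because it stays non-torsion in each finitely generated group $\CalO(U)^\times$. By contrast, $\pi_1(K(\CalO^{sh})\lcomp)=0$. So the homotopy sheaves of the completed sheaf are not $\ZZ_l(q/2)$.

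The same problem affects the coefficients. For $i\neq 0$, the étale-site sheaf $\lim_k\mu_{l^k}^{\otimes i}$ is $0$ on $\mathrm{Spec}(\CalO_F[1/l])_\et$, since each $\RH^0(U,\ZZ/l^k(i))$ is bounded and the transition maps are multiplication by $l$. The $l$-adic information lives in $R^1\lim$ instead. The groups $\RH^*_\et(X,\ZZ_l(i))$ in the theorem are by definition the homotopy of $\lim_k R\Gamma_\et(X,\ZZ/l^k(i))$ (Definition~\ref{Et-14}). Postnikov truncation on $X_\et$ does not commute with $\lim_k$. Hence the Postnikov spectral sequence of the completed sheaf has a different $E^2$-page from the one claimed. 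Your Step 2 assertion that completion commutes with sheafification is the same false commutation in another form.

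The fix is to reverse the order of limit and truncation. This is what the paper does, and your closing sentences point toward it, but it has to replace Steps 1--2, not only the convergence step.

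For each $k$, the sheaf $K^\et/l^k$ has stalks $K(\CalO^{sh})/l^k\simeq ku/l^k$. This is now legitimate, since $(-)/l^k$ commutes with filtered colimits. Hence its homotopy sheaves are $\ZZ/l^k(q/2)$ \cite[Cor.~6.12]{CM21}. Its Postnikov tower converges under the hypotheses of \cite{CM21}. So $R\Gamma_\et(X,\tau_{\geq q}(K^\et/l^k))$ is a complete, exhaustive filtration on $K^\et(X)/l^k$ with graded pieces $R\Gamma_\et(X,\ZZ/l^k(q/2))[q]$.

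Now take $\lim_k$ of these filtered spectra. Completeness is preserved. The underlying object is $\lim_k K^\et(X)/l^k=K^\et(X)\lcomp$, with no sheafification issue because $R\Gamma$ is exact. The graded pieces are $R\Gamma_\et(X,\ZZ_l(q/2))[q]$ by definition. In particular you need no Mittag-Leffler or finite-type input, and none is available for a general qcqs $X$.
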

The above is a the combination of
\begin{enumerate}
	\item \cite[Cor.6.12]{CM21}, which computes the étale local homotopy sheaves of $K^\et/l^k$ to be $\ZZ/l^k(q/2)$ in degree $q$.
	\item By the main result of \cite{CM21} under the above assumptions on $X$ the étale local Postnikov tower on $K^\et/l^k$ is convergent.
	\item $\lim_k R\Gamma_\et(R, \ZZ/l^k(q/2)) = R\Gamma_\et(R, \ZZ_l(q/2))$ by definition.
\end{enumerate}
This shows that the spectrum $(K^\et(R))^\wedge_l$ admits a convergent filtration with associated graded $R\Gamma_\et(R, \ZZ_l(q/2))$ giving rise to the spectral sequence \ref{I-7}.

In the case $X = \mathrm{Spec}(R)$, the $E^2$ page is so sparse that it yields the following formulas
\begin{align}\label{I-6}
	\dim_\QQ K_{2i-1}(\mathcal{O}_F)_\QQ = \dim_{\QQ_l} \mathrm{H}_\et^1(R, \ZZ_l(i))_\QQ & &\dim_\QQ K_{2i-2}(\mathcal{O}_F)_\QQ &= \dim_{\QQ_l} \mathrm{H}_\et^2(R, \ZZ_l(i))_\QQ
\end{align}
for $i \geq 2$.
\subsubsection{Content of this paper}
	The main inspiration for this project was Dwyer and Mitchell's paper \cite{DM98} and the formulas for étale cohomology groups in terms of Iwasawa-theoretic objects in \cite[Prop.4.15+Thm.1.7]{DM98}. The main goal of this paper is to extract the same formula without assuming Borel's result. We achieve this by working derivedly and not just on the level of cohomology groups.
	\begin{enumerate}
		\item In Sections \ref{A1} and \ref{A3} we introduce the basics of Iwasawa theory and tools to compute derived global sections of $\ZZ_l(i)$ such as proétale descent. The approach presented here uses condensed mathematics as the formal basis.
		\item In Section \ref{C1} we extract a formula for $R\Gamma_\proet(R, \ZZ_l(i))$ in terms of the derived $\mathrm{Hom}$ of certain Iwasawa-modules $M_\infty$ and $A_\infty$. The argument follows that of \cite{DM98} - descending along the $l$-cyclotomic extension.
		\item In Section \ref{C2} we compute the ranks of the Iwasawa-theoretic object $M_\infty$ over a certain completed group ring $\Lambda_F$ using class field theory. These ranks turn out to be the main contribution to the ranks predicted by Borel's result via the formula found in section \ref{C1}.
		\item In Section \ref{C3} we prove a simple Iwasawa-theoretic criterion for when Formula \eqref{I-2} is true in degree $i$ for each $i \geq 2$. Up until this point the only non-algebraic input is Quillen's proof of finite generation. Afterwards, we collect cases in which these conditions admit a proof via Iwasawa-theory. The easy cases include $F = \QQ(\mu_{l^k})$ for $l$ an odd regular prime. The more complicated cases, e.g., arbitrary totally real fields, can be tackled using hard theorems in Iwasawa theory, like the main conjecture of Iwasawa theory.
	\end{enumerate}
	The appendix consists of proofs for statements, for which we could not find a reasonably close reference.
	\newline\newline
	\textit{Acknowledgments}: First and foremost, the author would like to thank his master's advisor Markus Land for taking a lot of his time for discussions and answering questions on the contents of this former master's thesis. The author is very grateful for such a great learning experience.
	
	The author also wants to thank Emma Brink for explaining how condensed mathematics helps formalize continuous group actions, and making suggestions for how to prove Lemma \ref{Cond-6} in Section \ref{A1}.
	
	The author would also like to thank Ko Aoki for expressing his interest in this project, discussions related to it, and encouraging him to produce a readable account of what the author originally did in his master's thesis.
	
	For helpful discussions related to the contents of Appendix \ref{P}, the author thanks Semen Slobodaniuk.
	
	Many thanks also go to Jonathan Clivio, Anton Lin and Philippe Vollmuth for proofreading different parts of this paper.
	
	For advice on submitting this preprint, the author wants to thank his supervisor Ryomei Iwasa.
	
	The author finished this project while he was supported by the European Union’s Horizon 2020 research and innovation programme under the European Research Council (ERC) grant agreement No. 101164148 and by the Danish National Research Foundation through the Copenhagen Centre for Geometry and Topology (DNRF151).
	\newpage
	\setcounter{tocdepth}{2}
	\tableofcontents
	\section{Setup}
		\subsection{Notation}\label{A0}
			For an abelian group $A \in \Ab$, we write, for $k, l \in \ZZ$ $A[k] \coloneqq \ker(A \xrightarrow{k} A)$ and $A[l^\infty] \coloneqq \cup_{r \in \NN} A[l^r] = l-\text{power torsion}$. We also write $A_\QQ$ for $A \otimes \QQ$ sometimes.
We assume basic knowledge of algebraic geometry at the level of \cite[Ch.I+II]{Har77b}.
\newline\newline
\noindent\textbf{Number-theoretic setup and notation}

As usual in number theory, we have to introduce a substantial amount of notation.

Fix a number field $F$ and let $\CalO_F$ be its integral closure of $\ZZ$ in $F$. Then we have the following picture of Galois extensions:
\[\begin{tikzcd}
	& & F_\infty \coloneqq F(\mu_{l^\infty}) \arrow[dl, dash, "\Gamma"]\\
	& F_0 \coloneqq F(\mu_l) \arrow[dl, dash, "\Delta"] & \\
	F_{-1} \coloneqq F \arrow[uurr, dash, bend left, "\Gamma_F"] & &
\end{tikzcd}\]
Via the canonical injective \footnote{Because $F_\infty$ is the composite field of $F$ and $\QQ(\mu_{l^\infty})$} continuous restriction map
\begin{equation*}
	c: \Gal(F_\infty/F) \to \Gal(\QQ(\mu_{l^\infty})/\QQ) \simeq \Aut(\mu_{l^\infty}) \simeq \ZZ_l^\times
\end{equation*}
we may think of $\Gamma_F$ as a subgroup of $\ZZ_l^\times$. We call $c$ the cyclotomic character associated to $F$. This map identifies $\Delta$ as a subgroup of $\mu_{l-1} \subset \ZZ_l^\times$ and $\Gamma$ as a finite-index open subgroup of the $1$-units $1+l\ZZ_l$ in $\ZZ_l^\times \simeq \mu_{l-1} \times (1+l\ZZ_l)$. Now all finite index/open subgroups of $\ZZ_l^\times$ are a product of a subgroup $\subset \mu_{l-1}$ and a subgroup of the $1$-units $1+l\ZZ_l$. This way one obtains the splitting $\Gamma_F \simeq \Delta \times \Gamma$.

\begin{defin}
	We call the canonical inclusion $\omega: \Delta \to \mu(\ZZ_l) \subset \ZZ_l^\times$ the Teichmüller character. $\omega$ generates the character group $\mathrm{Map}_\Ab(\Delta, \QQ/\ZZ)$ \cite[Satz 2.7]{Neu06}.
\end{defin}

We also pick a prime $\mathfrak{p} \nmid l$, which stays inert throughout the tower $F_\infty/F$. Such a prime exists by Lemma \ref{F-3}. It has the property that the residue fields $\FF_n \simeq R_n/\mathfrak{p}R_n$ give rise to a field extension $\FF_\infty/\FF$, having the same Galois theory/group as $F_\infty/F$.
\newline\newline
\textbf{The $\Gamma_F$-/Iwasawa modules of interest}\newline
For $n \in \NN \cup \{\infty\}$:
\begin{align*}
	F_n &\coloneqq F_\infty^{\Gamma^{l^n}} \\
	R_n &\coloneqq \text{integral closure of } R \text{ in } F_n = \CalO_{F_n}[1/l] \\
	S_n &\coloneqq \text{ \# of primes in } \CalO_{F_n} \text{ above } l = \#\{\Spec(\CalO_{F_n}) - \Spec(R_n)\} \\
	A_n &\coloneqq \Pic(\CalO_{F_n})[l^\infty] \\
	A'_n &\coloneqq \Pic(R_n)[l^\infty] \\
	M_n &\coloneqq \text{Galois group of the maximal abelian $l$-profinite extension } L_n \text{ over } F_n \text{ ramified only over $l$}
\end{align*}
All the above sets and groups naturally promote to $\Gal(F_n/F)$-sets or modules, respectively. For $M_n$ the action comes from the short exact sequence
\begin{equation*}
	0 \to M_n \to \mathrm{Gal}(L_n/F) \to \mathrm{Gal}(F_n/F) \to 0
\end{equation*}
by lifting elements $\gamma \in \mathrm{Gal}(F_n/F)$ along the surjection and acting by conjugation on the normal subgroup $M_n$. Since $M_n$ is abelian this action does not depend on the choice of lift.

Since all of the above modules are $l$-profinite they become $\ZZ_l[\Gal(F_n/F)]$-modules. Moreover, they assemble into $\NN^\op$-indexed systems which naturally become modules over the ring
\begin{equation*}
	... \to \ZZ[\Gal(F_n/F)] \to \ZZ[\Gal(F_{n-1}/F)] \to ...
\end{equation*}
in $\Fun(\NN^\op, \Ab)$ with the pointwise tensor product. Define
\begin{align*}
	\Lambda_{F_n} &\coloneqq \ZZ_l[[\Gal(F_\infty/F_n)]] \coloneqq\lim_{m \in \NN^\op, m \geq n} \ZZ_l[\Gal(F_m/F_n)] \;\;\; \Lambda := \Lambda_{F_0}
\end{align*}
and endow it with the profinite topology.
Hence, the limits
\begin{align*}
	\ZZ_l[S_\infty] \coloneqq \lim\limits_{n \in \NN} \ZZ_l[S_n]
	& & A_\infty \coloneqq \lim\limits_{n \in \NN^\op} A_n
	& & A_\infty' \coloneqq \lim\limits_{n \in \NN^\op} A_n'
	& & M_\infty \coloneqq \lim\limits_{n \in \NN^\op} M_n
\end{align*}
become continuous $\Lambda_F$-modules. As limits over profinite abelian groups they are even underlyingly compact.

All of the modules $M_\infty, A_\infty, A'_\infty$ also arise as Galois groups of abelian extensions $T/F_\infty$. E.g., $M_\infty \simeq \Gal(L_\infty/F_\infty)$, where $L_\infty = \colim_n L_n$.
\newline\newline
\textbf{Eigenspaces with respect to the Teichm\"uller character}\newline
Consider the orthogonal idempotents
\begin{equation*}
	\epsilon_i \coloneqq \abs{{\Delta}}^{-1}\sum\limits_{g \in {\Delta}} \omega^{-i}(g)g\in \Lambda_F
\end{equation*}
where we used that $\mathrm{gcd}(\abs{\Delta}, l) = 1$, and that $\ZZ_l$ contains all $\abs{\Delta}$-roots of unity. Clearly, $\epsilon_i$ only depends on the residue class $k \mod \abs{\Delta}$. For $N$ a $\Lambda_F$-module $\epsilon_i N = N^{\omega^i} \coloneqq \{x \in N \vert \forall \delta \in \Delta: \delta.x = \omega^i(\delta)x\}$ is the $\omega^i$-eigenspace. Applied to $\Lambda_F$ this gives a splitting 
\begin{equation*}
	\Lambda_F \xrightarrow{\simeq} \prod_\Delta \Lambda \simeq \ZZ_l[\Delta] \otimes_{\ZZ_l} \Lambda
\end{equation*}
as $\Lambda$-algebras and therefore on module categories 
\begin{equation*}
	\mathrm{Mod}_{\Lambda_F} \simeq \mathrm{Mod}_\Lambda^\Delta
\end{equation*}
The tensor product over $\Lambda_F$ corresponds to Day convolution tensor product on the right. Equivalently, for $N, M$ $\Lambda_F$-modules we have an equivalence of $\Lambda$-modules:
\begin{equation}\label{S-2}
	\epsilon_n(N \otimes_{\ZZ_l} M) \simeq \bigoplus_{i + j = n} \epsilon_iN \otimes_{\ZZ_l} \epsilon_jM
\end{equation} 
\newline\newline
Any continuous character $\chi: \Gamma_F \to \ZZ_l^\times$ gives rise to a $\chi$-twist of $\ZZ_l$ which we denote by $\ZZ_l(\chi)$, making $\ZZ_l$ into a cyclic $\Lambda_F$-module.
\begin{defin}
	Denote by $\ZZ_l(i)$ $\ZZ_l$ with the $\Gamma_F \subset \ZZ_l^\times$-action given by $g.x \coloneqq c(g)^i \cdot x$. For a $\Lambda_F$-module $N$ we write $N(i)$ for $N \otimes_{\ZZ_l} \ZZ_l(i)$ with the diagonal $\Gamma_F$-action.
\end{defin}
Of course, we have the following formulas for computing the $\Delta$-eigenspaces
\begin{equation}\label{S-1}
	\epsilon_i \ZZ_l(j) \simeq \begin{cases}
		\ZZ_l(j) &\text{, if } i \equiv j \mod \abs{\Delta}\\
		0 &\text{, else}
	\end{cases}
\end{equation}
\textbf{Categorical notation}

By a category, we mean an $\infty$-categories as initially developed in \cite{Lur09} and \cite{Lur17}. Classical categories with hom-sets instead of homotopy types as mapping objects we refer to specifically as $1$-categories. We write $\Spc$ for the category of anima/homotopy types \cite[Def.1.2.16.1]{Lur09}. For $\mathcal{D}$ a category and $x, y \in \mathcal{D}$ objects we denote by $\mathrm{Map}(x, y)$ or $\mathrm{Map}_\mathcal{D}(x, y)$ depending on whether there is possible confusion or not. For $\mathcal{A}$ an abelian category, denote by $\mathcal{D}(\mathcal{A})$ its derived category \cite[Def.1.3.5.8]{Lur17}. If $\mathcal{D}$ is stable \cite[Def.1.1.1.9]{Lur17} we write $\mathrm{map}_\mathcal{D}(x, y)$ for the mapping spectrum (the unique lift of $\mathrm{Map}(x, \_)$ under the equivalence \cite[Prop.1.4.2.22]{Lur17}). We denote by $\_[k]$ the $k$-fold suspension/shift functor. In any stable category $\CalC$ with $\NN^\op$-indexed limits and finite colimit for $l$ a prime and $X \in \CalC$ we write
\begin{equation*}
	X\lcomp \coloneqq \lim_{k \in \NN^\op} \cofib(X \xrightarrow{l^k} X)
\end{equation*}
If $\mathcal{D}$ is enriched in a symmetric monoidal category $\mathcal{V}$ we write $y^x$ or $\underline{\mathrm{map}}(x, y) \in \mathcal{V}$ for the mapping object if it exists \cite[Def.4.2.1.28]{Lur17}. If we do not specify $\mathcal{V}$ then $\mathcal{D} = \mathcal{V}$ implicitly. We write $\mathrm{Pr}^L$ for the symmetric monoidal category of presentable categories as defined in \cite[Def.5.5.3.1]{Lur09} and the symmetric monoidal structure in \cite[Prop.4.8.1.15]{Lur17} and colimit preserving maps defined in and by $\mathrm{Pr}^R$ its opposite. For $\mathcal{C}$ a monoidal category denote by $\mathrm{Alg}(\mathcal{C})$ its category of associative algebras following \cite[Notat.4.1.1.8]{Lur17} and if $\mathcal{C}$ is symmetric monoidal denote by $\mathrm{CAlg}(\mathcal{C})$ the category of commutative algebras \cite[Def. 2.1.3.1]{Lur17}. For both $A \in \mathrm{Alg}(\mathcal{C})$ and $A \in \mathrm{CAlg}(\mathcal{C})$ denote by $\mathrm{Mod}_A(\mathcal{C})$ its category of left modules of the underlying associative algebra ($\mathrm{LMod}_A(\mathcal{C})$ in the notation of \cite[Def.4.2.1.13]{Lur17}).

		\subsection{Homological properties of the Iwasawa algebra}\label{A1}
			\subsubsection{Classical theory of finitely generated $\Lambda$-modules}
Iwasawa theory can be described as the study of modules such as $A_\infty$ and $M_\infty$ over the so-called Iwasawa algebra $\Lambda$ arising from invariants along a Galois extension with Galois group isomorphic to $\ZZ_l$. The main example is the cyclotomic extension $F_\infty/F_0$. In this subsection we summarize the necessary results we need in Section \ref{C1}.
\begin{rem}\label{1-8}
	We quickly review the results in \cite[Ch.13]{Was97} that are important for this section:
	\begin{enumerate}
		\item $\Lambda \simeq \ZZ_l[[T]]$ sending a topological generator $\gamma \in \Gamma$ to $T+1$. Hence, $\Lambda$ is a complete local regular ring with maximal ideal $\mathfrak{m} = (T, l)$ of global dimension $2$. Moreover, its height $1$ primes are $(l)$ and $(P)$ for $P$ an irreducible distinguished polynomial, i.e., a monic polynomial in $T$ such that $P \equiv T^{\deg(P)} \mod l$. The key input for proving this is the Weierstraß preparation Lemma:
		\begin{prop}
			Every $g \in \ZZ_l[[T]]$ can be uniquely written as
			\begin{equation*}
				g = UPl^\nu
			\end{equation*}
			with $\nu \in \NN$, $U \in \ZZ_l[[T]]^\times$ and $P \in \ZZ_l[T]$ a distinguished polynomial.
		\end{prop}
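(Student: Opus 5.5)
We prove the Weierstraß preparation statement by the standard $\mathfrak{m}$-adic approximation argument. First we split off the power of $l$: since $\ZZ_l[[T]]$ is a domain and $g \neq 0$ (for $g = 0$ the statement is vacuous or excluded), let $\nu$ be the largest integer with $l^\nu \mid g$. Every coefficient is in $\ZZ_l$, so $\nu$ is the minimum of the valuations of the coefficients, and we may write $g = l^\nu g_0$ with $g_0 \not\equiv 0 \bmod l$. Let $n$ be the smallest index such that the coefficient $a_n$ of $T^n$ in $g_0$ is a unit. Then $g_0 \equiv T^n \cdot(\text{unit}) \bmod l$, and it remains to show $g_0 = UP$ with $P$ distinguished of degree $n$.

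The key step is the division lemma. For $f = \sum a_i T^i$ with $a_0, \dots, a_{n-1} \in l\ZZ_l$ and $a_n \in \ZZ_l^\times$, every $h \in \ZZ_l[[T]]$ can be written uniquely as $h = qf + r$ with $q \in \ZZ_l[[T]]$ and $r \in \ZZ_l[T]$ of degree $< n$. To prove it, introduce the operator $\tau$ that truncates the first $n$ terms and divides by $T^n$: $\tau(\sum b_iT^i) = \sum_{i\ge n} b_i T^{i-n}$. Write $f = P_0 + T^n V$, where $P_0$ has degree $< n$ with coefficients in $l\ZZ_l$ and $V$ is a unit. The condition ``$h - qf$ has degree $< n$'' is equivalent to $\tau(h) = \tau(qf) = \tau(qP_0) + qV$. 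Setting $u = qV$, this becomes the fixed-point equation
\begin{equation*}
	u = \tau(h) - \tau(uV^{-1}P_0).
\end{equation*}
The map $u \mapsto \tau(uV^{-1}P_0)$ is $\ZZ_l$-linear and lands in $l\ZZ_l[[T]]$, because $P_0 \in l\ZZ_l[T]$. Hence the geometric series $u = \sum_{k\ge0} (-1)^k \mathcal{T}^k(\tau(h))$, where $\mathcal{T}(u) = \tau(uV^{-1}P_0)$, converges coefficientwise $l$-adically. Uniqueness follows from the same contraction: a solution of the homogeneous equation lies in $\bigcap_k l^k\ZZ_l[[T]] = 0$. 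This convergence-and-uniqueness argument is the main (though mild) obstacle; everything else is bookkeeping.

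Existence then follows quickly. Apply the division lemma to $h = T^n$ and $f = g_0$ to get $T^n = qg_0 + r$ with $\deg r < n$. Put $P := T^n - r$. Reducing mod $l$ gives $T^n \equiv \bar q\,\bar a_n T^n(1+\dots) + \bar r$. Comparing terms of degree $< n$ forces $\bar r = 0$, so $P$ is distinguished. Comparing the coefficient of $T^n$ shows $\bar q$ has unit constant term, so $q$ is a unit in $\ZZ_l[[T]]$, since a power series is invertible iff its constant term is. Setting $U = q^{-1}$ gives $g_0 = UP$.

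For uniqueness, suppose $l^\nu UP = l^{\nu'}U'P'$. Reducing mod $l$ after cancelling powers of $l$ (possible since $l \nmid UP$, as $P \equiv T^{\deg P}$) shows $\nu = \nu'$. Then $\deg P = \deg P'$, because this common degree is the order of vanishing of the reduction mod $l$. Finally $T^n = U^{-1}... $ more precisely, $T^n = U'^{-1}U\cdot P' \cdot 1 + (T^n - P)$ expresses the division of $T^n$ by $g_0$ in two ways. Indeed, $T^n = U^{-1}g_0 + (T^n - P) = U'^{-1}g_0 + (T^n - P')$, so uniqueness in the division lemma gives $U = U'$ and $P = P'$.
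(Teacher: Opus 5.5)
Your proof is correct and is the standard one. The paper gives no argument of its own here but quotes the result from \cite{Was97}, whose proof matches yours: a division lemma obtained by solving $u = \tau(h) - \tau(uV^{-1}P_0)$ with an $l$-adically convergent geometric series, then dividing $T^n$ by $g_0$ to produce the unit and the distinguished polynomial, with uniqueness read off from uniqueness of division. Two cosmetic points: the statement genuinely needs $g \neq 0$ (for $g = 0$ it is false rather than vacuous), and the garbled first attempt at the uniqueness identity should simply be replaced by $T^n = U^{-1}g_0 + (T^n - P) = U'^{-1}g_0 + (T^n - P')$.
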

		Two further consequences are the following:
		\begin{enumerate}
			\item\label{1-8-1} Let $P$ be a distinguished polynomial. Then $\ZZ_l[T]/(P) \to \ZZ_l[[T]]/(P)$ is an isomorphism. As a consequence, for $P, Q$ distinguished polynomials, $P \mid Q$ in $\Lambda$ if and only if $P \mid Q$ in $\ZZ_l[T]$. In particular, $(P) = (Q)$ implies that $P$ and $Q$ differ by an element of $\ZZ_l^\times$.
			\item For $f, g \in \Lambda\setminus \Lambda^\times$ $\Lambda/(f, g)$ is finite if and only if $f$ and $g$ are coprime \footnote{Being coprime is a sensible condition, since $\ZZ_l[[T]]$ is a UFD by Auslander-Buchsbaum}. In particular $\Lambda/(f)$ for $f \in \Lambda$ is either $0$ or infinite.
		\end{enumerate}
		
		\item Let $M, N$ be finitely generated. We call a $\Lambda$-module map $M \to N$ a quasi-isomorphism, if it has finite kernel and cokernel. In this case we say $M$ is quasi-isomorphic to $N$. Note that a priori quasi-isomorphy is not a symmetric relation. It turns out that it is not symmetric unless we are dealing with torsion $\Lambda$-modules.
		\item Any finitely generated $\Lambda$-module $M$ admits a quasi-isomorphism into a module of the form
		\begin{equation*}
			\Lambda^r \oplus \bigoplus_i \Lambda/(P_i^{\lambda_i}) \oplus \bigoplus_j \Lambda/(l^{\mu_j})
		\end{equation*}
		with $P_i$ irreducible distinguished polynomials and $\lambda_i, \nu_j \in \NN$. Then we define
		\begin{align*}
			\mu(M) \coloneqq \sum\limits_j \mu_j & & \lambda(M) \coloneqq \sum\limits_i \lambda_i\deg(P_i) & & \mathrm{char}(M) \coloneqq (\prod_i P_i^{\lambda_i})
		\end{align*}
		These are usually called the $\mu$- and $\lambda$-invariant and the characteristic ideal of $M$, respectively. All of these are invariants of $M$ preserved by quasi-isomorphisms. Furthermore, they are multiplicative in short exact sequences of $\Lambda$-torsion modules.
		We call modules of the form $\Lambda$ or $\Lambda/(P^k)$, for $P$ a irreducible distinguished polynomial, or $\Lambda/(l^\nu)$ \textit{elementary modules}.
	\end{enumerate}
\end{rem}
\begin{ex}
	The modules $A_\infty, A'_\infty$ are torsion $\Lambda$-modules by \cite[Lemma 13.17]{Was97}. $M_\infty$ is not $\Lambda$-torsion but still finitely generated by \cite[Ch.XI, Sect.3]{NSW08}. Hence, the above theory applies to them.
\end{ex}
\begin{notat}
	Set $\omega_n \coloneq \gamma^{l^n}-1 = (T+1)^{l^n}-1$. In this notation $\Lambda/\omega_n \xrightarrow{\simeq} \ZZ_l[\ZZ/l^n]$.
\end{notat}
\begin{rem}
	Iwasawa used the structure theorem above for $A_\infty$ to provide a formula for $\abs{A_n}$ in terms of the invariants $\lambda$ and $\mu$: There exists $\nu \in \ZZ$ such that for $n >> 0$ \cite[Thm.13.13]{Was97} we have for $\abs{A_n} = l^{e_n}$
	\begin{equation*}
		e_n = \lambda(A_\infty)n + \mu(A_\infty)l^n + \nu
	\end{equation*}
\end{rem}
Usually, computing the $\mu$ and $\lambda$-invariants from this growth description is not practical as computing class groups is hard. However, we will use that the $\Lambda$-ranks of an Iwasawa module can be read off from the $\ZZ_l$-ranks of the "finite stages" $N/\omega_nN$:
\begin{lem}\label{2-7}
	Given a finitely generated $\Lambda$-module $N$. Then
	\begin{equation*}
		\rank_{\ZZ_l} N/\omega_nN = l^n \rank_\Lambda N + C_n
	\end{equation*}
	where $C_n$ is uniformly bounded in $n$.
	\begin{proof}
		Clearly, $\rank_{\Lambda} N = \rank_{\Lambda \otimes \QQ} N \otimes \QQ$ and $\rank_{\ZZ_l} N/\omega_nN = \rank_{\QQ_l} N \otimes \QQ/\omega_n(N \otimes \QQ)$. Since $\_ \otimes_\ZZ \QQ$ inverts quasi-isomorphisms we are reduced to proving the lemma for elementary modules. For $N$ a elementary torsion module, we see that $\rank_{\QQ_l} N \otimes \QQ/\omega_n(N \otimes \QQ) \leq \lambda(N)$ as a consequence of Remark \ref{1-8} item \ref{1-8-1}. For $N = \Lambda$ we obtain the desired result as well: Let $C_{l^n}$ be the cyclic group of order $l^n$. Then
		\begin{equation*}
			\rank_{\ZZ_l} \Lambda/\omega_n\Lambda \simeq \rank_{\ZZ_l} \ZZ_l[C_{l^n}] = l^n
		\end{equation*}
	\end{proof}
\end{lem}
\begin{obs}\label{H-5}
	Commutative algebra for finitely generated modules $M$ over $\Lambda_\QQ := \Lambda \otimes \QQ$ becomes even easier:
	\begin{enumerate}
		\item $M$ is isomorphic - not quasi-isomorphic - to a sum of elementary modules.
		\item $\mathrm{Ext}^1_{\Lambda_\QQ}(M, \Lambda)$ is abstractly isomorphic to the torsion of $M$ and $\mathrm{Ext}^0_{\Lambda_\QQ}(M, \Lambda)$ is free of rank the $\Lambda_\QQ$-rank of $M$: As $M$ decomposes into a sum of elementary modules it suffices to treat only elementary modules. For $M = \Lambda$ the statement is clear. For $M = \Lambda_\QQ/(P^k)$ for $P$ a distinguished polynomial this follows from the obvious projective resolution
		\begin{equation*}
			\Lambda_\QQ \xrightarrow{P^k} \Lambda_\QQ \to M
		\end{equation*}
		\item Assume $M = N \otimes \QQ$ for some finitely generated $\Lambda$-module $N$ then since $\mathrm{Spec}(\Lambda \otimes \QQ) \subset \mathrm{Spec}(\Lambda)$ contains all height one primes of $\Lambda$ except $l$ we can recover the characteristic polynomial of $N$ from that of $M$.
	\end{enumerate}
\end{obs}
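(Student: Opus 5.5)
The plan is to reduce all three items to the fact that $\Lambda_\QQ = \ZZ_l[[T]][1/l]$ is a principal ideal domain, and then apply the structure theory of finitely generated modules over a PID.

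\textbf{Step 1: $\Lambda_\QQ$ is a PID.}
By Remark \ref{1-8}, $\Lambda$ is a two-dimensional regular local ring, hence a UFD. Its height one primes are $(l)$ and $(P)$ for $P$ an irreducible distinguished polynomial. Inverting $l$ removes the prime $(l)$ and the maximal ideal $(T,l)$. So $\Lambda_\QQ$ is a noetherian UFD of Krull dimension $\le 1$, whose nonzero primes are exactly the principal ideals $(P)$. A noetherian UFD of dimension $\le 1$ is a PID: every nonzero prime is height one, hence principal, and then every ideal is principal. For this last point I would argue directly. An ideal $I$ is generated by a greatest common divisor of its elements, since by Weierstraß preparation each element is a unit times a distinguished polynomial once $l$ is a unit. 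The only subtlety is to check that $\Lambda_\QQ$ is still noetherian and a UFD; both properties are preserved by localization.

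\textbf{Step 2: item 1.}
The structure theorem over a PID gives $M \simeq \Lambda_\QQ^r \oplus \bigoplus_i \Lambda_\QQ/(P_i^{k_i})$ with $P_i$ prime. Each $P_i$ may be taken to be an irreducible distinguished polynomial by Step 1. This is a genuine isomorphism, in contrast with the quasi-isomorphism in Remark \ref{1-8}.

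\textbf{Step 3: item 2.}
Both $\mathrm{Ext}$ functors are additive, so it suffices to treat the summands of Step 2. I read the target $\Lambda$ in the statement as $\Lambda_\QQ$.
\begin{itemize}
\item For $M=\Lambda_\QQ$, we have $\mathrm{Hom}=\Lambda_\QQ$ and $\mathrm{Ext}^1=0$.
\item For $M=\Lambda_\QQ/(P^k)$, use the free resolution $\Lambda_\QQ \xrightarrow{P^k}\Lambda_\QQ$. Applying $\mathrm{Hom}(\_,\Lambda_\QQ)$ gives the complex $\Lambda_\QQ\xrightarrow{P^k}\Lambda_\QQ$. Its $H^0$ vanishes because $\Lambda_\QQ$ is a domain, and its $H^1$ is $\Lambda_\QQ/(P^k)\simeq M$.
\end{itemize}
Summing over the summands, $\mathrm{Ext}^0(M,\Lambda_\QQ)$ is free of rank $r$ and $\mathrm{Ext}^1(M,\Lambda_\QQ)\simeq \bigoplus_i\Lambda_\QQ/(P_i^{k_i})$. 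The latter is abstractly isomorphic to $M_{\mathrm{tors}}$.

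\textbf{Step 4: item 3.}
Let $N$ be finitely generated over $\Lambda$ with a quasi-isomorphism $N\to E$ to an elementary decomposition as in Remark \ref{1-8}. Tensoring with $\QQ$ kills the finite kernel and cokernel, so $M = N\otimes\QQ \simeq E\otimes \QQ$. In $E\otimes\QQ$:
\begin{itemize}
\item the summands $\Lambda/(l^{\mu_j})$ vanish;
\item each $\Lambda/(P_i^{\lambda_i})$ becomes $\Lambda_\QQ/(P_i^{\lambda_i})$, which is nonzero since $P_i$ is not a unit in $\Lambda_\QQ$.
\end{itemize}
By uniqueness in the PID structure theorem, the multiset $\{(P_i,\lambda_i)\}$ is recovered from $M$. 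Hence $\mathrm{char}(N)=(\prod_i P_i^{\lambda_i})$ is recovered, as this ideal only involves primes different from $(l)$. Equivalently, for each distinguished irreducible $P$ one can use $\mathrm{length}_{\Lambda_{(P)}} N_{(P)} = \mathrm{length}\, M_{(P)}$, valid because $(P)\in\Spec\Lambda_\QQ$.

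The main obstacle is Step 1, verifying that $\Lambda_\QQ$ is a PID; everything afterwards is formal.
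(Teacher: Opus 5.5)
Your proposal is correct and takes the same route as the paper. The paper also decomposes $M$ into elementary modules; it takes the structure theorem over $\Lambda_\QQ$ for granted, and your Step 1 supplies the justification. It then computes $\mathrm{Ext}$ via the resolution $\Lambda_\QQ \xrightarrow{P^k} \Lambda_\QQ$ and recovers $\mathrm{char}(N)$ from the height one primes $(P) \neq (l)$ that survive in $\mathrm{Spec}(\Lambda_\QQ)$.

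The one loose spot is your direct ``gcd'' argument in Step 1. As written it still needs B\'ezout in $\QQ_l[T] \subset \Lambda_\QQ$, or alternatively the fact that a one-dimensional noetherian UFD is a Dedekind domain with trivial class group. This is easily supplied and does not affect correctness.
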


\subsubsection{A well-behaved derived category of $\Lambda$-modules}
In Section \ref{C1} we derive a formula for étale cohomology of $R$ in terms of $\mathrm{Ext}$-groups of certain finitely generated modules over $\Lambda$ and $\Lambda_F$. The technique is pro-Galois descent along the pro-Galois cover $R_0 \to R_\infty$ and the finite étale cover $R \to R_0$ of order coprime to $l$. Formally, the content of the last sentence amounts to endowing the system $\{R\Gamma_\proet(R_n, \ZZ_l(i))\}_{n \in \NN^\op}$ with extra structure encoding the continuous action on cohomology groups. Condensed math \cite{CS26} allows us to formalize this structure as follows: 
\begin{enumerate}
	\item First, we define a derived category of modules over any profinite group $G$ $\mathcal{D}_\square(G)$ in \ref{Cond-17} which sees the topology of $G$. Discrete $G$-modules will embed into this category. This is basically an expansion of \cite[Sect.4.3]{BS14}.
	\item Second, we specialize to the case of $\Lambda$ and define $\mathcal{D}_\square(\Lambda)$. $\mathcal{D}_\square(\Lambda)$ embeds into $\mathcal{D}_\square(\ZZ_l)^{B\ZZ}$; a derived category of locally compact abelian groups over the topological ring $\ZZ_l$ studied by Clausen-Scholze \cite{CS26} with the additional datum of an action by the topological generator of $\Gamma$. Discrete $\Gamma$-representations and $\mathrm{Perf}(\Lambda)$ the category of perfect $\Lambda$-modules embed into this category (cf. \ref{Cond-18} resp. \ref{Cond-3}). The main example for this is étale cohomology with $\QQ_l/\ZZ_l$ and $\ZZ_l$-coefficients along a pro-Galois extension with Galois group $\Gamma$.
\end{enumerate}
\begin{notat}
	For $\mathcal{C}$ a Grothendieck site we denote by $\mathrm{Shv}(\mathcal{C})$ the hypercompletion \cite[Sect.6.5.3]{Lur09} of its $\infty$-topos of $\Spc$-valued sheaves \cite[Def.6.2.2.6]{Lur09}. $\mathcal{D} \in \mathrm{Pr}^L$ we denote by $\mathrm{PSh}(\mathcal{C}; \mathcal{D}) := \mathrm{Fun}(\mathcal{C}^\op, \mathcal{D})$ the category of $\mathcal{D}$-valued presheaves and $\mathrm{Shv}(\mathcal{C}; \mathcal{D}) := \mathrm{Shv}(\mathcal{C}) \otimes_{\mathrm{Pr}^L} \mathcal{D}$, which can be identified with a Bousfield localization of $\mathrm{PSh}(\mathcal{C}; \mathcal{D})$ of objects satisfying certain descent conditions \cite[Prop.1.3.1.7]{Lur18}. In \cite[pg.4]{Jar87} Jardine also constructs a model category equivalent to this construction (cf.\cite[Introduction of 6.5.4]{Lur09}). In the case $\mathcal{D} = \mathcal{D}(\ZZ)$ this identifies with the left completed derived category of the abelian category of $\mathrm{Ab}$-valued sheaves by \cite[Thm.2.1.2.2]{Lur18} applied to the case $\mathcal{O}$ being the sheafification of the constant sheaf $\ZZ$ in degree $0$. Thus, it is a relatively familiar object containing $\mathrm{Shv}(\mathcal{C}; \mathrm{Ab})$ in the case $\mathcal{D} = \mathcal{D}(\ZZ)$ as a full subcategory. We write $\ZZ[\_]: \mathcal{C} \to \mathrm{Shv}(\mathcal{C}; \mathcal{D}(\ZZ))$ for the $\ZZ$-linearized Yoneda functor.
\end{notat}
First we expand \cite[Sect.4.3]{BS14} to the extent we require in the following sections.
\begin{defin}
	Let $G$ be a profinite group. By this we mean a group which is inverse limit over its finite group quotients. Define the site $BG_\et$ as the category of finite discrete $G$-sets with coverings jointly surjective maps \footnote{In case $G$ is topologically finitely generated such as $G = \Gamma$ discreteness is implied by finiteness by a theorem of Nikolov and Segal \cite{NS06}}.
	Define the proétale topology on $\mathrm{Mod}_{G}(\mathrm{Pro}(\mathrm{Fin}))$ by coverings being detected by the forgetful functor $\mathrm{Mod}_{G}(\mathrm{Pro}(\mathrm{Fin})) \to \mathrm{Pro}(\mathrm{Fin})$ where the target is endowed with the joint surjection topology (cf. \cite[Lect.1]{CS26}). Denote the corresponding site by $BG_\proet$. In the case of $G = 1$ we write $BG_\proet = *_\proet$.
\end{defin}
\begin{lem}\label{Cond-24}
	Given a set $X$, denote by $(G \times X)^\wedge$ the profinite completion \footnote{By which we mean the left adjoint to the functor $\mathrm{Pro}(\mathrm{Fin}) \to \Set$ induced by the inclusion $\mathrm{Fin} \to \Set$} of the product. Equivalently, this is the product of $X^\wedge$ and $G$ in profinite sets.
	Then $\{(G \times X)^\wedge\}_{X \in \Set}$ induces a conservative family of points, i.e., restriction along which induces a geometric morphism with left exact left adjoint
	\begin{equation*}
		((G \times X)^\wedge)^*: \mathrm{Shv}(BG_\proet, \Set) \to \Set: ((G \times X)^\wedge)_*
	\end{equation*}
	which jointly detect equivalences \footnote{cf. \cite[Tome 1, Exp.IV, Def.6.1]{SGA4} for the definition of points of a topos}.
	
	As standard consequences, we obtain:
	\begin{enumerate}
		\item The geometric morphism $\mathrm{Shv}(BG_\proet) \to \Spc$ induced by restriction to the value at $(G \times X)^\wedge$ for all $X$ detects hypercomplete objects.
		\item We have a diagram of categories:
		\[\begin{tikzcd}
			\mathrm{PSh}(BG_\proet; \mathrm{An}) \arrow[r, "L_\proet"] \arrow[rrr, "\prod_{X \in \Set}((G \times X)^\wedge)^*", bend left=10] & \mathrm{Shv}^\text{un}(BG_\proet; \mathrm{An}) \arrow[r, "L_\text{hyp}"] & \mathrm{Shv}(BG_\proet; \mathrm{An}) \arrow[r, dashed] & \prod_{X \in \Set} \mathrm{An}
		\end{tikzcd}\]
		,i.e., the value at the points $(G \times X)^\wedge$ does not change under hypersheafification. This is still true once we tensor over any $\mathcal{D} \in \mathrm{CAlg}(\mathrm{Pr}^L)$. Moreover, the last map is conservative.
	\end{enumerate}
	\begin{proof}
		First we prove that restriction to the value at $((G \times X)^\wedge)^*$ induces a point of set valued sheaf topos: By \cite[Exp. IV, §6, Ex.6.5.2]{SGA4} it suffices to identify $(G \times X)^\wedge \in BG_\proet$ as a pro-object such that $\mathrm{Map}_{BG_\proet}((G \times X)^\wedge, \_)$ sends covering sieves to surjections. But $(G \times X)^\wedge$ is a representable pro-object of $BG_\proet$ and the second condition is satisfied by the underlying joint surjection topology:
		Given an underlying surjection $Y \to (G \times X)^\wedge$ \footnote{It suffices to treat the universal case: For $Y \to Z$ any other underlying surjection lifting any morphism $(G \times X)^\wedge \to Z$ along this map is equivalent to finding a section of the pulled back covering $Y \times_{Z} ((G \times X)^\wedge) \to (G \times X)^\wedge$.} of profinite $G$-sets, we may lift $X \xrightarrow{1 \times \mathrm{id}_X} G \times X \to (G \times X)^\wedge$ to obtain a $G$-equivariant factorization $G \times X \to Y \to (G \times X)^\wedge$ of the unit of profinite completion. Thus upon profinite completion we obtain a section.
		
		Secondly, we prove the factorization: First observe that the functor
		\begin{equation*}
			\mathrm{PSh}(BG_\proet) \xrightarrow{\prod_{X \in \Set} ((G \times X)^\wedge)^*} \prod_{X \in \Set} \mathrm{An}
		\end{equation*}
		inverts covering sieves by what we proved in the first part of this proof. Thus we get a factorization through the non-hypercomplete category of sheaves $\mathrm{Shv}^\text{un}(BG_\proet)$. Since the target is hypercomplete this geometric morphism further factorizes through the hypercompletion by \cite[Prop.6.5.2.13]{Lur09}.
		
		Lastly, we prove that the left adjoint $\mathrm{Shv}(BG_\proet) \to \prod_{X \in \Set} \mathrm{An}$ is conservative: The set of morphisms which are mapped to equivalences under this functor is a saturated class and coincides with the $\infty$-connective morphisms by \cite[pg.48]{Jar87}. So we are done.
	\end{proof}
\end{lem}
We need to introduce some condensed math as first developed in \cite{CS26}:
\begin{defin}
	Define the symmetric monoidal category of condensed anima as $\mathrm{Cond}\Spc := \mathrm{Shv}(*_\proet)$. For an arbitrary category $\mathcal{C} \in \mathrm{Pr}^L$ define $\mathrm{Cond}\mathcal{C} := \mathrm{Cond}\Spc \otimes_{\text{Pr}^L} \mathcal{C}$. In the case $\mathcal{C} = \mathcal{D}(\ZZ)$ we also write $\mathcal{D}(\mathrm{Cond})$ following tradition.
\end{defin}
\begin{notat}
	We identify a profinite set $S$ with its image in $\mathrm{Cond}\Spc$. For arbitrary $\mathcal{C} \in \mathrm{Pr}^L$ denote by $\underline{\cdot}: \mathcal{C} \to \mathrm{Cond}\mathcal{C}$ the constant sheaf functor and $\_(*)$ its right adjoint - its global sections. Denote by $\ZZ[\_]: \mathrm{Cond}\Spc \to \mathcal{D}(\mathrm{Cond})$ the left adjoint coming from tensoring $\ZZ[\_]: \Spc \to \mathcal{D}(\ZZ)$ with $\mathrm{Cond}\Spc$.
\end{notat}
\begin{ex}\label{Cond-21}
	We have $\ZZ[S](*) = \ZZ[S]$ for $S$ any profinite set by the fact that the value on $*$ is preserved by hypersheafification by Lemma \ref{Cond-24}.
\end{ex}
\begin{fact}\label{Cond-20}
	Let $\mathcal{C}$ be a symmetric monoidal category and let $A \in \mathrm{CAlg}(\mathrm{Alg}(\mathcal{C})^\op)^\op$ a cocommutative bialgebra. Then the category $\mathrm{Mod}_A(\mathcal{C})$ inherits a symmetric monoidal structure constructed in \cite{Ra26} having the following properties, which we review in the appendix with an added discussion about Hopf algebras:
	\begin{enumerate}
		\item Given $A \to B$ a map of cocommutative bialgebras, then restriction of scalars $\mathrm{Mod}_B(\mathcal{C}) \to \mathrm{Mod}_A(\mathcal{C})$ is symmetric monoidal (see \ref{H-3}). In particular, $\mathrm{Mod}_A(\mathcal{C}) \to \mathcal{C}$ is symmetric monoidal.
		\item The monoidal structure is compatible with base change, i.e., for $f: \mathcal{C} \to \mathcal{D}$ in $\mathrm{CAlg}(\mathrm{Pr}^L)$ the comparison map $\mathrm{Mod}_A(\mathcal{C}) \otimes_\mathcal{C} \mathcal{D} \xrightarrow{\simeq} \mathrm{Mod}_{f(A)}$ identifies the two a priori different monoidal structures (see \ref{H-3}).
		\item In case $\mathcal{C}$ is endowed with the cartesian monoidal structure this symmetric monoidal structure on $\mathrm{Mod}_A(\mathcal{C})$ identifies with the cartesian one (see \ref{H-11}).
		\item $A$ is called a Hopf algebra if $\mathrm{Mod}_A(\mathcal{C}) \to \mathcal{C}$ preserves internal mapping objects in the sense of \ref{Cond-15} - this is classically called closed monoidal. This is the case if and only if $A \in h\mathcal{C}$ as a bialgebra (note that $\mathcal{C} \to h\mathcal{C}$ is a symmetric monoidal) is a Hopf algebra in the classical sense (cf. \cite[5.2]{BLV11} for a definition of Hopf algebra object in a symmetric monoidal 1-category).
	\end{enumerate}
\end{fact}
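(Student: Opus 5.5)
The statement collects four properties, so I plan to build the monoidal structure once, functorially in both $A$ and $\mathcal{C}$, and then read off each item from that construction.

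\textbf{The construction.} The key input is that $A \mapsto \mathrm{Mod}_A(\mathcal{C})$ assembles into a lax symmetric monoidal functor
\begin{equation*}
	\mathrm{Mod}_{(\_)}(\mathcal{C}) : \mathrm{Alg}(\mathcal{C})^\op \to \mathrm{Cat},
\end{equation*}
where the structure maps are $\mathrm{Mod}_A(\mathcal{C}) \times \mathrm{Mod}_B(\mathcal{C}) \to \mathrm{Mod}_{A \otimes B}(\mathcal{C})$, given by $(M,N) \mapsto M \otimes N$, and $* \to \mathrm{Mod}_{1}(\mathcal{C}) = \mathcal{C}$, picking out the unit. This follows from \cite[Sect.4.8]{Lur17}. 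A lax symmetric monoidal functor carries commutative algebra objects to commutative algebra objects. A cocommutative bialgebra is by definition an object of $\mathrm{CAlg}(\mathrm{Alg}(\mathcal{C})^\op)$, so $\mathrm{Mod}_A(\mathcal{C})$ becomes a commutative algebra in $\mathrm{Cat}$, i.e. a symmetric monoidal category. Concretely:
\begin{itemize}
	\item the tensor product is $M \otimes N$, viewed as an $A \otimes A$-module and restricted along the comultiplication $A \to A \otimes A$;
	\item the unit is $1$, with $A$ acting through the counit.
\end{itemize}

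\textbf{Items 1 and 2.} Item 1 is functoriality in $A$. A map of cocommutative bialgebras $A \to B$ is a morphism in $\mathrm{CAlg}(\mathrm{Alg}(\mathcal{C})^\op)$ from $B$ to $A$. Its image under the lax symmetric monoidal functor is therefore a symmetric monoidal functor, and this functor is restriction of scalars. Applying this to the unit map $1 \to A$, which is a bialgebra map, shows that the forgetful functor $\mathrm{Mod}_A(\mathcal{C}) \to \mathcal{C}$ is symmetric monoidal.

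For item 2, base change along a symmetric monoidal left adjoint $f$ gives a natural transformation between the two functors $\mathrm{Mod}_{(\_)}(\mathcal{C}) \otimes_\mathcal{C} \mathcal{D}$ and $\mathrm{Mod}_{f(\_)}(\mathcal{D})$. I will check that this transformation is itself lax symmetric monoidal and that it is an equivalence objectwise. The objectwise equivalence is \cite[Thm.4.8.4.6]{Lur17}. Evaluating the transformation on the commutative algebra $A$ then identifies the two symmetric monoidal structures.

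\textbf{Item 3.} When $\mathcal{C}$ is cartesian, every object carries a unique cocommutative coalgebra structure, namely the diagonal. So a bialgebra is just a monoid, and the tensor product of $M$ and $N$ is $M \times N$ with the diagonal action. The forgetful functor $\mathrm{Mod}_A(\mathcal{C}) \to \mathcal{C}$ is conservative and preserves limits, so this diagonal-action object is the categorical product in $\mathrm{Mod}_A(\mathcal{C})$. The unit $*$ is terminal. Hence the structure is cartesian, by the uniqueness of cartesian structures \cite[Cor.2.4.1.9]{Lur17}.

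\textbf{Item 4: the main obstacle.} Here the plan is as follows.
\begin{enumerate}
	\item Note that the forgetful functor always admits a comparison map on internal homs, namely $U\,\underline{\mathrm{map}}_A(M,N) \to \underline{\mathrm{map}}(UM,UN)$.
	\item Reduce, using free modules, the statement that these comparison maps are equivalences to the single case $M = A \otimes X$ with $X \in \mathcal{C}$.
	\item In that case, show the comparison is an equivalence if and only if the fusion (Galois) map $A \otimes A \to A \otimes A$ is an equivalence, where this map is informally $a \otimes b \mapsto ab_{(1)} \otimes b_{(2)}$.
\end{enumerate}
Being an equivalence is a property that can be detected in $h\mathcal{C}$. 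In a symmetric monoidal $1$-category, invertibility of the fusion map is classically equivalent to the existence of an antipode \cite[5.2]{BLV11}. Conversely, given an antipode $S$, the internal hom $\underline{\mathrm{map}}(UM,UN)$ carries the conjugation action $a \cdot \phi = a_{(2)}\,\phi\,(S a_{(1)} \cdot -)$, and I will check that this object satisfies the universal property of $\underline{\mathrm{map}}_A(M,N)$.

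I expect the delicate point to be the coherent identification of the fusion map with the comparison map on internal homs. The remaining steps are formal manipulations of adjunctions.
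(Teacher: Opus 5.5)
Your plan is correct, and for item 4 it is essentially the paper's argument (Definition \ref{Cond-15}, Lemma \ref{Cond-16}). The paper identifies the comparison map on internal homs with restriction along the shear map $s_{A,X}\colon A\otimes uX\to A\otimes X$ for an arbitrary module $X$. It then passes from $s_{A,A}$ to all $s_{A,X}$ using generation of $\mathrm{Mod}_A(\mathcal{C})$ by free modules under colimits. You instead reduce to free $M$ first; the detection in $h\mathcal{C}$ and the appeal to \cite{BLV11} are the same.

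The real differences are in the construction and in item 3.
\begin{itemize}
\item \textbf{Construction.} The paper does not use a lax functor into $(\mathrm{Cat},\times)$ along restriction of scalars. It takes Lurie's symmetric monoidal functor $\mathrm{Alg}(\mathcal{C})\to\mathrm{Pr}^L_{\mathrm{str}}$ along extension of scalars and passes to right adjoints, so $\mathrm{Mod}_A(\mathcal{C})$ comes out directly as an object of $\mathrm{CAlg}(\mathrm{Pr}^L)$. Your route is more elementary and works for any symmetric monoidal $\mathcal{C}$; the paper's lands where $-\otimes_{\mathcal{C}}\mathcal{D}$ makes sense.
\item \textbf{Item 3.} The paper uses Raksit's equivalence $\mathrm{Mod}_A(\mathrm{coCAlg}(\mathcal{C}))\simeq\mathrm{coCAlg}(\mathrm{Mod}_A(\mathcal{C}))$, together with $\mathrm{coCAlg}(\mathcal{C})\to\mathcal{C}$ being an equivalence for cartesian $\mathcal{C}$. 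You check the defining conditions directly: the unit is terminal, and $M\otimes N\to M$, $M\otimes N\to N$ exhibit a product, tested after the conservative, limit-preserving forgetful functor. This is simpler and needs no extra input.
\end{itemize}

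Your plan is thinner than it looks at item 2. A lax symmetric monoidal functor into $(\mathrm{Cat},\times)$ cannot be base-changed along $-\otimes_{\mathcal{C}}\mathcal{D}$. You must first lift it to $\mathrm{Mod}_{\mathcal{C}}(\mathrm{Pr}^L)$ with the relative tensor product. This is possible, since the structure maps and restrictions preserve colimits separately in each variable and are $\mathcal{C}$-linear. More importantly, ``I will check that this transformation is lax symmetric monoidal'' is exactly the content of item 2, not a routine check.

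The paper obtains it by realizing $\mathrm{Mod}_{(\_)}(\mathcal{C})$ as the fiber over $\mathcal{C}$ of Lurie's $\Theta$, a symmetric monoidal map of cocartesian fibrations over $\mathrm{CAlg}(\mathrm{Pr}^L)$ \cite[Thm.4.8.5.16]{Lur17}. Straightening over $\Delta^1$ via \cite[Thm.2.1]{Ram26} then gives the square \eqref{H-14} of symmetric monoidal functors.

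Within your setup, an alternative is:
\begin{itemize}
\item get a symmetric monoidal functor $\mathrm{Mod}_A(\mathcal{C})\to\mathrm{Mod}_{f(A)}(\mathcal{D})$ from functoriality of $\mathrm{LMod}$ in $\mathcal{C}$;
\item pair it with the trivial-module functor $\mathcal{D}\to\mathrm{Mod}_{f(A)}(\mathcal{D})$;
\item use that $-\otimes_{\mathcal{C}}\mathcal{D}$ is a pushout in $\mathrm{CAlg}(\mathrm{Pr}^L)$.
\end{itemize}
The induced symmetric monoidal functor has the comparison map as underlying functor, which is an equivalence by \cite[Thm.4.8.4.6]{Lur17}.

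Finally, drop the step ``given an antipode $S$, use the conjugation action''. An antipode in $h\mathcal{C}$ does not define a coherent $A$-action on $\underline{\mathrm{map}}(UM,UN)$ by an elementwise formula. You also do not need it: your chain (comparison invertible $\Leftrightarrow$ fusion map invertible $\Leftrightarrow$ invertible in $h\mathcal{C}$ $\Leftrightarrow$ antipode exists) already gives both directions.
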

A proof for the following statement can be found in the appendix at \ref{P-2}.
\begin{lem}\label{Cond-6}
	Let $G$ be a profinite group viewed as a group object in $\mathrm{Cond}\Spc$. Then the functor $BG_\proet \subset \Mod_G(\mathrm{Cond}\Set) \subset \Mod_G(\mathrm{Cond}\Spc)$ induces an equivalence
	\begin{equation*}
		\mathrm{Shv}(BG_\proet) \to \Mod_{G}(\mathrm{Cond}\Spc)
	\end{equation*}
	
	Thus applying $\_ \otimes_{\mathrm{Pr}^L} \mathcal{D}$ for any $\mathcal{D} \in \mathrm{CAlg}(\mathrm{Pr}^L)$ yields a symmetric monoidal equivalence:
	\begin{align*}
		\mathrm{Shv}(BG_\proet; \mathcal{D}) \to \Mod_{f(G)}(\mathrm{Cond}\mathcal{D})
	\end{align*}
	where $f$ denotes the canonical symmetric monoidal functor $\mathrm{Cond}\Spc \to \mathrm{Cond}\mathcal{D}$.
	Here we define the left hand side as in \cite[Def. 1.3.1.4]{Lur18} and endow the right hand side with the monoidal structure from Fact \ref{Cond-20}.
\end{lem}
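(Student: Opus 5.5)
The plan is to show that $\mathrm{Shv}(BG_\proet)$ and $\Mod_G(\mathrm{Cond}\Spc)$ are both modelled as sheaves on sites lying over $*_\proet$, with $G$ acting. I would then compare them via a descent argument along the "forgetful" point $(G\times *)^\wedge = G$.

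\textbf{Step 1: a colimit-preserving comparison functor.} The inclusion $BG_\proet \subset \Mod_G(\mathrm{Cond}\Set)\subset \Mod_G(\mathrm{Cond}\Spc)$ preserves finite coproducts. It sends proétale coverings (underlying surjections of profinite $G$-sets) to effective epimorphisms, because the forgetful functor $\Mod_G(\mathrm{Cond}\Spc)\to \mathrm{Cond}\Spc$ is conservative and preserves colimits. Since the target is hypercomplete, this gives a colimit-preserving functor $\Phi:\mathrm{Shv}(BG_\proet)\to \Mod_G(\mathrm{Cond}\Spc)$. Hypercompleteness of the target follows from that of $\mathrm{Cond}\Spc$ together with conservativity of the forgetful functor.

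\textbf{Step 2: identify both sides as modules over the same monad.} Consider the functor $u^*:\mathrm{Shv}(BG_\proet)\to \mathrm{Cond}\Spc$ given by restricting along $*_\proet\to BG_\proet$, $S\mapsto (G\times S)$, with its $G$-action by left multiplication. By Lemma \ref{Cond-24} (points $(G\times X)^\wedge$), $u^*$ is conservative. It preserves all colimits and limits, being restriction along a functor of sites that preserves covers and fibre products. Lurie's Barr--Beck therefore identifies $\mathrm{Shv}(BG_\proet)$ with algebras over the monad $u^*u_!$, where $u_!$ is the left adjoint, sending $S$ to $G\times S$ with its diagonal action. Concretely, $u^*u_!(Y)=G\times Y$ computed in $\mathrm{Cond}\Spc$. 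This is the check that $G\times S$ is represented by $\underline{G}\times S$, using $(G\times S)^\wedge=G\times S$ for $S$ profinite. On the other side, $\Mod_G(\mathrm{Cond}\Spc)\to\mathrm{Cond}\Spc$ is monadic with monad $G\times(-)$. Then $\Phi$ commutes with the forgetful functors and induces the identity on monads, so it is an equivalence.

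\textbf{Step 3: tensoring with $\mathcal{D}$.} Apply $-\otimes_{\mathrm{Pr}^L}\mathcal{D}$ to the equivalence. On the right, $\Mod_G(\mathrm{Cond}\Spc)\otimes\mathcal{D}\simeq\Mod_{f(G)}(\mathrm{Cond}\mathcal{D})$ by the base-change compatibility in Fact \ref{Cond-20}. This relies on modules over an algebra in a presentable category commuting with $\otimes_{\mathrm{Pr}^L}$. For symmetric monoidality, both structures are induced from the cartesian ones in the $\Spc$-case: on the left by the definition of $\mathrm{Shv}(-;\mathcal{D})$, on the right by Fact \ref{Cond-20}(3) and (2). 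The unstraightened equivalence in Step 2 is automatically cartesian, and the monoidal structures then agree after base change.

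\textbf{Main obstacle.} The delicate point is Step 2: verifying that $u^*$ preserves colimits and that $u^*u_!\simeq G\times(-)$ on the nose in hypercomplete sheaves. For this I would check it on representables $S$ via Example \ref{Cond-21}-type computations, and then extend by colimits, using Lemma \ref{Cond-24} to ensure that hypersheafification does not alter values at the points $G\times S$.
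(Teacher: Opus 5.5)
Your proof is correct and is essentially the paper's argument with the packaging unwound. The paper applies relative Schwede--Shipley \cite[Prop.~4.8.5.8]{Lur17} to $\mathrm{Shv}(BG_\proet)$, viewed as a $\mathrm{Cond}\Spc$-module pointed by $G$. It checks exactly your inputs: $u^*\simeq\underline{\mathrm{Map}}(G,-)$ is conservative and colimit-preserving via the points of Lemma~\ref{Cond-24} (Corollary~\ref{Cond-27}), and $\underline{\mathrm{Map}}(G,G)\simeq G$ by subcanonicity, which is your identification $u^*u_!\simeq G\times(-)$. It then base-changes to $\mathcal{D}$ as in your Step~3. Your direct Barr--Beck comparison along the site-induced functor $\Phi$ has a small advantage: it does not need the $\mathrm{Cond}\Spc$-module structure on $\mathrm{Shv}(BG_\proet)$, and it shows that the functor named in the statement is the equivalence.

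Two small corrections. First, restriction along a cover- and fibre-product-preserving functor only gives preservation of limits and of hypersheaves, not of colimits. Colimit preservation of $u^*$ comes either from your ``main obstacle'' argument (values at the points $G\times\beta X$ are unchanged by hypersheafification) or from $G\times(-)$ being left adjoint to the continuous forgetful functor. Second, the monad is $G\times(-)$ with $G$ the representable condensed set of the profinite group, not the constant sheaf $\underline{G}$ in the paper's notation.
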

\begin{rem}
	Define the $(\_)^{hG}$ continuous $G$-fixed points the be the functor corepresented by the cartesian unit $*^\text{triv} \in BG_\proet$. For $F \in \mathrm{Shv}(BG_\proet, \mathcal{D})$ for some presentable category $\mathcal{D}$ the underlying homotopy $F^{hG}(*)$ type is by definition just obtained by evaluating the corresponding sheaf on the left hand side at $*^\text{triv} \in BG_\proet$.
\end{rem}
\begin{defin}
	Define the solid derived category $\mathcal{D}_\square(\ZZ)$ as the localization at all morphisms $\ZZ[\lim_i X_i] \to \lim_i \ZZ[X_i] =: \ZZ[\lim_i X_i]^\square$ of $\mathcal{D}(\mathrm{Cond})$ \cite[Def.5.1]{CS26}. Denote by $(\_)^\square$ the associated localization functor and call it solidification. By \cite[Thm.5.8(ii)]{CS26} this is not a notational overload, since $\ZZ[\lim_i X_i] \to \lim_i \ZZ[X_i]$ identifies the target as the solidification of the source.
\end{defin}
\begin{fact}
	We need the following facts about $\mathcal{D}_\square(\ZZ)$:
	\begin{enumerate}
		\item It inherits a symmetric monoidal structure from $\mathcal{D}(\mathrm{Cond})$ making solidification symmetric monoidal \cite[Thm.6.2(ii)]{CS26}.
		\item Surprisingly, the inclusion $\mathcal{D}_\square(\ZZ) \subset \mathcal{D}(\mathrm{Cond})$ preserves colimits as well \cite[Thm.5.8]{CS26} \footnote{In \cite{CS26} Clausen-Scholze construct $\mathcal{D}(\mathrm{Cond})$ and $\mathcal{D}_\square(\ZZ)$ as derived categories of certain Grothendieck abelian categories $\mathrm{Solid} \subset \mathrm{Cond}$ \cite[Thm.2.2]{CS26}. The inclusion is closed under filtered colimits and $X$ lies in $\mathcal{D}_\square(\ZZ) \subset \mathcal{D}(\mathrm{Cond})$ if and only if its homology sheaves $H_j(X)$ lie in $\mathrm{Solid}$ both by \cite[Thm.5.8]{CS26}. Since $\mathrm{Cond}$ is Grothendieck abelian $H_j: \mathcal{D}(\mathrm{Cond}) \to \mathrm{Cond}$ preserve filtered colimits by \cite[Prop.1.3.5.21]{Lur17}. Thus $\mathcal{D}_\square(\ZZ) \subset \mathcal{D}(\mathrm{Cond})$ is closed under all colimits.}.
	\end{enumerate}
\end{fact}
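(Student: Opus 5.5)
We would prove both items at once, by identifying $\mathcal{D}_\square(\ZZ)$ concretely inside $\mathcal{D}(\mathrm{Cond})$. The key input is Nöbeling's theorem, via \cite{CS26}: for a profinite set $S = \lim_i X_i$, the object $\lim_i \ZZ[X_i] = \ZZ[S]^\square$ is concentrated in degree $0$ and is isomorphic to a product $\prod_I \ZZ$ for some set $I$.

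\textbf{Step 1 (description of solid objects).} We call $M \in \mathcal{D}(\mathrm{Cond})$ solid if for every profinite $S$ the restriction map
\begin{equation*}
	\mathrm{map}(\ZZ[S]^\square, M) \to \mathrm{map}(\ZZ[S], M) = M(S)
\end{equation*}
is an equivalence; by definition these are the local objects. We would then show that $M$ is solid if and only if every homology sheaf $H_j(M)$ is solid in the abelian sense.
\begin{enumerate}
	\item The main input here is the computation $\mathrm{RHom}_{\mathcal{D}(\mathrm{Cond})}(\prod_I \ZZ, \ZZ) \simeq \bigoplus_I \ZZ$ in degree $0$.
	\item From this we would deduce that $\prod_I\ZZ$ is compact projective in the abelian category $\mathrm{Solid}$, and that its derived $\mathrm{Hom}$ into any complex with solid homology is computed degreewise.
\end{enumerate}
We expect this Ext-vanishing computation to be the main obstacle. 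We would attack it by writing $\ZZ[S]$ via its resolution by free modules on finite sets $X_i$, passing to the limit, and controlling the resulting $\lim^1$ terms using the fact that the transition maps $\ZZ[X_j] \to \ZZ[X_i]$ are split surjections.

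\textbf{Step 2 (colimits).} Solid abelian objects are closed under kernels, cokernels and extensions, since the defining condition is exactness of a $\mathrm{Hom}$ out of a projective. They are also closed under filtered colimits: $\prod_I \ZZ$ is compact, and filtered colimits of condensed abelian groups are computed sectionwise on extremally disconnected sets. Combining this with Step 1 and the fact that $H_j$ commutes with filtered colimits in the Grothendieck abelian category $\mathrm{Cond}$ \cite[Prop.1.3.5.21]{Lur17}, the subcategory $\mathcal{D}_\square(\ZZ)$ is closed under filtered colimits and under finite colimits, which are shifts and cofibers. Hence it is closed under all colimits, which gives item 2.

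\textbf{Step 3 (monoidal structure).} By the standard criterion in \cite{Lur17} (compatibility of a localization with a symmetric monoidal structure), it suffices to show that the class of solid-equivalences is stable under tensoring with any object. Since $\mathcal{D}(\mathrm{Cond})$ is generated under colimits and shifts by the objects $\ZZ[T]$ with $T$ profinite, and the generating equivalences are the maps $\ZZ[S] \to \ZZ[S]^\square$, it is enough to show that $\ZZ[S]\otimes\ZZ[T] \to \ZZ[S]^\square\otimes \ZZ[T]$ is a solid-equivalence. Both source and target map to $\ZZ[S\times T]^\square$, and the source map is a solid-equivalence by definition. So by two-out-of-three it suffices that $\ZZ[S]^\square\otimes\ZZ[T] \to \ZZ[S\times T]^\square$ is a solid-equivalence. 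Writing $\ZZ[S]^\square = \prod_I\ZZ$, we would check this by mapping into an arbitrary solid $M$. Both sides then compute $\mathrm{map}(\prod_I \ZZ, \underline{\mathrm{map}}(\ZZ[T],M))$, using that $\underline{\mathrm{map}}(\ZZ[T],M)$ is again solid; that last claim follows from the universal property in Step 1, applied to $S\times T$. The localization is then symmetric monoidal, and this gives item 1.
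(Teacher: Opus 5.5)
The paper proves neither item. It quotes \cite[Thm.5.8, Thm.6.2(ii)]{CS26}, and its only argument is the footnote, which the last sentences of your Step 2 reproduce. The gaps are in your from-scratch reproof of the cited inputs.

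\textbf{Step 2.} You derive closure of solid abelian groups under filtered colimits from ``$\prod_I\ZZ$ is compact''. In $\mathrm{Cond}(\mathrm{Ab})$ this is false for infinite $I$: $\prod_\NN\ZZ$ is not even finitely generated. A surjection $\ZZ[S]\to\prod_\NN\ZZ$ is surjective on $*$-points. So every sequence would be a finite $\ZZ$-combination of points of the compact image of $S$, which lies in some box $\prod_n[-b_n,b_n]$. The sequence $(n(b_n+1))_n$ is no such combination. Compactness in $\mathrm{Solid}$ only concerns colimits computed in $\mathrm{Solid}$, so invoking it presupposes what you want to show.

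Similarly, $\prod_I\ZZ$ is projective in $\mathrm{Solid}$ but not in $\mathrm{Cond}(\mathrm{Ab})$. Closure under cokernels and extensions therefore needs control of $\mathrm{Ext}^1_{\mathrm{Cond}}(\prod_I\ZZ,N)$ for solid $N$. This is exactly the unproved part of Step 1.

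\textbf{Step 1.} Passing from $\mathrm{RHom}(\prod_I\ZZ,\ZZ)=\bigoplus_I\ZZ$ to a degreewise computation of $\mathrm{RHom}(\prod_I\ZZ,C)$ for all $C$ with solid homology requires knowing that $\mathrm{RHom}_{\mathrm{Cond}}(\prod_I\ZZ,-)$ commutes with the colimits (direct sums, resolutions) from which solid objects are built. That is the same missing compactness, and it is the real content of \cite[Thm.5.8]{CS26}.

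Your proposed attack on the computation itself also cannot work as described. The limit $\lim_i\ZZ[X_i]$ sits in the contravariant slot of $\mathrm{RHom}$. Split surjectivity of the system only gives $R\lim=\lim$; it gives no formula for $\mathrm{RHom}$ out of the limit. Indeed, for the discrete group $\ZZ^\NN$ with the same inverse system one has $\mathrm{Ext}^1_{\mathrm{Ab}}(\ZZ^\NN,\ZZ)\neq 0$, so the condensed structure must enter essentially.

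\textbf{Step 3.} The initial reduction to the map $\ZZ[S]^\square\otimes\ZZ[T]\to\ZZ[S\times T]^\square$ is fine, but the key step is circular. For solid $M$, adjunction gives $\mathrm{map}(\ZZ[S]^\square,\underline{\mathrm{map}}(\ZZ[T],M))\simeq\mathrm{map}(\ZZ[S]^\square\otimes\ZZ[T],M)$. On the other hand, $\mathrm{map}(\ZZ[S],\underline{\mathrm{map}}(\ZZ[T],M))\simeq M(S\times T)\simeq\mathrm{map}(\ZZ[S\times T]^\square,M)$.

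Hence ``$\underline{\mathrm{map}}(\ZZ[T],M)$ is solid for every solid $M$'' is literally equivalent to $\ZZ[S]^\square\otimes\ZZ[T]\to\ZZ[S\times T]^\square$ being a solid equivalence. That is the claim you are trying to prove. The universal property applied to $S\times T$ only supplies the second identification.

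You need an independent reason why $\underline{\mathrm{map}}(\ZZ[T],-)$ preserves solid objects. One option is a reduction to $M=\ZZ$, through the limits and colimits that generate solid objects from $\ZZ$; this again needs item 2. For $M=\ZZ$, $\underline{\mathrm{map}}(\ZZ[T],\ZZ)\simeq\underline{C(T,\ZZ)}$ is discrete and hence solid.
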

\begin{ex}
	The functor $\underline{\cdot}: \mathcal{D}(\ZZ) \to \mathcal{D}(\mathrm{Cond})$ factors through $\mathcal{D}_\square(\ZZ)$: Since $\mathcal{D}(\ZZ)$ is generated under colimits and cofiber sequences by $\ZZ$, it suffices to show that $\underline{\ZZ}$ is solid. But $\underline{\ZZ} = \ZZ[*]$ is solid by explicit formula of solidification on representables.
\end{ex}
\begin{ex}\label{Cond-28}
	Define $\ZZ_l := \lim_{k \in \NN^\op}\underline{\ZZ/l^k} \in \mathcal{D}(\mathrm{Cond})$. As solid objects are closed under limits, $\ZZ_l \in \mathcal{D}_\square(\ZZ)$. Since $\underline{\ZZ/l^k}(*) = \ZZ/l^k$ the global section are again $\ZZ_l$. Denote by
	\begin{equation*}
		\mathcal{D}_\square(\ZZ_l) := \mathrm{Mod}_{\ZZ_l}(\mathcal{D}_\square(\ZZ))
	\end{equation*}
	Note that by \cite[Ex.6.4]{CS26} $\ZZ_l$ is an idempotent algebra in $\mathcal{D}_\square(\ZZ)$. So any $\ZZ/l^k$-module inherits a canonical $\ZZ_l$-module structure and therefore any $l$-power torsion module as well.
\end{ex}
\begin{ex}\label{Cond-3}
	We have the following sequence of symmetric monoidal left adjoints (with respect to the symmetric monoidal structures coming from commutative algebra structures)
	\begin{equation*}
		\mathcal{D}(\mathrm{Cond}) \to \mathrm{Mod}_{\ZZ_l}(\mathcal{D}(\mathrm{Cond})) \xrightarrow{(\_)^\square} \mathcal{D}_\square(\ZZ_l)
	\end{equation*}
	we claim that $\ZZ[\Gamma]$ gets mapped to an object $\Lambda$ whose global sections are $\Lambda$, justifying the notation:
	
	Thus we need to compute global sections of $(\ZZ_l \otimes \ZZ[\Gamma])^\square$ - the proof idea is from \cite[Ex.6.4]{CS26}:
	First observe that for $\ZZ[[U]] := \ZZ[\hat{\NN}]^\square$ where $\hat{\NN}$ denotes the profinite completion of $\NN$.
	\[\begin{tikzcd}
		\mathrm{lim}_k \underline{\ZZ[U]/U^k} \arrow[r, "U-l"] \arrow[d, "\simeq", no head] & \mathrm{lim}_k \underline{\ZZ[U]/U^k} \arrow[r] \arrow[d, "\simeq", no head] & \mathrm{lim}_k \underline{\ZZ/l^k} \arrow[d, "\simeq", no head] \\
		\ZZ[[U]] \arrow[r, "U-l"] & \ZZ[[U]] \arrow[r] & \ZZ_l
	\end{tikzcd}\]
	By a similar argument we have a cofiber sequence $\ZZ[[\hat{\NN} \times \Gamma]] \xrightarrow{U-l} \ZZ[[\hat{\NN} \times \Gamma]] \to \ZZ_l[[\Gamma]]$. But since $\ZZ[[\hat{\NN} \times \Gamma]] \simeq (\ZZ[\hat{\NN}] \otimes \ZZ[\Gamma])^\square$ we arrive at
	\begin{equation*}
		\Lambda \simeq (\ZZ_l \otimes \ZZ[\Gamma])^\square(*)
	\end{equation*}
\end{ex}
\begin{defin}\label{Cond-17}
	Denote $\Lambda := \ZZ_l[\Gamma]^\square$. Define
	\begin{equation*}
		\mathcal{D}_\square(\Lambda) := \mathrm{Mod}_{\Lambda}(\mathcal{D}_\square(\ZZ)) = \mathrm{Mod}_{\ZZ[\Gamma]}(\mathcal{D}(\mathrm{Cond})) \otimes_{\mathcal{D}(\mathrm{Cond})} \mathcal{D}_\square(\ZZ)
	\end{equation*}
	For a general profinite group $G$ we analogously define
	\begin{equation*}
		\mathcal{D}_\square(G) := \mathrm{Mod}_{\ZZ[G]^\square}(\mathcal{D}_\square(\ZZ))
	\end{equation*}
	\textit{Addendum}: So the lax monoidal functor $\_(*): \mathcal{D}_\square(\ZZ) \to \mathcal{D}(\ZZ)$ induces a map $\mathcal{D}_\square(\Lambda) = \mathrm{Mod}_{\Lambda}(\mathcal{D}_\square(\ZZ)) \to \mathrm{Mod}_\Lambda(\mathcal{D}(\ZZ)) \simeq \mathcal{D}(\Lambda)$, which induces an equivalence on endomorphisms of the unit. Thus the category of perfect complexes over $\Lambda$ $\mathrm{Perf}(\Lambda)$ embeds into $\mathcal{D}_\square(\Lambda)$ via the left adjoint $\underline{\cdot}$.
\end{defin}
\begin{defin}\label{Cond-9}
	A sheaf $F \in \mathrm{Shv}(BG_\proet; \mathcal{D}(\ZZ))$ is said to have classical homology sheaves if 
	\begin{equation*}
		\colim_{\mathcal{I}^\op} \mathrm{H}^*(G/H_i; H_p^\proet(F)) \xrightarrow{\simeq} \mathrm{H}^*(\lim_{\mathcal{I}^\op} X_i; H_p^\proet(F))
	\end{equation*}
	for all $\mathcal{I}$-indexed systems of open subgroups with $\mathcal{I}$ cofiltered, $* \in \ZZ$ and
	\begin{equation*} 
		\mathrm{H}_p^\proet: \mathrm{Shv}(BG_\proet;\mathcal{D}(\ZZ)) \simeq \mathcal{D}(BG_\proet;\ZZ) \to \mathrm{Shv}(BG_\proet; \mathrm{Ab})
	\end{equation*}
	denotes the homology sheaf functor.
	\textit{Addendum}: The condition on homology sheaves is equivalent to
	\begin{equation*}
		\colim_{i \in \mathcal{I}^\op} R\Gamma(G/H_i, \tau_{\leq p}^\proet F) \xrightarrow{\simeq} R\Gamma(G/H, \tau_{\leq p}^\proet F)
	\end{equation*}
	for $p \in \ZZ$ where $\tau_{\leq n}^\proet$ denotes the homological truncation coming from the description of $\mathrm{Shv}(BG_\proet; \mathcal{D}(\ZZ))$ as derived category\footnote{
		This is a standard Postnikov tower argument: The condition on cohomology groups is equivalent to
		\begin{equation*}
			\colim_{i \in \mathcal{I}^\op} R\Gamma(G/H_i, H_{\leq p}^\proet F) \to R\Gamma(G/H, H_{\leq p}^\proet F)
		\end{equation*}
		being an equivalence. Using $\tau_{\geq q-1}\tau_{\leq p}(F)/\tau_{\geq q}\tau_{\leq n}(F) \simeq H_{p-1}^\proet(F)[p-1]$ we get the statement for $\tau_{\geq q}\tau_{\leq p}(F)$ $q \leq p$ replacing $\mathrm{H}_p^\proet$. Thus it remains to prove that $R\Gamma(G/H_i; \_)$ commutes with the filtered colimit over $\{\tau_{\geq q}\tau_{\leq p}(F)\}_q$. But one observes that the resulting system stabilizes in homotopical degree $t$ for $q < t$ showing the desired result.
	}
\end{defin}
\begin{fact}\label{H-1}
	Consider the morphism of sites $\nu: BG_\et \to BG_\proet$. It induces a geometric morphism $\nu^*: \mathrm{Shv}(BG_\et; \mathcal{D}(\ZZ)) \leftrightharpoons \mathrm{Shv}(BG_\proet; \mathcal{D}(\ZZ)): \nu_*$ with $\nu^*$ identifying the left hand side as those objects with classical homotopy sheaves (cf. \cite[Prop.5.3.2]{BS14} for the proof for the proét site of a scheme; the proof that classicality on $\mathrm{H}^0$ implies the condition in \ref{Cond-9} on all cohomology groups is given in \cite[Cor.5.1.6]{BS14}). Since for the computation later on, we will only encounter groups that are absolute Galois groups of some field, we can just refer to the Bhatt-Scholze result.
\end{fact}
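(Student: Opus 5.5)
The plan is to follow the strategy of \cite[Sect.5.1--5.3]{BS14}, transported from the proétale site of a scheme to $BG_\proet$. Every object of $BG_\proet$ is a cofiltered limit $X = \lim_i X_i$ of finite discrete $G$-sets. The first step is to describe $\nu^*$ explicitly on sheaves of sets (and of abelian groups) $F$ on $BG_\et$. We claim that the presheaf $X \mapsto \colim_i F(X_i)$ is already a proétale sheaf, so that it equals $\nu^*F$. To prove this, we use that objects of $BG_\proet$ are quasi-compact and quasi-separated, so covers may be refined by finite jointly surjective families. A finite cover of $\lim_i X_i$ by profinite $G$-sets is, after refinement, pulled back from a cover of some $X_i$; this is a standard compactness argument for profinite sets. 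The sheaf condition then reduces to the étale one at finite level together with exactness of filtered colimits. As an immediate consequence $\nu_*\nu^*F \simeq F$, so $\nu^*$ is fully faithful and exact on abelian sheaves. Its essential image is exactly the sheaves $H$ with $H(\lim_i X_i) = \colim_i H(X_i)$: such an $H$ is $\nu^*$ of its restriction $\nu_*H$.

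Next we pass to the derived level. The key computation is that for an abelian sheaf $A$ on $BG_\et$ and $X=\lim_i X_i$, the natural map
\[
\colim_i \mathrm{H}^*(X_i; A) \to \mathrm{H}^*(X; \nu^*A)
\]
is an isomorphism. Here the right-hand side is proétale cohomology and the left-hand side is discrete continuous cohomology. We would prove this by computing the right-hand side with Čech hypercovers consisting of finite disjoint unions of profinite $G$-sets, which are qcqs by Lemma \ref{Cond-24}. Each such hypercover is pulled back from finite level up to refinement, and filtered colimits commute with the finite limits in the Čech complex. This gives $R\nu_*\nu^*A \simeq A$ for abelian $A$. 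Since $\nu^*$ is t-exact, a Postnikov induction upgrades this to bounded objects, and full faithfulness on bounded objects follows. It also shows that every $\nu^*K$ has classical homology sheaves in the sense of Definition \ref{Cond-9}.

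For the converse, let $K \in \mathrm{Shv}(BG_\proet;\mathcal{D}(\ZZ))$ have classical homology sheaves. By the first step each $\mathrm{H}^\proet_p(K)$ is of the form $\nu^*B_p$, and an induction on truncations shows that the unit $\nu^*\nu_*\tau_{\leq p}K \to \tau_{\leq p}K$ is an equivalence. The inductive step uses the equivalent formulation in the addendum of Definition \ref{Cond-9}, together with the fibre sequences relating consecutive truncations. Finally we pass to the limit over $p$, using that both sides are left complete: we have taken hypercomplete sheaves, and Lemma \ref{Cond-24} supplies a conservative family of points.

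The main obstacle is this last passage to unbounded objects. One must show that $\nu^*$ commutes with the Postnikov limit, i.e.\ that $\nu^*\lim_p \tau_{\leq p}\nu_*K \simeq \lim_p \nu^*\tau_{\leq p}\nu_*K$. This can fail without some finiteness on cohomological dimension. Here I would invoke \cite[Cor.5.1.6]{BS14}, which shows that classicality of $\mathrm{H}^0$ controls all cohomology groups. Alternatively, as the Fact's statement remarks, in our applications $G$ is an absolute Galois group, so one can reduce directly to \cite[Prop.5.3.2]{BS14}: identify $BG_\proet$ with the proétale site of $\Spec$ of the corresponding field, and quote that result verbatim.
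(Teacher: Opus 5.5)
Your strategy is the paper's. The paper does no more than cite \cite[Prop.5.3.2, Cor.5.1.6]{BS14}, and your fallback to absolute Galois groups is its own justification. Your bounded-below reconstruction is right in outline, but one step is false as stated: a cover of $X=\lim_i X_i$ can in general \emph{not} be refined by one pulled back from a cover of some $X_i$. Take $G$ trivial and let $Y=\{(n,n \bmod 2):n\in\NN\}\cup\{(\infty,0),(\infty,1)\}$, a closed subset of $(\NN\cup\{\infty\})\times\{0,1\}$. It surjects onto $\NN\cup\{\infty\}$ but has no continuous section near $\infty$. By contrast, a cover pulled back from a finite level has continuous sections over a finite clopen partition of $X$, and so would $Y\to X$ if it were refined by one.

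What is true, and what Bhatt--Scholze use, is that a surjection $Y\to X$ of profinite $G$-sets is a cofiltered limit of surjections $Y_k\to X_k$ in $BG_{\et}$. You get this from a level representation (Lemma~\ref{F-9}), replacing $Y_k, X_k$ by the images of $Y, X$; the \v{C}ech nerve of $Y\to X$ is then the limit of the finite-level ones. The sheaf property of $X\mapsto\colim_i F(X_i)$, and \v{C}ech descent for $X\mapsto\colim_i R\Gamma(X_i,A)$, follow from \'etale descent at finite level. This uses that filtered colimits commute with finite limits and with totalizations of uniformly bounded-below cosimplicial complexes. Bounded-belowness then upgrades \v{C}ech descent to hyperdescent, which replaces your ``pull back the hypercover'' step. (Quasi-compactness here comes from compactness of profinite sets, not from Lemma~\ref{Cond-24}, which is about points.)

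The genuine gap is the unbounded case. Hypercompleteness and a conservative family of points only say that $\infty$-connective objects vanish; they do not make Postnikov towers converge. On the pro-\'etale side convergence does hold, but for a different reason: $BG_{\proet}$ is replete, since the objects $G\times S$ with $S$ extremally disconnected are weakly contractible and cover everything. Given this and $t$-exactness, $\nu^*M\simeq\lim_p\nu^*\tau_{\leq p}M$ holds for \emph{every} $M$. So the obstruction is not $\nu^*$ commuting with limits; it lives on the \'etale side.

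Indeed, for arbitrary $F$ your bounded-below result gives $\nu_*\nu^*F\simeq\lim_p\nu_*\nu^*\tau_{\leq p}F\simeq\lim_p\tau_{\leq p}F$. That is, the unit $F\to\nu_*\nu^*F$ is the Postnikov map of $F$. Hence $\nu^*$ is fully faithful on all of $\mathrm{Shv}(BG_{\et};\mathcal{D}(\ZZ))$ exactly when Postnikov towers converge there, which is not automatic when $G$ has infinite cohomological dimension. Your converse direction hits the same issue, as the question whether $\lim_p\nu_*\tau_{\leq p}K$ has the expected truncations. \cite[Cor.5.1.6]{BS14} does not help here: it is precisely the bounded-below colimit formula of your second step.

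So what the argument establishes is the identification in the bounded-below range. That suffices for the later uses in the paper: the bounded objects in Theorem~\ref{M-1} and the complexes $R\Gamma(-\times S,\underline{A})$ in Lemma~\ref{Et-27}. Finally, identifying $BG_{\proet}$ with the pro-\'etale site of a field does not literally cover $\Gamma_F\simeq\Delta\times\Gamma$ with $\Delta\neq 1$, as in Theorem~\ref{M-1} for general $F$. This group has central torsion, whereas by Artin--Schreier the torsion in an absolute Galois group consists of self-centralizing involutions. So your direct argument for arbitrary profinite $G$ is the one actually needed.
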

\begin{cor}\label{Cond-18}
	$\nu^*$ factors through the full subcategory $\mathcal{D}_\square(G) \subset \Mod_{\ZZ[G]}(\mathcal{D}(\mathrm{Cond}))$.
	\begin{proof}
		Denote by $\Mod_{\ZZ[G]}(\mathcal{D}(\mathrm{Cond})) \to \mathrm{Cond}$ the forgetful functor. It suffices to show that $u \circ \nu^*$ factors through the full subcategory $\mathcal{D}_\square(\ZZ)$, where $u$ is the forgetful functor $\Mod_{\ZZ[G]}(\mathcal{D}(\mathrm{Cond})) \to \mathcal{D}(\mathrm{Cond})$. Since solid objects are closed under colimits the claim boils down to proving that $u\nu^*\ZZ[X] = \ZZ[uX]$ for $X$ a finite $G$-set is solid. But that is clearly the case by explicit formula of solidification on representables.
		
		The claim about fully faithfulness follows from the more general fact: Given a symmetric monoidal localization $f:\mathcal{C} \to \mathcal{D}$ and $A \in \mathrm{CAlg}(\mathcal{C})$. Then by naturality of the base change formula the map $\mathrm{Mod}_A(\mathcal{C}) \to \mathrm{Mod}_{f(A)}(\mathcal{C})$ identifies with the canonical map $\mathrm{Mod}_A(\mathcal{C}) \otimes_\mathcal{C} \mathcal{C} \to \mathrm{Mod}_A(\mathcal{C}) \otimes_\mathcal{C} \mathcal{D}$ \cite[Thm.4.8.4.6]{Lur17}. But base change of a Bousfield localization = localization at a class of morphisms in $\mathrm{Pr}^L$ is a Bousfield localization by universal property of the Lurie tensor product \cite[Prop.4.8.1.15]{Lur17}, hence, its right adjoint is fully faithful \cite[Prop.5.5.4.15]{Lur09}.
	\end{proof}
\end{cor}
\begin{ex}
	If $G = \Gamma_F \simeq \Delta \times \Gamma$ we observe that $\ZZ[G]^\square = \underline{\ZZ[\Delta]} \otimes \ZZ[\Gamma]^\square$ and thus
	\begin{equation}
		\mathcal{D}_\square(\Gamma_F) \simeq (\mathcal{D}_\square(\Gamma))^{B\Delta}
	\end{equation}
	After tensoring with $\mathcal{D}_\square(\ZZ_l)$ this expression splits $\prod_{\mathrm{Hom}_\mathrm{Ab}(\Delta, \ZZ_l^\times)} \mathcal{D}_\square(\Lambda)$ because $\Lambda_F$ splits.
\end{ex}
Lastly, we want to prove that an object in $\mathcal{D}_\square(\Lambda)$ is determined by a relatively small datum - just the action of a topological generator on an object in $\mathcal{D}_\square(\ZZ_l)$.
\begin{lem}\label{Cond-29}
	Pick any topological generator $\gamma \in \Gamma$. Then the map specifying the topological generator $\gamma: \ZZ_l[\ZZ] \xrightarrow{\gamma} \Lambda$ induces a fully faithful closed symmetric monoidal functor upon restriction of scalars:
	\begin{equation*}
		\gamma^*: \mathcal{D}_\square(\Lambda) \subset \mathcal{D}_\square(\ZZ_l)^{B\ZZ}
	\end{equation*}
	\textit{Bonus}: Given $M \in \mathcal{D}_\square(\Lambda)$ then
	\begin{equation*}
		M^{hG} \simeq M^{h\ZZ}
	\end{equation*}
	So fixed points can be computed by a fiber sequence.
	\begin{proof}
		We observe two things:
		\begin{enumerate}
			\item The outer and right most composite in the following picture are closed symmetric monoidal, i.e., they preserve the internal hom in the sense of \ref{Cond-15}, with respect to the monoidal structure coming from the Hopf algebra structures:
			\[\begin{tikzcd}
				\mathcal{D}_\square(\Lambda) \simeq \mathrm{Mod}_{\Lambda}(\mathcal{D}_\square(\ZZ)) \xrightarrow{\gamma^*} \mathrm{Mod}_{\ZZ_l[\ZZ]^\square}(\mathcal{D}_\square(\ZZ)) \to \mathcal{D}_\square(\ZZ_l)
			\end{tikzcd}\]
			Since the last map is furthermore conservative, this implies that the leftmost composite is also closed monoidal.
			\item Denote by $\gamma_!$ extension of scalars. Then
			\begin{equation*}
				\gamma_!\ZZ_l^\text{triv} = \gamma_!\mathrm{cofib}(\ZZ_l[\ZZ]^\square \xrightarrow{\gamma -1} \ZZ_l[\ZZ]^\square) = \mathrm{cofib}(\Lambda \xrightarrow{\gamma-1} \Lambda) \simeq \ZZ_l^\text{triv}
			\end{equation*}
			For the last equivalence we used that the cofiber can be taken in the essential image of $\underline{\cdot}^\square: \mathrm{Perf}(\Lambda) \to \mathcal{D}_\square(\Lambda)$, where we can use the short exact sequence $\Lambda \xrightarrow{\gamma-1} \Lambda \to \ZZ_l^\text{triv}$.
			
			From this the \textit{Bonus} already follows: Continuous $G$-fixed points and $\ZZ$-fixed points are both defined as the functors corepresented by $\ZZ_l^\text{triv}$. Since the left adjoint $\gamma_!$ preserves this object, the statement follows.
		\end{enumerate}
		For $x, y \in \mathcal{D}_\square(\Lambda)$ we have the following square
		\begin{equation}\label{Cond-26}
		\begin{tikzcd}
			\mathrm{map}_{\mathcal{D}_\square(\Lambda)}(x, y) \arrow[d, "\gamma^*"] \arrow[r, "\simeq"] & \mathrm{map}_{\mathcal{D}_\square(\Lambda)}(\ZZ_l^\text{triv}, \underline{\mathrm{map}}(x, y)) \arrow[d, "\simeq"] \\
			\mathrm{map}_{\mathcal{D}_\square(\ZZ_l)^{B\ZZ}}(\gamma^*x, \gamma^*y) \arrow[r, "\simeq"] & \mathrm{map}_{\mathcal{D}_\square(\ZZ_l)^{B\ZZ}}(\ZZ_l^\text{triv}, \underline{\mathrm{map}}(\gamma^*x, \gamma^*y))
		\end{tikzcd}
		\end{equation}
		\begin{enumerate}
			\item The vertical equivalences come from the internal hom-adjunction
			\item The right vertical equivalence comes from $\gamma^*\underline{\mathrm{map}}(x, y) \to \underline{\mathrm{map}}(\gamma^*x, \gamma^*y)$ being an equivalence by the first observation and $\gamma_!\ZZ_l^\text{triv} \simeq \ZZ_l^\text{triv}$.
		\end{enumerate}
		It remains to prove that the diagram admits a homotopy, which we check in Appendix \ref{H-15}.
	\end{proof}
\end{lem}
		\subsection{Étale cohomological input}\label{A3}
			In this section:
\begin{enumerate}
	\item We quickly review necessary facts about the étale fundamental group and the proétale topology from \cite{BS14}. We use them to formalize pro-Galois descent.
	\item We collect vanishing and finiteness results for $l$-adic étale cohomology of $R$ and $R_\infty$ making sure that we only use results which do not use Borel.
	\item Compute some specific cohomology groups that will enter the computation later on.
\end{enumerate}
\subsubsection{The pro-étale topology}
We recall the necessary definitions and results in \cite{BS14}. Throughout, $X$ is assumed to be a normal, integral scheme with generic point $\eta$:
\begin{defin}
	Define $X_\proet$ as the category of flat $X$-schemes $Y$ such that the diagonal is flat as well - so-called weakly étale. Endow $X_\proet$ with the fpqc topology making it into a site.
\end{defin}
\begin{defin}
	Denote by $X_\fet \subset X_\et \subset X_\proet$ the full subcategory of finite étale $X$-schemes. Let $x$ a geometric point. Call the restricted pullback functor $x^*\vert_{X_\fet}: X_\fet \to \mathrm{Fin}$ as well. Denote by $\pi_1^\et(X, x) := \mathrm{Aut}(x^*)$. This group is topologized by considering the embedding
	\begin{equation*}
		\mathrm{Aut}(x^*) \to \prod_{Y \in X_\proet} \mathrm{Aut}_{\mathrm{Fin}}(x^*Y)
	\end{equation*}
	where the target has the product topology and $\mathrm{Aut}_\text{Fin}(x^*(Y))$ has the discrete topology \cite[\href{https://stacks.math.columbia.edu/tag/0BMR}{Tag 0BMR}]{stacks-project}. Since the target of this embedding is a product of finite groups, this implies that $\pi_1^\et(X, \bar{\eta})$ is a profinite group as well, i.e., limit of its quotients by normal open subgroups.
\end{defin}
\begin{fact}\label{Et-20}
	Recall that $\pi_1^\et(X, \bar{\eta})$ has the following properties:
	\begin{enumerate}
		\item The functor $\bar{\eta}^*: X_\fet \to \mathrm{Fin}$ factors through $B\pi_1^\et(X, \bar{\eta})_\et$ as an equivalence by \cite[Lemma 7.4.1]{BS14}.
		\item For $X = k$ a field the Galois group canonically identifies with its absolute Galois group and both notions coincide. This, for example, recovers Galois theory of fields \cite[\href{https://stacks.math.columbia.edu/tag/03QR}{Tag 03QR}]{stacks-project}.
		\item $\pi_1^\et(\_)$ is covariantly functorial in schemes pointed by a geometric point: Let $(X, x) \to (Y, y)$ be a map of schemes pointed by geometric points, i.e., morphisms $x \to X$ and $y \to Y$ from the spectrum from a separably closed field. By assumption the diagram
		\[\begin{tikzcd}
			Y_\fet \arrow[r, "f^*"] \arrow[rr, bend left, "y^*"] & X_\fet \arrow[r, "x^*"] & \mathrm{Fin}
		\end{tikzcd}\]
		commutes and therefore we obtain a map of groups $\mathrm{Aut}(x^*) \to \mathrm{Aut}(y^*)$.
		
		\item\label{Et-20-1} We can apply this functoriality to $\widebar{\kappa(\eta)} \to X$, where $\kappa(\eta)$ denotes the function field. By the normality assumption the canonical map
		\begin{equation*}
			\mathrm{Gal}(\widebar{\kappa(\eta)}/\kappa(\eta)) \simeq \pi_1(\kappa(\eta), \eta) \to \pi_1(X, \bar{\eta})
		\end{equation*}
		identifies the target as the Galois group of $K/\kappa(\eta)$ of the maximal unramified extension \cite[\href{https://stacks.math.columbia.edu/tag/0BQM}{Tag 0BQM}]{stacks-project}.
	\end{enumerate}
\end{fact}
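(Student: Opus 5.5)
The plan is to deduce all four items from the fact that $(X_\fet, \bar{\eta}^*)$ is a Galois category in the sense of SGA1, and then to read off the remaining items by tracking this equivalence along morphisms. For the first item, I would check the Galois category axioms for $X_\fet$ with fibre functor $\bar{\eta}^*$:
\begin{enumerate}
	\item finite limits and finite coproducts exist and are preserved by $\bar{\eta}^*$;
	\item quotients by finite groups exist;
	\item $\bar{\eta}^*$ reflects isomorphisms, which holds since $X$ is connected, being integral.
\end{enumerate}
Grothendieck's reconstruction theorem then says that $\bar{\eta}^*$ lifts to an equivalence $X_\fet \simeq B\pi_1^\et(X,\bar{\eta})_\et$ onto finite discrete continuous $\mathrm{Aut}(\bar{\eta}^*)$-sets. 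The topology on $\mathrm{Aut}(\bar{\eta}^*)$ induced from the product embedding in the definition is exactly the one making stabilisers of points of $\bar{\eta}^*Y$ open. I would quote this from \cite[Lemma 7.4.1]{BS14} or the Stacks project rather than reprove it.

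For the second item, take $X = \Spec(k)$. Finite étale $k$-schemes are spectra of finite products of finite separable extensions. A geometric point is a separable closure $k^s$, and $\bar{\eta}^*Y = \mathrm{Hom}_k(\CalO(Y), k^s)$. The action of $\Gal(k^s/k)$ on these Hom-sets gives a continuous map $\Gal(k^s/k) \to \mathrm{Aut}(\bar{\eta}^*)$. It is injective because $k^s$ is the union of finite subextensions. It is surjective because an automorphism of the fibre functor is determined by its compatible action on $\mathrm{Hom}_k(L, k^s)$ for finite Galois $L$, and these actions assemble into an element of $\lim_L \Gal(L/k)$. The third item is formal: $f^*$ preserves finite étale morphisms, and the pointing gives a natural isomorphism $x^* f^* \simeq y^*$. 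Conjugating automorphisms of $x^*$ through this isomorphism yields a continuous homomorphism.

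The real content, and the step I expect to be the main obstacle, is the fourth item. The key steps are:
\begin{enumerate}
	\item Show that pullback to the generic point $X_\fet \to \kappa(\eta)_\fet$ is fully faithful. Morphisms between finite étale $X$-schemes are sections of finite étale maps. Since $X$ is normal, each connected finite étale $Y \to X$ is normal and integral, so $Y$ is the normalisation of $X$ in its function field. Hence a morphism is determined by, and extends uniquely from, its generic fibre.
	\item Identify the essential image. It consists of finite separable $L/\kappa(\eta)$ whose normalisation of $X$ in $L$ is étale, i.e. unramified at every point of $X$.
	\item Translate into groups. By Galois-category formalism, a fully faithful exact functor between Galois categories whose essential image is closed under subobjects corresponds to a surjection of fundamental groups. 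Its kernel is the intersection of the stabilisers of all objects in the image, which is exactly $\Gal(\widebar{\kappa(\eta)}/K)$ for $K$ the compositum of such unramified $L$. This gives $\pi_1(X,\bar{\eta}) \simeq \Gal(K/\kappa(\eta))$.
\end{enumerate}
Closure of the image under subobjects and quotients is the delicate point, namely that subextensions of unramified extensions stay unramified. I would handle it by citing \cite[Tag 0BQM]{stacks-project}, with normality entering only through the normalisation argument in the first step.
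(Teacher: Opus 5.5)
Your proposal is correct and takes essentially the paper's route. The paper gives no proof beyond citing \cite[Lemma 7.4.1]{BS14} and the Stacks project tags 03QR and 0BQM (plus the formal remark for item 3), and your sketch --- Galois-category reconstruction, full faithfulness of $X_\fet \to \kappa(\eta)_\fet$ via normality, then reading off surjectivity and the kernel --- is the standard argument behind those references.

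One simplification: you do not need closure of the essential image under subobjects as a separate hypothesis, and the point you flag as delicate (that subextensions of unramified extensions stay unramified) is not an extra input. Both follow formally from full faithfulness, which already forces surjectivity onto the profinite group $\pi_1^\et(X,\bar{\eta})$; hence the Galois action on any summand or quotient of an object in the image still factors through it. So the real content is your normalisation argument in the first step.
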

\begin{ex}
	By Fact \ref{Et-20} bullet point \ref{Et-20-1} we have:
	\begin{equation*}
		M_\infty = (\pi_1^\et(X, \bar{\eta})/\overline{[\pi_1^\et(X, \bar{\eta}), \pi_1^\et(X, \bar{\eta})]})^\wedge_l
	\end{equation*}
	Here $\pi_1^\et(X, \bar{\eta})/\overline{[\pi_1^\et(X, \bar{\eta}), \pi_1^\et(X, \bar{\eta})]}$ is the abelianization in profinite groups, i.e., the quotient by the closure of the commutator.
\end{ex}
From the viewpoint of the proétale site, infinite Galois theory is also more natural:
\begin{constr}
	The fiber functor induces an equivalence
	\begin{equation*}
		X_\fet \xrightarrow{\bar{\eta}^*} \mathrm{Mod}_{\pi_1^\et(X, \bar{\eta})}(\mathrm{Fin}) \simeq (B\pi_1^\et(X, \bar{\eta}))_\et
	\end{equation*}
	Taking pro-categories on both sides we arrive at
	\begin{equation}\label{Et-2}
	\begin{tikzcd}
		\mathrm{Pro}(X_\fet) \arrow[d, "i"] \arrow[r, "\mathrm{Pro}(x^*)"] & \mathrm{Pro}(B\pi_1^\et(X, \bar{\eta})) \arrow[d, sloped, "\simeq"] \\
		X_\proet & B\pi_1^\et(X, \bar{\eta})_\proet
	\end{tikzcd}
	\end{equation}
	where for the existence of $i$ we used that $X_\proet$ admits limits over cofiltered diagrams with affine transition maps \cite[Lemma 4.2.2]{BS14} and for the equivalence on the right-hand side we use \ref{F-9}. We claim that it is also fully faithful:
	Since $X$ is qcqs we may write $X$ as a finite colimit along open immersions of affine schemes. Since all objects in \eqref{F-9} satisfy Zariski excision \footnote{This just follows from weakly étale and finite étale being properties which can be checked Zariski locally on the base}, we may therefore assume that $X = \mathrm{Spec}(A)$ is affine. This reduces the question to
	\begin{equation*}
		\mathrm{Ind}(\mathrm{CAlg}_\fet(A)) \subset \mathrm{CAlg}(A)
	\end{equation*}
	being fully faithful where $\mathrm{CAlg}_\fet(A)$ denotes the category of finite étale algebras over $A$: Indeed, the proétale site over $A$ is the full subcategory of schemes over $A$ whose structure morphism is weakly étale. Since $\mathrm{CAlg}_\fet(A) \subset \mathrm{CAlg}(A) \subset \mathrm{Sch}_{A/}^\op$ is fully faithful and lands by definition of étale inside compact objects=finitely presented algebras the claim follows from \cite[Prop.5.3.5.11]{Lur09} (characterization for when a morphism from an Ind-category is fully faithful).
\end{constr}
Now we can define a pro-Galois cover verbatim to the non-pro case:
\begin{defin}\label{Et-31}
	Let $G$ be a profinite group. We call $Y \to X \in \mathrm{Pro}(X_\fet) \subset X_\proet$ a $G$-Galois covering if under the equivalence $\mathrm{Pro}(X_\fet) \simeq (B\pi_1^\et(X, \bar{\eta}))_\et$ it corresponds to a $\pi_1^\et(X, \bar{\eta})$-set arising from a group quotient $\pi: \pi_1^\proet(X, \bar{\eta}) \to G$. Note that $\pi$ is part of the datum.
\end{defin}
\begin{ex}
	$R_\infty/R$ is a $\Gamma_F$-Galois covering in the sense of Definition \ref{Et-31}.
\end{ex}
\subsubsection{Examples of solid $G$-modules via étale global sections}
\begin{cor}\label{Et-11}
	Let $G$ be a profinite group. Moreover, let $f: W \to X$ be a $G$-Galois covering and $r: BG_\proet \to B\pi^\et_1(X, \eta)_\proet \simeq \mathrm{Pro}(X_\fet) \subset X_\proet$ the left exact functor of sites specifying that covering. Restricting yields a continuous functor $BG_\et \to B\pi_1^\et(X, \eta)_\et \subset X_\et$. We have the following induced commutative diagram of categories
	\[\begin{tikzcd}
		\mathrm{Shv}(BG_\et; \mathcal{D}(\ZZ)) \arrow[r, "\nu^*"] & \mathrm{Shv}(BG_\proet; \mathcal{D}(\ZZ)) \\
		\mathrm{Shv}(X_\et; \mathcal{D}(\ZZ)) \arrow[r, "\nu^*"] \arrow[u, "r_*"] & \mathrm{Shv}(X_\proet; \mathcal{D}(\ZZ)) \arrow[u, "r_*"]
	\end{tikzcd}\]
	witnessed by the Beck-Chevalley transfer is commutative, when restricted to the full subcategories of bounded above objects. Again by taking the tensor product in $\mathrm{Pr}^L$ we can get the statement for arbitrary presentable categories as coefficients.
	\begin{proof}
		We just apply the proof in \cite[Lemma 5.4.3]{BS14} to our situation. The proof in our case even simplifies: We first show that $r_* \circ \nu^*$ factors through classical sheaves, when restricted to sheaves that have bounded above homology sheaves: We need to check that for a cofiltered diagram of open subgroups $\{H_i\}_{i \in \mathcal{I}}$ of $G$ $\colim_{\mathcal{I}^\op} r_*\nu^*F(G/H_i) \simeq r_*\nu^*F(\lim_{\mathcal{I}} G/H_i)$. But the left hand side identifies with
		\begin{equation*}
			(\nu^*F)(r(\lim_{\mathcal{I}} G/H_i)) \simeq (\nu^*F)(\lim_{\mathcal{I}} r(G/H_i)) \simeq \colim_{\mathcal{I}^\op}(\nu^*F)(r(G/H_i)) \simeq \colim_{\mathcal{I}^\op}r_*(\nu^*F)(G/H_i)
		\end{equation*}
		Here we used
		\begin{enumerate}
			\item for the middle equivalence that $r$ takes any $G/H_i \to *$ to a finite map and therefore the diagram $r(G/H_i)$ in $i$ is a cofiltered diagram with affine transition maps.
			\item the fact that $\nu^*F$ has classical homology sheaves \cite[Prop.5.3.2]{BS14}, i.e., any Postnikov truncation from above has sends cofiltered limits along affine transition maps to filtered colimits.
		\end{enumerate}
		Thus we conclude, when restricting to bounded above objects:
		\begin{equation*}
			r_*\nu^* \simeq \nu^*\nu_*r_*\nu^* \simeq \nu^*r_*\nu_*\nu^* \simeq \nu^*r_*
		\end{equation*}
		where for the first equivalence we used that $r_*\nu^*$ factors through the essential image of $\nu^*$.
	\end{proof}
\end{cor}
\begin{rem}\label{Et-13}
	This shows that $R\Gamma_\et(X, \_)$ of a classical sheaf automatically inherits the structure of a solid $G$-module. If $G = \Gamma$ and $F$ is $l$-power-torsion, then by \ref{Cond-28} $R\Gamma_\et(X, F)$ promotes to a solid $\Lambda$-module. In our case $l$ will always be invertible on $X$ and therefore this remark applies to $F = \ZZ/l^k(i) := \mu_{l^k}^{\otimes_{\ZZ/l^k} i}$ for $k \in \NN \cup \{\infty\}$.
\end{rem}
Recall that all the different formalisms for defining Tate twists of $\ZZ_l$ - $l$-adic étale cohomology agree
\begin{defin}\label{Et-14}
	The following ways of defining $l$-adic étale cohomology agree:
	\begin{enumerate}
		\item Ekedahl's formalism \cite{Ek90} of $l$-adic complexes and associated cohomology theory
		\item Cisinski-Deglise's approach of just $l$-completing the derived category of étale sheaves on scheme $X$, this was shown to produce the same cohomology theory as Ekedahl's definition \cite{Ek90} in \cite[7.2.21 + 4.5.2]{CD16}. More explicitly
		\begin{equation}\label{Et-4}
			R\Gamma(X, \ZZ_l(i)) = \lim\limits_{k \in \NN^\op}R\Gamma(X, \mu_{l^k}^{\otimes_{\ZZ/l^k} i})
		\end{equation}
		taken in the derived category $\mathcal{D}(\ZZ)$
		\item Ekedahl's formalism for $\ZZ_l$-coefficients yields the same cohomology as the proétale sheaf of abelian groups $\ZZ_l(i) := \ZZ_l(1)^{\otimes_{\ZZ_l} i}$ where $\ZZ_l(1)$ is defined as $\lim\limits_{k \in \NN^\op} \mu_{l^k}$ taken in proétale sheaves \cite{BS14}.
	\end{enumerate}
	Thus, we may use whichever definition of $l$-adic étale cohomology is the most convenient:
	Cisinski-Deglise's definition is more useful for computations whereas Bhatt-Scholze's definition is more practical for theoretical statements.
\end{defin}
\begin{rem}
	In the situation of \ref{Et-11} with the further assumption that $l$ is invertible on $X$ we get that global sections of $\ZZ_l(i)$ also factor through solid $G$-modules, as
	\begin{equation*}
		r_*(\ZZ_l(i)) \simeq \mathrm{lim}_{k \in \NN^\op} r_*(\ZZ/l^k)
	\end{equation*}
	and solid $G$-modules are closed under limits. In case $G = \Gamma$ this gives $r_*(\ZZ_l(i))$ a canonical solid $\Lambda$-module structure.
\end{rem}
\subsubsection{Étale cohomology computations}
\begin{fact}\label{Et-8}
	Recall the following facts about the étale cohomology of the locally constant étale sheaves $\ZZ/l^k(i) := \mu_{l^k}^{\otimes_{\ZZ/l^k} i}$ \footnote{Often people omit the base over which one tensors, the advantage of writing it like that is that derived tensor product and the underived one coincide. If we would tensor over $\ZZ$ we would not get the correct result!}:
	\begin{enumerate}
		\item The cohomology groups $\mathrm{H}^*(R_n, \ZZ/l^k(i))$ - by which we mean étale cohomology groups, no other Grothendieck topology will appear in this text - are finite by \cite[Thm.2.13]{Mil06}, since the sheaf is $l^k$-torsion and $l^k$ is invertible on $\mathrm{Spec}(R_n)$ for $n \in \NN$.
		\item $\mathrm{H}^p(R_n, \ZZ/l^k(i)) = 0$ for $p \geq 3$:
		This is a consequence of the Poitou-Tate Theorem (see \cite[Ch.5.3 Prop.11]{Ser97} for a proof):
		Let $S$ be a set of places of a global field $K$ containing all archimedean places and $G_S$ the Galois group of the maximal extension which only ramifies over $S$. Let $M$ be a finite $G_S$-module whose order is a unit $\mathcal{O}_{K, S}$. Then we have an equivalence of group cohomology groups for $p \geq 3$:
		\begin{equation*}
			\mathrm{H}^p(G_S, M) \xrightarrow{\simeq} \bigoplus_{v \text{ real place}}\mathrm{H}^p(D_v, M_v)
		\end{equation*}
		where $D_v$ denotes the decomposition group at the real place $v$, which is isomorphic to $C_2$. So if $M$ is of order coprime to $2$ the right-hand side vanishes. Combining this with the following fact we arrive at the desired result:
		\begin{equation*}
			\mathrm{H}^p(\mathcal{O}_{K, S}, \ZZ/l^k(i)) \simeq \mathrm{H}^p(G_S, \ZZ/l^k(i))
		\end{equation*}
		for all $p$ if $l$ is invertible in $\mathcal{O}_{K, S}$ \cite[Prop.2.9]{Mil06}.
	\end{enumerate}
\end{fact}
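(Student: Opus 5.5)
The plan is to reduce both assertions to Galois cohomology of the group $G_S$, where $S$ is the set of places of $F_n$ lying over $l$ and $\infty$. The first step is the comparison $\mathrm{H}^p_\et(R_n, M) \simeq \mathrm{H}^p(G_S, M)$ for finite $G_S$-modules $M$ of $l$-power order, \cite[Prop.2.9]{Mil06}. It applies because $R_n = \mathcal{O}_{F_n, S}$ has $l$ invertible. This is the one place where the hypothesis $l \in R_n^\times$ is essential, since one needs $\mathrm{Spec}(R_n)$ to be a $K(\pi,1)$ for $l$-torsion coefficients. I expect this comparison to be the main obstacle, and I would take it as the black box from \cite{Mil06}. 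Once it is available, both finiteness and vanishing become statements about the profinite group $G_S$ acting on the finite module $\ZZ/l^k(i)$.

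For finiteness I would argue by dévissage and descent.
\begin{enumerate}
\item Filtering $\ZZ/l^k(i)$ by $l^j\ZZ/l^k(i)$ and using the long exact sequences reduces the claim to $\ZZ/l(i)$.
\item Choose a finite Galois extension $F'/F_n$ inside the maximal extension unramified outside $S$ that contains $\mu_l$. Over $R' := \mathcal{O}_{F',S}$ the sheaf $\ZZ/l(i)$ becomes isomorphic to $\mu_l$.
\item By the Hochschild–Serre spectral sequence $\mathrm{H}^a(\Gal(F'/F_n), \mathrm{H}^b(R', \mu_l)) \Rightarrow \mathrm{H}^{a+b}(R_n, \ZZ/l(i))$, it suffices that each $\mathrm{H}^b(R',\mu_l)$ is finite. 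Cohomology of a finite group with finite coefficients is finite in each degree, and only finitely many $b$ contribute once the vanishing of item 2 is known over $R'$.
\item Finiteness over $R'$ comes from the Kummer sequence $\mu_l \to \mathbb{G}_m \xrightarrow{l} \mathbb{G}_m$. It yields $0 \to (R')^\times/l \to \mathrm{H}^1(R',\mu_l) \to \Pic(R')[l] \to 0$ and $0 \to \Pic(R')/l \to \mathrm{H}^2(R',\mu_l) \to \mathrm{Br}(R')[l] \to 0$.
\item Dirichlet's $S$-unit theorem and finiteness of the class group control the first terms. The Brauer–Hasse–Noether sequence embeds $\mathrm{Br}(R')$ into $\bigoplus_{v \in S}\mathrm{Br}(F'_v)$, whose $l$-torsion is finite. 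None of these inputs uses Borel.
\end{enumerate}

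For the vanishing in degrees $p \ge 3$ I would quote Tate's part of the Poitou–Tate theorem as stated in the Fact: $\mathrm{H}^p(G_S, M) \xrightarrow{\simeq} \bigoplus_{v \text{ real}}\mathrm{H}^p(D_v, M)$ for $p \geq 3$, with $D_v \simeq C_2$. Since $l$ is odd, $M = \ZZ/l^k(i)$ has odd order, and the cohomology of $C_2$ with coefficients of odd order vanishes in positive degrees. Hence the right-hand side is zero, which gives $\mathrm{H}^p(R_n, \ZZ/l^k(i)) = 0$ for $p\ge 3$. Equivalently, $\mathrm{cd}_l(G_S) \le 2$ for odd $l$.

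The order of steps is therefore: comparison with $G_S$, then vanishing via Poitou–Tate, then finiteness via Kummer theory and Hochschild–Serre. The finiteness step uses the vanishing so that the spectral sequence has only finitely many nonzero rows.
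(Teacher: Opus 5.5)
Your treatment of the vanishing in degrees $p \geq 3$ is exactly the paper's. You use the comparison $\mathrm{H}^p(\mathcal{O}_{K,S},-) \simeq \mathrm{H}^p(G_S,-)$ of \cite[Prop.2.9]{Mil06}, then Tate's part of Poitou--Tate, then the vanishing of $C_2$-cohomology with odd-order coefficients.

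For finiteness your route is genuinely different. The paper simply cites \cite[Thm.2.13]{Mil06}. You reprove it by d\'evissage to $\ZZ/l(i)$, then Hochschild--Serre along a finite Galois \'etale cover $R' \to R_n$ that trivializes $\mu_l$. Finally you use the Kummer sequences on $R'$, fed by the $S$-unit theorem, finiteness of the class group, and the Brauer--Hasse--Noether embedding of $\mathrm{H}^2(R',\GG_m)$ into $\bigoplus_{v \in S}\mathrm{Br}(F'_v)$. This is correct, and it is essentially the standard proof of the cited theorem.

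What it buys is transparency in the spirit of the paper. It makes visible that only class field theory and the classical unit and class group theorems enter, with no appeal to Borel. It also uses the same Kummer sequences the paper later exploits in Lemma \ref{5-2}. What it costs is that item 1 now depends on item 2, whereas the paper's citation keeps them independent.

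Your stated reason for needing the vanishing is slightly off. In a fixed total degree $m$ only the terms $E_2^{a,b}$ with $a+b=m$ contribute, so finitely many $b$ occur regardless. The vanishing is really needed because your Kummer sequences only control $\mathrm{H}^b(R',\mu_l)$ for $b \le 2$. The terms with $3 \le b \le m$ must therefore be killed by item 2 applied to $R'$, which is legitimate since $R'$ is again a ring of $S$-integers with $l$ invertible.

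Two small remarks on your commentary. First, invertibility of $l$ is used not only in the comparison with $G_S$ but also for the \'etale exactness of $\mu_l \to \GG_m \xrightarrow{l} \GG_m$ on $R'$. Second, in the paper's situation $F_n \supseteq F_0 = F(\mu_l)$ for $n \in \NN$, so your descent step is vacuous there. It is only needed for $R$ itself, to which the paper also applies this Fact later in the proof of Theorem \ref{8-2}.
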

\begin{constr}\label{Et-30}
	Let $X \to Y$ be a finite étale map. Because of the étale assumption $\Delta: X \to X \times_Y X$ is clopen. Denote the complement by $X^c$. Then define the geometric evaluation map/Poincare pairing $\mathrm{gev}_X$:
	\[\begin{tikzcd}
		\ZZ[X] \otimes \ZZ[X] \arrow[r, "\simeq"] & \ZZ[X \times_Y X] \arrow[r] & \ZZ[X \times_Y X/X^c] & \ZZ[X] \arrow[l, "\Delta", "\simeq"'] \arrow[r, "f"] & \ZZ
	\end{tikzcd}\]
	where we use Zariski descent in the middle. Denote the adjoint $\ZZ[X] \to \ZZ[X]^\vee$ by $\mathrm{comp}_X$.
	
	\begin{claim}
		$(\ZZ[X], \ZZ[X], \mathrm{gev}_X)$ is can be extended to a dualizing datum (see \cite[Def.4.6.1.1]{Lur17}), which is a property by \cite[Lemma 4.6.1.6]{Lur17}.
		\begin{proof}
			Since this construction lifts along $r^*: \Mod_{\ZZ[\pi_1^\et(Y, \bar{\eta})]^\square}(\mathcal{D}_\square(\ZZ)) \to \mathrm{Shv}(Y_\proet; \mathcal{D}(\ZZ))$ we may check dualizability there. But dualizable objects in the right-hand side are detected underlyingly by \ref{H-10}. But underlyingly, the pairing $\mathrm{gev}_X$ becomes the pairing witnessing self-duality of a finite dimensional free $\ZZ$-module with specified basis.
		\end{proof}
	\end{claim}
	Thus we can define a transformation
	\begin{equation}\label{Et-12}
		\begin{tikzcd}[column sep=large]
			f_*f^*(\_) & \_ \otimes \ZZ[X]^\vee \arrow[l, "\simeq", "\mathrm{proj}"'] & \_ \otimes \ZZ[X] \arrow[l, "\simeq"', "\mathrm{comp}_X"] \arrow[r, "f"] & \mathrm{id}
		\end{tikzcd}
	\end{equation}
	where $\mathrm{proj}$ is the transformation witnessing the projection formula for finite étale morphisms.
	
	In the appendix \ref{P-13} we show that compatibility with basechange and it being the fold map for $f$ a split covering already uniquely determines these trace transformations and that they witness $f_* \dashv f^*$.
	
	Because of this canonicity it is easy to check that this recovers all other possible definitions of the trace along finite étale morphisms and that it recovers the usual traces for units, Picard and Brauer groups on global sections.
\end{constr}
\begin{fact}\label{Et-7}
	Recall the following computational facts for passing from $R_n$ to $R_\infty$ and $\ZZ/l^k(i)$ to $\QQ_l/\ZZ_l(i)$ for $i \in \ZZ$ in cohomology.
	\begin{enumerate}
		\item\label{Et-7-1} By classicality of the sheaves $\ZZ/l^k(i)$ for $k \in \NN \cup \{\infty\}$ the cohomology groups of $R_\infty$ satisfy the colimit formula
		\begin{equation*}
			\mathrm{H}^*(R_\infty, \ZZ/l^k(i)) \simeq \colim_{n \in \NN} \mathrm{H}^*(R_n, \ZZ/l^k(i))
		\end{equation*}
		\item\label{Et-7-2} Fix $n \in \NN$. Denote by $\hat{\mathrm{H}}^*(\mathcal{U}, F)$ Cech cohomology groups \cite[\href{https://stacks.math.columbia.edu/tag/03AK}{Tag 03AK}]{stacks-project} associated to a covering family $\mathcal{U} = \{U_i \to \mathrm{Spec}(R_n)\}_i$. Then by the Artin Comparison Theorem for the affine $\mathrm{Spec}(R_n)$ \cite[Cor.4.2]{Ar71} there is a covering family $\mathcal{U}$ for which Cech cohomology and étale sheaf cohomology agree for all sheaves $F$. Therefore we have the formula:
		\begin{equation*}
			\mathrm{H}^*(R_n, \QQ_l/\ZZ_l(i)) \simeq \hat{\mathrm{H}}^*(\mathcal{U}, \QQ/\ZZ_l(i)) \simeq \colim_{k \in \NN}\hat{\mathrm{H}}^*(\mathcal{U}, \ZZ/l^k(i)) \simeq \colim_{k \in \NN}\mathrm{H}^*(R_n, \ZZ/l^k(i))
		\end{equation*}
		\item\label{Et-7-3} There is a $\Gamma_F$-equivariant equivalence
		\begin{equation}\label{Et-3}
			\RH^*(S_\infty, \ZZ/l^k(i)) \simeq \RH^*(S_\infty, \ZZ/l^k)(i)
		\end{equation}
		for $S = R, \FF$ and $k \in \NN \cup \{\infty\}$.
		This follows from combining the following facts:
		\begin{enumerate}
			\item The sheaf $\ZZ/l^k(i) \simeq \ZZ/l^k$ when restricted to the étale site of $S_\infty$.
			\item $\pi_0R\Gamma_\proet(R_\infty, \ZZ/l^k(i)) \simeq \ZZ/l^k(i)$ $\Gamma_F$-equivariantly.
			\item To identify the action observe that for $k$ finite that we have the natural transformation
			\begin{equation*}
				\pi_{-*}R\Gamma_\proet(\_, \ZZ/l^k(i)) \otimes_{\ZZ/l^k} \pi_0R\Gamma_\proet(\_, \ZZ/l^k(i)) \Rightarrow \pi_{-*}R\Gamma_\proet(\_, \ZZ/l^k(i)): R_\proet \to \mathrm{gr}\mathrm{Ab}
			\end{equation*}
			natural transformation, which evaluates to an equivalence on $R_\infty$. This induces the desired identification in \eqref{Et-3}, which is $\Gamma_F$-equivariant by acting on the input. By bullet points \ref{Et-7-1} and \ref{Et-7-2}, taking colimits in $k$ yields the result for $k = \infty$ as well.
		\end{enumerate}
	\end{enumerate}
\end{fact}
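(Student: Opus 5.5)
For item \ref{Et-7-1} I would invoke the classicality of the locally constant sheaves $\ZZ/l^k(i)$ (Fact \ref{H-1}, i.e.\ \cite[Prop.5.3.2]{BS14}). Write $\mathrm{Spec}(R_\infty) = \lim_n \mathrm{Spec}(R_n)$ as a cofiltered limit of qcqs schemes with finite, hence affine, transition maps. Classical sheaves send such limits to filtered colimits on $R\Gamma$, exactly as in the middle step of the proof of Corollary \ref{Et-11}. This gives the colimit formula on $\mathrm{H}^*$ for finite $k$. For $k=\infty$ I would first write $\QQ_l/\ZZ_l(i) = \colim_k \ZZ/l^k(i)$, apply item \ref{Et-7-2} at each finite stage $R_n$, and then commute the two filtered colimits.

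For item \ref{Et-7-2} I would use Artin's theorem \cite[Cor.4.2]{Ar71} for the affine scheme $\mathrm{Spec}(R_n)$. It provides a cover $\mathcal{U}$ whose Čech complex computes étale cohomology for every abelian sheaf. The Čech complex of a cover by quasi-compact schemes is built from sections over finite fibre products of quasi-compact quasi-separated objects. Such sections commute with filtered colimits of sheaves, and filtered colimits are exact, so $\hat{\mathrm{H}}^*(\mathcal{U}, \colim_k \ZZ/l^k(i)) \simeq \colim_k \hat{\mathrm{H}}^*(\mathcal{U}, \ZZ/l^k(i))$. Here I must be careful that one and the same cover $\mathcal{U}$ works for all sheaves simultaneously; this is what Artin's statement provides.

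For item \ref{Et-7-3} the plan is as follows.
\begin{enumerate}
\item Since $\mu_{l^\infty} \subset R_\infty$, the sheaf $\ZZ/l^k(i)$ restricted to $(S_\infty)_\et$ is constant, with global sections $\pi_0 R\Gamma(S_\infty, \ZZ/l^k(i)) = \mu_{l^k}^{\otimes i}$. As an abstract group this is $\ZZ/l^k$, and $\Gamma_F$ acts on it through $c^i$.
\item For $S=\FF$ the same holds because $\mathfrak{p}$ is inert in $F_\infty$. The field $\FF_\infty$ is therefore the residue field of $R_\infty$, it contains $\mu_{l^\infty}$, and it carries the same $\Gamma_F$-action.
\item For $i \ge 0$ (and for $i<0$ via the dual $\mathrm{Hom}(\mu_{l^k}, \ZZ/l^k)$) I would consider the cup-product pairing
\begin{equation*}
\mathrm{H}^*(S_\infty, \ZZ/l^k) \otimes_{\ZZ/l^k} \mathrm{H}^0(S_\infty, \ZZ/l^k(i)) \to \mathrm{H}^*(S_\infty, \ZZ/l^k(i)).
\end{equation*}
This is a natural transformation of functors on $S_\proet$. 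Since $\Gamma_F$ acts on $S_\infty$ by automorphisms over $S$, naturality makes the pairing $\Gamma_F$-equivariant for the diagonal action on the source.
\item The pairing is an isomorphism. After choosing a generator of $\mu_{l^k}^{\otimes i}$ it becomes the identification of the constant sheaf $\ZZ/l^k \simeq \ZZ/l^k(i)$ on $S_\infty$. This proves \eqref{Et-3} for finite $k$.
\item For $k=\infty$, I would pass to the colimit using items \ref{Et-7-1} and \ref{Et-7-2}. The transition maps $\ZZ/l^k \to \ZZ/l^{k+1}$ are compatible with the cup product, and $\colim_k \mu_{l^k}^{\otimes i} = \QQ_l/\ZZ_l(i)$ equivariantly.
\end{enumerate}

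The main obstacle is the equivariance in item \ref{Et-7-3}. The abstract isomorphism depends on a choice of generator that is not $\Gamma_F$-fixed. The cup-product formulation is how I would avoid making that choice. I then need to verify that the $\Gamma_F$-action on $\mathrm{H}^*(S_\infty,-)$, coming from Corollary \ref{Et-11}, agrees with the action by functoriality in the automorphisms of $S_\infty$. I would check this only on cohomology groups, which is all the statement requires.
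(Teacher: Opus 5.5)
Your proposal follows the paper's argument essentially step for step: classicality for the colimit over $n$, Artin's \v{C}ech comparison for the colimit over $k$, and the natural cup product with $\mathrm{H}^0$, made $\Gamma_F$-equivariant by naturality (you write its first factor correctly with untwisted $\ZZ/l^k$ coefficients, where the paper's display has a typo). Two corrections, the first shared with the paper: Artin's Cor.~4.2 compares étale cohomology with \v{C}ech cohomology taken as the filtered colimit over all étale covers, not with that of a single cover (no single quasi-compact cover trivialises all the $\ZZ/l^k$-torsors coming from $F_\infty/F_n$), although your argument still works with that colimit because it commutes with $\colim_k$; and for $k=\infty$ in item 1 you should apply classicality directly to the étale sheaf $\QQ_l/\ZZ_l(i)$, which also gives the identification $\mathrm{H}^*(R_\infty,\QQ_l/\ZZ_l(i))\simeq\colim_k\mathrm{H}^*(R_\infty,\ZZ/l^k(i))$ that your route through item 2 leaves unproved but that you need in item 3.
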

\begin{lem}\label{5-2}
	Let $n \in \NN$. Then $(R_n^\times)\lcomp \simeq \RH^1(R_n; \ZZ_l(1))$
	\[\begin{tikzcd}
		0 \arrow[r] & \Pic(R_n) \otimes \ZZ_l \arrow[r] & \RH^2(R_n, \ZZ_l(1)) \arrow[r] & B_n \arrow[r] & 0
	\end{tikzcd}\]
	where $B_n \coloneqq \Hom(\QQ_l/\ZZ_l, \mathrm{Br}(R_n))$\footnote{By a paper of Ofer Gabber \cite[Ch.II, Thm.1]{Gab80} the Azumaya-algebra Brauer group and the torsion in $\RH^2(R_n, \GG_m)$ coincide. As by Cor. 2.6 in \href{https://www.jmilne.org/math/Books/ECpup4.pdf}{Milne's updated version of his book} \cite{Mil80} the localization $R_n \to F_n$ induces an injection on both Brauer groups and $\RH^2(F_n; \GG_m)$ is torsion, we need not make any distinction between the two definitions.}. These identifications are compatible with the trace map on cohomology and the norms maps on units, class group and Brauer group.
	\begin{proof}
		This is obtained simply by passing to inverse limits in classical computations: Consider the fiber sequence in $\mathrm{Shv}(R_n; \mathcal{D}(\ZZ))$
		\begin{equation}\label{Et-6}
			\ZZ/l^k(1) = \mu_{l^\nu} \to \GG_m \xrightarrow{l^\nu} \GG_m
		\end{equation}
		The naturality statement for the trace follows from the fact that this fiber sequence is preserved by any pullback and by naturality of the trace as defined in \ref{Et-30}.
		
		Its associated long exact sequence and the classical descriptions
		\begin{equation*}
			\RH^1(R_n; \GG_m) \simeq \Pic(X) \;\;\; \RH^2(R_n; \GG_m) \simeq \text{Br}(R_n)
		\end{equation*}
		yield short exact sequences
		\[\begin{tikzcd}
			0 \arrow[r] & (R_n^\times)/l^k \arrow[r] & \RH^1(R_n, \ZZ/l^k(1)) \arrow[r] & \Pic(R_n)[l^k] \arrow[r] & 0 \\
			0 \arrow[r] & \Pic(R_n)/l^k \arrow[r] & \RH^2(R_n, \ZZ/l^k(1)) \arrow[r] & \text{Br}(R_n)[l^k] \arrow[r] & 0
		\end{tikzcd}\]
		Now we observe the following
		\begin{align*}
			\text{Br}(R_n)[l^k] &\simeq \Hom_\Ab(\ZZ/l^k, \text{Br}(R_n)) &
			{\lim_k}^1 R_n^\times/l^k &\simeq 0 \text{ due to finite generation of } R_n^\times \\
			\lim_k \Pic(R_n)[l^k] &\simeq 0 \text{ due to finiteness} &
			{\lim_k}^1 \Pic(R_n)/l^k &\simeq 0 \text{ due to finiteness}
		\end{align*}
		For the bottom left equivalence we observe that the maps in the $\NN^\op$-indexed diagram are $\Pic(R_n)[l^{k + 1}] \xrightarrow{l} \Pic(R_n)[l^k]$ and $\mathrm{Pic}(R_n)$ is finite. Since we are computing homotopy groups of the expression \eqref{Et-4} we can use Milnor's $\mathrm{lim}^1$-sequence to compute $l$-adic étale cohomology groups. But, since we proved that all the $\mathrm{lim}^1$-terms vanish we arrive at the desired result.
	\end{proof}
\end{lem}
The following is proven in the appendix Lemma \ref{F-21}.
\begin{fact}\label{Et-32}
	 Let $\Gamma$ be a profinite group and $G$ a finite group. Let $Y \to X$ be a $\Gamma$-Galois covering of normal integral schemes. Then there is a short exact sequence of profinite groups.
	\begin{equation*}
		1 \to \pi_1^\et(Y, \bar{\eta}) \to \pi_1^\et(X, \bar{\eta}) \to \Gamma \to 1
	\end{equation*}
	
	Moreover, there is an equivalence
	\begin{equation*}
		\mathrm{Map}_{\mathrm{Shv}(X_\et)}(Y ,BG) \simeq \mathrm{Map}_{\mathrm{Cond}\Spc}(\Gamma/\pi_1^\et(X, \bar{\eta}), BG) \simeq \mathrm{Map}_{\mathrm{Cond}\Spc}(B\pi_1^\et(Y, \bar{\eta}), BG)
	\end{equation*}
	where the first equivalence is $\Gamma$-equivariant.
	
	Taking $\pi_0$ yields a $\Gamma$-equivariant equivalence
	\begin{equation*}
		\mathrm{H}^1(Y, BG) \simeq \mathrm{Hom}_{\mathrm{Grp}(*_\proet)}(\pi_1^\et(Y, \bar{\eta}), G)/G
	\end{equation*}
	where $G$ acts by conjugation in the target. Moreover, this identification is $\Gamma$-equivariant with the following action: For $\gamma \in \Gamma$ and $f: \pi_1^\et(X, \bar{\eta}) \to G$ a representative modulo conjugation by $G$. Then pick a lift $\tilde{\gamma} \in \pi_1^\et(X, \bar{\eta})$ and precompose $f$ with the restriction of conjugation by $\tilde{\gamma}$, which restricts to $\pi_1^\et(Y, \bar{\eta})$.
\end{fact}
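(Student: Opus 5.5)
The plan is to first establish the short exact sequence of fundamental groups and then deduce the statements about torsors from Galois theory in the form of Fact \ref{Et-20} and the equivalence \eqref{Et-2}.

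\emph{Step 1: the exact sequence.} By Definition \ref{Et-31}, $Y$ corresponds under $\mathrm{Pro}(X_\fet)\simeq B\pi_1^\et(X,\bar\eta)_\proet$ to the profinite $\pi_1^\et(X,\bar\eta)$-set $\Gamma$, obtained from a surjection $\pi:\pi_1^\et(X,\bar\eta)\to\Gamma$. Let $H=\ker\pi$. Write $Y=\lim Y_i$ with $Y_i\to X$ finite étale Galois with group $\Gamma/U_i$.
\begin{itemize}
\item Since $X$ is normal integral, Fact \ref{Et-20}(\ref{Et-20-1}) identifies $\pi_1^\et(X,\bar\eta)$ with $\Gal(K/\kappa(\eta))$, where $K$ is the maximal extension unramified over $X$.
\item The $Y_i$ are normal integral with function fields $\kappa(\eta)_i$, and $Y$ is normal integral with function field $\colim \kappa(\eta)_i$.
\item A finite extension of $\colim\kappa(\eta)_i$ unramified over $Y$ descends to some $Y_i$. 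This uses the fully faithfulness of $\mathrm{Ind}(\mathrm{CAlg}_\fet)\subset \mathrm{CAlg}$ shown after \eqref{Et-2}, together with the standard fact that finite étale algebras over a limit descend to a finite stage.
\item Hence the maximal unramified extension of $Y$ is again $K$, and $\pi_1^\et(Y,\bar\eta)=\Gal(K/\colim\kappa(\eta)_i)=H$.
\end{itemize}
This gives exactness, and the quotient map is $\pi$ by construction.

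\emph{Step 2: the mapping spaces.} For finite $G$, a map $Y\to BG$ in $\mathrm{Shv}(X_\et)$, that is a $G$-torsor over $Y$, is finite étale over $Y$. By the descent just used, such a torsor is pulled back from some $Y_i$. Therefore
\[
\mathrm{Map}(Y,BG)\simeq\colim_i\mathrm{Map}(Y_i,BG).
\]
Each term is computed by the equivalence $X_\fet\simeq B\pi_1^\et(X)_\et$ as maps of $\pi_1^\et(X)$-sets from $\pi_1^\et(X)/\pi^{-1}(U_i)$ to $BG$. Passing to the colimit and using that $BG$ is $1$-truncated and discrete (hence compact-valued in $\mathrm{Cond}\Spc$), this becomes $\mathrm{Map}_{\mathrm{Cond}\Spc}(\Gamma/\pi_1^\et(X),BG)$, where $\Gamma/\pi_1^\et(X)$ denotes the homotopy quotient. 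By Step 1 this homotopy quotient is $BH=B\pi_1^\et(Y)$.

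\emph{Step 3: equivariance and $\pi_0$.} $\Gamma$ acts on $Y$ by deck transformations, which correspond to right multiplication on the $\pi_1^\et(X)$-set $\Gamma$. The equivalences in Step 2 are natural in the $\pi_1^\et(X)$-set, which gives the $\Gamma$-equivariance of the first equivalence.

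Taking $\pi_0$ of $\mathrm{Map}(BH,BG)$ for profinite $H$ and finite $G$ gives continuous homomorphisms modulo $G$-conjugation. The identification of the induced $\Gamma$-action comes from translating right multiplication by $\gamma$ on $\Gamma=\pi_1^\et(X)/H$ into the self-equivalence of $BH$ given by conjugation by a lift $\tilde\gamma$. Inner automorphisms by elements of $H$ act trivially on $\pi_0$, so this action is independent of the choice of lift.

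The main obstacle is Step 1's descent claim, namely that finite étale covers of the pro-object $Y$ come from a finite stage. It is also where normality is used to stay within Galois theory of function fields. I would prove it by the affine reduction and the Ind-fully-faithfulness argument already given after \eqref{Et-2}, using that finite étale algebras are finitely presented.
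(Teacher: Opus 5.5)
Your proposal is correct. For the exact sequence it follows the paper's argument (Galois theory of function fields, Stacks 0BMM). You in fact supply a step the paper only asserts, namely that $X$ and $Y$ share the universal pro-cover, by using descent of finite \'etale covers along $Y=\lim_i Y_i$.

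For the mapping spaces the two routes differ.
\begin{itemize}
\item The paper never descends to finite stages. It uses the adjunction $r^*\dashv r_*$ between $\mathrm{Mod}_{\pi_1^\et(X)}$ of condensed anima and sheaves on $X_\proet$, with $r^*\Gamma\simeq Y$. Its one geometric input is that the unit $BG\to r_*r^*BG$ is an equivalence: an fpqc-locally trivial $G$-torsor over a pro-finite-\'etale object is finite \'etale and trivializes along itself. The equivalence is then a single adjunction step, and $\Gamma$-equivariance is just naturality in the $\pi_1^\et(X)$-module $\Gamma$.
\item Your route is the classical $\mathrm{colim}_i\,\mathrm{H}^1(Y_i,G)$ argument plus finite Galois theory. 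In exchange you need continuity of $BG$ along $\Gamma=\lim_i\Gamma/U_i$, which your ``compact-valued'' remark only gestures at. State it as $\mathrm{colim}_i\,\mathrm{Map}(B\pi^{-1}(U_i),BG)\simeq\mathrm{Map}(BH,BG)$ and check it on two levels.
  \begin{itemize}
  \item On $\pi_0$: a continuous $H\to G$ extends to some $\pi^{-1}(U_i)$, and two extensions agreeing on $H$ agree on a smaller $\pi^{-1}(U_j)$.
  \item On $\pi_1$: the images $f(\pi^{-1}(U_i))$ stabilize at $f(H)$ by compactness.
  \end{itemize}
\end{itemize}

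For $\pi_0$ and the $\Gamma$-action, the paper first treats discrete groups with groupoid models. It then compares the profinite extension with its underlying discrete one, using the recognition principle and an injectivity/surjectivity argument. Your setup makes that detour unnecessary.
\begin{itemize}
\item The colimit description yields continuous homomorphisms modulo conjugation directly. You assert this rather than derive it, but it falls out of your Step 2.
\item The conjugation formula is visible at level $i$. Pulling back a torsor along the deck transformation $\gamma$ of $Y_i$ moves the base point to $\tilde\gamma\,\pi^{-1}(U_i)$. This twists the $\pi^{-1}(U_i)$-set by conjugation by $\tilde\gamma$.
\end{itemize}

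In short, the paper's proof is more formal and stays in the condensed language throughout. Yours is more elementary and makes the final formula transparent, at the cost of the extra limit bookkeeping.
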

\begin{rem}\label{Et-9}
	In the case $G$ is abelian the formula simplifies to yield a $\Gamma$-equivariant equivalence
	\begin{equation*}
		\mathrm{H}^1(Y, BG) \simeq \mathrm{Hom}_{\mathrm{CGrp}(*_\proet)}(\pi_1^\et(Y, \bar{\eta})/\overline{[\pi_1^\et(Y, \bar{\eta}), \pi_1^\et(Y, \bar{\eta})]}, G)
	\end{equation*}
\end{rem}
\begin{lem}\label{2-2}
	 We have the following equivalences of $\Gamma_F$-modules for $i \in \NN$, $p \geq 1$ and $k \in \NN \cup \{\infty\}$.
	\begin{align*}
		\RH^0(\FF_\infty, \ZZ/l^k(i)) &\simeq \ZZ/l^k(i) & \RH^0(\FF_\infty, \ZZ_l(i)) \simeq \ZZ_l(i) \\
		\RH^p(\FF_\infty, \ZZ_l(i)) &\simeq 0
	\end{align*}
	\begin{proof}
		We first prove the corresponding results for the sheaves $\ZZ/l^k(i)$ and then pass to the limit using the $\mathrm{lim}^1$-sequence argument. By \ref{Et-7} bullet point \ref{Et-7-3} it suffices to assume $i = 0$.
		
		\textit{For $\mathrm{H}^0$}: Nothing to prove.
		
		\textit{For $\mathrm{H}^1$}: By Fact \ref{Et-32} $\mathrm{H}^1$ is equivalent to maps of profinite groups $\pi_1^\et(\FF_\infty) \to \ZZ/l^k$. But all finite discrete quotients of $\pi_1^\et(\FF_\infty)$ are not $l$-power torsion: Indeed, the absolute Galois group of any finite field $\FF$ is the profinite completion $\hat{\ZZ}$ of $\ZZ$. Moreover, for $\FF'/\FF$ a finite field extension the inclusion $\pi_1^\et(\FF') \to \pi_1^\et(\FF)$ corresponds to $\hat{\ZZ} \xrightarrow{\cdot [\FF':\FF]} \hat{\ZZ}$. Thus taking the limit yields that $\mathrm{Gal}(\bar{\FF}_\infty/\FF_\infty)$ does not contain an $l$-profinite part. Thus $\mathrm{H}^1$ vanishes.
		
		\textit{For $\mathrm{H}^p$ for $p \geq 2$}:  
		The cohomological dimension of finite fields is $1$ \cite[Ch.III, Prop.1.15]{Mil80} for torsion sheaves. Then pass to the colimit in $k$.
		
		All the $\mathrm{lim}^1$-terms clearly vanish, thus we arrive at the desired result for $\ZZ_l$-coefficients.
	\end{proof}
\end{lem}
\begin{lem}\label{2-1}
	Fix $k \in \NN \cup \{\infty\}, i \in \ZZ$. Then for $p \geq 2$:
	\begin{align*}
		\RH^0(R_\infty; \ZZ/l^k(i)) &\simeq \ZZ/l^k(i) & &
		\RH^1(R_\infty; \ZZ/l^k(i)) \simeq \Hom_{\mathrm{Cond}\mathrm{Ab}}(M_\infty, \ZZ/l^k(i)) \\
		\RH^p(R_\infty, \QQ_l/\ZZ_l(i)) &= 0
	\end{align*}
	All equivalences are maps of $\Gamma_F$-modules, where the action on $\Hom_{\mathrm{Cond}\mathrm{Ab}}(M_\infty, \ZZ/l^k(i))$ is given by conjugation.
	\begin{proof}
		Again by Fact \ref{Et-7} it suffices to treat the case $i = 0$.
		\textit{For $\mathrm{H}^0$}: Clear.
		
		\textit{For $\mathrm{H}^1$}: Apply the Fact \ref{Et-32} to the case of $R_\infty/R$. We see that the $\Gamma_F$-action on $M_\infty$ is the action described in Section \ref{A0}.
		
		\textit{For vanishing of $\mathrm{H}^2$}:
		We again use the cofiber sequence \eqref{Et-6} to obtain the short exact sequences
		\begin{equation*}
			0 \to \Pic(R_n)/l^k \to \RH^2(R_n; \ZZ/l^k) \to \text{Br}(R_n)[l^k] \to 0
		\end{equation*}
		functorial in $n \in \NN$ for $k \in \NN \cup \infty$. By Remark \ref{Et-7} we need to show that the colimit in $n, k \in \NN$ vanishes. For $\Pic(R_n)/l^k$ this is clear, since $\QQ_l/\ZZ_l \otimes \_ = \colim_k \ZZ/l^k \otimes \_$ kills finite groups of which $\mathrm{Pic}(R_n)$ is an example.
		
		So we are reduced to proving $\colim_{n, k} \mathrm{Br}(R_n)[l^k] = 0$. For that we need the following, which we carefully extract out of \cite{Mil06} in \ref{F-20}
		\begin{fact}
			There are injections $\mathrm{Br}(R_n) \to \bigoplus_{S_n} \QQ/\ZZ$ making the following diagram commute
			\[\begin{tikzcd}
				\mathrm{Br}(R_n) \arrow[r, hook] \arrow[d, "\mathrm{res}"] & \bigoplus_{v \in S_n} \QQ/\ZZ \arrow[d, "\oplus_v {[F_{n+1, v}: F_{n, v}]}"] \\
				\mathrm{Br}(R_{n+1}) \arrow[r, hook] & \bigoplus_{v \in S_{n+1}} \QQ/\ZZ
			\end{tikzcd}\]
			where $F_{n, v}$ denotes the completion of $F_n$ at the place $v$.
		\end{fact}
		In particular, since every prime $\mathfrak{l} \in S_n$ splits completely in $F_\infty/F_n$ $l = [F_{n+1}:F_n] = [F_{n+1, v}: F_{n, v}]$. Thus $\colim_{n, k} \mathrm{Br}(R_n)[l^k] = 0$ by the above fact.
		
		\textit{For vanishing of $\mathrm{H}^*$ for $* \geq 3$}: This is already true for the finite stages by Remark \ref{Et-8}.
	\end{proof}
\end{lem}
The following lemma makes precise that $\mathrm{H}^1(R_\infty, \QQ_l/\ZZ_l)$ "behaves" like the internal hom $\underline{\mathrm{map}}(M_\infty, \underline{\QQ_l/\ZZ_l})$ of $\mathcal{D}_\square(\Lambda_F)$ or, equivalently, $\mathcal{D}_\square(\ZZ_l)^{B\ZZ}$. Here we think of $M_\infty$ as being the $l$-completion of the solid abelian group $\pi_1^\et(R_\infty, \bar{\eta})^\mathrm{ab}$ concentrated in homological degree $0$. Equivalently, we may think of the solid $\Gamma_F$-module $M_\infty$ as the image of the perfect $\Lambda_F$-module $M_\infty$ under the embedding $\mathrm{Perf}(\Lambda_F) \subset \mathcal{D}_\square(\Lambda_F)$:
\begin{lem}\label{Et-27}
	$\underline{\mathrm{map}}_{\mathcal{D}_\square(\ZZ)}(M_\infty, \QQ_l/\ZZ_l)$, where $\QQ_l/\ZZ_l$ denotes the discrete object $\underline{\QQ_l/\ZZ_l}$ satisfies the following:
	\begin{enumerate}
		\item $\underline{\mathrm{map}}_{\mathcal{D}_\square(\ZZ_l)}(M_\infty, \QQ_l/\ZZ_l) \simeq \underline{\mathrm{map}_{\mathcal{D}_\square(\ZZ_l)}(M_\infty, \QQ_l/\ZZ_l)} \simeq \underline{\mathrm{map}_{\mathcal{D}_\square(\ZZ)}(M_\infty, \QQ_l/\ZZ_l)}$
		\item $\mathrm{map}_{\mathcal{D}_\square(\ZZ)}(M_\infty, \QQ_l/\ZZ_l)$ is concentrated in homological degree $0$.
	\end{enumerate}
	i.e., $\underline{\mathrm{map}}_{\mathcal{D}_\square(\ZZ_l)}(M_\infty, \QQ_l/\ZZ_l)$ is static and discrete.
	\begin{proof}
		\textit{For (1)}: The second equivalence is immediate, since $\ZZ_l$ is an idempotent algebra in $\mathcal{D}_\square(\ZZ)$ by \cite[Ex.6.4]{CS26}. The first equivalence is equivalent to $\underline{\mathrm{map}}_{\mathcal{D}_\square(\ZZ_l)}(M_\infty, \QQ_l/\ZZ_l)$ being discrete, i.e., lying in the essential image of $\nu^*$. We claim it is even true for any finite colimit over the objects $\ZZ_l \otimes_{\mathcal{D}_\square(\ZZ)} \ZZ[S]$ for $S \in *_\proet$. So we are reduced to showing that $\underline{\mathrm{map}}_{\mathcal{D}_\square(\ZZ)}(\ZZ[S], \underline{A})$ is discrete for any $S \in *_\proet$ and $A \in \mathrm{Ab}$:
		By \cite[pg.39, Formula (6.1)]{CS26} we have the formula
		\begin{equation*}
			\underline{\mathrm{map}}_{\mathcal{D}_\square(\ZZ)}(\ZZ[S]^\square, \underline{A}) \simeq R\Gamma(\_ \times S, \underline{A})
		\end{equation*}
		By Fact \ref{H-1} it suffices to see that the right-hand side sends cofiltered limits of finite sets $\{X_i\}_{i \in \mathcal{I}}$ to colimits. But writing $S = \mathrm{lim}_{j \in \mathcal{J}}$ and the fact that $\underline{A}$ is discrete we obtain:
		\begin{equation*}
			R\Gamma(S \times \lim_{i \in \mathcal{I}} X_i, \underline{A}) \simeq \colim_{(i, j) \in \mathcal{I} \times \mathcal{J}} R\Gamma(X_i \times S_j, \underline{A}) \simeq \colim_{i \in \mathcal{I}} R\Gamma(X_i \times \mathrm{lim}_j S_j, \underline{A})
		\end{equation*}
		
		\textit{For (2)}: We claim that we have the formula
		\begin{equation}\label{Et-10}
			\colim_{n \in \NN} \mathrm{map}_{\mathcal{D}_\square(\ZZ_l)}(M/(\omega_n, l^n), \underline{A}) \to \mathrm{map}_{\mathcal{D}_\square(\ZZ_l)}(M, \underline{A})
		\end{equation}
		for any $A \in \mathcal{D}(\ZZ)_{l-\text{tors}}$ and $M$ in the finite limit closure of $\Lambda$. Since $M/(\omega_n, l^n)$ and $\underline{A}$ both lie in $\nu^* = \underline{\cdot}: \mathcal{D}(\ZZ) \subset \mathcal{D}_\square(\ZZ)$ the source simplifies to
		\begin{equation*}
			\colim_{n \in \NN} \mathrm{map}_{\mathcal{D}(\ZZ)}(M/(\omega_n, l^n), \underline{A})
		\end{equation*}
		But in the case $M = M_\infty$ and $A = \QQ_l/\ZZ_l$ this expression is concentrated in homological degree $0$, since $A$ is an injective abelian group.
		\textit{For \eqref{Et-10}}: The characterization of the essential image of $\nu^*$ \ref{H-1} in the case $B = \mathbf{1}$ in terms of classicality of homology sheaves gives us
		\begin{equation*}
			\colim_{n \in \NN} \mathrm{map}_{\mathcal{D}_\square(\ZZ_l)}(M/\omega_n, \underline{A}) \to \mathrm{map}_{\mathcal{D}_\square(\ZZ_l)}(M, \underline{A})
		\end{equation*}
		Now more generally let $\mathcal{C}$ be a stable compactly generated closed symmetric monoidal category, in which the compact objects are closed under tensor product. Let $x \in \mathcal{C}^\omega$ and $y \in \mathcal{C}_{l-\text{tors}}$, i.e., $\colim(y \xrightarrow{l} y ...) = 0$. Then the map
		\begin{equation*}
			\colim_{k \in \NN} \mathrm{map}(x/l^k, y) \to \mathrm{map}(x, y)
		\end{equation*}
		is an equivalence \footnote{We have the following map of fiber sequences
			\[\begin{tikzcd}[ampersand replacement=\&]
				\colim_{k \in \NN} \mathrm{map}(x/l^k, y) \arrow[r] \arrow[d] \& \colim_{k \in \NN} \mathrm{map}(x, y) \arrow["\mathrm{id}", loop above] \arrow[r, "\colim_k l^k"] \arrow[d, "="] \& \colim_{k \in \NN} \mathrm{map}(x, y) \arrow["l", loop above] \arrow[d] \\
				\mathrm{map}(x, y) \arrow[r, phantom, "="] \& \mathrm{map}(x, y) \arrow[r] \& 0
			\end{tikzcd}\]
			Now the top right part of the diagram is $0$: By compactness of $x$ and generation of the category by compacts closed under $\otimes$ we can move the colimit past $\mathrm{map}(x, \_)$. But $\colim(y \xrightarrow{l} y \xrightarrow{l} ...) \simeq 0$, since $y$ is $l$-torsion.
		}.
	\end{proof}
\end{lem}
	\section{Relating K-theory and Iwasawa theory}
		\subsection{Ètale cohomology in terms of the Iwasawa module $M_\infty$ and the torsion of $M_\infty$}\label{C1}
			\begin{notat}
	Denote by $\iota: \mathrm{Spec}(\kappa(\mathfrak{p})) \to \mathrm{Spec}(R)$ the inclusion of the chosen residue field $\FF = \kappa(\mathfrak{p})$. As basechange along it is a left exact continuous functor it induces an adjunction
	\begin{equation*}
		\iota_*: \mathrm{Shv}(\kappa(p)_\proet, \mathcal{D}(\ZZ)) \leftrightharpoons \mathrm{Shv}(R_\proet, \mathcal{D}(\ZZ)): \iota^*
	\end{equation*}
	
	Specializing to the case $R_\infty/R$ in Corollary \ref{Et-11}, we get another adjunction
	\begin{equation*}
		f_*: \mathrm{Shv}(R_\proet, \mathcal{D}(\ZZ)) \leftrightharpoons \mathrm{Shv}(B(\Gamma_F)_\proet, \mathcal{D}(\ZZ)): f^*
	\end{equation*}
	induced by the inclusion $f: \mathrm{Pro}(R_\text{fet}) \subset R_\proet$. Denote by $\ZZ_l(i)^\mathrm{red}$, respectively, $\ZZ/l^k(i)^\mathrm{red}$ for $k \in \NN \cup \{\infty\}$ the fibers of the following two maps:
	\begin{align*}
		\ZZ/l^k(i) &\to \iota_*\iota^*\ZZ/l^k(i) \simeq \iota_*\ZZ/l^k(i) \\
		\ZZ_l(i) &\to (\iota_*\iota^*\ZZ_l(i))^\wedge_l \simeq \iota_*\ZZ_l(i)
	\end{align*}
\end{notat}
\begin{thm}\label{M-1}
	In $\mathcal{D}_\square(\Lambda_F) \subset (\mathcal{D}_\square(\ZZ_l))^{B(\ZZ \times \Delta)}$
	\begin{equation*}
		f_*\ZZ_l(i)^\mathrm{red} \simeq \underline{\mathrm{map}}(M_\infty, \ZZ_l(i))[-1]
	\end{equation*}
\end{thm}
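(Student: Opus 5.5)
We reduce to torsion coefficients, compute both sides through their cohomology, and then build a comparison map. Since $\ZZ_l(i)^{\mathrm{red}} = \lim_k \ZZ/l^k(i)^{\mathrm{red}}$, and both $f_*$ and $\underline{\mathrm{map}}(M_\infty,\_)$ commute with limits, it suffices to prove a compatible statement for $\ZZ/l^k(i)^{\mathrm{red}}$, or for $\QQ_l/\ZZ_l(i)^{\mathrm{red}}$, and pass to the limit. For $\QQ_l/\ZZ_l$ we would use the cofiber sequence $\ZZ_l(i)\to \QQ_l(i)\to \QQ_l/\ZZ_l(i)$, but it is cleaner to stay with the finite levels $\ZZ/l^k$. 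By Corollary \ref{Et-11} and the discussion around Remark \ref{Et-13}, $f_*$ of these sheaves is the solid $\Gamma_F$-module whose underlying object is $R\Gamma_\proet(R_\infty,\_)$. By Lemma \ref{Cond-29} it is detected in $\mathcal{D}_\square(\ZZ_l)^{B(\ZZ\times\Delta)}$.

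\textbf{Step 1: cohomology of the left-hand side.} Apply $f_*$ to the defining fiber sequence $\ZZ/l^k(i)^{\mathrm{red}} \to \ZZ/l^k(i) \to \iota_*\ZZ/l^k(i)$. Lemma \ref{2-1} gives the cohomology of the middle term over $R_\infty$: $\ZZ/l^k(i)$ in degree $0$, $\Hom(M_\infty,\ZZ/l^k(i))$ in degree $1$, and vanishing $\RH^{\geq 2}$ at least for $\QQ_l/\ZZ_l$. Lemma \ref{2-2} gives the cohomology of the right term: only $\ZZ/l^k(i)$ in degree $0$. Because $\mathfrak{p}$ is inert, $\FF_\infty/\FF$ has the same Galois group, so the map on $\RH^0$ is an isomorphism. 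Hence $f_*\ZZ/l^k(i)^{\mathrm{red}}$ has cohomology concentrated in degree $1$, equal to $\Hom(M_\infty,\ZZ/l^k(i))$, at least after passing to $\QQ_l/\ZZ_l$. So $f_*\QQ_l/\ZZ_l(i)^{\mathrm{red}} \simeq \underline{\Hom}(M_\infty,\QQ_l/\ZZ_l(i))[-1]$ as a discrete static object. This should be an identification of solid $\Gamma_F$-modules, not just of groups, because the right-hand side is static and discrete (Lemma \ref{Et-27}) and the action was identified in Lemma \ref{2-1}.

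\textbf{Step 2: construct the map rather than compare abstractly.} An abstract isomorphism of cohomology is not enough, so we produce a genuine map. The degree-one class is the universal torsor, namely the $M_\infty$-torsor $L_\infty/F_\infty$, i.e.\ the tautological element of $\RH^1(R_\infty, M_\infty)$. Cup product with it, or equivalently adjunction, yields a map $f_*(\QQ_l/\ZZ_l(i))^{\mathrm{red}} \to \underline{\mathrm{map}}(M_\infty,\QQ_l/\ZZ_l(i))[-1]$ in $\mathcal{D}_\square(\Lambda_F)$ that induces the identification of Fact \ref{Et-32} on $\RH^1$. The reduction at $\mathfrak{p}$ is what kills the $\RH^0$ part, which the right-hand side does not see. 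Step 1 then shows this map is an equivalence.

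\textbf{Step 3: pass to $\ZZ_l$-coefficients.} Write $\ZZ_l(i) = \lim_k \ZZ/l^k(i)$ and $\ZZ/l^k = \mathrm{fib}(\QQ_l/\ZZ_l \xrightarrow{l^k} \QQ_l/\ZZ_l)$. Both sides are exact in the coefficient, so the $\QQ_l/\ZZ_l$ equivalence induces equivalences for every $\ZZ/l^k$. These are compatible in $k$ by naturality of the map from Step 2. Taking limits gives $\underline{\mathrm{map}}(M_\infty,\lim_k \ZZ/l^k(i))[-1] = \underline{\mathrm{map}}(M_\infty,\ZZ_l(i))[-1]$, and on the other side $f_*\ZZ_l(i)^{\mathrm{red}}$, using the definition of $\ZZ_l(i)^{\mathrm{red}}$ via $l$-completion.

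\textbf{Main obstacle.} The hardest part is Step 2: producing a natural map of solid $\Lambda_F$-modules, not merely matching cohomology groups. This requires making the universal $M_\infty$-torsor into a $\Gamma_F$-equivariant class in $\mathcal{D}_\square$. I would try to realize it via Fact \ref{Et-32} at the level of condensed anima, $\mathrm{Map}(B\pi_1^\et(R_\infty),BG)$, and then linearize.
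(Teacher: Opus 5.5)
Your Steps 1 and 3 are essentially the paper's proof. Compute $f_*$ of the reduced $\QQ_l/\ZZ_l(i)$ from Lemmas \ref{2-1} and \ref{2-2}. Observe that the result and $\underline{\mathrm{map}}(M_\infty,\QQ_l/\ZZ_l(i))[-1]$ are both static and discrete with the same $\Gamma_F$-action. Then $l$-complete.

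\textbf{The gap is Step 2.} You assert that ``an abstract isomorphism of cohomology is not enough'' and make the construction of a comparison map from the universal $M_\infty$-torsor the load-bearing step. You then leave it as an open ``main obstacle''. It would indeed require realizing the tautological class $\Gamma_F$-equivariantly in the solid setting, with $\Gamma_F$ acting on $M_\infty$ by conjugation. So as written the proof is incomplete.

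The missing idea is that no map has to be constructed:
\begin{itemize}
\item By Remark \ref{Et-13} and Fact \ref{H-1}, $f_*\QQ_l/\ZZ_l(i)^{\mathrm{red}}$ comes from $\mathrm{Shv}(B(\Gamma_F)_\et;\mathcal{D}(\ZZ))_{l\text{-tors}}$.
\item By Lemma \ref{Et-27}, the internal hom is discrete and static.
\item $\gamma^*\colon\mathcal{D}_\square(\Lambda_F)\to\mathcal{D}_\square(\ZZ_l)^{B(\ZZ\times\Delta)}$ is fully faithful (Lemma \ref{Cond-29}), and so is $\mathcal{D}(\ZZ)\subset\mathcal{D}_\square(\ZZ)$.
\end{itemize}
Hence maps in $\mathcal{D}_\square(\Lambda_F)$ between these two objects are computed in $\mathcal{D}(\ZZ)^{B(\ZZ\times\Delta)}\simeq\mathcal{D}(\ZZ[\ZZ\times\Delta])$. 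There, an object with homology in a single degree is equivalent to its shifted homology module. So the $\Gamma_F$-equivariant isomorphism $\RH^1(R_\infty,\QQ_l/\ZZ_l(i))\cong\Hom(M_\infty,\QQ_l/\ZZ_l(i))$ of Lemma \ref{2-1}, read as an isomorphism of $\ZZ[\ZZ\times\Delta]$-modules, already yields the equivalence. The sentence closing your Step 1 is the whole argument once this full faithfulness is invoked, and it contradicts the opening claim of your Step 2.

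Two smaller corrections.

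First, Step 3 needs no naturality of any particular map. $(\_)^\wedge_l[-1]$ is a functor, so it carries the abstract equivalence and the fiber sequence to an equivalence and a fiber sequence. Combined with $(\QQ_l/\ZZ_l)^\wedge_l[-1]\simeq\ZZ_l$ and the fact that $f_*$ and $\underline{\mathrm{map}}(M_\infty,\_)$ are exact and preserve limits, this gives the $\ZZ_l$-statement. This is exactly what the paper does.

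Second, your claim that $f_*\ZZ/l^k(i)^{\mathrm{red}}$ is concentrated in degree $1$ is false in general. $\RH^2(R_\infty,\ZZ/l^k(i))$ is the cokernel of $l^k$ on $\Hom(M_\infty,\QQ_l/\ZZ_l(i))$. Up to twist, this is the Pontryagin dual of $M_\infty[l^k]$, which is nonzero as soon as $M_\infty$ has $\ZZ_l$-torsion. So at finite level the objects are not static and the single-degree argument fails. Passing through $\QQ_l/\ZZ_l$ is therefore forced, not merely cleaner: there $\RH^{\geq 2}(R_\infty,\QQ_l/\ZZ_l(i))=0$ and injectivity of $\QQ_l/\ZZ_l$ (Lemma \ref{Et-27}) make both sides static.
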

Upon taking global sections/continuous fixed points we obtain the following result recovering \cite[Prop.4.15 and Thm.1.7]{DM98} when passing to homology groups:
\begin{cor}\label{M-6}
	We have a fiber sequence
	\begin{equation*}
		\mathrm{map}_{\mathcal{D}_\square(\Lambda_F)}(M_\infty, \ZZ_l(i))[-1] \to R\Gamma_\proet(R, \ZZ_l(i)) \to R\Gamma_\proet(\kappa(\mathfrak{p}), \ZZ_l(i))
	\end{equation*}
	Since both $M_\infty$ and $\ZZ_l(i)$ lie in the essential image of $\mathrm{Perf}(\Lambda_F) \hookrightarrow \mathcal{D}_\square(\Lambda_F)$ we even have
	\begin{equation*}
		\mathrm{map}_{\mathcal{D}_\square(\Lambda_F)}(M_\infty, \ZZ_l(i)) \simeq \mathrm{map}_{\mathcal{D}(\Lambda_F)}(M_\infty, \ZZ_l(i))
	\end{equation*}
	\begin{proof}
		Under the inclusion $\mathcal{D}_\square(\Lambda) \subset \mathrm{Shv}(B\Gamma_\proet, \mathcal{D}(\ZZ))$ evaluating at $\Gamma_F/ (C \times \Gamma^{l^n})$ corresponds to taking continuous fixed points by $C \times \Gamma^{l^n}$ for any subgroup $C$ of $\Delta$. Thus, applying Theorem \ref{M-1} yields the desired result.
	\end{proof}
\end{cor}
\begin{proof}[Proof of Theorem \ref{M-1}]
	\textit{For the analogous statement for $\QQ_l/\ZZ_l$-coefficients}:
	We have a commutative square
	\begin{equation}\label{M-11}
	\begin{tikzcd}
		\mathrm{Shv}(B(\Gamma_F)_\text{et}, \mathcal{D}(\ZZ))_{l-\text{tors}} \arrow[d, hook] \arrow[r] & \mathcal{D}(\ZZ)_{l-\text{tors}}^{B\ZZ \times \Delta} \arrow[d, hook] \\
		\mathcal{D}_\square(\Lambda_F) \arrow[r, hook] & \mathcal{D}_\square(\ZZ)^{B(\ZZ \times \Delta)}
	\end{tikzcd}
	\end{equation}
	The left vertical map is an embedding by Fact \ref{H-1}; the right vertical map is an embedding since $\mathcal{D}(\ZZ) \to \mathcal{D}_\square(\ZZ)$ is fully faithful; the bottom horizontal map is an embedding by Lemma \ref{Cond-29}. The map
	\begin{equation}\label{M-2}
		f_*\QQ_l/\ZZ_l(i) \to (f\iota)_*\QQ_l/\ZZ_l(i)
	\end{equation}
	lies in the full subcategory $\mathrm{Shv}(BG_\text{et}; \mathcal{D}(\ZZ))_{l-\text{tors}}$ by Remark \ref{Et-13}. Hence, the square \eqref{M-11} of embeddings implies that the object $f_*\QQ_l/\ZZ_l(i)^\mathrm{red}$ is simply determined by the action of a topological generator $\gamma \in \Gamma$ and the finite group $\Delta$ on its underlying object in $\mathcal{D}_\square(\ZZ)$.\newline\newline
	\noindent\fbox{%
		\parbox{\textwidth}{%
			\textbf{Main observation}: By the cohomology computation in Lemma \ref{2-1} and Lemma \ref{Et-27}, the fiber of the morphism in \eqref{M-2} coincides with the mapping object $\underline{\mathrm{map}}_{\mathcal{D}_\square(\Lambda)}(M_\infty, \QQ_l/\ZZ_l(i))[-1]$, which is underlyingly just $\underline{\mathrm{map}}_{\mathcal{D}_\square(\ZZ)}(M_\infty, \QQ_l/\ZZ_l(i))[-1]$.
			In more detail, the two lemmas and the discussion above tell us:
			\begin{itemize}
				\item Forgetting the $\Gamma_F$-action, both lie in the essential image of $\mathcal{D}(\ZZ) \subset \mathcal{D}_\square(\ZZ)$.
				\item Both are concentrated in homological degree $-1$.
				\item Both have the correct action by a topological generator $\gamma \in \Gamma$ and the finite group $\Delta$.
			\end{itemize}
		}%
	}
	\newline\newline
	\textit{To recover the desired result}: Applying $(\_)^\wedge_l[-1]$ to this fiber sequence, \eqref{M-2} turns into
	\begin{equation*}
		f_*\ZZ_l(i) \to (f\iota)_*\ZZ_l(i)
	\end{equation*}
	Moreover, the fiber identifies with $\underline{\mathrm{map}}_{\mathcal{D}_\square(\ZZ)}(M_\infty, \ZZ_l(i))[-1]$.
\end{proof}
\begin{rem}
	In \cite[Thm.3.4]{DM98}, Borel's result enters in the cohomology-group-level analogue of the above computation as follows: To solve an extension problem arising in \cite[Thm.3.8]{DM98} they need $\mathrm{H}^2_\et(R, \QQ_l/\ZZ_l(i)) = 0$ for $i \geq 2$. This is, of course implied by Borel, which implies finiteness of $\mathrm{H}^2_\et(R, \ZZ_l(i))$ and $i \geq 1$ (see \eqref{I-6} for the relation of $K$-theory and $l$-adic étale cohomology). We circumvent this by identifying $R\Gamma_\et(R_\infty, \ZZ_l(i))$ as a continuous module over the Galois group of $F_\infty/F$. Instead, the main computational input for this is $\mathrm{H}^2_\et(R_\infty, \QQ_l/\ZZ_l(i)) = 0$. As we showed, this is a simple consequence of class field theory and we need not use Borel's computation.
\end{rem}
As an application of Theorem \ref{M-1} we can also prove a formula (which is also a consequence of \cite[Lemma 8.8]{DM98}) for the characteristic polynomial of $M_\infty$ in terms of the Iwasawa module $A'_\infty$. When $k$ is taken to be even, the corollary below also recovers a theorem of Iwasawa from \cite{Iw59}, in the formulation found in \cite[Prop.2.6]{Gr01}:
\begin{cor}\label{M-5}
	Denote by $S: \Lambda \to \Lambda$ the antipode of the Hopf algebra $\Lambda$. Then
	\begin{equation*}
		\mathrm{char}((\epsilon_k M_\infty)(-1)) = \begin{cases}
			S(\mathrm{char}(\epsilon_{1-k}A'_\infty)) &\text{, if } k \not\equiv 1 + \abs{\Delta}\ZZ \\
			S(\mathrm{char}(\epsilon_0A'_\infty))(T^{\vert S_t \vert-1}) &\text{, else}
		\end{cases}
	\end{equation*}
	for any $t \in \NN$.
	\begin{proof}
		We claim that in $\mathcal{D}(\ZZ)$:
			\begin{equation}\label{M-4}
				\begin{aligned}
					\lim\limits_{n \in \NN^\op} R\Gamma_\et(R_n, \ZZ/l^n(1)^\mathrm{red}) &\simeq \lim\limits_{k, n \in \NN^\op} R\Gamma_\et(R_n, \ZZ/l^k(1)^\mathrm{red}) \simeq \lim\limits_{n \in \NN^\op} R\Gamma_\et(R_n, \ZZ_l(1)^\mathrm{red}) \\
					&\simeq \mathrm{map}_{\mathcal{D}(\Lambda_F)}(M_\infty(-1), \Lambda_F)[-1]
				\end{aligned}
			\end{equation}
			Here the limit in the middle diagram is taken along the trace maps for the finite extensions $R_{n+1}/R_n$. The $\Gamma_F$-action on the left, by acting on the tower $... R_{n+1} \supset R_n \supset ...$, corresponds to the $\Gamma_F$-action twisted by the antipode on the right.
			\begin{proof}[Proof of \eqref{M-4}]
				Conceptually, we are just computing $\Lambda_F \otimes f_*(\ZZ_l(1)^\mathrm{red})$ with point-wise solid tensor product in two ways.
				\begin{enumerate}
					\item The first equivalence follows as $\NN^\op$ is a cosifted category. $\NN$ is even filtered.
					\item The second equivalence comes from $\lim_k\ZZ/l^k(i)^\mathrm{red} \simeq \ZZ_l(i)^\mathrm{red}$ by definition of the $\mathrm{red}$-superscript.
					\item Observe that $\Lambda_F \simeq \lim_n\Lambda_F/\omega_n$ witnesses that $\Lambda_F$ is a limit over dualizable objects in $\mathcal{D}_\square(\Lambda_F)$ with respect to the pointwise monoidal structure. Thus in $\mathcal{D}_\square(\Lambda_F)$:
					\begin{equation}\label{M-14}
						\underline{\mathrm{map}}(M_\infty(-1), \Lambda_F) \simeq \lim\limits_{n \in \NN^\op} \underline{\mathrm{map}}(M_\infty(-1), \Lambda_F/\omega_n) \xleftarrow{\simeq} \lim\limits_{n \in \NN^\op} (\underline{\mathrm{map}}(M_\infty(-1), \ZZ_l) \otimes \Lambda_F/\omega_n) 
					\end{equation}
					Denote by $f_n: \mathrm{Spec}(R_{n+1}) \to \mathrm{Spec}(R_n)$ the transition maps. Recall that we can identify
					\begin{equation*}
						f^*(\ZZ[\mathrm{Gal}(F_n/F)])\simeq \ZZ[R_n]
					\end{equation*}
					naturally in $n \in \NN$; the right-hand side is natural in $n$ along the $\Gamma_F$-equivariant surjections $\mathrm{Gal}(F_{n+1}/F) \to \mathrm{Gal}(F_n/F)$, the left-hand side is natural in $n$ along $f_n$.
					
					Since $\Lambda_F/\omega_n \simeq \ZZ_l \otimes \ZZ[\mathrm{Gal}(F_n/F)]$, we have:
					\begin{equation}\label{M-7}
						\lim\limits_{n \in \NN^\op} \underline{\mathrm{map}}(M_\infty(-1), \ZZ_l) \otimes \Lambda_F/\omega_n)[-1] \simeq \lim\limits_n f_*(\ZZ_l(1)^\mathrm{red}) \otimes \Lambda_F/\omega_n \xleftarrow{\mathrm{proj}} \lim\limits_n f_*(\ZZ_l(1)^\mathrm{red} \otimes \ZZ[R_n])
					\end{equation}
					natural in $n$. By Remark \ref{T-5} we have commutative diagrams
					\[\begin{tikzcd}[column sep=huge]
						\ZZ_l(i)^\mathrm{red} \otimes \ZZ[R_{n+1}] \arrow[d, "f_n"] \arrow[r, "\mathrm{id} \otimes \mathrm{comp}_{R_{n+1}}", "\simeq"'] & \ZZ_l(i)^\mathrm{red} \otimes \ZZ[R_{n+1}]^\vee \arrow[r, "\mathrm{proj}", "\simeq"'] & (f_{n+1})_*(f_{n+1})^*(\ZZ_l(i)^\mathrm{red}) \arrow[d, "\epsilon"] \\
						\ZZ_l(i)^\mathrm{red} \otimes \ZZ[R_n] \arrow[r, "\mathrm{id} \otimes \mathrm{comp}_{R_n}", "\simeq"'] & \ZZ_l(i)^\mathrm{red} \otimes \ZZ[R_n]^\vee \arrow[r, "\mathrm{proj}", "\simeq"'] & (f_n)_*(f_n)^*(\ZZ_l(i)^\mathrm{red})
					\end{tikzcd}\]
					This shows that global sections of the last term in \eqref{M-7} is equivalent to the claimed limit of global sections along the trace maps. Up until \eqref{M-7} everything was $\Gamma_F$-equivariant when acting on the second tensor factor without twist. The twist comes in, since by Remark \ref{F-40} we have commutative diagrams:
					\[\begin{tikzcd}
						\ZZ[R_n] \arrow[r, "\mathrm{comp}_{R_n}"] \arrow[d, "\gamma"] & \ZZ[R_n]^\vee \\
						\ZZ[R_n] \arrow[r, "\mathrm{comp}_{R_n}"] & \ZZ[R_n]^\vee \arrow[u, "\gamma^\vee"]
					\end{tikzcd}\]
					for $\gamma \in \Gamma_F$.
					
					By combining \eqref{M-14} and \eqref{M-7} we arrive at the desired equivalence, also $\Gamma_F$-equivariantly.
				\end{enumerate}
			\end{proof}
			\textit{Passing to cohomology}: Taking $\pi_{-2}$ of the expression in \eqref{M-4} the right-hand side just becomes 
			\begin{equation*}
				S^*\mathrm{Ext}^1_{\Lambda_F}(M_\infty(-1), \Lambda_F)
			\end{equation*}
			as a $\Lambda_F$-module. Here $S^*$ denotes restriction of scalars along the antipode. So we arrive at the following chain of equivalences of $\Lambda_F$-modules: 
			\begin{equation}\label{M-8}
				S^*\mathrm{Ext}^1_{\Lambda_F}(M_\infty(-1), \Lambda_F) \simeq \mathrm{lim}_n \mathrm{H}^2(R_n, \ZZ/l^n(1)^\mathrm{red}) \simeq \mathrm{lim}_{n, k} \mathrm{H}^2(R_n, \ZZ/l^k(1)^\mathrm{red}) \simeq \mathrm{lim}_n \mathrm{H}^2(R_n, \ZZ_l(1)^\mathrm{red})
			\end{equation}
			Here we used the following:
			\begin{enumerate}
				\item $\pi_*R\Gamma_\mathrm{et}(R_n, \ZZ/l^n(i)^\mathrm{red})$ is degree-wise finite, hence, the $\mathrm{lim}^1$-term does not contribute. This settles the first equivalence.
				\item For the second equivalence we used, again, that $\NN^\op$ is cosifted.
				\item For the third equivalence, we again use the $\mathrm{lim}^1$-sequence applied to the global sections of $R_n$ of $\ZZ_l(1)^\mathrm{red} = \mathrm{lim}_k \ZZ/l^k(1)^\mathrm{red}$.
			\end{enumerate}
			\begin{claim}\label{M-13}
			$\mathrm{lim}_n \mathrm{H}^2(R_n, \ZZ_l(1)^\mathrm{red})$ and $\mathrm{lim}_n \mathrm{H}^2(R_n, \ZZ_l(1))$ agree
			\end{claim}
			By the long exact sequence associated to $\ZZ_l(1)^\mathrm{red} \to \ZZ_l(1) \to \iota_*\ZZ_l(1)$ the possible difference in a finite stage $n$ between the two comes from:
			\begin{enumerate}
				\item $\mathrm{H}^2(\FF_n, \ZZ_l(1))$; but this group is zero: By proétale descent (see Addendum of Lemma \ref{Cond-29}) we have a fiber sequence
				\begin{equation}\label{M-16}
					R\Gamma(\FF_n, \ZZ_l(1)) \to R\Gamma(\FF_\infty, \ZZ_l(1)) \xrightarrow{\gamma^{l^n}-1} R\Gamma(\FF_\infty, \ZZ_l(1))
				\end{equation}
				But $R\Gamma(\FF_\infty, \ZZ_l(1)) \simeq \ZZ_l(1)[0]$ is the first Tate twist of $\ZZ_l$. Thus the long exact sequence on cohomology groups of \eqref{M-16} shows that $\mathrm{H}^2(\FF_n, \ZZ_l(1)) = 0$.
				\item the cokernel of $(R_n^\times \to \FF_n^\times) \otimes \ZZ_l$: Note that both groups have the same $l$-torsion by choice of $\FF_n$: else $\FF_n$ would contain more roots of unity than $F_n$, leading to the contradiction $\FF_{n+1} = \FF_n$. But $\FF_n^\times$ is finite, hence, $\FF_n^\times \otimes \ZZ_l$ is its own $l$-torsion subgroup. Thus the cokernel is $0$.
			\end{enumerate}
			
			\textit{Conclusion}: By passing to inverse limits in $n \in \NN$ in Lemma \ref{5-2} we obtain a short exact sequence of $\Lambda_F$-modules
			\begin{equation}\label{M-9}
				0 \to A'_\infty \to \mathrm{lim}_n \mathrm{H}^2(R_n, \ZZ_l(1)) \to \mathrm{lim}_n B_n \to 0
			\end{equation}
			of $\Lambda_F$-modules. No $\mathrm{lim}^1$-term appears as the groups $A'_n$ are finite. Also by Corollary \ref{F-4} $B_n$ is a free $\ZZ_l$-module of rank $\vert S_n \vert-1$, which implies
			\begin{equation}\label{M-10}
				\mathrm{char}(\epsilon_i \lim_n B_n) = \begin{cases}
					(T)^{\vert S_t \vert-1} &\text{, if } i = 0 \\
					(1) &\text{, else}
				\end{cases}
			\end{equation}
			for any $t \in \NN$. Thus we conclude, also using $S$ as the notation for the antipode on $\Lambda_F$:
			\begin{align*}
				\mathrm{char}((\epsilon_kM_\infty)(-1)) &= \mathrm{char}(\epsilon_{k-1}(M_\infty(-1))) = \mathrm{char}(\epsilon_{k-1}\mathrm{Ext}^1_{\Lambda_F}(M_\infty(-1), \Lambda_F)) = \mathrm{char}(\epsilon_{k-1}S^*\lim_n \mathrm{H}^2(R_n, \ZZ_l(1))) \\
				&= S(\mathrm{char}(\epsilon_{1-k}\lim_n \mathrm{H}^2(R_n, \ZZ_l(1)))) = S(\mathrm{char}(\epsilon_{1-k}A'_\infty)\mathrm{char}(\epsilon_{1-k}B_t)) \\
				&= \begin{cases}
					S(\mathrm{char}(\epsilon_{1-k}A'_\infty)) &\text{, if } k \not\equiv 1 + \abs{\Delta}\ZZ \\
					S(\mathrm{char}(\epsilon_0A'_\infty))(T^{\vert S_t \vert-1}) &\text{, else}
				\end{cases}
			\end{align*}
			for any $t \in \NN$. Here we used:
			\begin{enumerate}
				\item Formulas \eqref{S-2} (compatibility of tensor products with taking $\epsilon_i$-summands) and \eqref{S-1} ($\epsilon_i$ summands of the twists $\ZZ_l(j)$) for the first equality.
				\item Remark \ref{H-5} stating that for every finitely generated $\Lambda_F$-module $N$ the underlying $\Lambda$-module satisfies:
				\begin{equation*}
					\mathrm{char}(N) = \mathrm{char}(\mathrm{Ext}^1_\Lambda(N, \Lambda)) = \mathrm{char}(\mathrm{Ext}^1_{\Lambda_F}(N, \Lambda_F))
				\end{equation*}
				This settles the second equality.
				\item \eqref{M-8} and Claim \ref{M-13} for the third equality.
				\item The facts $\epsilon_{1-k}S^*N = S^*\epsilon_{k-1}N$ and $S(\mathrm{char}{N}) = \mathrm{char}(S^*N)$ for the fourth equality.
				\item Multiplicativity of the characteristic ideal in short exact sequences of torsion $\Lambda$-modules applied to the $\epsilon_{1-k}$-summand in \eqref{M-9} for the fifth equality.
				\item Formula \eqref{M-10} and $S(T) = (T)$ for the last equality.
			\end{enumerate}
	\end{proof}
\end{cor}
\begin{rem}
	The above is the simplest proof we came up with to ensure that $\mathrm{lim}^1 R_n^\times = 0$ for the diagram along the trace maps is trivial. By formula \eqref{M-4} it suffices to show that the $\NN^\op$-indexed system
	\begin{equation}\label{M-20}
		\{\mathrm{Hom}_{\Lambda_F}(M_\infty(-1), \Lambda_F/\omega_n)\}_{n \in \NN^\op}
	\end{equation}
	has trivial $\mathrm{lim}^1$. On the other hand, it actually suffices that $\mathrm{char}((\epsilon_k M_\infty)(-1))$ and $S(\mathrm{char}(\epsilon_{1-k}A'_\infty))$ differ by a power of $\omega_m$ for some $m \in \NN$.
	
	Thus it would even be sufficient to prove that $\mathrm{lim}^1$ of the $\NN^\op$-system \eqref{M-20} is a $(\omega_m)^t$-module for some $m, t \in \NN$. Passing to $\Delta$-eigenspaces, this follows from Lemma \ref{F-25}: For any finitely generated $\Lambda$-module $N$, $\mathrm{lim}^1$ of the $\NN^\op$-indexed system
	\begin{equation*}
		\{\mathrm{Hom}_\Lambda(N, \Lambda/\omega_n)\}_{n \in \NN^\op}
	\end{equation*}
	is an $\omega_m^t$-module for some $m, t \in \NN$.
\end{rem}
\begin{rem}
	The classical proof for the odd eigenspaces of $M_\infty$ is much more straightforward \cite[Cor.3.3.14]{Sha}. The short exact used in the proof of that statement can also be recovered by passing to Pontryagin duals in \eqref{M-2} and then computing $\pi_1$. From that point-of-view one does not have full control over the difference between the characteristic ideal of $M_\infty$ and $A'_\infty$.
\end{rem}
		\subsection{The $\Lambda$-rank of $M_\infty$}\label{C2}
			We would like to generalize the following result \cite[Thm. 13.31]{Was97} to a statement on the $\Lambda$-ranks of the $\Delta$-eigenspaces $\epsilon_iM_\infty$:
\begin{prop}\label{7-4}
	In our general setting
	\begin{equation*}
		\rank_{\Lambda} M_\infty = r_2(F(\mu_l))
	\end{equation*}
\end{prop}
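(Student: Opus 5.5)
We follow the classical strategy of \cite[Thm.13.31]{Was97}, feeding in the rank formula of Lemma \ref{2-7}. Since $\Lambda$-ranks are insensitive to finite extensions along the tower, we may replace $F$ by $F_0 = F(\mu_l)$. We then compute $\rank_{\ZZ_l}(M_\infty)_{\Gamma^{l^n}} = \rank_{\ZZ_l} M_\infty/\omega_n M_\infty$ and show it equals $l^n r_2(F_0) + O(1)$. By Lemma \ref{2-7} this forces $\rank_\Lambda M_\infty = r_2(F_0)$.

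\textbf{Step 1: identify the coinvariants with $M_n$ up to bounded error.} Let $L_\infty/F_n$ be the maximal abelian pro-$l$ extension of $F_n$ inside $L_\infty$ that is unramified outside $l$. Because $\Gal(F_\infty/F_n) \simeq \ZZ_l$ is free pro-$l$, the extension $\Gal(L_\infty/F_n)$ of $\Gamma^{l^n}$ by $M_\infty$ splits. Its abelianization is therefore $(M_\infty)_{\Gamma^{l^n}} \times \ZZ_l$. Moreover $L_n \subset L_\infty$, since $L_n F_\infty \subset L_\infty$. The maximal abelian subextension of $L_\infty/F_n$ is thus exactly $L_n$, and
\[
M_n \simeq (M_\infty/\omega_n M_\infty) \oplus \ZZ_l .
\]
Hence $\rank_{\ZZ_l} M_\infty/\omega_nM_\infty = \rank_{\ZZ_l} M_n - 1$, and no error term appears beyond the constant $1$.

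\textbf{Step 2: compute $\rank_{\ZZ_l} M_n$ by class field theory.} Global class field theory gives the exact sequence
\[
\Big(\overline{\mathcal{O}_{F_n}^\times}\Big)^\wedge_l \to \prod_{v \mid l} U^1_{v} \to M_n \to A_n \to 0 .
\]
Here $U^1_v$ are the principal local units at the primes above $l$, and the first map is the diagonal embedding of the $l$-completed global units. The local term has $\ZZ_l$-rank $\sum_{v\mid l}[F_{n,v}:\QQ_l] = [F_n:\QQ]$, and $A_n$ is finite. The $\ZZ_l$-rank of the image of the global units is $r_1(F_n)+r_2(F_n)-1-\delta_n$, where $\delta_n$ is the Leopoldt defect. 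This yields
\[
\rank_{\ZZ_l} M_n = r_2(F_n) + 1 + \delta_n .
\]
Since $F_0$ contains $\mu_l$ with $l$ odd, every $F_n$ is totally complex, so $r_2(F_n) = l^n r_2(F_0)$. Combining with Step 1 gives $\rank_{\ZZ_l} M_\infty/\omega_n M_\infty = l^n r_2(F_0) + \delta_n$.

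\textbf{Step 3 (main obstacle): bounding the Leopoldt defect $\delta_n$ uniformly in $n$.} Leopoldt's conjecture is not known in general, so we cannot simply set $\delta_n = 0$. Instead we would argue inside $\Lambda$-module theory, as in Washington. The defect is the $\ZZ_l$-rank of the kernel of the diagonal map on $l$-completed units. Equivalently, via the structure of $M_\infty$, it is controlled by $(\mathfrak{X}_\infty)^{\Gamma^{l^n}}$, where $\mathfrak{X}_\infty$ is the $\Lambda$-torsion submodule of $M_\infty$.

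One can show $\delta_n \le \rank_{\ZZ_l}(\mathrm{tors}_\Lambda M_\infty)^{\Gamma^{l^n}}$. For the elementary torsion module $\Lambda/(P^k)$, the invariants under $\Gamma^{l^n}$ have rank bounded by $\deg P^k$. By the argument in the proof of Lemma \ref{2-7}, the $\ZZ_l$-rank of invariants of a finitely generated torsion module is at most its $\lambda$-invariant. So $\delta_n$ is bounded independently of $n$.

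Alternatively, one could bypass this by reading Lemma \ref{2-7} in both directions. The quantity $C_n$ there is already uniformly bounded, so it suffices to show $\delta_n$ is $o(l^n)$. This follows from $\delta_n \le r_1(F_n)+r_2(F_n) \le l^n[F_0:\QQ]$ together with the $\Lambda$-module structure forcing the $l^n$-coefficient to be an integer independent of $n$. The most delicate point is making this comparison rigorous. Either route then gives $\rank_\Lambda M_\infty = r_2(F(\mu_l))$.
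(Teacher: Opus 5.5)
Your Steps~1 and~2 are correct. They are the same computation the paper carries out, eigenspace by eigenspace, in its proof of Lemma~\ref{7-2}: the identification $M_\infty/\omega_nM_\infty\simeq\Gal(L_n/F_\infty)$, the class field theory sequence, and the rank count $\rank_{\ZZ_l}M_\infty/\omega_nM_\infty=l^nr_2(F_0)+\delta_n$.

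The gap is Step~3, and neither of your routes closes it. The structure theory behind Lemma~\ref{2-7} gives $M_\infty\otimes\QQ\simeq\Lambda_\QQ^{r}\oplus T_\QQ$, with $r=\rank_\Lambda M_\infty$ and $T_\QQ$ finite-dimensional over $\QQ_l$. Since $M_\infty^{\Gamma^{l^n}}$ lies in the torsion submodule, this yields the exact identity $\rank_{\ZZ_l}M_\infty/\omega_nM_\infty=l^nr+\rank_{\ZZ_l}M_\infty^{\Gamma^{l^n}}$. Combined with Step~2,
\[
\delta_n-\rank_{\ZZ_l}M_\infty^{\Gamma^{l^n}}=l^n\bigl(\rank_\Lambda M_\infty-r_2(F_0)\bigr).
\]
So the inequality $\delta_n\le\rank_{\ZZ_l}(\mathrm{tors}_\Lambda M_\infty)^{\Gamma^{l^n}}$ in your first route is equivalent to $\rank_\Lambda M_\infty\le r_2(F_0)$, i.e.\ to the proposition itself. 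Your ``one can show'' hides the entire content, and no $\Lambda$-module-theoretic property of $M_\infty$ can supply it.

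Your second route fails outright. The bound $\delta_n\le l^n[F_0:\QQ]$ is $O(l^n)$, not $o(l^n)$. Together with the display it only gives $\delta_n=c\,l^n+O(1)$ for an integer $c\ge 0$, i.e.\ $\rank_\Lambda M_\infty=r_2(F_0)+c$. Even the sharper unit-rank bound $\delta_n<l^nr_2(F_0)$ only caps $c\le r_2(F_0)$. Excluding $c>0$ is precisely the weak Leopoldt conjecture.

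The paper does not prove this bound either. It invokes the weak Leopoldt theorem \cite[Lem.13.30]{Was97}, stated as a Fact before the proof of Lemma~\ref{7-2}, and then concludes with Lemma~\ref{2-7} exactly along your Steps~1--2. For Proposition~\ref{7-4} itself it simply cites \cite[Thm.13.31]{Was97}. Citing that theorem repairs your argument.

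If you want to prove the bound yourself, the missing input is arithmetic, not module-theoretic:
\begin{enumerate}
\item You need $\mathrm{H}^2_\et(R_\infty,\QQ_l/\ZZ_l)=0$, which the paper establishes in Lemma~\ref{2-1} via Brauer groups.
\item Descent along $\Gamma^{l^n}$ then gives $\mathrm{H}^2_\et(R_n,\QQ_l/\ZZ_l)\simeq\mathrm{H}^1_\et(R_\infty,\QQ_l/\ZZ_l)_{\Gamma^{l^n}}$, whose corank is $\rank_{\ZZ_l}M_\infty^{\Gamma^{l^n}}$.
\item Tate's global Euler characteristic formula reads $1-\mathrm{corank}\,\mathrm{H}^1+\mathrm{corank}\,\mathrm{H}^2=-r_2(F_n)$, with $\mathrm{corank}\,\mathrm{H}^1=\rank_{\ZZ_l}M_n=r_2(F_n)+1+\delta_n$ by your Step~2. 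This identifies $\mathrm{corank}\,\mathrm{H}^2_\et(R_n,\QQ_l/\ZZ_l)$ with $\delta_n$.
\end{enumerate}
This turns your inequality into an equality, $\delta_n=\rank_{\ZZ_l}M_\infty^{\Gamma^{l^n}}\le\lambda(\mathrm{tors}_\Lambda M_\infty)$, which is the uniform bound you need.
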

The goal is the following refinement:
\begin{lem}\label{7-2}
	We have
	\begin{equation*}
		\rank_{\Lambda} \epsilon_iM_\infty = \begin{cases} r_1(F) + r_2(F) &\text{, if $i$ odd} \\
		r_2(F) &\text{, if $i$ even}\end{cases}
	\end{equation*}
\end{lem}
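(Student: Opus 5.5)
We use Lemma \ref{2-7}: the $\Lambda$-rank of $\epsilon_iM_\infty$ equals the leading coefficient in $l^n$ of $\rank_{\ZZ_l}(\epsilon_iM_\infty)/\omega_n$. It therefore suffices to compute, up to a bounded error, the $\ZZ_l$-rank of $\epsilon_i(M_\infty)_{\Gamma^{l^n}}$ as a $\Delta$-representation. Since $L_\infty \supset F_\infty$ and $M_\infty = \Gal(L_\infty/F_\infty)$, the coinvariants $(M_\infty)_{\Gamma^{l^n}}$ sit in the exact sequence
\begin{equation*}
	0 \to (M_\infty)_{\Gamma^{l^n}} \to M_n \to \Gal(F_\infty/F_n) \simeq \ZZ_l \to 0,
\end{equation*}
which is exact on the left because $L_\infty$ is the maximal abelian pro-$l$ extension of $F_n$ unramified outside $l$ and $\Gal(L_\infty/F_\infty)_{\Gamma^{l^n}}=\Gal(L_\infty^{ab/F_n}/F_\infty)$. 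Hence $\rank_{\ZZ_l}\epsilon_i (M_\infty/\omega_n) = \rank_{\ZZ_l}\epsilon_iM_n - \delta_{i,0}$, and the correction is bounded. So it suffices to compute the $\Delta\times\Gal(F_n/F_0)$-representation $M_n\otimes\QQ_l$, up to bounded error.

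For this we use class field theory. It gives an exact sequence
\begin{equation*}
	0 \to \overline{E_n}\otimes\ZZ_l \to \prod_{v\mid l} (U^1_{F_n,v}) \to M_n \to A_n \to 0,
\end{equation*}
where $U^1_v$ denotes the principal local units and $\overline{E_n}$ is the closure of the global units. Here $A_n$ is finite, and the first map is injective after tensoring with $\QQ$ provided Leopoldt's conjecture holds. We wish to avoid assuming Leopoldt, so we only use that the defect is bounded: by \cite[Thm.13.31]{Was97}, or by Proposition \ref{7-4}, it is known on the total $\Lambda$-rank that the Leopoldt defect does not grow like $l^n$. Since each eigenspace can only lose rank, the total count then forces the equality in each eigenspace. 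Rationally, the local units form the regular representation $\prod_{v\mid l}\QQ_l\otimes F_{n,v} \simeq \QQ_l\otimes_\QQ F_n \simeq \QQ_l[\Gal(F_n/F)]^{[F:\QQ]}$. By Dirichlet and Herbrand, the global units $E_n\otimes\QQ_l$ are the permutation representation on the archimedean places minus the trivial character.

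The key step is the $\Delta$-eigenspace bookkeeping. Write $G_n=\Gal(F_n/F)=\Delta\times\Gal(F_n/F_0)$. The archimedean places of $F_n$ for $n\ge0$ are all complex. Above a complex place of $F$ the decomposition group is trivial, contributing regular representations $\QQ_l[G_n]^{r_2(F)}$. Above a real place of $F$ (which occurs only if $\Delta$ contains complex conjugation $c=-1$, since $F_0$ is totally imaginary) the decomposition group is $\langle c\rangle$, contributing $\QQ_l[G_n/\langle c\rangle]^{r_1(F)}$. Now $c\in\Delta$ acts on $\epsilon_i$ by $\omega^i(c)=(-1)^i$. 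Hence $\epsilon_i\QQ_l[G_n/\langle c\rangle]$ has rank $l^n$ for $i$ even and $0$ for $i$ odd, while $\epsilon_i\QQ_l[G_n]$ has rank $l^n$ in every eigenspace. The local units contribute $[F:\QQ]\,l^n=(r_1+2r_2)l^n$ to each eigenspace. Subtracting gives $\epsilon_i$-rank $(r_1+2r_2-r_2-r_1)l^n = r_2 l^n$ for $i$ even and $(r_1+r_2)l^n$ for $i$ odd, up to $O(1)$. If $F$ has no real places the two formulas agree, since then $r_1=0$. Applying Lemma \ref{2-7} yields Lemma \ref{7-2}.

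The main obstacle is controlling the Leopoldt defect uniformly in $n$ and per eigenspace without assuming Leopoldt. I would first try reducing to the known total rank $r_2(F_0)$ of Proposition \ref{7-4}. One checks that summing our lower bounds over the $\abs{\Delta}$ eigenspaces gives exactly $r_2(F_0)$. Indeed $\abs{\Delta}=[F_0:F]$, and $r_2(F_0)=[F_0:F](r_1+2r_2)/2$ splits as half of the eigenspaces even and half odd, or all of them equally if $F$ has no real places. The per-eigenspace counts computed above are lower bounds up to $O(1)$, because the image of the units is at most as large as claimed. A sum of lower bounds equal to the total then forces equality in each eigenspace.
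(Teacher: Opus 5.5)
Your proof is correct. It follows the same computation as the paper:
\begin{itemize}
\item Lemma \ref{2-7};
\item the identification $M_\infty/\omega_nM_\infty\simeq\Gal(L_n/F_\infty)$, with the $\ZZ_l$-quotient $\Gal(F_\infty/F_n)$ sitting in the $\epsilon_0$-part;
\item the class field theory sequence with $A_n$ finite;
\item the rational local units being $\QQ_l[G_n]^{[F:\QQ]}$, hence of rank $l^n[F:\QQ]$ in every eigenspace (Lemma \ref{7-6}); you obtain this from the normal basis theorem for $F_n/F$ rather than locally;
\item the permutation description of the global units, where real places of $F$ contribute $\QQ_l[G_n/\langle c\rangle]$ (Lemma \ref{7-8}).
\end{itemize}

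The one genuine difference is how you handle the Leopoldt defect. The paper invokes the weak Leopoldt theorem, i.e.\ that $\delta_n$ is uniformly bounded. This bounds the error $\rank_{\ZZ_l}\epsilon_iK_n\in[0,\delta_n]$ in each eigenspace directly.

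You use only that the image of the units can be smaller than expected. This gives the lower bounds $\rank_\Lambda\epsilon_iM_\infty\geq c_i$, with $c_i=r_2(F)$ for $i$ even and $c_i=r_1(F)+r_2(F)$ for $i$ odd. You then squeeze these against the total rank $r_2(F_0)$ from Proposition \ref{7-4}, after checking correctly that $\sum_{i\in\ZZ/\abs{\Delta}}c_i=r_2(F_0)$. Here you use that $r_1(F)>0$ forces $-1\in\Delta$, so $\abs{\Delta}$ is even.

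So you trade the citation ``$\delta_n=O(1)$'' for a citation of the total rank plus a parity count. Logically this is the same input. The standard proof of Proposition \ref{7-4} goes through the boundedness of $\delta_n$. Conversely, summing your count over $i$ gives $\rank_{\ZZ_l}M_\infty/\omega_nM_\infty=l^nr_2(F_0)+\delta_n$, so with Lemma \ref{2-7} the equality $\rank_\Lambda M_\infty=r_2(F_0)$ forces $\delta_n$ to be bounded. Your route has the mild advantage of needing only a black-box total-rank statement.

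One small slip in your justification of left exactness: $L_\infty$ is not abelian over $F_n$. What you need is this. $\Gal(L_\infty/F_n)$ is topologically generated by $M_\infty$ and a lift of $\gamma^{l^n}$, so its closed commutator subgroup is $\omega_nM_\infty$. Its maximal abelian quotient is $\Gal(L_n/F_n)$, because $L_n\subseteq L_\infty$ is the maximal subextension abelian over $F_n$.
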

For this refinement, we first consider the following description of the kernel of the restriction map $M_n \to A_n$ stemming from the idèle-theoretic version of class field theory (see \cite[Cor.13.6]{Was97}):
\begin{prop}\label{7-5}
	Let $n \in \NN$ and $\mathfrak{l} \in S_n$. Denote by $\CalO_{F_n, \mathfrak{l}}$ the $\mathfrak{l}$-adic completion of $\CalO_{F_n}$ and let $F_{n, \mathfrak{l}}$ be its fraction field. Then the idèle-level Artin map induces an equivalence
	\begin{equation*}
		(\prod\limits_{\mathfrak{l} \mid l \text{ in } F_n} \CalO_{F_n, \mathfrak{l}})/\widebar{\CalO_{F_n}^\times} \xrightarrow{\simeq} \ker(M_n \to A_n)
	\end{equation*}
	where the closure is taken with respect to the topology of the $\mathfrak{l}$-local units.
\end{prop}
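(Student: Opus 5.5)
We deduce the statement from the idèle-theoretic Artin reciprocity map by a purely group-theoretic comparison of two open subgroups of the idèle class group; throughout we fix $n$ and write $K \coloneqq F_n$, $\mathbb{I}_K$ for its idèles, and $U_v \subset K_v^\times$ for the local units at a finite place $v$. Global class field theory gives a continuous surjection $\mathrm{art}: \mathbb{I}_K/K^\times \to \mathrm{Gal}(K^{ab}/K)$ whose kernel is the closure of the connected component of the identity. It induces a bijection between open finite-index subgroups $H \supset K^\times$ of $\mathbb{I}_K$ and finite abelian extensions. Under this bijection, ``unramified at $v$'' means $H \supset U_v$, and ``unramified at a real place'' means $H$ contains the local factor $\mathbb{R}^\times$ rather than only $\mathbb{R}_{>0}$.

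We apply this to the two extensions in question.
\begin{enumerate}
	\item The maximal abelian extension unramified everywhere, i.e.\ the Hilbert class field, corresponds to $H_{\mathrm{Hil}} = K^\times \cdot K_\infty^\times \cdot \prod_{v \nmid \infty} U_v$. The quotient $\mathbb{I}_K/H_{\mathrm{Hil}}$ is $\mathrm{Pic}(\mathcal{O}_K)$, and its $l$-part is $A_n$.
	\item For the maximal abelian extension unramified outside $l$ (and $\infty$), consider $H_0 = K^\times \cdot K_\infty^\times \cdot \prod_{v \nmid l\infty} U_v$. Then $M_n$ is the maximal pro-$l$ quotient of $\mathbb{I}_K/\overline{H_0}$. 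Since $l$ is odd, the sign components at real places are $2$-torsion and die in every pro-$l$ quotient. Hence we may harmlessly include all of $K_\infty^\times$, and only the connected-component issue disappears upon taking closures.
\end{enumerate}
The restriction $M_n \to A_n$ then corresponds to the projection $\mathbb{I}_K/\overline{H_0} \to \mathbb{I}_K/H_{\mathrm{Hil}}$ followed by passing to $l$-parts.

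Next we compute the kernel of $\mathbb{I}_K/\overline{H_0} \to \mathbb{I}_K/H_{\mathrm{Hil}}$ before completion. It equals the image of $U_l \coloneqq \prod_{\mathfrak{l} \mid l} U_{\mathfrak{l}}$, which gives the exact sequence
\begin{equation*}
	U_l \to \mathbb{I}_K/\overline{H_0} \to \mathrm{Pic}(\mathcal{O}_K) \to 0 .
\end{equation*}
The kernel of the map $U_l \to \mathbb{I}_K/H_0$ is $U_l \cap H_0$. An element of $U_l$ that lies in $H_0$ can be written as $x \cdot u$ with $x \in K^\times$ and $u \in K_\infty^\times \prod_{v \nmid l\infty} U_v$. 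Looking at the components away from $l$ and $\infty$ shows that $x$ is a unit at every finite place, so $x \in \mathcal{O}_K^\times$ embedded diagonally. Passing to the closure of $H_0$, the kernel becomes the closure $\overline{\mathcal{O}_K^\times}$ in the compact group $U_l$ (a standard compactness argument, as in \cite[Cor.13.6]{Was97}). This yields an isomorphism of topological groups
\begin{equation*}
	U_l/\overline{\mathcal{O}_K^\times} \xrightarrow{\simeq} \ker\bigl(\mathbb{I}_K/\overline{H_0} \to \mathrm{Pic}(\mathcal{O}_K)\bigr) .
\end{equation*}

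Finally we pass to pro-$l$ completions. This is the step I expect to need the most care. Pro-$l$ completion is exact on short exact sequences of profinite abelian groups, since it is the projection onto the $l$-Sylow factor of a product decomposition. The $l$-completion of $U_l$ is the group of principal units $\prod_{\mathfrak{l}} U^1_{\mathfrak{l}}$, and the prime-to-$l$ torsion of $U_{\mathfrak{l}}$ drops out. The closure of the global units is taken inside this group, matching the phrase ``closure with respect to the topology of the $\mathfrak{l}$-local units''; thus $\prod \mathcal{O}_{F_n,\mathfrak{l}}$ in the statement should be read as the $l$-completed local unit group. The $l$-completion of $\mathrm{Pic}(\mathcal{O}_K)$ is $A_n$, and that of $\mathbb{I}_K/\overline{H_0}$ is $M_n$ by the identification above. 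Applying exactness to the displayed isomorphism gives the claimed equivalence, and by construction the map is induced by the Artin map.
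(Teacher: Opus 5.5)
Your argument is correct and takes essentially the paper's route. The paper gives no separate proof and simply cites the idèle-theoretic class field theory computation of \cite[Cor.13.6]{Was97}; your comparison of $\overline{H_0}$ with the Hilbert class field subgroup, the computation $U_l \cap \overline{H_0} = \overline{\mathcal{O}_K^\times}$, and the exact passage to $l$-Sylow parts spell out exactly that computation. You are also right that the source must be read as the $l$-completed local units $\prod_{\mathfrak{l}} U^1_{\mathfrak{l}}$ modulo the closure of the global (principal) units: the paper's $\prod \mathcal{O}_{F_n,\mathfrak{l}}$ is a slip, and the paper only uses the statement rationally, via Lemma \ref{7-9}, where the distinction disappears.
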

\begin{rem}
	The map in Proposition \ref{7-5} is $\Delta$-equivariant, which is an easy consequence of the definition of the idèle-theoretic Artin map \cite[p.170 (d)]{Neu11}.
\end{rem}
\begin{lem}\label{7-9}
	Let $n \in \NN$. Denote by $U_{\mathfrak{l}} \coloneqq 1 + \mathfrak{l}\CalO_{F_n, \mathfrak{l}}$ the $\mathfrak{l}$-local $1$-units and define the global $1$-units $U_n \coloneqq \ker(\CalO_{F_n}^\times \to \prod_{\mathfrak{l}\mid l} \kappa(\mathfrak{l})^\times)$. Then the map
	\begin{equation*}
		(\prod\limits_{\mathfrak{l} \mid l} U_{\mathfrak{l}})/\widebar{U_n} \to (\prod\limits_{\mathfrak{l} \mid l} \CalO_{F_n, \mathfrak{l}}^\times)/\widebar{\CalO_{F_n}^\times}
	\end{equation*}
	becomes an equivalence upon rationalizing. Since $\prod_{\mathfrak{l} \mid l} U_{\mathfrak{l}} \subset \prod_{\mathfrak{l} \mid l} \CalO_{F_n, \mathfrak{l}}$ is clopen, $\widebar{U_n}$ is the closure of $U_n$ in either one of them.
	\begin{proof}
		Observe that $\widebar{U_n} = \widebar{\CalO_{F_n}^\times} \cap \prod_{\mathfrak{l} \mid l} U_{\mathfrak{l}}$: The inclusion $\prod_{\mathfrak{l} \mid l} U_{\mathfrak{l}} \subset \prod_{\mathfrak{l} \mid l} \CalO_{F_n, \mathfrak{l}}$ is clopen as product of inclusions of finite-index closed subgroups and therefore
		\begin{equation*}
			\widebar{\CalO_{F_n}^\times} \cap \prod_{\mathfrak{l} \mid l} U_{\mathfrak{l}} = \widebar{\CalO_{F_n}^\times \cap \prod_{\mathfrak{l} \mid l} U_{\mathfrak{l}}} = \widebar{U_n}
		\end{equation*}
		Thus the map is injective before rationalizing. But as $\prod_{\mathfrak{l} \mid l} U_{\mathfrak{l}} \subset \prod_{\mathfrak{l} \mid l} \CalO_{F_n, \mathfrak{l}}$ is of finite index, the map becomes an equivalence upon rationalizing.
	\end{proof}
\end{lem}
Motivated by Proposition \ref{7-5}, we prove the following refinement of the Dirichlet Unit Theorem:
\begin{lem}\label{7-8}
	For all $n \in \NN$ we have
	\begin{equation*}
		\dim_{\QQ_l} \epsilon_i(U_n \otimes_\ZZ \QQ_l) = \dim_{\QQ_l} \epsilon_i (\CalO_{F_n}^\times \otimes \QQ_l) \simeq \begin{cases}
			l^n(r_1(F) + r_2(F)) - 1 & \text{, if } i = 0 \\
			l^n(r_1(F) + r_2(F)) & \text{, if } i \neq 0 \text{ and } i \equiv 0 (2) \\
			l^nr_2(F) & \text{, else}
		\end{cases}
	\end{equation*}
	\begin{proof}
		Denote by $X$ the set of embeddings $F_n \to \widebar{\QQ}$ modulo complex conjugation. By the Dirichlet Unit Theorem we have a short exact sequence of $\RR[\Delta]$-modules
		\begin{equation}\label{7-10}
			0 \to \CalO_{F_n}^\times \otimes_\ZZ \RR \to \RR^X \to \RR^\text{triv} \to 0
		\end{equation}
		Choose a field $K$ sitting in a diagram
		\[\begin{tikzcd}
			\RR \arrow[r] & K \\
			\QQ \arrow[u]\arrow[r] \arrow[u] & \QQ_l \arrow[u]
		\end{tikzcd}\]
		Applying $\_ \otimes_\RR K$ to the sequence \eqref{7-10} \footnote{$\RR$ does not have enough roots of unity for applying the $\epsilon_i$.}, we obtain the short exact sequence of $K[\Delta]$-modules
		\begin{equation*}
			0 \to K \otimes_\ZZ \CalO_{F_n}^\times \to K^X \to K^\text{triv} \to 0
		\end{equation*}
		Moreover, the sequence is $\Delta$-equivariantly split via the diagonal on the right. So we need to study $X$ as a $\Delta$-set. This we do for each fiber of the map of $\Delta$-sets induced by the restriction along $F \subset F_n$
		\begin{equation*}
			X = \Hom_\QQ(F_n, \widebar{\QQ})/C_2 \xrightarrow{\vert_F} \Hom_\QQ(F, \widebar{\QQ})/C_2
		\end{equation*}
		Here $C_2$ acts from the right by complex conjugation and $\Delta$ from the left via pre-composition. Denote by $X_x$ the fiber over $C_2 \cdot x$ for $x \in \Hom_{\QQ}(F_n, \widebar{\QQ})$ and $\Hom_x(F_n, \widebar{\QQ})$ the fiber of the restriction map before taking $C_2$-orbits.
		\begin{claim}
			If $x$ is a real embedding, we have an equivalence of $\Delta$-sets
			\begin{equation*}
				X_x \simeq \coprod_{l^n} \Delta/C_2
			\end{equation*}
			\textit{Explanation}: Fix an element $y \in \Hom_x(F_\infty, \widebar{\QQ})$. As $x$ is real, restricting complex conjugation along $y$ gives rise to an element of order $2$ in $\Gal(F_\infty/F)$. Such an element is unique and is contained in $\Delta$ corresponding to $-1$ under $\Delta \hookrightarrow \ZZ_l^\times$. Thus, the notation $\Delta/C_2$ makes sense. Denote by $\tau$ this $2$-torsion element. Then, for all embeddings $y \in \Hom_x(F_n, \widebar{\QQ})$, we have $y \circ \tau = \bar{y}$.
			\begin{proof}
				By the explanation, the $C_2$-action of $\tau$ via pre-composition coincides with that of complex conjugation via post-composition. As complex conjugation acts trivially on $x$, the $C_2$-action descends to the fiber such that $X_x \simeq \Hom_x(F_n, \widebar{\QQ})/C_2$. As $\Delta \subset \Gal(F_n/F)$ acts freely on $\Hom_x(F_n, \widebar{\QQ})$, we arrive at the desired result.
			\end{proof}
		\end{claim}
		\begin{claim}
			If $x$ is a non-real embedding, we have an equivalence of $\Delta$-sets
			\begin{equation*}
				X_x \simeq \coprod_{l^n} \Delta
			\end{equation*}
			\begin{proof}
				In this case the fiber $\Delta$-equivariantly identifies with $\Hom_x(F_n, \widebar{\QQ})$, as the $C_2$-action on $\{x, \bar{x}\}$ is non-trivial. The restriction map
				\begin{equation*}
					\Hom_x(F_n, \widebar{\QQ}) \to \Hom_x(F_0, \widebar{\QQ})
				\end{equation*}
				is $\Delta$-equivariant. The right-hand side identifies with $\Delta$ as $\Delta$-set. Hence, the action on the left-hand side is also free, i.e., every orbit $\simeq \Delta$.
			\end{proof}
		\end{claim}
		Combining these two claims, we obtain
		\begin{equation*}
			K^X \simeq (\bigoplus\limits_{r_1(F)} K^{\Delta/\langle \tau \rangle} \oplus \bigoplus\limits_{r_2(F)} K^{\Delta})^{l^n}
		\end{equation*}
		An element $x \in \epsilon_iK^{\Delta}$ and $x \in \epsilon_iK^{\Delta/\langle \tau \rangle}$ must satisfy $x_{\sigma} = \omega^i(\sigma)x_1$ for $\sigma \in \Delta$. Since in the second case this is only possible if $\omega^i(\tau) = 1$, i.e., $i$ is even, we arrive at the following formulas:
		\begin{enumerate}
			\item $\epsilon_iK^{\Delta} \simeq K$ for all $i$
			\item $\epsilon_iK^{\Delta/\langle \tau \rangle} \simeq \begin{cases}
				K &\text{, if } i \equiv 0 (2) \\
				0 &\text{, if } i \equiv 1 (2)
			\end{cases}$
		\end{enumerate}
		This yields the desired result.
	\end{proof}
\end{lem}
We do the analogous computation for $\prod_{\mathfrak{l} \mid l} U_\mathfrak{l}$:
\begin{lem}\label{7-6}
	\begin{equation*}
		\dim_{\QQ_l} \epsilon_i((\prod\limits_{\mathfrak{l} \mid l} U_{\mathfrak{l}}) \otimes_{\ZZ_l} \QQ_l) = l^n[F:\QQ]
	\end{equation*}
	where we recall that $U_\mathfrak{l}$ comes with a $\ZZ_l$-module structure by taking powers\footnote{This is not true for $\CalO_{F_{n, \mathfrak{l}}}^\times$ as it contains torsion coprime to $l$, which is the reason why we proved Lemma \ref{7-9}.}.
	\begin{proof}
		Recall that $\Delta$ does not interchange the product factors, as the sequence
		\begin{equation*}
			... \to S_n \to S_{n-1} \to ...
		\end{equation*}
		is a chain of equivalences. This also shows that the Galois groups of $\mathfrak{l}$-local extensions $F_{n, \mathfrak{l}}/F_\mathfrak{l}$ and $F_n/F$ coincide.
		
		Instead of the global logarithm, we use the $\mathfrak{l}$-adic logarithm, which gives us a $\ZZ_l$-linear and $\Delta$-equivariant map from $U_{\mathfrak{l}}$ to $\mathfrak{l}$ identifying $1+\mathfrak{l}^k$ and $\mathfrak{l}^k$ for $k$ large enough \cite[Satz 5.5]{Neu06} \footnote{The equivariance uses that the local Galois groups act by continuous ring automorphisms, hence, commute with the defining power series for the $\mathfrak{l}$-adic logarithm.}. Hence, we obtain a $\Delta$-equivariant equivalence
		\begin{equation*}
			U_{\mathfrak{l}} \otimes_{\ZZ_l} \QQ_l \xrightarrow[\simeq]{\log} \CalO_{F_n, \mathfrak{l}} \otimes_{\ZZ_l} \QQ_l \simeq F_{n, \mathfrak{l}} \simeq F_\mathfrak{l}[\Gal(F_{n, \mathfrak{l}}/F_{\mathfrak{l}})] \simeq \QQ_l[\Gal(F_{n, \mathfrak{l}}/F_{\mathfrak{l}})]^{[F_\mathfrak{l}:\QQ_l]}
		\end{equation*}
		of $\Delta$-modules. Because of the splitting $\Gal(F_{n, \mathfrak{l}}/F_{\mathfrak{l}}) \simeq \Gal(F_{n, \mathfrak{l}}/F_{0, \mathfrak{l}}) \times \Delta$ the right-hand side is induced, and therefore
		\begin{equation*}
			\epsilon_i\CalO_{F_n, \mathfrak{l}}^\times \otimes_{\ZZ_l} \QQ_l \simeq \QQ_l[\Gal(F_{n, \mathfrak{l}}/F_{0, \mathfrak{l}})]^{[F_{\mathfrak{l}}: \QQ_l]} \otimes_{\QQ_l} \epsilon_i\QQ_l[\Gal(F_{0, \mathfrak{l}}/F_{\mathfrak{l}})] \simeq \QQ_l[\Gal(F_{n, \mathfrak{l}}/F_{0, \mathfrak{l}})] \otimes_{\QQ_l} \QQ_l^{[F_{\mathfrak{l}}: \QQ_l]}
		\end{equation*}
		Summing up all the $\mathfrak{l}$-components, we arrive at the desired formula, as we have
		\begin{equation*}
			\QQ_l \otimes_\QQ F_n \simeq \prod_{\mathfrak{l}} F_{n, \mathfrak{l}}
		\end{equation*}
	\end{proof}
\end{lem}
\begin{defin}
	We define the Leopoldt defect of $F_n$ at the prime $l$ to be
	\begin{equation*}
		\delta_n = \rank_\ZZ U_n - \rank_{\ZZ_l} \widebar{U_n}
	\end{equation*}
\end{defin}
The following is proven in \cite[Lem.13.30]{Was97}:
\begin{fact}[Weak Leopoldt Conjecture/Theorem]
	The $\delta_n$ are uniformly bounded in $n$.
\end{fact}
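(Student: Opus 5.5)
We plan to identify $\delta_n$ with the $\ZZ_l$-corank of $\RH^2(R_n, \QQ_l/\ZZ_l)$. We then bound that corank uniformly in $n$ by descending along $R_\infty/R_n$, using the vanishing $\RH^2(R_\infty, \QQ_l/\ZZ_l) = 0$ from Lemma \ref{2-1}. This is the same class-field-theoretic input that replaced Borel in Section \ref{C1}. The key steps are the following.

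\textbf{Step 1: $\delta_n$ in terms of $M_n$.} By Proposition \ref{7-5} and Lemma \ref{7-9}, after rationalizing, $\ker(M_n \to A_n)$ is $(\prod_{\mathfrak{l}\mid l} U_\mathfrak{l})/\widebar{U_n}$, and $A_n$ is finite. Lemma \ref{7-6} (summed over $i$) gives $\rank_{\ZZ_l}\prod U_\mathfrak{l} = [F_n:\QQ]$. Dirichlet's theorem (Lemma \ref{7-8}) gives $\rank_\ZZ U_n = r_1(F_n)+r_2(F_n)-1$. Using $[F_n:\QQ] = r_1(F_n) + 2r_2(F_n)$ we get
\begin{equation*}
	\rank_{\ZZ_l} M_n = [F_n:\QQ] - \rank_{\ZZ_l}\widebar{U_n} = r_2(F_n) + 1 + \delta_n .
\end{equation*}
By Fact \ref{Et-32} and Remark \ref{Et-9}, $\RH^1(R_n, \QQ_l/\ZZ_l) \simeq \Hom(M_n, \QQ_l/\ZZ_l)$. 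Hence this number is the $\ZZ_l$-corank of $\RH^1(R_n, \QQ_l/\ZZ_l)$.

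\textbf{Step 2: Euler characteristic.} Tate's global Euler characteristic formula for $G_S$ with $S$ the places above $l$ and $\infty$ is classical Galois cohomology and independent of Borel. Applied to $\ZZ/l^k$, and passing to the colimit over $k$ via Fact \ref{Et-7}, it gives
\begin{equation*}
	\mathrm{corank}\, \RH^1(R_n,\QQ_l/\ZZ_l) - \mathrm{corank}\, \RH^2(R_n,\QQ_l/\ZZ_l) = r_2(F_n) + \mathrm{corank}\,\RH^0 = r_2(F_n)+1 .
\end{equation*}
Hence $\delta_n = \mathrm{corank}_{\ZZ_l} \RH^2(R_n, \QQ_l/\ZZ_l)$.

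\textbf{Step 3: Descent along $R_\infty/R_n$.} We use proétale descent in the form of the Addendum of Lemma \ref{Cond-29}, with $\Gamma^{l^n}$ in place of $\Gamma$, i.e. the fiber sequence $R\Gamma(R_n, -) \to R\Gamma(R_\infty, -) \xrightarrow{\gamma^{l^n}-1} R\Gamma(R_\infty, -)$. Lemma \ref{2-1} kills all $\RH^{p}(R_\infty,\QQ_l/\ZZ_l)$ for $p \geq 2$. The long exact sequence then gives
\begin{equation*}
	\RH^2(R_n,\QQ_l/\ZZ_l) \simeq \mathrm{coker}\big(\omega_n \text{ on } \Hom(M_\infty, \QQ_l/\ZZ_l)\big),
\end{equation*}
which is the Pontryagin dual of $M_\infty[\omega_n]$. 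Its corank is therefore $\rank_{\ZZ_l} M_\infty[\omega_n]$.

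\textbf{Step 4: Bounding $M_\infty[\omega_n]$.} $M_\infty$ is finitely generated over $\Lambda$, so it is quasi-isomorphic to an elementary module as in Remark \ref{1-8}. After $\otimes\QQ$ (Observation \ref{H-5}) the free summands have no $\omega_n$-torsion, and the $\Lambda/(l^{\mu})$ summands vanish. For the summands $\Lambda_\QQ/(P^\lambda)$, the $\omega_n$-torsion has $\QQ_l$-dimension at most $\lambda\deg P$. Hence $\rank_{\ZZ_l} M_\infty[\omega_n] \leq \lambda(M_\infty)$, which is uniform in $n$. This gives $\delta_n \leq \lambda(M_\infty)$.

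The main obstacle is Step 3. The descent spectral sequence must be applied to the solid $\Gamma^{l^n}$-module $R\Gamma(R_\infty, \QQ_l/\ZZ_l)$, and we must check that its $\RH^1$ is really the discrete dual of $M_\infty$ with the correct $\gamma$-action; Lemma \ref{Et-27} and the main observation in the proof of Theorem \ref{M-1} supply this. The rank bookkeeping in Steps 1 and 2 is routine, given the finiteness of $A_n$ and of the finitely many $\ZZ/l^k$-torsion corrections.
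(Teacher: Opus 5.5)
Your argument is correct, but it takes a different route from the paper, which gives no proof of its own here and simply imports the statement from \cite[Lem.~13.30]{Was97}. What you write down is the standard Galois-cohomological proof.

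Steps 1 and 2 give the classical identity $\delta_n=\mathrm{corank}_{\ZZ_l}\RH^2(R_n,\QQ_l/\ZZ_l)$, using class field theory and Tate's global Euler characteristic formula. The passage from $\ZZ/l^k$ to $\QQ_l/\ZZ_l$ is fine: the error terms coming from $0\to\ZZ/l^k\to\QQ_l/\ZZ_l\xrightarrow{l^k}\QQ_l/\ZZ_l\to 0$ are bounded in $k$, because $\RH^i(R_n,\QQ_l/\ZZ_l)$ is cofinitely generated. Steps 3 and 4 then sharpen this to the exact formula $\delta_n=\rank_{\ZZ_l}M_\infty^{\Gamma^{l^n}}=\rank_{\ZZ_l}M_\infty[\omega_n]\leq\lambda(M_\infty)$.

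None of your inputs depends on weak Leopoldt. In particular, finite generation of $M_\infty$ over $\Lambda$ follows from $M_\infty/\omega_0M_\infty\simeq\Gal(L_0/F_\infty)$ and Nakayama.

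What your route gains is that weak Leopoldt becomes a consequence of the very input the paper uses to bypass Borel, namely $\RH^2(R_\infty,\QQ_l/\ZZ_l)=0$ from Lemma~\ref{2-1}, and it comes with an explicit bound. What it costs is one more external input, Tate's Euler characteristic formula, which the paper otherwise does not use. The paper's citation keeps the exposition short but leaves the statement as a black box.

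The step you flag as the main obstacle is easier than you fear. With discrete torsion coefficients no condensed machinery is needed. Ordinary Hochschild--Serre for $G_S(F_\infty)\trianglelefteq G_S(F_n)$ applies, since the quotient $\Gamma^{l^n}\simeq\ZZ_l$ has cohomological dimension $1$. It already gives $\RH^2(R_n,\QQ_l/\ZZ_l)\simeq\Hom(M_\infty,\QQ_l/\ZZ_l)_{\Gamma^{l^n}}$. Its Pontryagin dual is the kernel of $\gamma^{-l^n}-1=-\gamma^{-l^n}\omega_n$ on $M_\infty$, which is exactly $M_\infty[\omega_n]$.
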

\begin{proof}[Proof of Lemma \ref{7-2}]
	By Lemma \ref{7-5}, finiteness of $A_n$ and Proposition \ref{7-9} we have an exact sequence of $\QQ_l$-modules
	\begin{equation*}
		0 \to K_n \to U_n \otimes_\ZZ \QQ_l \to (\prod_{\mathfrak{l} \mid l} U_{\mathfrak{l}}) \otimes_{\ZZ_l} \QQ_l \to M_n \otimes_{\ZZ_l} \QQ_l \to 0
	\end{equation*}
	\footnote{$U_n \otimes \ZZ_l \to \widebar{U_n}$ is surjective or equivalently $U_n \otimes \ZZ_l \to \prod_{\mathfrak{l}\mid l} U_{\mathfrak{l}}$ is closed, as a group homomorphism between profinite groups} with $\rank_{\ZZ_l} K_n = \delta_n$. Hence, combining the formulas in Lemmas \ref{7-8} and \ref{7-6} yields
	\begin{equation}\label{7-7}
		\rank_{\ZZ_l}\epsilon_i \Gal(L_n/F_n) \in [0, \delta_n] + l^n\begin{cases}
			r_2(F) + 1 & \text{, if } i = 0\\
			r_2(F) & \text{, if } i \text{ is even }\neq 0 \\
			r_1(F) + r_2(F) & \text{, if } i \text{ is odd}
		\end{cases}
	\end{equation}
	But by the introduction of \cite[Ch.13.5]{Was97} \footnote{$\mathcal{X}_\infty$ in Washington's notation is $M_\infty$ in our notation, $K_\infty$ is our $F_\infty$, $M_n$ is our $L_n$} we have
	\begin{equation*}
		M_\infty/\omega_nM_\infty \simeq \Gal(L_n/F_\infty)
	\end{equation*}
	and not $\Gal(L_n/F_n) = M_n$ on the right-hand side. Hence, the $\ZZ_l$-extension $F_\infty/F$ still contributes to the $\epsilon_0$-term in formula \eqref{7-7}. Thus
	\begin{equation*}
		\rank_{\ZZ_l}\epsilon_i M_\infty/\omega_nM_\infty \in [0, \delta_n] + l^n\begin{cases}
			r_2(F) & \text{, if } i \text{ is even} \\
			r_1(F) + r_2(F) & \text{, if } i \text{ is odd}
		\end{cases}
	\end{equation*}
	As the weak Leopoldt conjecture holds, we deduce that, by Lemma \ref{2-7}
	\begin{equation*}
		\rank_{\Lambda} \epsilon_iM_\infty = \begin{cases} r_2(F) & \text{, if } i \equiv 0 \mod 2 \\
		r_1(F) + r_2(F) & \text{, if } i \equiv 1 \mod 2
		\end{cases}
	\end{equation*}
	as desired.
\end{proof}
		\subsection{Partial recovery of Borel's computation}\label{C3}
				\subsubsection{A criterion for existence of an "algebraic" proof for Borel}
	As a reminder, Borel proved in \cite{Bor74} for $s \geq 2$:
	\begin{equation*}
			\dim_\QQ K_s(\CalO_F)_\QQ = \begin{cases}
				r_1(F) + r_2(F) &\text{, if } s \equiv 1 \mod 4 \\
				r_2(F) &\text{, if } s \equiv 3 \mod 4 \\
				0 &\text{, else}
			\end{cases}
	\end{equation*}
	Fix a topological generator $\gamma \in \Gamma$. Define $u \coloneqq c(\gamma)$ as the image of $\gamma$ under the cyclotomic character. This determines an identification $\ZZ_l[[\mathrm{Gal}(F_\infty/F_0)]] \simeq \ZZ_l[[T]]$. In this section, we deduce the following equivalent characterizations from the computations in the previous chapters:
	\begin{thm}\label{8-2}
		Let $i \geq 2$. Then the following are equivalent:
		\begin{enumerate}
			\item Borel's rank result holds in homotopical degree $s = 2i-1$.
			\item Borel's rank result holds in homotopical degree $s = 2i-2$ (equivalently, that $K_s(\mathcal{O}_F)$ is finite).
			\item $\mathrm{char}(\epsilon_{1-i}A'_\infty)$ does not have a zero at $u^{1-i}-1$ \footnote{$\mathrm{char}(\epsilon_{1-i}A'_\infty)$ is a principal ideal generated by some polynomial $P$, which we can evaluate at $u^{1-i}-1$. Since generators differ by a unit in $\ZZ_l^\times$ having zeroes at $u^{1-i}-1$ is independent of choice of generator $P$.}.
			\item $A'_\infty \otimes_{\Lambda_F} \QQ_l(1-i) \simeq (\epsilon_{1-i}A'_\infty) \otimes_\Lambda \QQ_l(1-i) = 0$.
		\end{enumerate}
	\end{thm}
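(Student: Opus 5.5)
We will compute both relevant ranks from Corollary \ref{M-6} and compare them with the prediction of \eqref{I-6}. By \eqref{I-6}, (1) and (2) amount to statements about $\dim_{\QQ_l}\mathrm{H}^1(R,\ZZ_l(i))_\QQ$ and $\dim_{\QQ_l}\mathrm{H}^2(R,\ZZ_l(i))_\QQ$. Borel predicts rank $r_1+r_2$ or $r_2$ in degree $2i-1$, according to the parity of $i$, and rank $0$ in degree $2i-2$. The fiber sequence of Corollary \ref{M-6} has third term $R\Gamma(\kappa(\mathfrak{p}),\ZZ_l(i))$. By the descent fiber sequence \eqref{M-16} for $\FF$ (with $\gamma$ acting on $\ZZ_l(i)$ by $u^i\neq 1$), this term is finite in every degree, hence vanishes rationally. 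So rationally
\begin{equation*}
\mathrm{H}^{p}(R,\ZZ_l(i))_\QQ \simeq \mathrm{Ext}^{p-1}_{\Lambda_F}(M_\infty,\ZZ_l(i))_\QQ .
\end{equation*}
Passing to $\Delta$-eigenspaces gives $\mathrm{Ext}^{p-1}_{\Lambda}(\epsilon_i M_\infty,\ZZ_l(i))$. After twisting, this is $\mathrm{Ext}^{p-1}_\Lambda(\epsilon_iM_\infty(-i),\ZZ_l)$, where $\ZZ_l=\Lambda/T$ is trivial.

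Next we compute these Ext groups over $\Lambda_\QQ$ using Observation \ref{H-5}. Write $N=\epsilon_iM_\infty(-i)\otimes\QQ\simeq \Lambda_\QQ^r\oplus \mathrm{tors}$. The rank $r$ is given by Lemma \ref{7-2}: it is $r_1+r_2$ for $i$ odd and $r_2$ for $i$ even, and twisting does not change the rank. The change-of-rings resolution $\Lambda\xrightarrow{T}\Lambda\to\ZZ_l$ gives
\begin{equation*}
\mathrm{Ext}^j_\Lambda(N,\QQ_l)=\mathrm{H}^j\bigl(\mathrm{RHom}_\Lambda(N,\Lambda)\otimes^L_\Lambda\Lambda/T\bigr).
\end{equation*}
Consequently $\dim\mathrm{Ext}^0=r+\dim\mathrm{Hom}(\mathrm{tors},\QQ_l)$ and $\dim\mathrm{Ext}^1=\dim(\mathrm{tors}/T)$. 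Both extra terms are nonzero exactly when $T$ divides $\mathrm{char}(\epsilon_iM_\infty(-i))$. This shows (1)$\Leftrightarrow$(2): each holds iff the torsion of $N$ has no $T$-primary part. The mod-4 bookkeeping then follows from $s=2i-1$ and the parity of $i$, since $i$ odd gives $s\equiv1$ and $i$ even gives $s\equiv 3\pmod 4$.

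For (2)$\Leftrightarrow$(3) we use Corollary \ref{M-5} with $k=i$. We have $\mathrm{char}(\epsilon_iM_\infty(-1))=S(\mathrm{char}(\epsilon_{1-i}A'_\infty))$, with no extra $T^{|S_t|-1}$ factor, since $i\not\equiv 1$ modulo $|\Delta|$ would be needed; this is the delicate case discussed below. Twisting by $\ZZ_l(1-i)$ corresponds to substituting $T\mapsto u^{1-i}(1+T)-1$. The antipode corresponds to $1+T\mapsto(1+T)^{-1}$. Tracking these substitutions, $T\mid\mathrm{char}(\epsilon_iM_\infty(-i))$ iff $P=\mathrm{char}(\epsilon_{1-i}A'_\infty)$ vanishes at $u^{1-i}-1$, possibly up to the sign convention for $u^{\pm(1-i)}$; we fix this by checking against the known case. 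Finally, (3)$\Leftrightarrow$(4) is the statement that for a finitely generated torsion $\Lambda$-module $A$ and a character, $A\otimes_\Lambda\QQ_l(\chi)=0$ iff $\chi(\gamma)-1$ is not a root of $\mathrm{char}(A)$. This follows from the elementary-module decomposition and Remark \ref{1-8}, because $\Lambda_\QQ/(P^k)\otimes\Lambda/(T-a)$ is nonzero iff $P(a)=0$. The $\epsilon$-compatibility is \eqref{S-1}.

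The main obstacle is the eigenspace $i\equiv1 \bmod |\Delta|$. There Corollary \ref{M-5} contributes a factor $T^{|S_t|-1}$, which after twisting by $1-i\neq 0$ becomes a factor that does not vanish at the relevant point. We must verify this, i.e.\ that $u^{1-i}\neq 1$ for $i\ge2$, so that the extra factor is harmless. The other delicate point is getting the twist and antipode substitutions exactly right.
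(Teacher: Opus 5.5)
Your proposal is correct and follows essentially the same route as the paper: Corollary \ref{M-6} with the $\kappa(\mathfrak{p})$-term vanishing rationally, reduction to $\mathrm{Ext}^*_\Lambda(\epsilon_iM_\infty,\QQ_l(i))$, a free part whose rank (Lemma \ref{7-2}) matches Borel's prediction, torsion contributing equally to $\mathrm{Hom}$ and $\mathrm{Ext}^1$ (the paper's Claim \ref{B-2}, which you rederive after twisting to trivial coefficients), and Corollary \ref{M-5} for (2)$\Leftrightarrow$(3). The one thing to fix is the direction of the twist, which you left hedged: tensoring with $\ZZ_l(1-i)$ sends a characteristic polynomial $f(T)$ to $f(u^{i-1}(1+T)-1)$, so $T$ divides $\mathrm{char}((\epsilon_iM_\infty)(-i))$ iff $\mathrm{char}((\epsilon_iM_\infty)(-1))$ vanishes at $u^{i-1}-1$; after the antipode this is exactly $\mathrm{char}(\epsilon_{1-i}A'_\infty)(u^{1-i}-1)=0$, while the extra factor $T^{|S_t|-1}$ evaluates to $(u^{i-1}-1)^{|S_t|-1}\neq 0$ because $u$ has infinite order in $1+l\ZZ_l$, so no appeal to a ``known case'' is needed.
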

	\begin{rem}\label{8-6}
		The most studied Iwasawa-module is $A_\infty$ in our notation. So naturally one might ask how $A'_\infty$ and $A_\infty$ are related:
		For each $n \in \NN$ we have a short right exact sequence
		\begin{equation*}
			\ZZ[S_n] \to A_n \xrightarrow{\text{res}_n} A'_n \to 0
		\end{equation*}
		since $\ZZ[S_n]$ is the free group of cycles lying over $l$ in $\mathcal{O}_{F_n}$. Taking limits we see that the kernel of $\text{res}: A_\infty \to A'_\infty$ is a trivial $\Gamma_F$-module, i.e., it only has non-trivial eigenspace for the trivial character and this eigenspace has characteristic ideal a power of $\gamma - 1 = T$. By multiplicativity of the characteristic ideal in short exact sequences of $\Lambda_F$-torsion modules, we obtain for $i \in \ZZ$
		\begin{equation*}
			\mathrm{char}(\epsilon_iA_\infty) = \mathrm{char}(\epsilon_iA'_\infty) \begin{cases}
				(T)^r &\text{, if } i = 0 \\
				1 &\text{, else}
			\end{cases}
		\end{equation*}
		for some $r \in \NN$. Since the polynomial $T$ does not have a zero at $u^{1-i} - 1$ for $i \neq 1$, we may thus replace $A_\infty'$ by $A_\infty$ in Theorem \ref{8-2}.
	\end{rem}
	\begin{proof}[Proof of Theorem \ref{8-2}]
		\textit{Reduction to étale cohomology computation}:
		As recalled in the introduction (see \ref{I-7}) we have a convergent spectral sequence
		\begin{equation*}
			E^2_{p, q} = \RH^{-p}(R, \ZZ_l(q/2)) \Rightarrow \pi_{p+q}(K^\et(R)\lcomp)
		\end{equation*}
		We argue that it degenerates on the $E^2$-page; note that we use homological Serre grading in the following:
		\begin{enumerate}
			\item $\mathrm{H}^0(R, \ZZ_l(i)) = 0$ for $i \geq 1$: By pro-Galois descent, we see that $\mathrm{H}^0(R, \ZZ_l(i))$ is just the $\Gamma_F$-fixed points of the $\Gamma_F$-module $\ZZ_l(i) = \mathrm{H}^0(R_\infty, \ZZ_l(i))$. But, for $i \neq 0$, there are no fixed points but $0$.
			\item $\mathrm{H}^p(R, \ZZ_l(i)) = 0$ for all $i \in \ZZ$ and $p \geq 3$: This was recalled in Fact \ref{Et-8} for $\ZZ/l^k(i)$-coefficients. By the $\mathrm{lim}^1$-sequence and the finiteness of $\ZZ/l^k(i)$-cohomology groups recalled in Fact \ref{Et-8}, we obtain the desired result.
		\end{enumerate}
		$\Rightarrow$ In homological Serre grading, the spectral sequence therefore has only two non-trivial columns in $p$-degrees $-1$ and $-2$ when one restricts to $q$-degree $1$ and upwards. Moreover, every second row is $0$. Since all non-trivial entries in the first and second quadrant are concentrated in the $p = 0$ row, there are no non-trivial differentials. Pictorially,
		\begin{center}
		\renewcommand{\arraystretch}{1.5}
		\begin{tabular}{c|c c c c c}
			& -3 & -2 & -1 & 0 & 1\\\hline
			2i & 0 & $\mathrm{H}^2(R, \ZZ_l(i))$ & $\mathrm{H}^1(R, \ZZ_l(i))$ & 0 & 0 \\
			2i-1 & 0 & 0 & 0 & 0 & 0 \\
			0 & 0 & $\mathrm{H}^2(R, \ZZ_l)$ & $\mathrm{H}^1(R, \ZZ_l)$ & $\mathrm{H}^0(R, \ZZ_l)$ & 0 \\
			-j & 0 & 0 & 0 & 0 & 0
		\end{tabular}
		\end{center}
		Rationally, $\mathrm{H}^*(\FF, \ZZ_l(i)) = 0$ if $i \neq 0$; so after rationalizing the difference between reduced and unreduced étale cohomology goes away: By proétale descent (see Addendum of Lemma \ref{Cond-29}) we have a fiber sequence
		\begin{equation}\label{M-12}
			R\Gamma(\FF_0, \ZZ_l(i)) \to R\Gamma(\FF_\infty, \ZZ_l(i)) \xrightarrow{\gamma-1} R\Gamma(\FF_\infty, \ZZ_l(i))
		\end{equation}
		Since $R\Gamma(\FF_\infty, \ZZ_l(i))_\QQ \simeq \QQ_l(i)[0]$ as $\Lambda_F$-module it suffices to see that $\gamma$ acts non-trivially on $\QQ_l(i)$ for the fiber to be rationally trivial. Since the order of $\Delta$ is invertible in $\ZZ_l$, $R\Gamma(\FF, \ZZ_l(i))_\QQ$ is a direct summand of $R\Gamma(\FF_0, \ZZ_l(i))_\QQ$, hence, trivial as well for $i \neq 0$. Thus, Corollary \ref{M-6} implies
		\begin{align*}
			\pi_{2i-1}(K^\text{et}(R)^\wedge_l)_\QQ &= \mathrm{Hom}_{\Lambda_F}(M_\infty, \QQ_l(i)) & \pi_{2i-2}(K^\text{et}(R)^\wedge_l)_\QQ &= \mathrm{Ext}^1_{\Lambda_F}(M_\infty, \QQ_l(i))
		\end{align*}
		for $i \geq 2$. Here we used that $\mathrm{Ext}^*_{\Lambda_F}(M_\infty, \_)$ commutes with filtered colimits, since $M_\infty$ is perfect. We can further simplify these formulas, since
		\begin{equation*}
			\mathrm{Ext}^*_{\Lambda_F}(M_\infty, \QQ_l(i)) \simeq \prod_j \mathrm{Ext}^*_\Lambda(\epsilon_jM_\infty, \epsilon_j\QQ_l(i)) \simeq \mathrm{Ext}^*_\Lambda(\epsilon_i M_\infty, \QQ_l(i))
		\end{equation*}
		for $* \in \NN$. The first equivalence comes from the splitting $\mathcal{D}(\Lambda_F) \xrightarrow[\prod_i\epsilon_i]{\simeq} \prod_\Delta \mathcal{D}(\Lambda)$ of categories by taking $\epsilon_i$-eigenspaces and the second equivalence comes from formula \eqref{S-1}.
		\newline\newline
		\textit{Homological algebra over $\Lambda \otimes \QQ$}:
		\begin{claim}\label{B-2}
			Let $N$ be a finitely generated $\Lambda$-torsion module. Then the following are equivalent:
			\begin{enumerate}
				\item $\mathrm{Ext}^1_\Lambda(N, \QQ_l(i)) = 0$
				\item $\mathrm{Hom}_\Lambda(N, \QQ_l(i)) = 0$
				\item $u^i-1$ is not a zero of $\mathrm{char}(N)$
			\end{enumerate}
			\begin{proof}
				By Observation \ref{H-5}, $N \otimes \QQ$ splits into a sum of elementary modules and still carries the information of the characteristic polynomial of $N$. Hence, we may assume $N = \Lambda/P$ for some polynomial $P \in \ZZ_l[T] \subset \Lambda$. But using the obvious projective resolution of $N$ in that case and $\QQ_l(i) = \Lambda_\QQ/(T - (u^i-1))$ we see that kernel and cokernel of
				\begin{equation*}
					\QQ_l \xrightarrow{P(u^i-1)} \QQ_l
				\end{equation*}
				compute $\Hom_\Lambda(N, \QQ_l(i))$ and $\mathrm{Ext}^1_\Lambda(N, \QQ_l(i))$, respectively. So either both of them are $1$-dimensional or $0$-dimensional, depending exactly on whether $P(u^i-1) = 0$ or $\neq 0$.
			\end{proof}
		\end{claim}
		\textit{Conclusion}:
		Denote by $T_i$ the $\Lambda$-torsion submodule of $\epsilon_iM_\infty$. By Observation \ref{H-5}, we have
		\begin{equation*}
			\epsilon_iM_\infty \otimes \QQ \simeq (\Lambda \otimes \QQ)^{\mathrm{rk}_\Lambda \epsilon_i M_\infty} \oplus T_i \otimes \QQ
		\end{equation*}
		Furthermore
		\begin{align}\label{C-2}
			\pi_{2i-1}(K^\text{et}(R)^\wedge_l)_\QQ &= \mathrm{Hom}_\Lambda(T_i \otimes \QQ, \QQ_l(i)) \oplus \QQ_l^{\mathrm{rk}_\Lambda(\epsilon_i M_\infty)} & \pi_{2i-2}(K^\text{et}(R)^\wedge_l)_\QQ &= \mathrm{Ext}^1_\Lambda(T_i \otimes \QQ, \QQ_l(i))
		\end{align}
		\noindent\fbox{%
			\parbox{\textwidth}{%
				Recall Borel's result:
				\begin{equation}
					\dim_\QQ \QQ \otimes K_s\CalO_F = \begin{cases}
						r_1(F) + r_2(F) &\text{, if } s \equiv 1 \mod 4 \\
						r_2(F) &\text{, if } s \equiv 3 \mod 4 \\
						0 &\text{, else}
					\end{cases}
				\end{equation}
				for $s \geq 2$. Recall what we proved in Lemma \ref{7-2}:
				\begin{equation*}
					\rank_{\Lambda} \epsilon_iM_\infty = \begin{cases} r_1(F) + r_2(F) &\text{, if $i$ is odd} \\
						r_2(F) &\text{, if $i$ even}\end{cases}
				\end{equation*}
				\textbf{Main observation}: The ranks predicted by Borel in degree $s = 2i-1$ coincide with those contributed by the $\Lambda$-rank of $\epsilon_i M_\infty$.
			}%
		}
		\newline
		
		Thus we conclude:\newline
		\textit{1 $\Leftrightarrow$ 2}: As observed in \eqref{C-2}, Borel in degree $2i-1$ equivalent to vanishing of $\mathrm{Hom}_\Lambda(T_i \otimes \QQ, \QQ_l(i))$ and Borel in degree $2i-2$ is equivalent to $\mathrm{Ext}^1_\Lambda(T_i, \QQ_l(i)) = 0$ for $i \geq 2$. But these statements are equivalent by Claim \ref{B-2}.
		\newline
		\textit{2 $\Leftrightarrow$ 3}: As observed in \eqref{C-2}, Borel in degree $2i-1$ is equivalent to $\mathrm{Hom}_\Lambda(T_i(-1), \QQ_l(i-1)) = 0$. Again by Claim \ref{B-2}, the latter is equivalent to $\mathrm{char}(T_i(-1))$ not having a zero at $u^{i-1}-1$. By Corollary \ref{M-5}
		\begin{equation*}
			\mathrm{char}(T_i(-1)) = S(\mathrm{char}(\epsilon_{1-i}A'_\infty))T^{r_i}
		\end{equation*}
		for some $r_i \in \NN$. $T$ evaluated at $u^{i-1}-1$ is non-zero for $i \geq 2$. Thus, Borel in degree $2i-1$ is equivalent to $S(\mathrm{char}(\epsilon_{1-i}A'_\infty))$ not vanishing at $u^{i-1}-1$. So it is equivalent to
		\begin{equation*}
			0 \neq S(\mathrm{char}(\epsilon_{1-i}A'_\infty))\vert_{\gamma = u^{i-1}} = \mathrm{char}(\epsilon_{1-i}A'_\infty)\vert_{\gamma = u^{1-i}} = \mathrm{char}(\epsilon_{1-i}A'_\infty)(u^{1-i}-1)
		\end{equation*}
		\textit{3 $\Leftrightarrow$ 4}: The statement is true for any torsion finitely generated $\Lambda_F \otimes \QQ$-module; this we may check on elementary modules with a single non-trivial $\Delta$-eigenspace $N = \epsilon_j N \simeq \Lambda/\mathrm{char}(N)$ for some $j$: Since $\QQ_l(1-i) = \QQ \otimes \epsilon_{1-i}\Lambda_F/(T-(u^{1-i}-1))$, 4 is equivalent to $1-i \not\equiv j \mod \abs{\Delta}$ or $\mathrm{char}(N)$ not having a zero at $u^{1-i}-1$. But that is exactly condition 3.
	\end{proof}
	\begin{rem}
		That Borel implies condition 3 in Theorem \ref{8-2} was already observed by Soulé in \cite{So87}. We are more interested in the converse.
	\end{rem}
	
	\subsubsection{Cases in which the Iwasawa-theoretic condition has an Iwasawa-theoretic proof}
	\begin{rem}\label{I-3}
		We collect some obvious consequences of Theorem \ref{8-2}:
		\begin{enumerate}
			\item Borel's result can be deduced "algebraically" in all but finitely many degrees, since characteristic ideals can only have finitely many roots and the elements $u^{1-k}-1$ are all distinct.
			\item We want to vary the odd prime $l$ in this section. Therefore, we denote by $A_{l, \infty}$ the unramified Iwasawa module for the $l$-cyclotomic extension, $A_\infty$ in our previous notation. Set
			\begin{equation*}
				\lambda_{F, \mathrm{min}} := \min_{l \text{ prime}} \lambda(A_{l, \infty}) \in \NN
			\end{equation*}
			The finite number of degrees in which the criterion fails to provide a proof for Borel is clearly bounded by $\lambda_{F, \mathrm{min}}$ from above, since Borel's result is an integral statement independent of the prime $l$.
			\item\label{I-3-1} If Borel holds for any intermediate number field $F \subset K \subset F_\infty$ in degrees $2i-1/2i-2$, it also holds for $F$ in degrees $2i-1/2i-2$: Observe that $F_\infty = K[\mu_{l^\infty}]$ by the containment assumption, and therefore the a priori two definitions of $A'_\infty$ for $F$ and $K$, respectively, coincide. Moreover, $\Lambda_K \subset \Lambda_F$ is naturally a subring by inclusion of Galois groups. Therefore, the map
			\begin{equation*}
				A'_\infty \otimes_{\Lambda_K} \QQ_l(1-i) \to A'_\infty \otimes_{\Lambda_F} \QQ_l(1-i)
			\end{equation*}
			is surjective. By the main Theorem \ref{8-2}, we therefore arrive at the desired result.
		\end{enumerate}
	\end{rem}
	Recall the following criterion for $A_\infty = 0$.
	\begin{lem}\label{I-4}
		Let $K_\infty/K$ be a $\ZZ_l$-extension of a number field $K$ such that
		\begin{enumerate}
			\item There is only one prime above $l$ in $K$ which is totally ramified.
			\item $l \nmid h_K$
		\end{enumerate}
		Then the inverse limit of $l$-Sylow subgroups of the class groups $A_\infty$ along the norm maps is trivial.
		\begin{proof}
			Identify $\ZZ_l[[\Gal(K_\infty/K)]] \simeq \ZZ_l[[T]]$, depending on a choice of generator. Denote by $A_n$ the $l$-Sylow subgroup of the class group of $K_n = K_\infty^{l^n\ZZ_l}$. Then under both hypothesis $A_n = A_\infty/\omega_n A_\infty$ by \cite[Lem.13.15]{Was97}. In the case $n = 0$, the second hypothesis implies $A_\infty = TA_\infty \subseteq (T, l)A_\infty$. Hence, by Nakayama, $A_\infty = 0$.
		\end{proof}
	\end{lem}
	\begin{ex}
		We call $l$ a regular prime if $l \nmid h_{\QQ(\zeta_l)}$. This is exactly Condition 2 in Lemma \ref{I-4} for the extension $\QQ(\mu_{l^\infty})/\QQ(\zeta_l)$. Condition 1 holds by ramification theory for cyclotomic extensions. An example of an odd regular prime number is $3$. The existence of infinitely many regular primes is not known.
	\end{ex}
	\begin{cor}
		Borel can be proven "algebraically" for $\QQ(\zeta_{l^k})$ for $l$ a regular prime and any $k \in \NN$. In particular, since there are regular primes (e.g., $3$), this gives an algebraic proof of Borel for $\QQ$.
		\begin{proof}
			In all of these cases $A_\infty = 0$ by the above discussion, hence, condition $3$ in Theorem \ref{8-2} is trivially satisfied.
		\end{proof}
	\end{cor}
	\begin{rem}[Bounds on the $\lambda$-invariant]
		In \cite[Cor.4.6]{Ang07}, the following bound on the $\lambda$-invariant of an abelian number field $F$ is proven:
		Let $N$ be the conductor of $F$, i.e., the smallest $N \in \NN$ such that $\QQ(\zeta_N) \supseteq F$. Let $n$ be maximal with $l^n \mid N$ and $d \coloneqq N/l^n$. Then for $\lambda_l(F)$ the $\lambda$-invariant of the unramified Iwasawa module of the $l$-cyclotomic extension $F_\infty^\Delta/F$\footnote{Here $F_\infty^\Delta$ denotes the fixed field of $F_\infty$ by the action of $\Delta$. So this is not the cyclotomic extension $F_\infty/F$, which we usually consider in this paper.}, satisfies
		\begin{equation*}
			\lambda_l(F) \leq 2(\frac{l-1}{2}\phi(d))^{\phi(l-1)+1}
		\end{equation*}
		This yields a bound for the number of degrees in which the algebraic argument as is might not cover Borel. This bound is unfortunately too large to prove Borel that way.
	\end{rem}
	One of the few instances in which more is known about the characteristic ideal/polynomial of $A_\infty$ is the classical Iwasawa main conjecture, proven by Wiles for totally real fields. Claiming that these statements and their proofs are "algebraic" is a bit of a stretch. Borel's proof is hard but still easier to understand for most people than the proof of the main conjecture. So the following should be taken as a thought experiment.
	
	We use the notation in \cite{Co77}:
	\begin{fact}
		Let $\mathfrak{m}$ be an ideal of $\CalO_F$. A Dirichlet character mod $\mathfrak{m}$ is a character $\chi: \text{Cl}(\mathfrak{m}) \to S^1 \subset \CC^\times$ of the ray class group $\text{Cl}(\mathfrak{m})$, where $\mathfrak{m}$ is minimal with this property. Via the Artin map of global class field theory, $\chi$ is a faithful character of the Galois group of some finite abelian extension $F_\chi$ of $F$ with conductor $\mathfrak{m}$. Associated to such a character $\chi$, there is a Hecke $L$-function defined on the right half-plane $\{s \in \CC \vert \Re(s) > 1\}$
		\begin{align*}
			L(s, \chi) = \prod\limits_{\mathfrak{p} \nmid \mathfrak{m}} \frac{1}{1-\chi(\mathfrak{p})N(\mathfrak{p})^{-s}}
		\end{align*}
		As for any type of complex $L$-function, there is an analytic continuation to $\CC\setminus\{1\}$ and a functional equation relating the values at the $L$-function and the substitution $s \rightsquigarrow 1-s$ as meromorphic functions.
		
		There is also a presentation in terms of partial zeta functions $\zeta_F(\sigma, s)$ for $\sigma \in \Gal(F_\chi/F)$ (or equivalently, $\sigma$ a class of idèles; the latter formulation can be found in Neukirch \cite[Ch.VII, Sect.6]{Neu06}):
		\begin{equation*}
			L(\chi, s) = \sum\limits_{\sigma \in \Gal(F_\chi/F)} \chi(\sigma) \zeta_F(\sigma, s) \in \mathbb{C}
		\end{equation*}
		The fact that the numbers $\zeta_F(\sigma, 1-n) \in \QQ$ for $n \geq 1$ and $F$ totally real was proven independently by Siegel and Klingen \cite{Kli62}. Choosing an identification of the roots of unity of $\widebar{\QQ}_l$ and $\CC$ we can define:
		\begin{equation*}
			L(\chi, 1-k) \coloneqq \sum\limits_{\sigma \in \Gal(F_\chi/F)} \chi(\sigma) \zeta_F(\sigma, 1-k) \in \QQ_l
		\end{equation*}
		
		Let $F$ be totally real. Then there exists an $l$-adic $L$-function $L_l:\ZZ_l \setminus \{1\} \to \CC_l$, associated to any character $\chi: \Delta \to \ZZ_l^\times$, uniquely determined by interpolating the values $L(\chi, 1-k) \in \QQ_l$ twisted by some extra term. In the case $\chi$ being an odd power of the Teichm\"uller character, this means
		\begin{equation*}
			L_l(\omega^k, 1-t) = L(\omega^{k-t}, 1-t)\underbrace{\prod\limits_{\mathfrak{l} \mid l}(1-\omega^{k-t}(\mathfrak{l})N(\mathfrak{l})^{t-1})}_{\neq 0}
		\end{equation*}
		for $t, k \in \NN$. As stated in \cite[Sect.5]{Co77}, there exists $G_i \in \ZZ_l[[T]]$ having the property
		\begin{equation*}
			L_l(\omega^{1-i}, s) = G_i(u^s-1)
		\end{equation*}
		for all $s \in \ZZ_l/\{1\}$.
	\end{fact}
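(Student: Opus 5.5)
The plan is to build the $l$-adic $L$-function as the Mellin transform of an $l$-adic measure on $\Gamma_F\simeq\Delta\times\Gamma$. The power series $G_i$ is then the element of $\Lambda\simeq\ZZ_l[[T]]$ attached to the $\omega^{1-i}$-component of that measure. Concretely, fix the finite extensions $F_n$ of the cyclotomic tower (adjoining a suitable auxiliary conductor prime to $l$ if needed). For each $n$, form the Stickelberger-type element
\begin{equation*}
	\theta_n(1-k)\coloneqq \sum_{\sigma\in \Gal(F_n/F)} \zeta_F(\sigma,1-k)\,\sigma^{-1}\in \QQ[\Gal(F_n/F)],
\end{equation*}
with the Euler factors at primes above $l$ removed. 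By Siegel--Klingen, as recalled in the Fact, these elements have rational coefficients.

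The first step is to show that $\theta_n(1-k)$ is $l$-integral once smoothed by an auxiliary element $(1-c^{k}\sigma_c^{-1})$, for $c$ an ideal prime to $l\mathfrak{m}$. The second step is to prove the \emph{Kummer-type congruences}: the family $\{(1-c^k\sigma_c^{-1})\theta_n(1-k)\}_n$ is compatible under the projections $\ZZ_l[\Gal(F_{n+1}/F)]\to \ZZ_l[\Gal(F_n/F)]$. Moreover, twisting by $c^{k}$ in the cyclotomic character gives
\begin{equation*}
	\theta_n(1-k)\equiv c(\_)^{k-1}\cdot \theta_n(0)\mod l^n
\end{equation*}
in the appropriate sense. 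Together these define a single element $\mu_c\in\Lambda_F$ whose integral against $c^{k-1}$ recovers the Euler-factor-modified values $L(\chi,1-k)$ for every character $\chi$ of finite order. This is exactly the interpolation property stated for $L_l(\omega^k,1-t)$.

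For the integrality and congruences, I would follow Deligne--Ribet: realize the partial zeta values at negative integers as constant terms of Hilbert--Eisenstein series. Their higher Fourier coefficients are finite sums $\sum_{\mathfrak{a}\mid(\nu)}N(\mathfrak{a})^{k-1}$, which are visibly integral and satisfy the required congruences in $k$ and $n$. The $q$-expansion principle then transfers these congruences to the constant terms. As a fallback for abelian $F$, Cassou-Noguès' explicit Shintani cone decomposition gives the same congruences combinatorially.

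With the measure in hand, project to the $\omega^{1-i}$-eigenspace via $\epsilon_{1-i}$ and write $\epsilon_{1-i}\mu_c=g_c(T)\in\ZZ_l[[T]]$, with $\gamma\mapsto 1+T$ as in Remark~\ref{1-8}. Integrating $x\mapsto c(x)^{s}$ against $\gamma$ amounts to evaluating at $T=u^{s}-1$. Dividing by the power series $h_c(T)$ of the smoothing factor then gives $G_i=g_c/h_c$, which is independent of $c$ by the interpolation property. It lies in $\ZZ_l[[T]]$ because $\omega^{1-i}$ is nontrivial for $i\not\equiv 1$; in the trivial-character case one would only obtain a pole at $s=1$, which explains the excluded point. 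Uniqueness follows since a power series is determined by its values on the infinite set $\{u^{1-t}-1\}$, by the Weierstra\ss{} preparation Lemma.

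The main obstacle is the second step, the integrality and congruence statements for the partial zeta values. This is where the genuinely analytic or automorphic input (Hilbert modular forms) enters; everything else is formal Iwasawa-algebra bookkeeping with the $\epsilon_i$ and the identification $\Lambda\simeq\ZZ_l[[T]]$.
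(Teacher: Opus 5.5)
Your proposal is correct as a sketch, but it takes a genuinely different route from the paper. The paper does not prove this Fact at all. It recalls Hecke's continuation and functional equation (via Neukirch), the Siegel--Klingen rationality theorem, and the existence of $l$-adic $L$-functions and of the power series $G_i$ as recorded in Coates' survey, which in turn rests on Deligne--Ribet (or Cassou-Nogu\`es/Barsky).

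You instead reconstruct the Deligne--Ribet argument: smoothed Stickelberger elements, the distribution relation along $F_n/F$, the twist congruences in $k$, the resulting element of $\Lambda_F$, projection by $\epsilon_{1-i}$, and the Mellin transform $T\mapsto u^s-1$. The last step may need to be composed with an automorphism $T\mapsto u^{a}(1+T)^{\pm 1}-1$ of $\ZZ_l[[T]]$ to match the paper's normalization.

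What your route buys is transparency in the formal Iwasawa-algebra part. It shows why $G_i$ is integral exactly when $\omega^{1-i}\neq 1$, and why it is unique, since a nonzero element of $\ZZ_l[[T]]$ has only finitely many zeros in the disc. It also isolates the single deep input. It does not make the Fact less dependent on the literature. The integrality and congruences of smoothed partial zeta values via the $q$-expansion principle for Hilbert modular forms are precisely the main theorem of Deligne--Ribet. You also take the complex-analytic parts as given, just as the paper does.

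Points to tighten:

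\emph{Choice of $\mathfrak c$.} The division $G_i=g_c/h_c$ lands in $\ZZ_l[[T]]$ only for a good choice of $\mathfrak c$. The reduction of $\epsilon_{1-i}h_c$ modulo $(l,T)$ is $1-\psi(\sigma_{\mathfrak c}\vert_\Delta)$, with $\psi=\omega^{\pm(1-i)}$ depending on normalizations. So by Chebotarev you can make $h_c$ a unit in $\Lambda$ if and only if $\omega^{1-i}\neq 1$, and independence of $\mathfrak c$ then gives integrality.

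\emph{Trivial branch.} Your remark here is right, and sharper than the Fact as literally stated. For $i\equiv 1 \pmod{\abs{\Delta}}$, $L_l(\mathbf{1},s)$ has at most a simple pole at $s=1$, and for e.g.\ $F=\QQ$ the pole is genuine. So $G_i$ need not lie in $\ZZ_l[[T]]$; only its product with a factor vanishing at $T=u-1$ does. This matches the factor $T+(1-u)$ the paper attaches to $G_1$ in Theorem \ref{C-1}.

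\emph{Minor points.} The compatibility of the $\theta_n$ under projection is the exact distribution relation, not a congruence. Cassou-Nogu\`es' Shintani-cone method covers all totally real $F$; for abelian $F$ the cheap fallback is Kubota--Leopoldt via factorization into Dirichlet $L$-functions. Finally, you use $c$ both for the cyclotomic character and for the smoothing ideal.
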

	Now Iwasawa's main conjecture for $F$ a totally real field reads as follows \cite[Sect.5.1]{Co77}:
	\begin{thm}\label{C-1}
		For odd $i$ we have
		\begin{equation*}
			\mathrm{char}(\epsilon_iA_\infty) = \begin{cases}
				(T + (1-u))(G_i) &\text{, if } i = 1 \\
				(G_i) &\text{, else}
			\end{cases}
		\end{equation*}
	\end{thm}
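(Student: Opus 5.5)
The plan is not to reprove this from scratch. Theorem \ref{C-1} is the main conjecture for totally real $F$ in the formulation of \cite[Sect.5.1]{Co77}, proved by Wiles. The work therefore consists of matching notation with the literature and then recalling the two halves of the standard argument.

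\textbf{Step 1: matching notation.} Our $A_\infty$ is the inverse limit of the $l$-parts of class groups along $F_\infty=F(\mu_{l^\infty})/F$, with the $\Delta$-action described in Section \ref{A0}. Since $F$ is totally real, the odd eigenspaces $\epsilon_iA_\infty$ with $i$ odd make up exactly the minus part $A_\infty^-$ for complex conjugation $\tau\in\Delta$. The $\Lambda$-structure comes from the generator $\gamma$ with $c(\gamma)=u$. I would check that our choice of $T$, our use of $\epsilon_i$ (with $\omega^{-i}$ in the idempotent) and the twist conventions agree with Coates' $G_i$. In particular, the interpolation $L_l(\omega^{1-i},s)=G_i(u^s-1)$ should attach $G_i$ to the eigenspace $\epsilon_i$. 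Otherwise one passes through the antipode $S$ as in Corollary \ref{M-5}. Remark \ref{8-6} shows that nothing changes between $A_\infty$ and $A'_\infty$ for $i\neq 0$.

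\textbf{Step 2: the divisibility $(G_i)\mid\mathrm{char}(\epsilon_iA_\infty)$.} This is the hard direction and the main obstacle. I would follow Ribet's method as extended by Mazur--Wiles and Wiles. One writes down $\Lambda$-adic Hilbert Eisenstein series over $F$ whose constant term is essentially $G_i$. Modulo a height one prime dividing $G_i$, such a series is congruent to a $\Lambda$-adic cusp form. The Galois representations attached to these cusp forms are then reducible modulo that prime. Using Galois cohomology, the resulting extension classes produce unramified abelian $l$-extensions of $F_\infty$ lying in the $\omega^i$-eigenspace of $M_\infty$ (in its unramified quotient). Done carefully, including multiplicities, this yields the divisibility. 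For $i=1$ the constant term of the Eisenstein family acquires the extra factor $T+(1-u)$, which reflects the pole of $\zeta_F$ at $s=1$. That is the source of the exceptional case.

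\textbf{Step 3: equality via counting.} Both ideals are generated by elements of the same $\mu$- and $\lambda$-invariants. I would compare the growth formula for $|A_n^-|$ (the remark following Lemma \ref{2-7}) with the analytic class number formula for the minus part. That formula gives $|A_n^-|$ up to bounded factors as the product over odd characters of $\mathrm{Gal}(F_n/F)$ of $L(\chi,0)$, i.e.\ of the values $G_i(\zeta-1)$ with $\zeta$ running over $l^n$-th roots of unity. Reading off $\lambda$ and $\mu$ from both sides and summing over the odd $i$ shows that the product of the $\mathrm{char}(\epsilon_iA_\infty)$ has the same invariants as $\prod_i G_i$, corrected at $i=1$. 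The eigenspace-wise divisibilities of Step 2 then force equality in each eigenspace. This step relies on the Siegel--Klingen rationality and on Deligne--Ribet's construction of $L_l$ quoted in the Fact above.
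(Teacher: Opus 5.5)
This matches the paper. The paper does not prove Theorem~\ref{C-1}; it quotes it as Wiles' theorem in Coates's formulation \cite[Sect.5.1]{Co77}, so the only real work is the convention check in your Step~1. If the source is phrased via the unramified-outside-$l$ module, the translation to $\epsilon_iA_\infty$ is the twist-and-antipode of Corollary~\ref{M-5}. Your Steps~2--3 correctly outline the Mazur--Wiles/Wiles argument behind that citation: Ribet-style divisibility $(G_i)\mid\mathrm{char}(\epsilon_iA_\infty)$, upgraded to equality via the analytic class number formula. The paper does not reproduce that argument.
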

	The main conjecture only treats odd eigenspaces of $A_\infty$. For the even eigenspaces we need the following conjecture of Greenberg; this conjecture is implied by the Kummer-Vandiver conjecture in the case $F = \QQ(\zeta_l)^+$ being the maximal totally real subfield of $\QQ(\zeta_l)$:
	\begin{conj}
		Let $F$ be totally real, and $F_\infty^{\Delta} = F_\text{cyc}/F$ be the cyclotomic $\ZZ_l$-extension of $F$ for a fixed prime $l$. Then the associated unramified Iwasawa module $\tilde{A}_\infty$ is finite.
	\end{conj}
	\begin{rem}
		Denote by $(\_)_{(l)}$ the localization at all primes $\neq l$. To apply Greenberg's conjecture, recall
		\begin{equation*}
			\bigoplus\limits_{i \text{ even}} \epsilon_i\Pic(\CalO_{F_n}[1/l])_{(l)} = (\Pic(\CalO_{F_n}[1/l])_{(l)})^{C_2} = (\Pic(\CalO_{F_n^+}[1/l])_{(l)})
		\end{equation*}
		for all $n$ \footnote{Indeed, the Hochschild-Serre spectral sequence for the sheaf $(\GG_m)_{(l)}$ (see \cite[Thm.14.9]{Mil80}) associated to the $C_2$-Galois covering $\CalO_{F_n}[1/l]/\CalO_{F_n^+}[1/l]$ has only one possibly non-trivial row consisting of the $C_2$-fixed points, since $(l, \abs{C_2}) = (l, 2) = 1$}.
		
		Taking limits, we obtain $A'_\infty = (A'_\infty)^{C_2} = \lim_n \Pic(\CalO_{F_n^+}[1/l]) \otimes_\ZZ \ZZ_l$. The latter is finite by Greenberg's conjecture.
	\end{rem}
	\begin{lem}
		Let $F$ be totally real. Assuming Greenberg's conjecture for $F$, there is an Iwasawa-theoretic proof of Borel for all intermediate fields of $F_\infty/F$.
		\begin{proof}
			By Remark \ref{I-3}, item \ref{I-3-1}, it suffices to prove Borel for $F_n$ for all $n \in \NN$. Thus it suffices to treat the case $F = F_n^+$ which implies $F_n = F_0$. In that case, $F_0/F$ is a $C_2$-extension.
			
			In this case, the characteristic ideals $\mathrm{char}(\epsilon_iA_\infty)$ for $i = 0, 1$ are exactly $\mathrm{char}(A_\infty^\pm)$. By assuming Greenberg's conjecture $\mathrm{char}(\epsilon_0A_\infty) = 1$. By the main conjecture for totally real fields (see \ref{C-1}) it remains to show that for $k \geq 2$ $\mathrm{char}(A_\infty^-)(u^{1-k}-1) \neq 0$: Observe that $(T + (1-u))\vert_{T = (u^{1-k}-1)} \neq 0$. Thus we get the chain of equivalences:
			\begin{align*}
				\mathrm{char}(A_\infty^-)(u^{1-k}-1) \neq 0 &\Leftrightarrow G_1(u^{1-k}-1) \neq 0 \Leftrightarrow L_l(1, 1-k) \neq 0 \\ 
				&\Leftrightarrow 0 \neq L(\omega^{-k}, 1-k)
			\end{align*}
			The following, proven in Lemma \ref{F-1}, finishes the proof.
			\begin{claim}
				Let $F$ be totally real, and let $L(\chi, s)$ be the generalized Dirichlet $L$-function associated to a generalized Dirichlet character $\chi$. Then
				\begin{equation*}
					L(\omega^i, 1-j) = 0 \Leftrightarrow \omega^i \text{ has sign } -(-1)^j \Leftrightarrow i + j \equiv 1 (2)
				\end{equation*}
			\end{claim}
		\end{proof}
	\end{lem}
		\appendix
		\section{Weight computation from this perspective}
		\begin{fact}
	Denote by $K[1/r]: \mathrm{Sch}_\mathrm{qcqs}^\op \to \mathrm{Sp}$ the $K$-theory presheaf pointwise localized at $r \in \NN$. By \cite{Gr92}, there exists an operation $\psi^r: K[1/r] \Rightarrow K[1/r]$ having two properties:
	\begin{enumerate}
		\item $\psi^r$ is a morphism of commutative algebras in $\mathrm{PSh}(\mathrm{Sch}_\mathrm{qcqs}; \mathrm{Sp})$
		\item There is homotopy of commutative monoids making
		\[\begin{tikzcd}
			\mathrm{Pic}(\_) \arrow[r, "(\_)^{\otimes r}"] \arrow[d] & \mathrm{Pic}(\_) \arrow[d] \\
			\Omega^\infty K[1/r] \arrow[r, "\psi^r"] & \Omega^\infty K[1/r]
		\end{tikzcd}\]
		commute. Here $\Omega^\infty: \mathrm{PSh}(\mathrm{Sch}_\mathrm{qcqs}; \mathrm{Sp}) \to \mathrm{PSh}(\mathrm{Sch}_\mathrm{qcqs}; \mathrm{An})$ denotes the precomposition by the underlying space functor $\Omega^\infty: \mathrm{Sp} \to \mathrm{An}$.
	\end{enumerate}
\end{fact}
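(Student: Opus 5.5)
The plan is to build $\psi^r$ from the $r$-th tensor power functor on vector bundles. That functor is symmetric monoidal for $\otimes$, but it is not additive; it is a polynomial functor of degree $r$. The defect from additivity is accounted for by transfers from proper subgroups of $C_r$, and inverting $r$ kills exactly this defect. Property (2) should then come out almost by construction, and property (1) from the symmetric monoidality of $\otimes$. Throughout I work with $X$ affine first and extend to qcqs schemes at the end.

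\textbf{Step 1: a ring map into homotopy fixed points.} For $X$ qcqs, consider the functor
\begin{equation*}
	\mathrm{Vect}(X) \to \mathrm{Vect}(X)^{BC_r}, \qquad E \mapsto E^{\otimes r},
\end{equation*}
with $C_r$ permuting the factors. It is natural in $X$ and symmetric monoidal for $\otimes$. Applying the universal property of $K$-theory as the group completion of polynomial functors, as in Grayson's Koszul-complex model \cite{Gr92}, I get a map of $E_\infty$-ring presheaves
\begin{equation*}
	P_r : K \to \bigl(K^{hC_r}\bigr)/\mathrm{tr},
\end{equation*}
where the target is the cofiber of the transfers from proper subgroups. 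Concretely, this amounts to applying Grayson's construction to the cubes of exact sequences that witness non-additivity, and showing that their contributions are transfers. For $r$ not prime one runs through the lattice of subgroups of $C_r$; alternatively, for $r$ composite, build $\psi^r$ from the prime factors and compose.

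\textbf{Step 2: pass to $K[1/r]$.} After inverting $r$, both the Tate construction and the relevant higher cohomology of $C_r$ vanish. Hence
\begin{equation*}
	K[1/r]^{hC_r} \simeq K[1/r] \otimes \mathbb{Z}[1/r][C_r]^{C_r}\text{-coinvariants},
\end{equation*}
and the quotient by transfers identifies with $K[1/r]$ itself, via the projection onto the part not induced from proper subgroups (a character-idempotent argument, valid since $1/r$ is present). Composing $P_r$ with this identification defines $\psi^r$. Since every map in the composite is $E_\infty$, this gives property (1).

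\textbf{Step 3: compatibility with $\mathrm{Pic}$.} For a line bundle $L$ we have $L^{\otimes r}$ with trivial $C_r$-action up to the sign character, and no transfer contribution occurs. Therefore the restriction of $P_r$ along $\mathrm{Pic} \to \Omega^\infty K$ factors as
\begin{equation*}
	L \mapsto L^{\otimes r} \mapsto [L^{\otimes r}].
\end{equation*}
The required homotopy of commutative monoids comes from the fact that the symmetric monoidal functor $(\_)^{\otimes r}$ on the Picard groupoid already lands in $\mathrm{Pic}$, so the square commutes at the level of symmetric monoidal groupoids before group completion. Finally, Zariski descent for $K[1/r]$ extends everything from affines to qcqs schemes.

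\textbf{Main obstacle.} The hard part is Step 1 with full $E_\infty$-coherence. Grayson's original construction gives $\psi^r$ as a map of spaces compatible with products only up to homotopy. To upgrade it, I would try the modern framework of polynomial functors between stable $\infty$-categories: a degree-$r$ symmetric monoidal polynomial functor induces an $E_\infty$ map on $K$-theory modulo the lower-degree layers. I would check that the quotient by transfers is exactly the degree-$r$ layer. The identification in Step 2 is a routine computation once $r$ is inverted.
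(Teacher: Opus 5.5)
The paper gives no argument for this Fact beyond citing Grayson's Koszul-complex construction; the remark after it invokes the Annala--Iwasa Snaith theorem only for uniqueness. Your tensor-power route is therefore genuinely different from what the paper relies on. As written, however, Steps 1--2 do not produce $\psi^r$, and the problem is the target.

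\textbf{Borel fixed points lose what you need.} You invert $r$ first and then take fixed points. Then $K[1/r]^{hC_r}\simeq K[1/r]$, because the action is trivial and $\Sigma^\infty BC_r[1/r]\simeq 0$. Under this identification the transfer from the trivial subgroup is multiplication by $r$, which is now an equivalence, so $(K[1/r]^{hC_r})/\mathrm{tr}\simeq 0$. For $r=p$ your target is essentially $K^{tC_p}$ and $P_p$ is the Tate-valued Frobenius, whose vanishing after inverting $p$ you note yourself. Inverting $r$ only afterwards does not help either, and that is in fact all $P_r$ gives you, since $(-)^{hC_r}$ does not commute with inverting $r$. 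For bounded below $K$ the spectrum $K^{tC_p}$ is $p$-complete, so $K^{tC_p}[1/p]$ has $\QQ_p$-vector spaces as homotopy groups and is not $K[1/p]$. A ring map to $0$ cannot be $\psi^r$, so your claimed identification of the quotient by transfers with $K[1/r]$ is false. What survives inverting $r$ is \emph{genuine} equivariant information. You would need the $K$-theory of $C_r$-equivariant bundles, $K(\mathrm{Vect}(X)^{BC_r})$, which has representation-ring rather than $K^0(BC_r)$-type coefficients.

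\textbf{Even the genuine version does not reach the statement.} Your ``character idempotents'' require $r\in\mathcal{O}_X^\times$ and $\mu_r\simeq\ZZ/r$ on $X$, not merely $r$ inverted in the coefficients. Under those hypotheses $K(\mathrm{Vect}(X)^{BC_r})\simeq\bigoplus_{\chi\in C_r^\vee}K(X)$. Using $\ZZ[1/r][t]/(t^r-1)\cong\prod_{d\mid r}\ZZ[1/r][\zeta_d]$, the quotient by transfers from proper subgroups is $K[1/r][\zeta_r]$, not $K[1/r]$ (for $r>2$). To land in $K[1/r]$ you would still need $\mathrm{Aut}(C_r)$-equivariance of $P_r$ and Galois descent along $\ZZ[1/r]\to\ZZ[1/r,\zeta_r]$.

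When $r$ is not a unit on $X$ the method fails completely. Take $X=\mathrm{Spec}\,\FF_p$ and $r=p$. D\'evissage gives $K(\mathrm{Rep}_{\FF_p}C_p)\simeq K(\FF_p)$. The transfer is multiplication by $p$, since an induced module has a natural filtration with $p$ trivial subquotients. So the quotient by transfers vanishes after inverting $p$, while $\psi^p$ must send $1\mapsto 1$. The Fact concerns presheaves on all qcqs schemes, and the paper applies it with $k$ not invertible on $\CalO_F[1/l]$. This is therefore a real gap, in addition to the $E_\infty$-coherence of Step 1, which you leave open.

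\textbf{A route that works.} The Snaith presentation $K\simeq\Sigma^\infty_+\mathbb{P}^\infty[\beta^{-1}]$, which the paper's remark cites for uniqueness, also gives existence. The $r$-th power map of $\mathbb{P}^\infty=B\mathbb{G}_m$, i.e.\ $L\mapsto L^{\otimes r}$, is an $E_\infty$ map. It multiplies $\beta$ by $r$, since $1-[\mathcal{O}(-r)]=r(1-[\mathcal{O}(-1)])$ in $K^0(\mathbb{P}^1)$. Hence it induces an $E_\infty$ endomorphism of $K[1/r]$, and (2) holds by construction.

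A minor point on Step 3: the cyclic permutation acts trivially on $L^{\otimes r}$, because the symmetry of $\otimes$ on a line bundle is the identity. There is no sign character; if there were one, (2) would fail for even $r$.
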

\begin{rem}
	The Adams operations in topology are uniquely determined by the topological analogue of these two properties; this is a consequence of Snaith's theorem \cite{Sn79}. Using the motivic version of Snaith's theorem \cite{AI22} one can prove the uniqueness for algebraic K-theory as well.
\end{rem}
\begin{fact}\label{W-2}
	The category $\mathrm{Shv}(\mathrm{Sch}_\mathrm{qcqs, \et}; \mathrm{Sp})$ admits a $t$-structure whose heart is the category of abelian-group-valued sheaves \footnote{More generally, for any $\infty$-topos $\mathcal{X}$ its stabilization $=$ spectrum objects in $\mathcal{X}$ admits a $t$-structure with connective part $\mathrm{CGrp}(\mathcal{X})$ and heart $\mathrm{CGrp}(\tau_{\leq 0}\mathcal{X})$ \cite[Prop.1.3.2.7]{Lur18}.}. Denote by $\pi_q^\et(\_)$ the homotopy groups valued in $\mathrm{Shv}(\mathrm{Sch}_\mathrm{qcqs, \et}; \mathrm{Ab})$. Recall the following facts:
	\begin{enumerate}
		\item For $k \in \NN$
		\begin{equation*}
			\pi_q^\et(K^\text{et}/l^k) \simeq \begin{cases}
				\ZZ/l^k(q/2) &\text{, if } 2 \mid q \\
				0 &\text{, else}
			\end{cases}
		\end{equation*}
		A proof of this goes back to \cite[Sect.4]{Tho85}. Working through the proof, the identification happens as follows:
		The composite in $\mathrm{PSh}(\mathrm{Sch}_\mathrm{qcqs}; \mathrm{An})$
		\begin{equation}\label{W-1}
		\begin{tikzcd}
			B\mathbb{G}_m \arrow[r] & \Omega^\infty\tau_{\geq 1}^\et(K^\text{et}/l^k) \arrow[r] & B^2\pi_2^\text{et}(K^\text{et}/l^k)
		\end{tikzcd}
		\end{equation}
		becomes a map of commutative group objects, identifying the target as the cofiber by $l^k$ of the source.
		\item\label{W-2-1} Taking the limit shows that $(K^\et)^\wedge_l$ has a multiplicative filtration with associated graded given by
		\begin{equation*}
			\mathrm{gr}_q (K^\text{et})^\wedge_l \simeq \begin{cases}
				\ZZ_l(q/2) &\text{, if } 2 \mid q \\
				0 &\text{, else}
			\end{cases}
		\end{equation*}
		By \cite[Lemma 6.7]{BEM25}, the underlying $\mathbb{E}_1$-algebra is free on its degree-$2$ part over its degree-$0$ part.
		\item This is the filtration which gives rise to the étale $K$-theory spectral sequence.
	\end{enumerate}
\end{fact}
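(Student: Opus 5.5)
The plan is to verify the three items in order, working throughout in the stabilization of the étale $\infty$-topos. For the $t$-structure I invoke \cite[Prop.1.3.2.7]{Lur18}, as in the footnote. Then $\pi_q^\et$ is the étale sheafification of the presheaf $U \mapsto \pi_q(K^\et/l^k)(U)$. Since $K^\et$ is by definition the étale sheafification of $K$, this agrees with the sheafification of $U \mapsto \pi_q(K(U)/l^k)$. So item 1 can be checked on stalks, i.e. on strictly henselian local rings $A$ with $l \in A^\times$. One should restrict to schemes where $l$ is invertible, which is the only case used in the paper.

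\textbf{Item 1.} At a stalk I apply Gabber rigidity: $K(A)/l^k \to K(\kappa)/l^k$ is an equivalence, where $\kappa$ is the separably closed residue field. Suslin's computation then gives $\pi_*(K(\kappa)/l^k) \simeq \ZZ/l^k[\beta]$ for $l$ odd, with $\beta$ in degree $2$. This yields the vanishing in odd degrees. To identify $\pi_2$ canonically with $\mu_{l^k}$, I use the map \eqref{W-1}.
\begin{itemize}
\item The determinant/line-bundle map $\mathrm{Pic} \to \Omega^\infty K$ gives $B\GG_m \to \Omega^\infty \tau_{\geq 1}K$, which is a map of $\mathbb{E}_\infty$-monoids.
\item Étale locally the Kummer sequence $\mu_{l^k} \to \GG_m \xrightarrow{l^k} \GG_m$ is a fiber sequence with surjective right map. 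Hence $\mathrm{cofib}(B\GG_m \xrightarrow{l^k} B\GG_m) \simeq B^2\mu_{l^k}$ as sheaves of commutative groups.
\item Because the target of \eqref{W-1} is $l^k$-torsion, the map factors through this cofiber. This produces $\mu_{l^k} \to \pi_2^\et(K^\et/l^k)$.
\item On stalks this map is an isomorphism: the Bott element is exactly the image of a primitive root of unity under this Kummer-type map, by Suslin's construction.
\end{itemize}
Multiplicativity of $K^\et/l^k$ ($l$ odd, so the Moore spectrum carries a unital multiplication, homotopy commutative enough) then gives maps $\mu_{l^k}^{\otimes j} \to \pi_{2j}^\et$. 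The stalkwise polynomial structure shows these are isomorphisms.

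\textbf{Item 2.} I take the étale Postnikov filtration $\tau_{\geq q}^\et(K^\et/l^k)$ and pass to the limit over $k$. The reduction maps $\ZZ/l^{k+1}(j) \to \ZZ/l^k(j)$ are surjections of sheaves, so the limit of the graded pieces is $\ZZ_l(j)$ in the proétale/Ekedahl sense of Definition \ref{Et-14}. The Postnikov (Whitehead) filtration of an $\mathbb{E}_\infty$-ring in a stable $\infty$-topos is multiplicative, and limits of multiplicative filtrations stay multiplicative. This gives the multiplicative filtration on $(K^\et)^\wedge_l$ with the stated graded pieces. The $\mathbb{E}_1$-freeness statement I simply cite from \cite[Lemma 6.7]{BEM25}.

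\textbf{Item 3.} Applying $R\Gamma$ to this filtration gives the spectral sequence, and convergence is exactly the content of \cite{CM21}, as recalled after Theorem \ref{I-7}.

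The main obstacle I expect is the identification in item 1 being canonical and Galois-equivariant, rather than an abstract isomorphism. Abstract isomorphism alone would lose the Tate twist, which is the whole point. Concretely, one must check that the factorization of \eqref{W-1} through $B^2\mu_{l^k}$ is compatible with Suslin's Bott element. I would attack this by reducing to $\kappa = \bar{\FF}_p$ (or $\CC$) via rigidity, and comparing with Quillen's computation, where the Brauer lift identifies $\pi_2$ with $\mu$ equivariantly.
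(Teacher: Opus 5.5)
Your route is the one the paper points to: Thomason's argument via Gabber rigidity and Suslin on stalks, the identification of $\pi_2$ through \eqref{W-1}, then a limit over $k$, with \cite{BEM25} cited for freeness. You are also right to restrict to $\ZZ[1/l]$-schemes. However, two steps rest on claims that are false as stated.

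\textbf{Item 1: the additivity claim.} The map $B\mathbb{G}_m\to\Omega^\infty\tau_{\geq1}K$, $f(L)=[L]-1$, is \emph{not} a map of commutative group objects ($\otimes$ on the source, $+$ on the target). The line-bundle map is multiplicative, so $f(L\otimes L')=f(L)+f(L')+f(L)f(L')$. On $\pi_2$ the cross term is the Steinberg symbol $\mathbb{G}_m\otimes\mathbb{G}_m\to\pi_2^\et K^\et$, which is nonzero (e.g.\ $K_2(\CC)\neq0$). That is why one can only assert, as the paper does, that the \emph{composite} \eqref{W-1} is additive. The reason is this:
\begin{itemize}
\item The additivity defect of a pointed map $B\mathbb{G}_m\to B^2M$ factors through $B\mathbb{G}_m\wedge B\mathbb{G}_m$.
\item That smash is $1$-connected with $\pi_2=\mathbb{G}_m\otimes\mathbb{G}_m$, so the defect is a sheaf homomorphism $\mathbb{G}_m\otimes\mathbb{G}_m\to M$.
\item For $M=\pi_2^\et(K^\et/l^k)$ it vanishes, since the source is étale-locally $l$-divisible and $M$ is $l^k$-torsion. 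Higher coherence obstructions vanish for the same divisibility and connectivity reasons.
\end{itemize}
Only then does your factorization through $\mathrm{cofib}(l^k)\simeq B^2\mu_{l^k}$ make sense. Also, the canonicity you flag as the main obstacle is not really one. The Bockstein $\pi_2^\et(K^\et/l^k)\to\pi_1^\et(K^\et)[l^k]=\mathbb{G}_m[l^k]=\mu_{l^k}$ is a map of sheaves and is stalkwise an isomorphism, because rigidity and Suslin give $K_2(A)/l^k=0$ for strictly henselian $A$. So Galois equivariance is automatic and no comparison with Quillen's Brauer lift is needed.

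\textbf{Item 2: multiplicativity.} You apply ``the Postnikov filtration of an $\mathbb{E}_\infty$-ring is multiplicative'' to $K^\et/l^k$, which is not an $\mathbb{E}_\infty$-ring: $\mathbb{S}/l^k$ is not one, and $\mathbb{S}/3$ is not even homotopy associative. So the towers $\tau^\et_{\geq\star}(K^\et/l^k)$ are not multiplicative filtrations whose limit you could take. Nor does the étale Postnikov tower of the $\mathbb{E}_\infty$-ring $(K^\et)^\wedge_l$ itself work. Its graded pieces are discrete étale sheaves, whereas the $\ZZ_l(j)=R\lim_k\ZZ/l^k(j)$ of Definition \ref{Et-14} is in general not in the heart of the étale $t$-structure. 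Surjectivity of the transition maps gives $R\lim=\lim$ only on a replete site.

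The missing step is to work on the proétale site:
\begin{itemize}
\item $(\nu^*K^\et)^\wedge_l=\lim_k\nu^*(K^\et/l^k)$ is an $\mathbb{E}_\infty$-ring.
\item By repleteness, its homotopy sheaves are $\lim_k\mu_{l^k}^{\otimes j}=\ZZ_l(j)$ in degree $2j$ and $0$ in odd degrees.
\item Its Postnikov filtration agrees with $\lim_k\nu^*\tau^\et_{\geq\star}(K^\et/l^k)$, by the Milnor sequences with vanishing $R^1\lim$. This filtration is multiplicative.
\item Apply the lax symmetric monoidal, limit-preserving functor $\nu_*$, using $\nu_*\nu^*\simeq\mathrm{id}$ on bounded-below objects.
\end{itemize}
This gives exactly the filtration $\lim_k\tau^\et_{\geq\star}(K^\et/l^k)$ on $(K^\et)^\wedge_l$, now with a multiplicative structure, and with graded pieces $\Sigma^{q}\ZZ_l(q/2)$ in the sense of Definition \ref{Et-14}.
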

\begin{cor}\label{W-3}
	Let $k \in \NN$ be coprime to $l$. The Adams operations $\psi^k: (K^\text{et})^\wedge_l \to (K^\text{et})^\wedge_l$ restrict to the multiplicative filtration recalled in Fact \ref{W-2}. The operation on the associated graded is given by multiplication by $k^{q/2}$ on the degree-$q$ part.
	\begin{proof}
		By Item \ref{W-2-1}, it suffices to prove the claim in degree $2$:
		The diagram \eqref{W-1} is compatible with the Adams operation on the right and middle terms and taking $k$-fold power on the left. Hence, we arrive at the desired result.
	\end{proof}
\end{cor}
\textbf{Conclusion}: Recall that we have the following commutative square:
\[\begin{tikzcd}
	K(R)[1/k] \arrow[r] \arrow[d] & K(R)^\wedge_l \arrow[d] \\
	K(R)_\QQ \arrow[r] & (K(R)^\wedge_l)_\QQ
\end{tikzcd}\]
where the bottom map identifies the target as the basechange along $\QQ \to \QQ_l$ of the source.
\begin{enumerate}
	\item So we may compute the ranks of the Adams eigenspaces on $(K(R)^\wedge_l)_\QQ$. Since we are only interested in homotopy in degrees $\geq 2$, we may replace $K(R)$ by $K^\et(R)$.
	\item Since the Adams operations are compatible with the filtration we can compute the contribution of Adams eigenspaces to homotopy via the term-wise rationalized étale $K$-theory spectral sequence (cf. \ref{I-7}). By Corollary \ref{W-3}, we see that the $q$-th Adams summand exactly contributes to the $q$-th row.
	\item Recall that the étale $K$-theory spectral sequence is concentrated in rows $-1$ and $-2$ in $q$-degree $\geq 1$. As we already observed in the beginning of the proof of Theorem \ref{8-2}, the $q$-th row therefore only contributes to homotopy in degrees $2q-1/2q-2$.
\end{enumerate}
		\section{Number-Theoretical Miscellanea}\label{A2}
		\begin{lem}\label{F-2}
	$F_\infty/F$ is only ramified over $l$ and all primes in $F$ over $l$ ramify completely.
	\begin{proof}
		By \cite[Satz 10.3]{Neu06}, this is true for $F = \QQ$.
		
		Recall (see \cite[\href{https://stacks.math.columbia.edu/tag/0BU7}{Tag 0BU7}]{stacks-project}) that the restriction map
		\begin{equation*}
			\Gal(F_\infty/F) \xrightarrow[\simeq]{\text{restr}} \Gal(\QQ(\mu_{l^\infty})/\QQ(\mu_{l^\infty}) \cap F)
		\end{equation*}
		identifies the ramification and inertia groups of every $\mathfrak{l}$ in $F$ with those of $l = \mathfrak{l} \cap \QQ(\mu_{l^\infty})$. But $l$ completely ramifies in $\QQ(\mu_{l^\infty})/\QQ$ by \cite[Satz 10.3]{Neu06}.
	\end{proof}
\end{lem}
\begin{lem}\label{F-3}
	There exist infinitely many primes $\mathfrak{l}$ of $F$ that stay inert throughout $F_\infty/F$.
	\begin{proof}
		 For this we need the following fact about cyclotomic fields, which can be deduced from ramification theory for the finite extensions $\QQ(\zeta_{l^k})/\QQ$ \cite[Satz 10.3]{Neu06}.
		\begin{fact}
			No prime but $l$ ramifies. For $p \nmid l$ a prime, the inertia group $I \subset \Gal(\QQ(\mu_{l^\infty})/\QQ) \simeq \ZZ_l^\times$ of $p$ in $\QQ(\mu_{l^\infty})/\QQ$ is the closed subgroup generated by $p$.
		\end{fact}
		The restriction $\Gal(F_\infty/F) \simeq \Gal(\QQ(\mu_{l^\infty})/\QQ(\mu_{l^\infty}) \cap F)$ identifies the inertia and ramification groups of a prime $\mathfrak{p}$ in $R$ and those of $p \coloneqq \mathfrak{p} \cap \ZZ$. Thus, it remains to find a prime $\mathfrak{p}$ over a prime $p$ that is a topological generator of $\ZZ_l^\times \simeq \ZZ/(l-1) \times \ZZ_l$, i.e., $p$ that gets mapped to a generator in all finite quotients $\ZZ/(l-1) \times \ZZ_l/l^k$. By Nakayama for the ring $\ZZ_l$, it suffices to find $p$ that gets mapped to a generator of $(\ZZ_l/l^2)^\times$. But infinitely many such primes exist by Dirichlet's theorem on arithmetic progressions.
	\end{proof}
\end{lem}
\begin{lem}\label{F-22}
	Let $K$ be a number field. Denote by $I_K$ the group of divisors/fractional ideals and by $I_{K, l}$ the subgroup generated by ideals lying over $l$. Then there is a short exact sequence
	\begin{equation*}
		0 \to \mathrm{Br}(\mathcal{O}_K[1/l])[l^\infty] \to \QQ_l/\ZZ_l \otimes I_{K, l} \to \QQ_l/\ZZ_l \to 0
	\end{equation*}
	Both the Brauer group and $I_K$ have trace/pushforward and restriction maps making them both co- and contravariantly functorial in $K$. We claim that the injection in this sequence is compatible with them.
	\begin{proof}
		The short exact sequences are proven in \cite[Ch.II, Prop.2.1]{Mil06}. We quickly review the proof to convince the reader of the claimed functoriality:
		\newline
		\textit{Functoriality in the classical Brauer-Hasse-Noether (BHN) sequence}:
		For each finite place $v$ there is an identification $\mathrm{Br}(K_v) \xrightarrow[\mathrm{inv}_v]{\simeq} \QQ/\ZZ$ where $K_v$ denotes the (metric) completion of $K$ at $v$. Upon taking $l$-power torsion, for $l$ an odd prime, the BHN-Theorem states that
		\begin{equation*}
			0 \to \mathrm{Br}(K)[l^\infty] \xrightarrow{v^*} \bigoplus_v \mathrm{Br}(K_v)[l^\infty] \xrightarrow{\bigoplus_v \mathrm{inv}_v} \QQ_l/\ZZ_l \to 0
		\end{equation*}
		is a short exact sequence.
		
		Recall the following fact (see \cite[Ch.1, Sect.2]{Mil06}; there Milne proves that local fields give rise to a class formation; the statement below is part of the definition of a class formation):
		\begin{fact}
			Let $L/K$ be a field extension, and let $\tilde{v}$ be a finite place lying over $v$. Then the squares
			\[\begin{tikzcd}
				\mathrm{Br}(K_v) \arrow[d, "\mathrm{res}"] \arrow[r, "\mathrm{inv}_v"] & \QQ/\ZZ \arrow[d, "{[L_{\tilde v}:K_v]\cdot}"] & & \mathrm{Br}(L_{\tilde{v}}) \arrow[r, "\mathrm{inv}_v"] \arrow[d, "\mathrm{cores}"] & \QQ/\ZZ \arrow[d, sloped, phantom, "="] \\
				\mathrm{Br}(L_{\tilde{v}}) \arrow[r, "\mathrm{inv}_{\tilde{v}}"] & \QQ/\ZZ & & \mathrm{Br}(K_v) \arrow[r, "\mathrm{inv}_v"] & \QQ/\ZZ
			\end{tikzcd}\]
			commute.
		\end{fact}
		As a consequence, the map
		\begin{equation*}
			\bigoplus_{v \text{ place}} \mathrm{Br}(K_v) \xrightarrow[\bigoplus \mathrm{inv}_v]{\simeq} \QQ/\ZZ \otimes I_K
		\end{equation*}
		is compatible with restriction and trace/corestriction on both sides.
		\begin{fact}\label{F-22-1}
			The squares
			\[
			\begin{tikzcd}
				\mathrm{Br}(K) \arrow[d, "\mathrm{res}", shift left=1] 
				\arrow[r, "\oplus v^*"] 
				& \bigoplus_v \mathrm{Br}(K_v) \arrow[d, "\mathrm{res}", shift left=1] 
				& {} \\
				\mathrm{Br}(L) \arrow[u, shift left, "\mathrm{cores}"] \arrow[r, "\oplus {\tilde{v}}^*"] 
				& \bigoplus_v \mathrm{Br}(L \otimes_K K_v) 
				\arrow[r, phantom, "\simeq"] \arrow[u, shift left, "\mathrm{cores}"]
				& \bigoplus_v\bigoplus_{{\tilde{v}}\vert v} \mathrm{Br}(L_{\tilde{v}})
			\end{tikzcd}
			\]
			commute. The composites from top left to bottom right agree because it is just a commuting square of restriction/basechange maps. The bottom left to top right composites agree, since corestriction and restriction satisfy a basechange formula. The identification in the lower right corner comes from the identification $L \otimes_K K_v \simeq \prod_{\tilde{v}\vert v} L_{\tilde{v}}$.
		\end{fact}
		These two facts prove the desired functoriality in the classical Brauer-Hasse-Noether sequence.
		\textit{Functoriality in the restricted sequence}:
		By the proof of \cite[Prop.2.1]{Mil06} on page 164 the BHN-sequence for $\mathcal{O}_K[1/l]$ comes from restricting the BHN-sequence for $K$: We have an inclusion of short exact sequences:
		\[\begin{tikzcd}
			0 \arrow[r] & \mathrm{Br}(\mathcal{O}_K[1/l]) \arrow[d, hook] \arrow[r] & \QQ/\ZZ \otimes I_{K, l} \arrow[r, "\bigoplus_v \mathrm{inv}_v"] \arrow[d, phantom, sloped, "\subset"] & \QQ/\ZZ \arrow[d, "=", sloped, phantom] \arrow[r] & 0 \\
			0 \arrow[r] & \mathrm{Br}(K) \arrow[r, "v^*"] & \bigoplus_v \QQ/\ZZ \otimes I_K \arrow[r, "\bigoplus_v \mathrm{inv}_v"] & \QQ/\ZZ \arrow[r] & 0
		\end{tikzcd}\]
		So it remains to check that the restriction map $\mathrm{Br}(\mathcal{O}_K[1/l]) \to \mathrm{Br}(K)$ commutes with restriction and corestriction/trace in $K$. This follows for the same reason as in Fact \ref{F-22-1}.
	\end{proof}
\end{lem}
\begin{cor}\label{F-4}
	In the notation of Lemma \ref{F-22}, we have
	\begin{equation*}
		B_n \simeq \ker(I_{F_n, l} \xrightarrow{\mathrm{deg}} \ZZ_l)
	\end{equation*}
	which is compatible with trace and restriction maps. In particular, the trace induces an isomorphism $B_n \to B_{n-1}$.
	\begin{proof}
		Apply the left exact functor $\mathrm{Hom}_\text{Ab}(\QQ_l/\ZZ_l, \_)$ to the sequence in the preceding Lemma \ref{F-22}. The formula for $B_n$, including the claimed functoriality, follows from the short exact sequence:
		\begin{align*}
			\Hom_\Ab(\QQ_l/\ZZ_l, \QQ_l/\ZZ_l) &\simeq \lim_i \Hom_\Ab(\ZZ/l^i, \QQ_l/\ZZ_l[l^i]) \simeq \lim_i \Hom_\Ab(\ZZ/l^i, \ZZ/l^i) \\
			&\simeq \lim_i \ZZ/l^i \simeq \ZZ_l
		\end{align*}
		The claim about the trace $B_n \to B_{n-1}$ being an equivalence follows from the trace $I_{F_n, l} \to I_{F_{n-1}, l}$ being an equivalence; all the primes over $l$ completely ramify.
	\end{proof}
\end{cor}
\begin{lem}\label{F-1}
	Let $F$ be totally real, and $L(\chi, s)$ be the Hecke $L$-function associated to a Hecke character $\chi$. Then for $\chi$ a power of the Teichm\"uller character:
	\begin{equation*}
		L(\omega^i, 1-j) = 0 \Leftrightarrow \omega^i \text{ has sign } -(-1)^j \Leftrightarrow i + j \equiv 1 (2)
	\end{equation*}
	\begin{proof}
		We follow the notation of Neukirch \cite{Neu06} and directly specialize all formulas to the case where $F$ is totally real: First we need to introduce $L_\infty(\chi, s)$ \cite[Ch.VII]{Neu06}:
			Let $p = 0, 1$, depending on whether $\chi$ is odd or even. Set
			\begin{equation}\label{F-1-2}
				L_\infty(\chi, s) = (\pi^{-(s+p)/2}\Gamma((s+p)/2))^{r_1(F)}
			\end{equation}
			where $\Gamma$ is the usual $\Gamma$-function. Recall that $\Gamma(s)$ has poles only at $s = 0, -1, ...$. So $L_\infty(\chi, s)$ has a pole at $s = 1-k$ if and only if $2 \mid (1-k+p)$. This is the case if and only if $k$ and $p$ do not have the same parity. So if $\chi = \omega^i$ this is exactly the case if $p \equiv i (2)$. Thus, $L_\infty(\chi, s)$ having a pole at $1-j$ is equivalent to $i + j \equiv 1 (2)$.
			
			The claim we want to prove is a consequence of the functional equation of the $L$-function: Following Neukirch we first introduce the complete Hecke $L$-function without defining all non-zero terms:
			\begin{equation}\label{F-1-1}
				\Lambda(s, \chi) \coloneqq \underbrace{(\abs{d_F}N(\mathfrak{m}))^{s/2}}_{\neq 0}L_\infty(\chi, s)L(\chi, s)
			\end{equation}
			Then the functional equation \cite[Ch.VII.8]{Neu06} reads as follows
			\begin{equation*}
				\Lambda(s, \chi) = W(\chi)\Lambda(1-s, \bar{\chi})
			\end{equation*}
			with a constant $W(\chi) \in S^1$ as meromorphic functions in $s$. For $s > 1$ the left-hand side is a non-vanishing holomorphic function: By \eqref{F-1-1}, this reduces to the there occurring factors being holomorphic and non-vanishing. For the $L$-function, this follows from the converging Euler product representation of the $L$-function for these $s$. For $L_\infty(\chi, s)$ and $s > 1$, this follows from the $\Gamma$-functions appearing in \eqref{F-1-2} being non-vanishing and holomorphic for $\mathrm{Re}(s) > 1$. So the statement follows from the location of poles of $L_\infty(\chi, 1-s)$, which we covered in the first paragraph of this proof.
	\end{proof}
\end{lem}
\begin{lem}\label{F-25}
	Let $N$ be any finitely generated $\Lambda$-module. Then
	\begin{equation*}
		\mathrm{lim}^1_{n \in \NN^\op} \Hom_\Lambda(N, \Lambda/\omega_n)
	\end{equation*}
	is a $\Lambda/(\omega_m)^t$ module for some $m, t \in \NN$. This implies that its characteristic ideal divides $\omega_m^t$ for some $m, t \in \NN$.
	\begin{proof}
		$\mathrm{lim}^1_{n \in \NN^\op} \Hom_\Lambda(N, \Lambda/\omega_n)$ is again a finitely generated $\Lambda$-module as a submodule of $\mathrm{Ext}^1_\Lambda(N, \Lambda)$ witnessed by the $\mathrm{lim}^1$-sequence.
		
		\textit{Case of $N$ is sum of simple modules}: Then we can write $N = \Lambda^r \oplus \bigoplus \Lambda/(P^{s_P})$ for $P \in \Lambda$ prime and $s_P, r \in \NN$. Since $\mathrm{lim}^1$ and finite sums commute we are reduced to $N$ itself being a simple module \footnote{Of course, each of the finitely many summands might produce a different $m ,t$. But $\omega_m \mid \omega_{m+1}$ and therefore we may just pass to the maximum of the obtained $m$'s and $t$'s.}.
		\begin{itemize}
			\item $N = \Lambda$: Then the system identifies with the system $... \to \Lambda/\omega_n \to \Lambda/\omega_{n-1} \to ...$ with surjective transition maps. In this case the $\mathrm{lim}^1$-contribution is trivial.
			\item $N = \Lambda/(P^s)$ for $P \in \Lambda$ a prime coprime to all $\omega_n$: Then $\mathrm{Hom}_\Lambda(N, \Lambda/\omega_n)$ is both $P$ and $\omega_n$-torsion. But by assumption $P, \omega_n$ are coprime, which implies $\Lambda/(\omega_n, P)$ is finite. Thus $\mathrm{Hom}_\Lambda(N, \Lambda/\omega_n)$ is finitely generated over the finite ring $\Lambda/(\omega_n, P)$, hence, itself finite by Remark \ref{1-8}. But $\mathrm{lim}^1$ of a system of finite groups is trivial.
			\item $N = \Lambda/(P^s)$ with $P \mid \omega_m$ for some $m$: Then the groups $\mathrm{Hom}_\Lambda(N, \Lambda/\omega_n)$ are all $\Lambda/(\omega_m)^s$-modules. Thus $\mathrm{lim}^1$ of this $\NN^\op$-indexed system is a $\Lambda/(\omega_m)^s$-module as well.
		\end{itemize}
		
		\textit{Case $N$ contains no finite $\Lambda$-submodules \footnote{A maximal finite submodule exists, since $N$ is finitely generated and $\Lambda$ is Noetherian.}} Equivalently, there exists a short exact sequence of $\Lambda$-modules
		\begin{equation*}
			0 \to N \to N' \to F \to 0
		\end{equation*}
		where $F$ is finite and $N'$ is a sum of simple $\Lambda$-modules. The long exact sequence implies that we have an exact sequence
		\begin{equation*}
			\mathrm{Hom}_\Lambda(N', \Lambda/\omega_n) \to \mathrm{Hom}_\Lambda(N, \Lambda/\omega_n) \to \mathrm{Ext}^1(F, \Lambda/\omega_n)
		\end{equation*}
		By what we proved above, the $\Lambda$-module $\mathrm{Hom}_\Lambda(N', \Lambda/\omega_n)$ refines to a $\Lambda/(\omega_m)^t$-module for some $t, m \in \NN$ independent of $n$.
		\begin{claim}
			$\{\mathrm{Ext}^1_\Lambda(F, \Lambda/\omega_n)\}_{n \in \NN^\op}$ has a limit cofinal (the dual notion of cofinal \cite[\href{https://kerodon.net/tag/02NQ}{Tag 02NQ}]{Ker}) subsystem with transition maps being $0$.
			\begin{proof}
				By the $\mathrm{lim}^1$-sequence for computing $\mathrm{RHom}_\Lambda(F, \Lambda) = \mathrm{lim}_n\mathrm{RHom}_\Lambda(F, \Lambda/\omega_n)$ we have a surjection of abelian groups
				\begin{equation*}
					\mathrm{Ext}^1_\Lambda(F, \Lambda) \to \mathrm{lim}_n\mathrm{Ext}^1_\Lambda(F, \Lambda/\omega_n) \to 0 
				\end{equation*}
				But the left hand side term is $0$ by \cite[Lemma 7.5]{DM98}\footnote{Section 7 of the paper does not use Borel and is just about homological properties of the ring $\Lambda$.}. But $F$ is finite and therefore $\{\mathrm{Ext}^1_\Lambda(F, \Lambda/\omega_n)\}_{n \in \NN^\op}$ is a system of finitely generated $\ZZ/l^s$-modules for $s$ independent of $n$. Such abelian groups are Artinian - every descending chain of submodules stabilizes. Combining this with the fact that any $\NN^\op$-system of abelian groups with surjective transition maps has non-trivial limit, $\{\mathrm{Ext}^1_\Lambda(F, \Lambda/\omega_n)\}_{n \in \NN^\op}$ must have a limit cofinal subsystem with transition maps being $0$.
			\end{proof}
		\end{claim}
		Summarizing, we have a short exact sequence of $\NN^\op$-indexed systems
		\begin{equation*}
			0 \to \{A_n\}_{n \in \NN} \to \{\mathrm{Hom}_\Lambda(N, \Lambda/\omega_n)\}_{n \in \NN^\op} \to \{B_n\}_{n \in \NN^\op} \to 0
		\end{equation*}
		with
		\begin{itemize}
			\item $A_n$ a $\Lambda/(\omega_m)^t$-module for some $m, t\in \NN$ independent of $n$.
			\item $\{B_n\}_{n \in \NN}$ thinning out to a system with trivial transition maps.
		\end{itemize}
		Now the long exact $\mathrm{lim}^1$-sequence shows the desired result, since $\{B_n\}_n$ has both trivial limit and $\mathrm{lim}^1$.
		
		\textit{The general case}:
		Denote by $E$ the maximal finite $\Lambda$-submodule of $N$. Then its quotient $N'$ has no finite $\Lambda$-submodules. The long exact sequence of $\mathrm{Ext}$-groups gives rise to a short exact sequence
		\begin{equation*}
			0 \to \{\mathrm{Hom}_\Lambda(N', \Lambda/\omega_n)\}_n \to \{\mathrm{Hom}_\Lambda(N, \Lambda/\omega_n)\}_n \to \{B_n\}_n \to 0
		\end{equation*}
		of $\NN^\op$-indexed systems where $B_n = \mathrm{Im}(\mathrm{Hom}_\Lambda(N, \Lambda/\omega_n) \to \mathrm{Hom}_\Lambda(F, \Lambda/\omega_n))$. By the claim above $\{B_n\}_n$ also thins out to a system with trivial transition maps. Thus $\mathrm{lim}^1$ of $\{\mathrm{Hom}_\Lambda(N', \Lambda/\omega_n)\}_n$ and $\{\mathrm{Hom}_\Lambda(N, \Lambda/\omega_n)\}_n$ coincide.
	\end{proof}
\end{lem}
		\section{Bi- and Hopf algebras}
		The following construction appears in \cite[Variant 2.2.3]{Ra26}. We recall the construction to draw some consequences that are important to us. Denote by $\mathrm{coCAlg}(\_) := \mathrm{CAlg}(\_^\op)^\op$ the category of cocommutative coalgebras in the following:
\begin{constr}\label{H-3}
	Let $\mathcal{C}$ be a symmetric monoidal category and let $A \in \mathrm{coCAlg}(\mathrm{Alg}(\mathcal{C}))$ a cocommutative bialgebra. Then the category $\mathrm{Mod}_A(\mathcal{C})$ inherits a symmetric monoidal structure \cite[Variant 2.2.3]{Ra26}:
	
	\textbf{Explicit construction}: Denote by $\mathrm{Pr}^L_\text{str}$ the wide subcategory of $\mathrm{Pr}^L$ with morphisms the strongly continuous functors, i.e., those functors which have a colimit-preserving right adjoint. The usual equivalence $\mathrm{Pr}^L \simeq (\mathrm{Pr}^R)^\op$ factors through $\mathrm{Pr}^R \cap \mathrm{Pr}^L$ when restricted to $\mathrm{Pr}^L_\mathrm{str}$. Consider the symmetric monoidal functor $\Theta_\mathcal{C} = \mathrm{Mod}_{(\_)}(\mathcal{C}):\mathrm{Alg}(\mathcal{C}) \to \mathrm{Pr}^L_\text{str}$ taking a map of algebras $A \to B$ to $\_ \otimes_A B: \mathrm{Mod}_A(\mathcal{C}) \to \mathrm{Mod}_B$ (\cite[Thm.4.5.3.1]{Lur17} in the unstraightened form). This functor is symmetric monoidal, thus its opposite \cite[Rem.4.8.5.17]{Lur17} is as well. So $\mathrm{Mod}_{(\_)}(\mathcal{C})^\op: \mathrm{Alg}(\mathcal{C})^\op \to \mathrm{Pr}^R \cap \mathrm{Pr}^L \subset \mathrm{Pr}^L$ preserves commutative algebra objects and therefore takes the commutative coalgebra $A$ to a symmetric monoidal structure on $\mathrm{Mod}_A(\mathcal{C})$.
	
	From the construction one can deduce the following two core properties:
	\begin{enumerate}
		\item \textit{Restriction of scalars is symmetric monoidal}: In particular, the forgetful functor $\mathrm{Mod}_A(\mathcal{C}) \to \mathcal{C}$ is symmetric monoidal.
		\item \textit{Compatibility with symmetric monoidal basechange}: Let $f: \mathcal{C} \to \mathcal{D}$ be symmetric monoidal. We claim that the two composites of symmetric monoidal functors in the following square
		\begin{equation}\label{H-14}
		\begin{tikzcd}
			\mathrm{Alg}(\mathcal{C}) \arrow[r, "\mathrm{Mod}_{(\_)}(\mathcal{C})"] \arrow[d, "\mathrm{Alg}(f)"] & \mathrm{Mod}_\mathcal{C}(\mathrm{Alg}_{\mathbb{E}_0}(\mathrm{Pr}^L)) \arrow[d, "\_ \otimes_{\mathcal{C}} \mathcal{D}"] \\
			\mathrm{Alg}(\mathcal{D}) \arrow[r, "\mathrm{Mod}_{(\_)}(\mathcal{D})"] & \mathrm{Mod}_\mathcal{D}(\Alg_{\mathbb{E}_0}(\mathrm{Pr}^L))
		\end{tikzcd}
		\end{equation}
		are homotopic. Since the monoidal structure constructed by Raksit comes from passing to cocommutative coalgebra categories in the above, the identification $\mathrm{Mod}_A(\mathcal{C}) \otimes \mathcal{D} \to \mathrm{Mod}_{f(A)}(\mathcal{C})$ promotes to a symmetric monoidal one.
		
		\textit{Commutativity of \eqref{H-14}}: Firstly, recall how the functor $\mathrm{Mod}_{(\_)}(\mathcal{C})$ is obtained: In Higher algebra, Lurie introduces the fully faithful symmetric monoidal functor $\Theta$
	\begin{equation*}
		\Theta: \mathrm{Pr}^\mathrm{Alg} \to \Alg_{\mathbb{E}_0}(\mathrm{Pr}^\text{Mod})
	\end{equation*}
	which promotes to a map of cocartesian fibrations over $\mathrm{Alg}(\mathrm{Pr}^L)$ (\cite[Prop.4.8.5.1]{Lur17} in the special case of $\mathcal{K}$ being all small diagrams). Now $\mathrm{Mod}_{(\_)}(\mathcal{C})$ is the functor induced on fibers over $\mathcal{C}$.
	
	Given $f:\mathcal{C} \to \mathcal{D}$ a map in $\mathrm{CAlg}(\mathrm{Pr}^L)$, view it as a functor $\Delta^1 \to \mathrm{CAlg}(\mathrm{Pr}^L)$. Then pulling back $\Theta$ along this map and applying the straightening functor yields a natural transformation of functors $\Delta^1 \to \mathrm{Cat}_\infty$ or, equivalently, a square
	\[\begin{tikzcd}
		\mathrm{Alg}(\mathcal{C}) \arrow[r, "\mathrm{Mod}_{(\_)}(\mathcal{C})"] \arrow[d, "\mathrm{Alg}(f)"] & \mathrm{Mod}_\mathcal{C}(\mathrm{Alg}_{\mathbb{E}_0}(\mathrm{Pr}^L)) \arrow[d, "\_ \otimes_{\mathcal{C}} \mathcal{D}"] \\
		\mathrm{Alg}(\mathcal{D}) \arrow[r, "\mathrm{Mod}_{(\_)}(\mathcal{D})"] & \mathrm{Mod}_\mathcal{D}(\Alg_{\mathbb{E}_0}(\mathrm{Pr}^L))
	\end{tikzcd}\]
	of symmetric monoidal categories: Indeed,
	\begin{itemize}
		\item The claim about symmetric monoidality of the diagram follows from \cite[Thm.4.8.5.16]{Lur17}, where it is proven that $\Theta$ promotes to a symmetric monoidal functor making
		\begin{equation}\label{H-6}
		\begin{tikzcd}
			\mathrm{Pr}^\mathrm{Alg} \arrow[rr, "\Theta"] \arrow[dr] & & \mathrm{Pr}^\mathrm{Mod} \arrow[dl] \\
			& \mathrm{CAlg}(\mathrm{Pr}^L)
		\end{tikzcd}
		\end{equation}
		a diagram of symmetric monoidal categories such that cocartesian edges are preserved by the tensor product \cite[Thm.4.8.5.16+Def.2.1.2.13]{Lur17}. So the diagram \eqref{H-6} witnesses that the maps from top to bottom are symmetric monoidally cocartesian fibrations in the sense of \cite[Lemma 1.9 and Def.1.11]{Ram26}. By \cite[Thm.2.1]{Ram26}, which the author says is already contained in \cite{Lur17}, the straightening of such cocartesian fibrations admit a lax symmetric monoidal structure. These are the lax monoidal structures put on the $\mathrm{Alg}(\_)$ and $\mathrm{Mod}_{(\_)}(\mathcal{C})$ functor and the lax monoidal refinement of $\Theta_\mathcal{C}$.
		\item The horizontal morphisms are the correct ones by definition - $\mathrm{Mod}_{(\_)}(\mathcal{C})$ is the functor induced by $\Theta$ on fiber over $\mathcal{C}$.
		\item The vertical morphisms are also the correct ones by definition of the basechange functors of algebras and modules in $\mathrm{Pr}^L$ over $\mathcal{C}$.
	\end{itemize}
	\end{enumerate}
\end{constr}
\begin{defin}\label{Cond-15}
	Write $u: \mathrm{Mod}_A(\mathcal{C}) \to \mathcal{C}$ for the forgetful functor. Denote by $\underline{\mathrm{map}}_A(\_, \_)$ the internal mapping object for that symmetric monoidal structure, and $\underline{\mathrm{map}}(\_, \_)$ the internal mapping object of $\mathcal{C}$. There is a canonical map
	\begin{equation}\label{Cond-14}
		\underline{\mathrm{map}}(u)_A: u\underline{\mathrm{map}}_A(x, y) \to \underline{\mathrm{map}}(ux, uy)
	\end{equation}
	Indeed, by adjunction this corresponds to a map
	\begin{equation*}
		u(\underline{\mathrm{map}}_A(x, y) \otimes x) \simeq u\underline{\mathrm{map}}_A(x, y) \otimes ux \to uy
	\end{equation*}
	We choose this map to be $u$ applied to the counit of the $\otimes$-$\underline{\mathrm{map}}_A$-adjunction.
	
	We say $A$ is a Hopf algebra, if \eqref{Cond-14} is an equivalence. This condition is also called being closed symmetric monoidal.
\end{defin}
\begin{constr}
	Given a cocommutative bialgebra $A$ and a module $X$ over it. Define its shear map $s_{A, X}: A \otimes uX \to A \otimes X$ to be the $A$-module map corresponding to $\eta_\mathbf{1} \otimes \mathrm{id}_{uX}: uX \to A \otimes uX$, where $\eta$ is the unit of the $\_ \otimes A \dashv u$-adjunction.
\end{constr}
\begin{lem}\label{Cond-16}
	Let $A$ be a cocommutative bialgebra in $\mathcal{C} \in \mathrm{CAlg}(\mathrm{Pr}^L)$. Then the following are equivalent:
	\begin{enumerate}
		\item $A$ is a Hopf algebra in the sense of \ref{Cond-15}.
		\item The shear map $s_A$ is invertible.
		\item $A$, as a bialgebra in $h\mathcal{C}$, is a Hopf algebra in the usual sense (see \cite[5.2]{BLV11} for a definition).
	\end{enumerate}
	\begin{proof}
		By assumption, we have a transformation natural in $Y \in \mathrm{Mod}_A(\mathcal{C})$:
		\begin{equation}\label{H-2}
			\begin{aligned}
			\mathrm{Map}_A(A \otimes X, Y) \simeq \mathrm{Map}_A(A, \underline{\mathrm{map}}_A(X, Y)) &\simeq \mathrm{Map}(\mathbf{1}_\mathcal{C}, u\underline{\mathrm{map}}_A(X, Y)) \xrightarrow[\simeq]{\underline{\mathrm{map}}(u)_*} \mathrm{Map}(\mathbf{1}, \underline{\mathrm{map}}(uX, uY)) \\
			&\simeq \mathrm{Map}(uX, uY) \simeq \mathrm{Map}(A \otimes uX, Y)
			\end{aligned}
		\end{equation}
		We claim it coincides with restriction along the shear map. By Yoneda, it suffices to treat the case $Y = A \otimes X$: We have the commutative diagram in which we omit the associativity and commutativity witnesses for readability:\newline
		\begin{adjustbox}{center}
		\begin{tikzcd}[ampersand replacement=\&, column sep=10]
			\mathrm{Map}_A(A \otimes X, A \otimes X) \arrow[dddd, bend right=100, "u"'] \arrow[rr, no head, "\mathrm{adj}", "\simeq"'] \arrow[dr, "(1)", phantom] \& \& \mathrm{Map}_A(A, \underline{\mathrm{map}}_A(X, A \otimes X)) \arrow[ddll, "\_ \otimes X"] \arrow[dd, "\underline{\mathrm{map}}(u)_* \circ u"] \arrow[rr, "\mathrm{adj}", "\simeq"', no head] \arrow[ddrr, phantom, "(4)"] \& \& \mathrm{Map}(\mathbf{1}_\mathcal{C}, \underline{\mathrm{map}}_A(X, A \otimes X)) \arrow[dd, "\underline{\mathrm{map}}(u)_*"] \\
			\& \phantom{} \&\&\&\\
			\mathrm{Map}_A(A \otimes X, \underline{\mathrm{map}}_A(X, A \otimes X) \otimes X) \arrow[uu, "\mathrm{ev}"] \arrow[ddrr, "(\underline{\mathrm{map}}(u) \otimes X)_* \circ u"] \arrow[dd, phantom, "(2)"] \arrow[rr, "(3)", phantom] \& \& \mathrm{Map}(A, \underline{\mathrm{map}}(X, A \otimes X)) \arrow[dd, "\_ \otimes X"] \arrow[rr, "\eta^*"] \arrow[ddrr, phantom, "(5)"] \& \& \mathrm{Map}(\mathbf{1}, \underline{\mathrm{map}}(X, A \otimes X)) \arrow[dd, no head, "\text{adj}", "\simeq"'] \\
			\&\&\&\&\\
			\mathrm{Map}(A \otimes X, A \otimes X) \arrow[rrrr, bend right=10, "\eta^*"] \& \& \mathrm{Map}(A \otimes X, \underline{\mathrm{map}}(X, A \otimes X)) \arrow[ll, "\mathrm{ev}"] \& \& \mathrm{Map}(X, A \otimes X)
		\end{tikzcd}
		\end{adjustbox}
		\begin{itemize}
			\item (1) has a filler by definition of $\mathrm{ev}_A$ as the counit of the $\otimes$-$\underline{\mathrm{map}}_A$-adjunction.
			\item (2) has a filler, since by definition of $\underline{\mathrm{map}}(u)$ we have a commutative diagram
			\[\begin{tikzcd}[column sep=large]
				\underline{\mathrm{map}}_A(x, y) \otimes x \arrow[r, "\underline{\mathrm{map}}(u) \otimes \mathrm{id}_x"] \arrow[d, "\mathrm{ev}_A"] & \underline{\mathrm{map}}(x, y) \otimes x \arrow[d, "\mathrm{ev}"] \\
				y \arrow[r, phantom, "="] & y
			\end{tikzcd}\]
			\item (3) has a filler because $u$ commutes with $\_ \otimes A$ on $\mathcal{C}$ and $\mathrm{Mod}_A(\mathcal{C})$, respectively.
			\item (4) has a filler because we define $\eta$ to be the unit of the $\_ \otimes A$-$u$-adjunction or, equivalently, the unique map of bialgebras $\mathbf{1} \to A$.
			\item (5) has a filler: The left composite is the $X \otimes \_$-$\underline{\mathrm{map}}(X, \_)$-adjunction followed by $\eta^*$ whereas the other composite is $\eta^*$ followed by the same adjunction. Thus it commutes by naturality of the $\_ \otimes X$-$\underline{\mathrm{map}}(X, \_)$-adjunction.
		\end{itemize}
		Now observe that the top left to top right to bottom right composite is the composite in \eqref{H-2} until the second-to-last term in the case $Y = A \otimes A$. Obviously, the top-left to bottom-left to bottom-right composite sends $\mathrm{id}_{A \otimes A}$ to $\eta \otimes \mathrm{id}_A$, which by definition gets sent to the shear map under the last map in \eqref{H-2}.
		
		\textit{1 $\Rightarrow$ 2}: This follows from plugging in $X = A$.
		
		\textit{2 $\Rightarrow$ 1}: Assume only $s_{A, A}$ is invertible. Observe that $s_{A, A \otimes Y} \simeq s_{A, A} \otimes \mathrm{id}_Y$ for $Y \in \mathcal{C}$ is an equivalence as well. $s_{A, \cdot}: A \otimes u(\_) \to A \otimes \_$ is a natural transformation between colimit-preserving functors. Moreover, it is an equivalence on the essential image of $A \otimes \_$. Since the module category is generated by objects of the form $A \otimes \_$ under colimits, $s_{A, X}$ is an equivalence for all $X \in \mathrm{Mod}_A(\mathcal{C})$.
		
		\textit{2 $\Rightarrow$ 3}: The shear map of $A$, viewed as a bialgebra in $h\mathcal{C}$, is still invertible in the homotopy category. Now it is a classical theorem that this is a criterion for being a Hopf algebra \cite[Lemma 5.1]{BLV11} in a monoidal $1$-category, i.e., the existence of an antipode.
		
		\textit{3 $\Rightarrow$ 2}: The shear map being an equivalence can be detected in the homotopy category.
	\end{proof}
\end{lem}
\begin{cor}
	Let $f:\mathcal{C} \to \mathcal{D}$ be a morphism in $\mathrm{CAlg}(\mathrm{Pr}^L)$, and $A$ be a Hopf algebra in $\mathcal{C}$. Then so is $f(A)$.
	\begin{proof}
		$f$ induces symmetric monoidal map on homotopy categories. Such a map preserves classical Hopf algebras. By Lemma \ref{Cond-16} this proves that $f(A)$ is a Hopf algebra in $\mathcal{D}$.
	\end{proof}
\end{cor}
\begin{cor}\label{H-11}
	Endow $\mathcal{C} \in \mathrm{Pr}^L$ with the cartesian symmetric monoidal structure and let $A$ be a monoid object in $\mathcal{C}$, i.e., a cocommutative bialgebra object in $\mathcal{C}$. Then the monoidal structure on $\mathrm{Mod}_A(\mathcal{C})$ is the cartesian one.
	\begin{proof}
		Recall that, the universal property of the cartesian monoidal structure is that the forgetful functor $\mathrm{coCAlg}(\mathcal{C}) \to \mathcal{C}$ is an equivalence (see \cite[Cor.2.4.3.10]{Lur17} for the dual statement).
		
		The dual of \cite[Prop.2.2.10]{Ra26} states that, if we endow $\mathrm{Mod}_A(\mathcal{C})$ with the symmetric monoidal structure from Construction \ref{H-3}, we obtain an equivalence over $\mathcal{C}$:
		\begin{equation}
			\mathrm{Mod}_A(\mathrm{coCAlg}(\mathcal{C})) \simeq \mathrm{coCAlg}(\mathrm{Mod}_A(\mathcal{C}))
		\end{equation}
		Thus we arrive at
		\begin{equation*}
			\mathrm{coCAlg}(\mathrm{Mod}_A(\mathcal{C})) \simeq \mathrm{Mod}_A(\mathrm{coCAlg}(\mathcal{C})) \simeq \mathrm{Mod}_A(\mathcal{C})
		\end{equation*}
		For the second equivalence, we used that $\mathcal{C}$ has the cartesian monoidal structure.
	\end{proof}
\end{cor}
\begin{cor}\label{H-10}
	Let $\mathcal{C} \in \mathrm{Pr}^L$ and $A \in \mathcal{C}$ be a cocommutative Hopf algebra. Then $X \in \mathrm{Mod}_A(\mathcal{C})$ is dualizable if and only if $uX \in \mathcal{C}$ is with respect to the symmetric monoidal structure in \ref{H-3}.
	\begin{proof}[Proof sketch]
		Since $u: \mathrm{Mod}_A(\mathcal{C}) \to \mathcal{C}$ is symmetric monoidal, the "only if" part is immediate. We observe that the evaluation map $e: X \otimes \underline{\mathrm{map}}(X, \mathbf{1}) \to \mathbf{1}$ in $\mathrm{Mod}_A(\mathcal{C})$ gets sent to the evaluation map, up to the identification $u\mathrm{map}_A \simeq \mathrm{map}(u\_, u\_)$, by $u$. This provides a witness for $\tilde{X} := u\underline{\mathrm{map}}_A(X, \mathbf{1})$ being dual to $X$. Equivalently, by \cite[Lemma 4.6.1.6]{Lur17}, $e$ witnesses that $\tilde{X} \otimes \_$ is right adjoint to $X \otimes \_$. This implies that the bottom composite in the diagram of internal mapping objects
		\begin{equation}\label{H-13}
			\begin{tikzcd}
				u\underline{\mathrm{map}}_A(B, C \otimes \underline{\mathrm{map}}_A(X, \mathbf{1})) \arrow[r, "\_ \otimes X"] \arrow[d, no head, "\simeq"] & u\underline{\mathrm{map}}_A(B \otimes X, C \otimes \underline{\mathrm{map}}(X, \mathbf{1}) \otimes X) \arrow[r, "\mathrm{ev}_*"] \arrow[d, no head, "\simeq"] & \underline{\mathrm{map}}(B \otimes X, C)  \arrow[d, no head, "\simeq"] \\
				\underline{\mathrm{map}}(B, C \otimes \tilde{X}) \arrow[r, "\_ \otimes X"] & \underline{\mathrm{map}}(B \otimes X, C \otimes \tilde{X} \otimes X) \arrow[r, "e_*"] & \underline{\mathrm{map}}(B \otimes X, C)
			\end{tikzcd}
		\end{equation}
		is an equivalence. Here $\_ \otimes X$ is defined as the map corresponding under the internal hom adjunction to the natural transformation
		\begin{equation*}
			\mathrm{Map}_\mathcal{C}(\_ \otimes A, B \otimes \tilde{X}) \xrightarrow{\_ \otimes X} \mathrm{Map}_\mathcal{C}(\_ \otimes A \otimes X, B \otimes \tilde{X} \otimes X)
		\end{equation*}
		Applying $\mathrm{Map}_{\mathrm{Mod}_A(\mathcal{C})}(\mathbf{1}, \_)$ to the top composite in \eqref{H-13}, we obtain that
		\[\begin{tikzcd}
			\mathrm{Map}_{\mathrm{Mod}_A(\mathcal{C})}(B, C \otimes \tilde{X}) \arrow[r, "\_ \otimes X"] & \mathrm{Map}_{\mathrm{Mod}_A(\mathcal{C})}(B \otimes X, C \otimes \tilde{X} \otimes X) \arrow[r, "e_*"] & \mathrm{Map}_{\mathrm{Mod}_A(\mathcal{C})}(B \otimes X, C)
		\end{tikzcd}\]
		is an equivalence, witnessing that $X$ is dualizable as $A$-module, again by \cite[Lemma 4.6.1.6]{Lur17}.
	\end{proof} 
\end{cor}
\begin{rem}
	The proof actually gives us more: Given a tuple $(X, Y, \mu: X \otimes Y \to \mathbf{1})$. If $u\mu$ witnesses that $X, Y$ are dual, then $X, Y$ are dual witnessed by $\mu$.
\end{rem}
\begin{lem}\label{H-15}
	Diagram \eqref{Cond-26} in the proof of Lemma \ref{Cond-29} admits a homotopy.
\begin{proof}
	We write $\underline{\mathrm{map}}$ for the mapping spectrum and $\underline{\mathrm{map}}$ for the internal hom in the respective category. Denote by $\tilde{\phi}: \gamma_!\ZZ_l \to \ZZ_l$ the equivalence, which is adjoint to $\phi: \ZZ_l \to \gamma^*\ZZ_l$, the witness of unitality of $\gamma^*$. Denote by $\mathrm{ev}$ the counit of the inner hom adjunction. Denote by $\underline{\mathrm{map}}(\gamma^*): \gamma^*\underline{\mathrm{map}}(x, y) \to \underline{\mathrm{map}}(\gamma^*x, \gamma^*y)$ the map defined in Definition \ref{Cond-15}. Consider the following diagram:\newline
	\begin{adjustbox}{center}
		\begin{tikzcd}[column sep=small]
			\underline{\mathrm{map}}(x, y) \arrow[ddrr, phantom, "(1)"] \arrow[dd, "\gamma^*"] & & \underline{\mathrm{map}}(x, \underline{\mathrm{map}}(x, y) \otimes x) \arrow[ll, "\mathrm{ev}_*"] \arrow[dd, "\gamma^*"] \arrow[ddrr, phantom, "(2)"] & & \underline{\mathrm{map}}(\ZZ_l, \underline{\mathrm{map}}(x, y)) \arrow[ll, "\_ \otimes x"] \arrow[dd, "\gamma^*"] \arrow[rr, "\tilde{\phi}^*"] \arrow[dr, phantom, "(3)"] & & \underline{\mathrm{map}}(\gamma_!\ZZ_l, \underline{\mathrm{map}}(x, y)) \arrow[ddddll, bend left] \arrow[ddll, "\simeq", no head, "\mathrm{adj}"'] \arrow[llllll, bend right=10, "\simeq"]\\
			&&&&& \phantom{}\\
			\underline{\mathrm{map}}(\gamma^*x, \gamma^*y) & & \underline{\mathrm{map}}(\gamma^*x, \gamma^*\underline{\mathrm{map}}(x, y) \otimes \gamma^*x) \arrow[ll, "\gamma^*(\mathrm{ev})"] \arrow[dd, "\underline{\mathrm{map}}(\gamma^*)"] \arrow[ddrr, phantom, "(5)"] \arrow[dl, phantom, "(4)"] & &  \underline{\mathrm{map}}(\ZZ_l, \gamma^*\underline{\mathrm{map}}(x, y)) \arrow[ll, "\_ \otimes \gamma^*x"] \arrow[dd, "\underline{\mathrm{map}}(\gamma^*)_*"] \\
			& \phantom{} \\
			& & \underline{\mathrm{map}}(\gamma^*x, \underline{\mathrm{map}}(\gamma^*x, \gamma^*y) \otimes \gamma^*x) \arrow[uull, "\mathrm{ev}_*"] & & \underline{\mathrm{map}}(\ZZ_l, \underline{\mathrm{map}}(\gamma^*x, \gamma^*y)) \arrow[ll, "\_ \otimes \gamma^*x"] \arrow[uullll, "\simeq", bend left=45] &
		\end{tikzcd}
	\end{adjustbox}
	where we have left out the symmetric monoidality witnesses of $\gamma^*$
	\begin{enumerate}
		\item square (1) commutes by functoriality
		\item (2) has a filler by symmetric monoidality of $\gamma^*$
		\item For (3): Adding the omitted symmetric monoidality witnesses back in, we arrive at the diagram where the rightmost composite is the adjunction equivalence:
		\[\begin{tikzcd}
			\underline{\mathrm{map}}(\ZZ_l, \underline{\mathrm{map}}(x, y)) \arrow[d, "\gamma^*"]  \arrow[r, "\tilde{\phi}"] & \underline{\mathrm{map}}(\gamma_!\ZZ_l, \underline{\mathrm{map}}(x, y)) \arrow[d, "\gamma^*"] \\
			\underline{\mathrm{map}}(\gamma^*\ZZ_l, \gamma^*\underline{\mathrm{map}}(x, y)) \arrow[r, "\gamma^*(\tilde{\phi})"] \arrow[d, "\phi^*", "\simeq"'] & \underline{\mathrm{map}}(\gamma^*\gamma_!\ZZ_l, \gamma^*\underline{\mathrm{map}}(x, y)) \arrow[dl, "\eta^*"] \\
			\underline{\mathrm{map}}(\ZZ_l, \gamma^*\underline{\mathrm{map}}(x, y))				
		\end{tikzcd}\]
		The top square has a witness by functoriality; the bottom diagram has a witness by the description of adjunctions in terms of initial objects in the slice \cite[Lem.5.2.4.1]{Lur09}.
		\item (4) commutes by definition of the comparison map $\gamma^*$, since under the inner hom adjunction it corresponds to
		\begin{equation*}
			\gamma^*\underline{\mathrm{map}}(x, y) \otimes \gamma^*y \simeq \gamma^*(\underline{\mathrm{map}}(x, y) \otimes x) \xrightarrow{\gamma^*(\mathrm{ev})} \gamma^*y 
		\end{equation*}
		\item (5) has a witness, since post- and precomposition commute.
	\end{enumerate}
	Now the outer square is exactly \eqref{Cond-26}, which therefore admits a homotopy.
\end{proof}
\end{lem}
		\section{Some Sheafy Facts}\label{P}
		\subsection{Facts about the pro-étale site}
\begin{lem}\label{F-9}
	The canonical functor $\mathrm{Pro}(BG_\et) \xrightarrow{\simeq} \mathrm{Mod}_G(\mathrm{Pro}(\mathrm{Fin}))$ is an equivalence.
	\begin{proof}
		Denote by $\mathrm{ProFin}$ the full subcategory of the $1$-category of topological spaces $\mathrm{Top}$, which are a cofiltered limit of finite discrete sets. By \cite{Jo82}, these are the Stone topological spaces. $\mathrm{ProFin} \to \mathrm{Top} \to \mathrm{Set}$ is conservative, and the canonical map $\mathrm{Pro}(\mathrm{Fin}) \to \mathrm{ProFin}$ is an equivalence. Thus, to show the desired statement, we may replace $\mathrm{Mod}_G(\mathrm{Pro}(\mathrm{Fin}))$ by $\mathrm{Mod}_G(\mathrm{ProFin})$.
		\begin{claim}
			Every object in $\mathrm{Pro}(BG_\et)$ admits a description as a formal limit $\mathrm{lim}_{i \in \mathcal{I}} X_i$ with $X_i \in BG_\et$ along surjective (equivariant) transition maps.
			\begin{proof}
				Same proof as in the non-equivariant case: Let $X = \mathrm{lim}_{j \in \mathcal{J}} X_j$ be a presentation. For fixed $j$ the images of $X_{j'} \to X_j$ stabilize, giving rise to a discrete sub-$G$-set $Y_j \subset X_j$. By the same argument as in the non-equivariant case, the limit over $Y_j$ is the same as over $X_j$, by the standard limit argument.
			\end{proof}
		\end{claim}
		But now for $X = \mathrm{lim}_i X_i$ and $Y = \mathrm{lim}_j Y_j$ elements of $\mathrm{Pro}(BG_\et)$ with such a presentation the map
		\begin{equation*}
			\colim_{i \in I^\op}\mathrm{Hom}_{\mathrm{Mod}_G(*_\proet)}(X_i, Y_j) \to \mathrm{Hom}_{\mathrm{Mod}_G(*_\proet)}(X, Y_j)
		\end{equation*}
		is clearly injective as the left-hand side is filtered colimit along inclusions. It is also surjective, since it is true when dropping the equivariance condition and since the condition of a factorization $X \to X_i \to Y_j$ to be $G$-equivariant is just a condition on the second map by surjectivity of the first map, which is $G$-equivariant. Taking limits in $j$ yields the desired fully faithfulness.
		
		Bullet point \ref{F-9-2} of the following claim settles essential surjectivity:
		\begin{claim}
			Let $X \in \mathrm{Mod}_G(\mathrm{ProFin})$. Denote by $\mathcal{P}_X$ and $\mathcal{P}_X^G$ the posets of partitions and $G$-stable clopen partitions of $X$. Morphisms are given by $\mathcal{U} \to \mathcal{V}$ if $\mathcal{U}$ refines $\mathcal{V}$. There is a composition of functors
			\begin{equation*}
				\mathcal{P}_X^G \subset \mathcal{P}_X \xrightarrow{\vert \_ \vert} \mathrm{Set} \subset \mathrm{Top}
			\end{equation*}
			where the last functor sends a partition $\mathcal{U}$ to its underlying set of objects. Then:
			\begin{enumerate}
				\item\label{F-9-1} The set of objects of $\mathcal{P}_X$ is a discrete $G$-representation.
				\item\label{F-9-2} $X \xrightarrow{\simeq} \lim_{\mathcal{U} \in \mathcal{P}_X^G} \vert \mathcal{U} \vert$.
			\end{enumerate}
			\begin{proof}
				\textit{For 1}: For $n \in \NN$ we have a $G$-equivariant bijection of sets
				\begin{align*}
					\{\text{ordered clopen partitions of order }n\} &\simeq \mathrm{Map}_\mathrm{ProFin}(X, \{1, ..., n\}^\delta) \\
					\{U_i\}_{i = 1, ..., n} &\mapsto f \text{ with } f\vert_{U_i} := i \\
					\{f^{-1}(i)\}_{i = 1, ..., n} &\mapsfrom f
				\end{align*}
				where $G$ acts on the right via restriction.
				Now observe that we can endow the right-hand side with the compact-open topology. We claim that this action is continuous: We need to prove that the action map
				\begin{equation}\label{P-3}
					G \times \mathrm{Map}(X, \{1, ..., n\}^\delta) \to \mathrm{Map}(X, \{1, ..., n\}^\delta)
				\end{equation}
				is continuous. It is obtained by "adjoining over" $X$ in the continuous function
				\begin{equation*}
					G \times X \times \mathrm{Map}(X, \{1, ..., n\}^\delta) \xrightarrow{\mu \times \mathrm{id}} X \times \mathrm{Map}(X, \{1, ..., n\}^\delta) \xrightarrow{\mathrm{ev}} \{1, ..., n\}^\delta 
				\end{equation*}
				$\mathrm{ev}$ is continuous, since $X$ is locally compact by \cite[Thm.46.10]{Mun00}. Thus by \cite[Thm.46.11]{Mun00} \eqref{P-3} is continuous.
				
				But the compact-open topology on $\mathrm{Map}(X, \{1, ..., n\}^\delta)$ is discrete: Let $f: X \to \{1, ..., n\}^\delta$ be a continuous function. Then
				\begin{equation*}
					\{f\} = \bigcap\limits_{i = 1}^n\{g: X \to \{1, ..., n\}^\delta \vert g(f^{-1}(i)) \subset \{i\}\} \subseteq \mathrm{Map}(X, \{1, ..., n\}^\delta)
				\end{equation*}
				is a description of $f$ as finite intersection of generating opens of the compact-open topology. Here we used the compactness of $X$ to deduce that the $f^{-1}(i)$'s are compact. Now the quotient of a discrete topological space by a finite group is again discrete. But
				\begin{equation*}
					\mathcal{P}_X = \bigcup_{n \in \NN} \mathrm{Map}(X, \{1, ..., n\}^\delta)/\Sigma_n
				\end{equation*}
				$G$-equivariantly, since any clopen partition must be finite by compactness of $X$. Hence, $\mathcal{P}_X$ has open stabilizers.
				
				\textit{For 2}: Since limits in $\mathrm{Mod}_G(\mathrm{ProFin})$ are computed underlyingly it suffices to prove that the inclusion $\mathcal{P}_X^G \to \mathcal{P}_X$ is limit cofinal. By Joyal's version of Quillen's Theorem A \cite[\href{https://kerodon.net/tag/02NY}{Tag 02NY}]{Ker} it suffices to prove that the relative slice $(\mathcal{P}_X^G)_{/\mathcal{U}}$ is weakly contractible for all $\mathcal{U} \in \mathcal{P}_X$. We even provide a terminal object:
				The common refinement $\bigwedge\limits_{g \in G} g\mathcal{U}$ is a terminal object, if we can show that we get the same result for a common refinement over a finite indexing set:
				\begin{equation*}
					\bigwedge\limits_{g \in G} g\mathcal{U} = \bigwedge\limits_{\bar{g} \in G/\mathrm{Stab}_\mathcal{U}(G)} g\mathcal{U}
				\end{equation*}
				and $G/\mathrm{Stab}_\mathcal{U}(G)$ is finite by bullet point \ref{F-9-1}.
			\end{proof}
		\end{claim}
	\end{proof}
\end{lem}
\begin{lem}\label{F-21}
	Let $Y \to X$ be a $\Gamma$-Galois covering of schemes, with our standard assumption in Section \ref{C1} on $X,Y$, and $\Gamma$ a profinite group. There is a short exact sequence of profinite groups:
	\begin{equation}\label{F-20}
		1 \to \pi_1^\et(Y, \bar{\eta}) \to \pi_1^\et(X, \bar{\eta}) \to \Gamma \to 1
	\end{equation}
	Let $G$ be a finite group. Then there is an equivalence
	\begin{equation}\label{F-23}
		\mathrm{Map}_{\mathrm{Shv}(X_\et)}(Y ,BG) \simeq \mathrm{Map}_{\mathrm{Cond}\Spc}(\Gamma/\pi_1^\et(X, \bar{\eta}), BG) \simeq \mathrm{Map}_{\mathrm{Cond}\Spc}(B\pi_1^\et(Y, \bar{\eta}), BG)
	\end{equation}
	where the first equivalence is $\Gamma$-equivariant.
	
	Taking $\pi_0$ yields a $\Gamma$-equivariant equivalence
	\begin{equation}\label{F-24}
		\mathrm{H}^1(Y, BG) \simeq \mathrm{Hom}_{\mathrm{Grp}(\mathrm{Pro}(\mathrm{Fin}))}(\pi_1^\et(X, \bar{\eta}), G)/G
	\end{equation}
	where $G$ acts by conjugation in the source. Moreover, this identification is $\Gamma$-equivariant with the following action: Let $g \in \Gamma$, and let $f: \pi_1^\et(X, \bar{\eta}) \to G$ be a representative modulo conjugation by $G$. Pick a lift $\tilde{g} \in \pi_1^\et(X, \bar{\eta})$ and precompose $f$ with the restriction of the conjugation action by $\tilde{g}$ to the normal subgroup $\pi_1^\et(Y, \bar{\eta})$.
	\begin{proof}
		\textit{For the short exact sequence}: Since $Y/X$ is unramified they have a common universal pro-covering $\tilde{X}$ corresponding to $\pi_1^\et(X, \bar{\eta})$ under $\mathrm{Mod}_{\pi_1^\et(X, \bar{\eta})}(*_\proet) \subset X_\proet$ (see \eqref{Et-2} for the definition of this inclusion). The short exact sequence comes from the short exact sequence of Galois groups of function field extensions $K(\tilde{X})/K(Y)/K(X)$ proven in \cite[\href{https://stacks.math.columbia.edu/tag/0BMM}{Tag 0BMM}]{stacks-project}.
		
		\textit{For formula \eqref{F-23}}: Recall that $\mathrm{Pro}(X_\text{fet}) \subset X_\proet$ induces an adjunction:
		\[\begin{tikzcd}
			\mathrm{Shv}(X_\et) \arrow[r, shift left, "r_*"] & \mathrm{Mod}_{\pi_1^\et(X, \bar{\eta})}(\mathrm{Cond}\Spc) \arrow[l, shift left, "r^*"]
		\end{tikzcd}\]
		Recall the following facts about this adjunction
		\begin{enumerate}
			\item The representable $r^*\Gamma \simeq Y \in \mathrm{Shv}(X_\proet)$ by definition.
			\item The unit $BG \to r_*BG \simeq r_*r^*BG$ is an equivalence: We may check this against representables $Z \in \mathrm{Pro}(X_\text{fet})$. Then the statement is equivalent to
			\begin{equation*}
				\{\text{profinite étale locally trivial } G \text{-torsors}/Z\} \subset \{\text{proétale locally trivial }G\text{-torsors}/Z\}
			\end{equation*}
			being an equivalence. But a $G$-torsor $p:E \to Z$, which trivializes fpqc-locally on $Z$, is already finite étale by \cite[\href{https://stacks.math.columbia.edu/tag/02LA}{Tag 02LA}]{stacks-project} and \cite[\href{https://stacks.math.columbia.edu/tag/02VN}{Tag 02VN}]{stacks-project}. Since $p$ trivializes along itself and $p$ is a finite étale covering, this proves the converse inclusion.
		\end{enumerate}
		Thus we get a $\Gamma$-equivariant chain of equivalences:
		\begin{align*}
			\mathrm{Map}_{\mathrm{Shv}(X_\et)}(Y, BG) &\simeq \mathrm{Map}_{\mathrm{Shv}(X_\et)}(r^*\Gamma, BG) \simeq \mathrm{Map}_{\mathrm{Mod}_{\pi_1(X, \bar{\eta})}(\mathrm{Cond}\Spc)}(\Gamma, r_*BG) \\
			&\simeq \mathrm{Map}_{\mathrm{Mod}_{\pi_1(X, \bar{\eta})}(\mathrm{Cond}\Spc)}(\Gamma, BG) \simeq \mathrm{Map}_{\mathrm{Cond}\Spc}(\pi_1^\proet(X, \bar{\eta})\backslash \Gamma, BG)
		\end{align*}
		The last equivalence in \eqref{F-23} follows from $\pi_1^\et(X, \bar{\eta}) \backslash \Gamma \simeq B\pi_1^\et(Y, \bar{\eta})$ witnessed by \eqref{F-20}.
		
		\textit{For formula \eqref{F-24}}: Thus, it remains to prove the following statement: Let
		\begin{equation*}
			1 \to A \to B \to C \to 1
		\end{equation*}
		be a short exact sequence of profinite groups. Then we have a $C$-equivariant equivalence
		\begin{equation}\label{P-15}
			\pi_0\mathrm{Map}_{\mathrm{Cond}\Spc}(B \backslash C, BG) \simeq \mathrm{Hom}_{\mathrm{Grp}(*_\proet)}(A, G)/G
		\end{equation}
		The action on the left is given by restricting along right multiplication on $B \backslash C$. On the right, $C$ acts on $A$ by restricting the conjugation action on $B$ of any lift $\hat{c}$ for $c \in C$.
		\begin{claim}
			The above holds for $A, B, C$ discrete groups, and $\mathrm{Hom}_{\mathrm{Grp}(\mathrm{Set})}(A, G)/G$ on the right-hand side in \eqref{P-15}.
			\begin{proof}[Proof sketch]
				Recall the following facts:
				\begin{enumerate}
					\item $B \backslash C$ and $BG$ lie in the essential image of $\mathrm{An} \subset \mathrm{Cond}\Spc$. So we may work in $\mathrm{An}$.
					\item\label{F-21-1} $1$-truncated anima are modelled by the $(2, 1)$-category of groupoids. In particular,
					\begin{equation*}
						h(\mathrm{An}_{\leq 1}) \simeq h(\mathrm{Gpd})
					\end{equation*}
					, i.e., the homotopy category of $1$-truncated anima and the homotopy category of groupoids coincides.
					\item The homotopy quotient $B \backslash C$ is modelled by the action groupoid $B \backslash \backslash C$. Its objects are the elements of $C$ and the mapping sets are $\mathrm{Map}(c, c') = \{b \in B \vert bc = c'\}$ \footnote{This follows by combining the facts:
						\begin{enumerate}
							\item The classical Grothendieck construction extends Luries straightening-unstraightening equivalence \cite[Introduction to 3.2]{Lur09}.
							\item The total space of the Grothendieck construction is exactly $B \backslash \backslash C$.
							\item The total space of the unstraightening of a functor with source a anima computes its colimit \cite[Cor.3.3.4.3]{Lur09}.
						\end{enumerate}
					}. The residual $C$-action on the quotient is given by right multiplication on objects. For $c \in C$, denote by $r_c: B \backslash \backslash C \to B \backslash \backslash C$ this functor.
					\item[$\Rightarrow$] $BA$ is modelled by the groupoid $A \backslash \backslash *$. By bullet point \ref{F-21-1}, homotopy classes $[BA, BG]$ are thus given by functors of groupoids $f: A\backslash\backslash * \to G \backslash \backslash *$ modulo homotopy, or, equivalently, group homomorphisms $A \to G$ modulo conjugation by $g \in G$ on the target. This gives rise to the desired equivalence of underlying sets.
					\item\label{F-21-2} Every $c \in C$ gives rise to a homotopy equivalence $c: A \backslash \backslash * \xrightarrow{\simeq} B \backslash \backslash C$ given by the functor mapping the point to $c$ and which induces the equivalence $A \simeq \{b \in B \vert bc = c\} \subset B$ on mapping sets. Every lift $\hat{c}$ along $B \to C$ of $c$ gives rise to a homotopy $\hat{c}: 1 \Rightarrow c: A \backslash \backslash * \xrightarrow{\simeq} B \backslash \backslash C$.
				\end{enumerate}
				Denote by $\mathrm{conj}_{\hat{c}}: A \to A$ the inner automorphism induced by conjugation by $\hat{c}$ on $A \subset B$. Now we have a strictly commuting square
				\[\begin{tikzcd}
					A \backslash \backslash * \arrow[d, "\mathrm{conj}_{\hat{c}}\backslash \backslash *"] \arrow[r, "1"] & B \backslash \backslash C \arrow[d, "l_c"] \\
					A \backslash \backslash * \arrow[r, "c"] & B \backslash \backslash C
				\end{tikzcd}\]
				of groupoids. The bottom map is homotopic to the top vertical map as described in \ref{F-21-2}. Hence, the action on homotopy classes of groupoids $[BA, BG] \simeq \mathrm{Hom}_{\mathrm{Grp}}(A, G)/G$ is via restriction along conjugation by $\hat{c}$.
			\end{proof}
		\end{claim}
		Now we reduce to the general case as follows: $A, B, C$ are representable profinite groups in $\mathrm{Cond}\Spc$. They admit a map from $A^\delta, B^\delta, C^\delta$, the constant sheaves associated to the underlying discrete groups, via the counit of the $\underline{\cdot} \dashv \_(*)$-adjunction. This comparison gives rise to a $C$-equivariant map of fiber sequences
		\[\begin{tikzcd}
			\mathrm{Map}_{\mathrm{Cond}\Spc_*}(B^\delta \backslash C^\delta, BG) \arrow[r] & \mathrm{Map}_{\mathrm{Cond}\Spc}(B^\delta \backslash C^\delta, BG) \arrow[r] & BG \arrow[d, sloped, phantom, "="] \\
			\mathrm{Map}_{\mathrm{Cond}\Spc_*}(B \backslash C, BG) \arrow[u] \arrow[r] & \mathrm{Map}_{\mathrm{Cond}\Spc}(B \backslash C, BG) \arrow[u, "\epsilon^*"] \arrow[r] & BG
		\end{tikzcd}\]
		Thus the long exact sequence on homotopy groups yields a square
		\[\begin{tikzcd}
			\mathrm{Hom}_{\mathrm{Grp}(\mathrm{Set})}(A, G) \arrow[r, no head, "\simeq"] & \pi_0\mathrm{Map}_{\mathrm{Cond}\Spc_*}(B^\delta \backslash C^\delta, BG) \arrow[r] & \pi_0\mathrm{Map}_{\mathrm{Cond}\Spc}(B^\delta \backslash C^\delta, BG) \\
			\mathrm{Hom}_{\mathrm{Grp}(*_\proet)}(A, G) \arrow[r, no head, "\simeq"] \arrow[u, hook, sloped] & \pi_0\mathrm{Map}_{\mathrm{Cond}\Spc_*}(B \backslash C, BG) \arrow[r] \arrow[u] & \pi_0\mathrm{Map}_{\mathrm{Cond}\Spc}(B \backslash C, BG) \arrow[u]
		\end{tikzcd}\]
		where:
		\begin{enumerate}
			\item The horizontal equivalences on the left come from the recognition principle for group objects in any $\infty$-topos $\mathcal{X}$: Pointed connected objects are equivalent to group objects via
			\[\begin{tikzcd}
				\mathcal{X}_{*, \geq 1} \arrow[r, shift left, "\Omega"] & \mathrm{Grp}(\mathcal{X}) \arrow[l, shift left, "B"]
			\end{tikzcd}\]
			\footnote{By characterization in \cite[Sect.6.1]{Lur09} a topos the \v{C}ech nerve construction induces an equivalence $\mathrm{Eff}(\mathcal{X}) \xrightarrow{\simeq} \mathrm{Gpd}(\mathcal{X})$ between the full subcategory of $\mathrm{Fun}(\Delta^1, \mathcal{X})$ on effective epimorphisms and the full subcategory of $\mathrm{Fun}(\Delta^\op, \mathcal{X})$ of groupoids. Taking vertical fibers over the point in the commutative diagram
				\[\begin{tikzcd}[ampersand replacement=\&]
					\mathrm{Gpd}(\mathcal{X}) \arrow[d, "\vert_{[0]}"] \arrow[r, "\simeq"] \& \mathrm{Eff}(\mathcal{X}) \arrow[d, "\text{source}"] \\
					\mathcal{X} \arrow[r, phantom, "="] \& \mathcal{X}
				\end{tikzcd}\]
				yields the desired equivalence.} Moreover, $A^\delta, B^\delta, C^\delta$ live in the essential image of $\mathrm{Set} \subset \mathrm{An} \subset \mathrm{Cond}\Spc$.
			\item The leftmost vertical map is injective, since $\mathrm{*_\proet} \to \mathrm{Set}$ is faithful.
			\item The right vertical morphisms are surjective. In both cases they witness that the target is the quotient by the conjugation action by $G$.
		\end{enumerate}
		So we conclude that the right vertical map is injective. Since $\pi_0\epsilon^*$ is $C$-equivariant this identifies the action as the correct one.
	\end{proof}
\end{lem}
\subsection{Canonicity of the finite étale trace}
The approach in this subsection is very much inspired by the much more complicated situation in \cite{EHKSY21}.
\begin{fact}
	Let $\mathcal{C}$ be a category. Let $E, F \subset \mathcal{C}$ be wide subcategories, closed under composition, containing all equivalences, and such that for $e: z \to w$ and $f: y \to w$ the pullback
	\[\begin{tikzcd}
		x \arrow[r, "\bar{e}"] \arrow[d, "\bar{f}"] & y \arrow[d, "f"] \\
		z \arrow[r, "e"] & w
	\end{tikzcd}\]
	in $\mathcal{C}$ exists and $\bar{e} \in E$ and $\bar{f} \in F$. Under these assumptions, denote by $\mathrm{Span}(\mathcal{C}, E, F)$ the $(2, 1)$-category of spans as for example constructed in \cite{LLP26} (and usually attributed to Barwick). Objects are the same as those of $\mathcal{C}$, and morphisms are given by spans $x \xleftarrow{e} y \xrightarrow{f} z$. The conditions on $E, F$ ensure that the composition with another morphism $z \xleftarrow{e'} w \xrightarrow{f'} v$ defined as the outer span in
	\[\begin{tikzcd}[column sep=small, row sep=small]
		& & y \times_z w \arrow[dr] \arrow[dl] & & \\
		& y \arrow[dr] \arrow[dl] & & w \arrow[dr] \arrow[dl] & \\
		x & & z & & v
	\end{tikzcd}\]
	is well-defined. In the diagram above the square is a pullback.
	
	Moreover, there are left exact functors $E^\op \to \mathrm{Span}(\mathcal{C}, E, F) \xleftarrow{\gamma} F$ by inserting only in the backward or forward direction.
\end{fact}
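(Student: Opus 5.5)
The plan is to realize $\mathrm{Span}(\mathcal{C}, E, F)$ as a complete Segal space, following Barwick's effective Burnside construction. For $[n] \in \Delta$ let $\mathrm{Tw}([n])$ be the twisted arrow category, whose objects are pairs $i \leq j$ and whose morphisms are $(i,j) \to (i',j')$ for $i' \leq i \leq j \leq j'$. Define the $n$-simplices $\mathrm{Span}_n$ as the subspace of $\mathrm{Map}(\mathrm{Tw}([n])^\op, \mathcal{C})$ on those functors $X$ such that:
\begin{enumerate}
	\item every square $X_{(i,l)} \to X_{(i,k)}, X_{(j,l)} \to X_{(j,k)}$ with $i \leq j \leq k \leq l$ is a pullback;
	\item the maps $X_{(i,k)} \to X_{(i,j)}$ (forward legs) lie in $F$;
	\item the maps $X_{(i,k)} \to X_{(j,k)}$ (backward legs) lie in $E$.
\end{enumerate}
Functoriality in $[n]$ is given by precomposition, because $\mathrm{Tw}$ is functorial and the three conditions are stable under it. For $n = 1$ this recovers the spans $x \xleftarrow{e} y \xrightarrow{f} z$. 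For $n = 2$ the extra datum is exactly the pullback $y \times_z w$ with its projections, which reproduces the composite displayed in the statement.

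The main step, and the one I expect to be the real obstacle, is the Segal condition: the map $\mathrm{Span}_n \to \mathrm{Span}_1 \times_{\mathrm{Span}_0} \cdots \times_{\mathrm{Span}_0} \mathrm{Span}_1$ should be an equivalence. Restriction to the zig-zag spine $\Lambda_n \subset \mathrm{Tw}([n])^\op$, consisting of the objects $(i,i)$ and $(i,i+1)$, realizes this map. I would show that a functor on $\mathrm{Tw}([n])^\op$ satisfies condition 1 exactly when it is the right Kan extension of its restriction to $\Lambda_n$, arguing inductively on $j-i$: each $X_{(i,j)}$ is an iterated pullback of the spine.

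The hypotheses then do two jobs.
\begin{itemize}
	\item Existence: the required pullbacks of a backward $E$-leg along a forward $F$-leg exist in $\mathcal{C}$, and their base changes stay in $E$ and $F$. Together with closure of $E$ and $F$ under composition, this shows by induction that the Kan extension exists and satisfies conditions 2 and 3.
	\item Uniqueness: by \cite[Prop.4.3.2.15]{Lur09}, right Kan extensions form a contractible space of choices, so restriction to $\Lambda_n$ is an equivalence onto its image.
\end{itemize}
This gives associativity and all higher coherences at once, so composition is well defined up to contractible choice. Completeness is checked separately: the equivalences of the span category are the spans whose two legs are equivalences, and $E$ and $F$ contain all equivalences, so completeness reduces to that of $\mathcal{C}^\simeq$.

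For the $(2,1)$-claim, note that the mapping spaces are $\mathrm{Map}(x,z) \simeq (E_{/x} \times F_{/z})$-spans up to equivalence, that is, groupoids of spans. When $\mathcal{C}$ is a $1$-category these groupoids are $1$-truncated. Finally, I would construct the two functors. The functor $E^\op \to \mathrm{Span}$ sends $e: y \to x$ to the span $x \xleftarrow{e} y = y$; simplicially, it takes a chain in $E$ to the functor on $\mathrm{Tw}([n])^\op$ that depends only on the second coordinate. The functor $\gamma: F \to \mathrm{Span}$ sends $f$ to $x = x \xrightarrow{f} z$. In both cases condition 1 holds automatically, because the squares involved have two parallel identity edges and are therefore trivially pullbacks. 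Left exactness reduces to compatibility with the base-change squares above; I would verify it on $2$-simplices and let the Kan-extension description propagate it.
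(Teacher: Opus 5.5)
Your construction of $\mathrm{Span}(\mathcal{C},E,F)$ is the standard Barwick twisted-arrow Segal space, which the paper only cites without proof, and that part is sound. This covers the right-Kan-extension reading of condition 1, existence of the iterated pullbacks from the hypothesis on $E$ and $F$, completeness (after the short check that an invertible span has invertible legs), and your observation that the $(2,1)$ claim needs $\mathcal{C}$ to be a $1$-category.

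There is one labelling slip. With your convention for $\mathrm{Tw}([n])$, the map $X_{(0,1)}\to X_{(0,0)}$ is the leg towards the source. So conditions 2 and 3 must read $X_{(i,k)}\to X_{(i,j)}\in E$ and $X_{(i,k)}\to X_{(j,k)}\in F$. As written, $n=1$ gives spans whose backward leg lies in $F$. Your functor $E^{\mathrm{op}}\to\mathrm{Span}$, which depends only on the second coordinate, would also require the $E$-maps $x_k\to x_j$ to lie in $F$.

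The genuine gap is the final step. Write $\iota$ for the backward functor $E^{\mathrm{op}}\to\mathrm{Span}(\mathcal{C},E,F)$. Left exactness means preservation of finite limits. Checking on $2$-simplices only verifies functoriality and the base-change relation $\iota(e)\circ\gamma(f)\simeq\gamma(\bar f)\circ\iota(\bar e)$ for the squares in the hypothesis. That is a statement about composition, not about limits.

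In fact the step cannot be carried out, because the claim is false as literally stated. Take $\mathcal{C}=E=F=\mathrm{Fin}$.
\begin{itemize}
\item The terminal object of $\mathrm{Span}(\mathrm{Fin})$ is $\emptyset$, while $\mathrm{Map}_{\mathrm{Span}}(*,*)\simeq\mathrm{Fin}^{\simeq}$ is not contractible. So $\gamma$ does not preserve the terminal object $*$ of $F$.
\item $\iota$ preserves finite products but not pullbacks. Consider the pullback in $\mathrm{Fin}^{\mathrm{op}}$ given by the pushout $*\sqcup_{\{1,2\}}*=*$ in $\mathrm{Fin}$. It is sent to a square whose comparison map on $\mathrm{Map}(*,-)$ is the constant-loop map $\mathrm{Fin}^{\simeq}\to L\mathrm{Fin}^{\simeq}$. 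Here $L\mathrm{Fin}^{\simeq}$ is the free loop space, i.e.\ finite sets with an automorphism, and this map is not an equivalence.
\end{itemize}

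What your simplicial description does give is also what the later argument in Lemma \ref{P-13} actually uses. Both functors are well defined and essentially surjective, and they satisfy the base-change relation. Moreover, suppose $\mathcal{C}$ has disjoint universal finite coproducts whose inclusions lie in $E$ and $F$. Then the span category is semiadditive, $\iota$ preserves finite products, and $\gamma$ preserves finite coproducts. You should prove these statements in place of left exactness.
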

\begin{defin}
	Let $\mathcal{D} \in \mathrm{CAlg}(\mathrm{Pr}^L)$. For $X$ a qcqs scheme, denote 
	\begin{equation*}
		\mathrm{PSh}^\mathrm{\fet}(X_\proet; \mathcal{D}) := \mathrm{PSh}_\Sigma(\mathrm{Span}(X_\proet, X_\proet, \fet); \mathcal{D})
	\end{equation*}
	where the subscript $\Sigma$ denotes product-preserving presheaves.
	
	Left Kan extension along the opposite of $\gamma: X_\proet \to \mathrm{Span}(X_\proet, X_\proet, \mathrm{finite } \et)$ yields an adjunction:
	\begin{equation*}
		\gamma^*: \mathrm{PSh}(X_\proet; \mathcal{D}) \leftrightharpoons \mathrm{PSh}^{\fet}(X_\proet; \mathcal{D}): \gamma_* 
	\end{equation*}
\end{defin}
\begin{defin}
	We call $F \in \mathrm{PSh}^{\fet}(X_\proet; \mathcal{D})$ $\proet$-local if $\gamma_*F$ is a hypersheaf. Denote by $\mathrm{Shv}^\mathrm{\fet}(X_\proet; \mathcal{D})$ the full subcategory on local objects. By general principle, we obtain an adjunction:
	\begin{equation*}
		\gamma^*: \mathrm{Shv}(X_\proet; \mathcal{D}) \leftrightharpoons \mathrm{Shv}^{\fet}(X_\proet; \mathcal{D}): \gamma_* 
	\end{equation*}
\end{defin}
\begin{lem}\label{P-13}
	$\gamma^* \dashv \gamma_*$ is an equivalence for $\mathcal{D} = \mathrm{CMon}(\mathrm{An})$. Thus, this is also true for arbitrary $\mathcal{D}$, which are module over $\mathrm{CMon}(\mathrm{An})$ in $\mathrm{Pr}^L$.
	\begin{proof}
		We claim that $\mathrm{Span}(X_\proet, X_\proet, \fet)$ is already semi-additive. Therefore, the product-preserving functors on its opposite category are semi-additive as well: $\emptyset$ is clearly initial and terminal, as for $Y \in X_\proet$ the span $Y \leftarrow \emptyset \to \emptyset$ is the only span mapping to $\emptyset$. So it remains to check that for $Y, Z \in X_\proet$ the spans $Y \sqcup Z \leftarrow Y \to Y$, $Y \sqcup Z \leftarrow Z \to Z$ witness that $Y \sqcup Z$ is the product in the span category, which is an easy check.
		
		Now the core observation is the following: Let $Y \in X_\proet$ and let $\mathcal{O}_x$ be the spectrum of a strictly henselian local ring. Then
		\begin{equation}\label{P-10}
			\mathrm{Map}_{X_\proet}(\mathcal{O}_{x}, Y) \xrightarrow{\gamma^*}\mathrm{Map}_{\mathrm{Span}(X_\proet, X_\proet, \fet)}(\mathcal{O}_x, Y) 
		\end{equation}
		witnesses that the target is the free commutative monoid in $\mathrm{An}$ of its source: Recall that $(\mathcal{O}_x)_{\fet}$ is equivalent to the category of finite sets (see \cite[\href{https://stacks.math.columbia.edu/tag/04GK}{Tag 04GK}]{stacks-project}). One easily checks that
		\[\begin{tikzcd}[row sep=tiny]
			& \mathcal{O}_x^{\sqcup S} \arrow[ddl, "\nabla"] \arrow[ddr, "\coprod_S f_s"] & & & \\
			& & & \mapsto & \sum\limits_S f_s \in \coprod_{S \in \mathrm{Fin}} \mathrm{Map}_{X_\proet}(\mathcal{O}_x, Y)^{\times S}\\
			\mathcal{O}_x & & Y
		\end{tikzcd}\]
		provides an inverse for the map induced by \eqref{P-10}
		\footnote{Here $(\_)^{\times S}$ stands for the groupoid with vertices the $\vert S \vert$-fold product with paths between objects that are related by a permutation. This is exactly $((\_)^{\times \vert S \vert})_{h\Sigma_{\vert S \vert}}$.}.
		
		To finish the proof, we need bullet points \ref{P-13-1} and \ref{P-13-3} in the following:
		\begin{enumerate}
			\item\label{P-13-1} Both versions of $\gamma_*$ are conservative.
			\item\label{P-13-2} $\gamma_*\gamma^*: \mathrm{PSh}(X_\proet; \mathrm{An}) \to \mathrm{PSh}(X_\proet; \mathrm{An})$ sends covering sieves generated by a single morphism to $\proet$-equivalences.
			\item\label{P-13-3} Both versions of $\gamma_*$ preserve all sifted colimits.
		\end{enumerate}
		\begin{proof}
			\textit{For 1}: $X_\proet \to \mathrm{Span}(X_\proet, X_\proet, \fet)$ is essentially surjective, hence, restriction along its opposite induces a conservative functor on functor categories.
			
			\textit{For 2}: This follows from the computation of mapping spaces out of henselian local rings \eqref{P-10}.
			
			\textit{For 3}: $\mathrm{PSh}_\Sigma \subset \mathrm{PSh}$ preserves sifted colimits and restriction preserves all colimits. By bullet point \ref{P-13-2}, we have a commutative diagram
			\[\begin{tikzcd}
				\mathrm{PSh}^{\fet}(X_\proet; \mathrm{An}); \mathrm{An}) \arrow[r] \arrow[d, "L_\proet"] & \mathrm{CMon}(\mathrm{PSh}(X_\proet; \mathrm{An})) \arrow[d, "L_\proet"] \\
				\mathrm{Shv}^{\fet}(X_\proet); \mathrm{An}) \arrow[r] & \mathrm{CMon}(\mathrm{Shv}(X_\proet; \mathrm{An}))
			\end{tikzcd}\]
			The top map preserves products and sifted colimits, and therefore all colimits. Since the vertical functors are localizations, the bottom map is colimit-preserving as well.
		\end{proof}
		By the triangle identities and conservativity of $\gamma_*$, it suffices to prove that the unit $\mathrm{id} \to \gamma_*\gamma^*$ is an equivalence. By bullet point \ref{P-13-3}, $\gamma_*\gamma^*$ becomes colimit-preserving when one replaces $\mathrm{Shv}(X_\proet; \mathrm{An})$ by $\mathrm{CMon}(\mathrm{Shv}(X_\proet; \mathrm{An}))$. So it suffices to check that the unit is an equivalence on representables, which is true, again, by \eqref{P-10}.
	\end{proof}
\end{lem}
\begin{obs}\label{P-5}
	Let $f: Y \to X$ be an étale map. Then there is an adjunction
	\begin{equation}\label{P-4}
	\begin{tikzcd}
		\mathrm{Span}(X_\proet, X_\proet, \fet) \arrow[r, "f^*", shift left=2] \arrow[r, phantom,  "{\rotatebox{-90}{$\dashv$}}"] & \mathrm{Span}(Y_\proet, Y_\proet, \fet) \arrow[l, shift left=2, "f_\#"]
	\end{tikzcd}
	\end{equation}
	extending the corresponding adjunction between $X_\proet$ and $Y_\proet$.
	In case $f$ is finite, the following natural transformations witness $f_\# \dashv f^*$: Unit and counit are given by
	\[\begin{tikzcd}
		& W \times_X Y \arrow[dl, "\mathrm{pr}_W"] \arrow[dr, "="] & & & & Z \arrow[dl, "{(\mathrm{id}, r)}"] \arrow[dr, "="] & \\
		W & & W \times_X Y & & Z \times_X Y & & Z
	\end{tikzcd}\]
	for $Z \xrightarrow{r} Y \in Y_\proet$ and $W \in X_\proet$. Here we used, that by étaleness of $f$ the diagonal $Z \xrightarrow{(\mathrm{id}, r)} Z \times_X Y$ is clopen, hence finite étale.
	\begin{proof}
		Keeping notation as in the description, the triangle identities amount to commutativity of the following diagrams of spans:
		\[\begin{tikzcd}
			& & Z \arrow[dl, "{(\mathrm{id}, r)}"] \arrow[ddll, "=", bend right] \arrow[ddrr, "=", bend left] \arrow[dr, "="] & & \\
			& Z \times_X Y \arrow[dl, "Z \times_X f"] \arrow[dr, "="] & & Z \arrow[dl, "{(\mathrm{id}, r)}"] \arrow[dr, "="] & \\
			W \times_X Y & & W \times_X Y \times_X Y & & W \times_X Y
		\end{tikzcd}\]
		and
		\[\begin{tikzcd}
			& & W \times_X Y \arrow[dl, "W \times \Delta_Y"] \arrow[ddll, "=", bend right] \arrow[ddrr, "=", bend left] \arrow[dr, "="] & & \\
			& W \times_X Y \times_X Y \arrow[dl, "W \times_X Y \times f"] \arrow[dr, "="] & & W \times_X Y \arrow[dl, "W \times \Delta_Y"] \arrow[dr, "="] & \\
			W \times_X Y & & W \times_X Y \times_X Y & & W \times_X Y
		\end{tikzcd}\]
	\end{proof}
\end{obs}
\begin{obs}\label{P-14}
	Note that $Y$ lives in the image of $\mathrm{Span}(X_{\fet}, X_{\fet}, X_{\fet}) \subset \mathrm{Span}(X_\proet, X_\proet, \fet)$, and every object is dualizable in the first category witnessed by
	\[\begin{tikzcd}
		& Y \arrow[dl] \arrow[dr] & & & & Y \arrow[dl] \arrow[dr] & \\
		X & & Y \times_X Y & & Y \times_X Y & & X
	\end{tikzcd}\]
	Observe that the latter is exactly the dualizing datum used in the construction of \eqref{Et-12} upon applying $\ZZ[\_]$: It remains to identify the image of the span $Y \times_X Y \xleftarrow{\Delta} Y \to Y$. More generally, we identify the image of a clopen inclusion in the forward direction $U \sqcup V \leftarrow U \to U$ for any $U, V \in X_\proet$: Since the latter span actually witnesses semi-additivity of $\mathrm{PSh}_\Sigma(X_\proet; \mathrm{An})$ for $U, V$, we know that it must be sent to the composite
	\begin{equation*}
		\ZZ[U \sqcup V] \xleftarrow{\simeq} \ZZ[U] \oplus \ZZ[V] \xrightarrow{\mathrm{pr}_U} \ZZ[U]
	\end{equation*}
	This is exactly the map we used in \eqref{Et-12} to define the dualizing datum.
\end{obs}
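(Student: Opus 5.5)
The plan is to check the two assertions of Observation \ref{P-14} separately: first that the two displayed spans form a dualizing datum for $Y$ in $\mathrm{Span}(X_{\fet}, X_{\fet}, X_{\fet})$, and then that the image under $\ZZ[\_]$ of a forward clopen inclusion is the projection used in \eqref{Et-12}. Throughout, the tensor product on the span category is the fiber product over $X$, with unit $X$. Composition of spans is computed by pullback, as in the Fact on span categories. Finite étale maps are stable under pullback and composition, so every span below stays in the subcategory.

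\textbf{Triangle identities.} Write $\mathrm{coev}$ for the span $X \leftarrow Y \xrightarrow{\Delta} Y \times_X Y$ and $\mathrm{ev}$ for the span $Y \times_X Y \xleftarrow{\Delta} Y \to X$. I would compute the composite
\begin{equation*}
	Y \xrightarrow{\mathrm{coev} \times_X Y} Y \times_X Y \times_X Y \xrightarrow{Y \times_X \mathrm{ev}} Y .
\end{equation*}
The first span is $Y \leftarrow Y \times_X Y \to Y \times_X Y \times_X Y$, given by $(a,b) \mapsto (a,a,b)$. The second is $Y \times_X Y \times_X Y \leftarrow Y \times_X Y \to Y$, given by $(c,d) \mapsto (c,d,d)$ on the left leg and $(c,d) \mapsto c$ on the right leg. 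The composite apex is the fiber product of these two maps into $Y \times_X Y \times_X Y$. It consists of pairs with $(a,a,b) = (c,d,d)$, which forces $a = b = c = d$. So the apex is the intersection of two clopen diagonals, and it is identified with $Y$ via the small diagonal. Both outer legs then become the identity of $Y$, so the composite is equivalent to the identity span. The other triangle identity follows by the symmetric argument. By \cite[Lemma 4.6.1.6]{Lur17}, being a dualizing datum is a property, so no higher coherence has to be checked. I would make the pullback computation rigorous by working étale locally on $X$, where $Y$ splits as a finite disjoint union of copies of $X$. There all the fiber products are computed on index sets, so the identification becomes a statement about finite sets.

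\textbf{Image of a clopen forward inclusion.} By Lemma \ref{P-13}, the category $\mathrm{Span}(X_\proet, X_\proet, \fet)$ is semi-additive. Its biproduct projection $U \sqcup V \to U$ is exactly the span $U \sqcup V \leftarrow U \to U$. Now $\ZZ[\_]$ is a product-preserving functor into an additive category, namely sheaves valued in $\mathcal{D}(\ZZ)$, where it factors through $\gamma^*$. Such a functor preserves biproducts and hence their projections. So the span is sent to $\ZZ[U \sqcup V] \xleftarrow{\simeq} \ZZ[U] \oplus \ZZ[V] \xrightarrow{\mathrm{pr}_U} \ZZ[U]$. Applying this with $U = \Delta(Y)$ and $V = Y^c$ recovers the Zariski-descent map $\ZZ[Y \times_X Y] \to \ZZ[Y \times_X Y / Y^c]$ of Construction \ref{Et-30}. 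Hence the dualizing datum here maps to $\mathrm{gev}_Y$.

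\textbf{Main obstacle.} The delicate point is bookkeeping: the identification of the apex with $Y$ must be compatible with both legs and with the symmetric monoidal structure of the span category. The concrete computation itself is not the difficulty. I would handle this by the étale-local reduction to the split case described above, combined with the fact that duality is a property.
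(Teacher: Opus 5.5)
Your proposal is correct and takes essentially the paper's approach: the image of the clopen span $U \sqcup V \leftarrow U \to U$ is pinned down by semi-additivity of the span category (Lemma \ref{P-13}) together with biproduct preservation by $\ZZ[\_]$, and your explicit check of the triangle identities fills in what the paper only asserts. The étale-local reduction you plan is unnecessary: the pullback of $\Delta \times_X \mathrm{id}_Y$ and $\mathrm{id}_Y \times_X \Delta$ over $Y \times_X Y \times_X Y$ is already identified with the small diagonal on $T$-valued points, by Yoneda, in any category with fiber products.
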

\begin{cor}\label{P-6}
	The trace transformation defined in \eqref{Et-12} witnesses $f_* \dashv f^*$.
	\begin{proof}[Proof sketch]
		The adjunction $f^* \dashv f_*$ comes from restricting along the adjunction $f^* \dashv f_\#$ of span-categories in \eqref{P-4}. Similarly, the adjunction $f_\# \dashv f^*$ on span-categories specified in Observation \ref{P-5} induces an adjunction $f_* \dashv f^*$ upon restriction, witnessed by some counit $\bar{\epsilon}$. It remains to prove the following two statements:
		\begin{enumerate}
			\item The adjunction $f_* \dashv f^*$ described above is compatible with the projection formula, by which we mean that
			\begin{equation}\label{P-12}
			\begin{tikzcd}
				\_ \otimes f_*f^*\mathbf{1} \arrow[r, "\mathrm{id} \otimes \bar{\epsilon}_\mathbf{1}"] \arrow[d, "\mathrm{proj}"] & \mathrm{id} \arrow[d, no head, "="] \\
				f_*f^* \arrow[r, "\bar{\epsilon}"] & \mathrm{id} 
			\end{tikzcd}
			\end{equation}
			admits a homotopy. As a reminder, the projection formula is witnessed by
			\[\begin{tikzcd}
				\_ \otimes f_*\mathbf{1} \xrightarrow{\tilde{\eta}_f} f_*f^*(\_ \otimes f_*\mathbf{1}) \simeq f_*(f^*(\_) \otimes f^*f_*\mathbf{1}) \xrightarrow{f_*(\mathrm{id} \otimes \tilde{\epsilon}_f)} f_*f^*
			\end{tikzcd}\]
			\item On the unit it coincides with \eqref{Et-12}.
		\end{enumerate}
		\textit{For 1}: Under the equivalence
		\begin{equation}\label{P-11}
			\underline{\mathrm{map}}(f_\#f^*\mathbf{1}, \_) \simeq f_*f^*(\_)
		\end{equation}
		$\bar{\epsilon}$ corresponds to restriction along the unit $\mathrm{id} \to f_\#f^*$ on span categories. Upon applying \eqref{P-11} to the diagram which we want to show commutes, it corresponds to
		\[\begin{tikzcd}
			\_ \otimes \underline{\mathrm{map}}(f_\#f^*\mathbf{1}, \_) \arrow[d] \arrow[r, "\mathrm{id} \otimes \bar{\eta}^*"] & \mathrm{id} \arrow[d, no head, "="] \\
			\underline{\mathrm{map}}(f_\#f^*\mathbf{1}, \_) \arrow[r, "\bar{\eta}^*"] & \mathrm{id}
		\end{tikzcd}\]
		Here the left vertical map is adjoint to $\mathrm{id} \otimes \mathrm{ev}$. But this clearly commutes. Moreover, the left vertical map is an equivalence, since $f_\#f^*\mathbf{1}$ is dualizable.
		
		\textit{For 2}: The two composites we want to identify are
		\begin{align*}
			f_*f^*\mathbf{1} &\simeq \underline{\mathrm{map}}(f_\#f^*\mathbf{1}, \mathbf{1}) \xleftarrow{\mathrm{comp}_Y} f_\#f^*\mathbf{1} \xrightarrow{f} \mathbf{1} \\
			f_*f^*\mathbf{1} &\simeq \underline{\mathrm{map}}(f_\#f^*\mathbf{1}, \mathbf{1}) \xrightarrow{\bar{\eta}^*} \mathbf{1}
		\end{align*}
		Now observe that the above maps are determined by the canonical (see Observation \ref{P-14}) dualizing datum for $\ZZ[Y]$ induced by the geometric evaluation map. Hence, they must coincide.
	\end{proof}
\end{cor}
\begin{rem}\label{T-5}
	Observe that by Corollary \ref{P-6}, there must be a canonical natural transformation $\delta_f: f_\# \to f_*$, which recovers $\mathrm{comp}_Y$ on the unit. This is usually called ambidexterity. The construction of $\epsilon_f: f_*f^* \to \mathrm{id}$  via $\mathrm{comp}_Y$ shows that it is the only $\mathrm{Shv}(X_\proet; \mathcal{D}(\ZZ))$-linear transformation of the strongly linear functors $f_*f^*$ recovering $\mathrm{comp}_Y^{-1} \circ f$ on the unit. Thus $\delta_f$ is uniquely determined by $\delta_{f, f^*}$ being $\mathrm{Shv}(X_\proet; \mathcal{D}(\ZZ))$-linear and recovering $\mathrm{comp}_Y$ on the unit \footnote{By linearity we mean as functors between homotopy categories.}.

	Given $Z \xrightarrow{g} Y \xrightarrow{f} X$ composable finite étale morphisms, we therefore get $\delta_{gf} \simeq \delta_{f, g_\#} \circ f_*\delta_g \simeq f_\#\delta_g \circ \delta_{f, g_*}$:
	
	As a corollary, we get a commutative diagram
	\[\begin{tikzcd}
		(fg)_\#(fg)^* \arrow[r, "\delta_{f, g_\#}"] \arrow[rr, bend left, "\delta_{fg}"] \arrow[d, "\dot{\epsilon}_{fg}"] & f_*g_\#(fg)^* \arrow[r, "f_*\epsilon_g"] \arrow[d, "f_*\dot{\epsilon}_{g, f^*}"] & (fg)_*(fg)^* \arrow[d, "f_*\epsilon_{g, f^*}"] \\
		f_\# \arrow[r, "\mathrm{comp}_Y"] \arrow[rr, bend right, "\mathrm{comp}_Y"] & f_* \arrow[r, no head, "="] & f_*
	\end{tikzcd}\]
	where $\dot{\epsilon}$ denotes the counit of the $f_\# \dashv f^*$-adjunction. The left-hand square commutes by naturality. The right square commutes, since, by the above uniqueness statement of the norm map, $\epsilon_f \simeq \dot{\epsilon}_f \circ \delta_{f, f^*}$.
\end{rem}
\begin{rem}\label{F-40}
	Since the pairing $\mathrm{gev}_Y: \ZZ[Y]^{\otimes 2} \to \ZZ$ is natural in $Y/X$ finite étale, for $g: Z \to Y$ a finite étale map we obtain a commutative diagram:
	\[\begin{tikzcd}
		\ZZ[Y] \arrow[r, "\mathrm{comp}_Y"] & \ZZ[Y]^\vee \arrow[d, "g^\vee"] \\
		\ZZ[Z] \arrow[u, "g"] \arrow[r, "\mathrm{comp}_Z"] & \ZZ[Z]^\vee
	\end{tikzcd}\]
\end{rem}

\subsection{Categories of sites and $\infty$-topoi}

Let $\mathcal{C}$ denote an arbitrary category with finite limits throughout this subsection. Recall the following definitions \cite[Def.6.2.2.1]{Lur09}:
\begin{defin}
	Let $\mathcal{D} \subset \mathcal{C}$ be a full subcategory. $\mathcal{D}$ is called a sieve if, for $d \in \mathcal{D}$ and $c \to d$ a morphism, $c \in \mathcal{D}$.
\end{defin}
\begin{defin}
	A Grothendieck topology on $\mathcal{C}$ is a set of so-called covering sieves $J_c$ of $\mathcal{C}_{/c}$ for all $c \in \mathcal{C}$, satisfying the following axioms:
	\begin{enumerate}
		\item (Identity): $\mathcal{C}_{/c} \in J_c$
		\item (Basechange): For $\mathcal{C}^{(0)}_{/c} \in J_c$ and $h: d \to c$ a morphism, the sieve generated by the essential image of $f^*: \mathcal{C}^{(0)}_{/c} \subset \mathcal{C}_{/c} \to \mathcal{C}_{/d}$, which we denote by $h^*\mathcal{C}_{/c}^{(0)}$, is a covering sieve.
		\item (Localness): For any sieves $\mathcal{C}_{/c}^{(0)}, \mathcal{C}^{(1)}_{/c} \subset \mathcal{C}_{/c}$ such that the first is a covering sieve and $h^*\mathcal{C}^{(1)}_{/c}$ is a covering sieve for every $h \in \mathcal{C}^{(0)}_{/c}$, $\mathcal{C}_{/c}^{(1)}$ is a covering sieve.
	\end{enumerate}
\end{defin}
\begin{defin}\label{F-18}
	Define $\mathrm{Cat}_\mathrm{lex} \subset \mathrm{Cat}$ to be the category of small categories with finite limits and left exact functors between them.
	
	A (continuous) functor between categories with a Grothendieck topology $f: (\mathcal{C}, J_{\mathcal{C}, \_}) \to (\mathcal{D}, J_{\mathcal{D}, \_})$ is a left exact functor $f: \mathcal{C} \to \mathcal{D}$ such that, for every covering sieve $\mathcal{C}_{/c}^{(0)} \in J_{\mathcal{C}, c}$, the sieve generated by the essential image $f(\mathcal{C}_{/c}^{(0)})$ is a covering sieve.
	
	Denote by $\mathrm{Cat}_\mathrm{lex}^\mathrm{Groth} \subset \mathrm{Cat}_\mathrm{lex}$ the category of small categories with finite limits and a Grothendieck topology, and continuous functors between them.
\end{defin}
\begin{ex}\label{F-19}
	There is also the notion of a Grothendieck topology in terms of covering families on a $1$-category $\mathcal{C}$, as defined in \cite[III.2]{MM92}. The covering family notion gives rise to a Grothendieck topology as described above as follows:
	To each covering family $\mathcal{U} = \{U_i \to X\}$ we associate the full subcategory $\mathcal{C}^\mathcal{U}_{/X} \subset \mathcal{C}_{/X}$ of morphisms that have a factorization through one of the $U_i$. This turns out to satisfy the axioms of a Grothendieck topology as defined in Definition \ref{F-18}.
	
	In particular, a functor $f: \mathcal{C} \to \mathcal{D}$ that preserves covering families induces a continuous morphism of sites.
\end{ex}
\begin{defin}\label{P-7}
	Every topos $\mathcal{X}$ admits the structure of a category with Grothendieck topology by \cite[Def.6.2.4.1+Prop.6.2.4.2]{Lur09} whose covering sieves $\mathcal{C}_{/c}^{(0)}$ are generated by a single effective epimorphism $c' \to c$ \cite[Def. after Cor.6.2.3.5]{Lur09}. By the characterization of effective epimorphisms via \v{C}ech nerves \cite[Cor.6.2.3.5]{Lur09} they are preserved by left adjoints in geometric morphisms. Thus this construction assembles into a functor $\mathrm{LTop} \to \mathrm{Cat}_\mathrm{lex}^\mathrm{Groth}$.
\end{defin}
\begin{lem}\label{F-17}
	The forgetful functor $\mathrm{LTop} \to \mathrm{Cat}_\mathrm{lex}^\mathrm{Groth}$ described in Definition \ref{P-7} has a left adjoint given pointwise by $\mathcal{C} \mapsto \mathrm{Shv}^\mathrm{un}(\mathcal{C})$.
	\begin{proof}
		By \cite[Prop.6.2.3.20]{Lur09} the full subcategory $\mathrm{Fun}_*(\mathrm{Shv}^\text{un}(\mathcal{C}), \mathcal{X}) \subset \mathrm{Fun}(\mathrm{Shv}^\text{un}(\mathcal{C}), \mathcal{X})$ on geometric morphisms identifies via restriction along Yoneda with the full subcategory on functors $f:\mathcal{C} \to \mathcal{X}$, which satisfy:
		\begin{enumerate}
			\item $f$ is left exact.
			\item For every collection $\{c_\alpha \to c\} \subset \mathcal{C}_{/c}$ which generate a covering sieve,
			\begin{equation*}
				\coprod_\alpha f(c_\alpha) \to f(c)
			\end{equation*}
			is an effective epimorphism.
		\end{enumerate}
		These are exactly the continuous functors $\mathcal{C} \to \mathcal{X}$ where the target has the canonical topology: Indeed, this boils down to proving that $h: x \to y$ in $\mathcal{X}$ is an effective epimorphism if it generates a covering sieve in the canonical topology generated by a single effective epimorphism $e: w \to y$; the converse is true by definition. But the sieve generated by a single morphism is exactly the full subcategory of morphisms admitting a factorization through that morphism. Thus, there has to exist a factorization $e: w \to z \xrightarrow{h} x$. By \cite[Cor.6.2.3.12]{Lur09}, effective epimorphisms satisfy right cancellation, so $h$ must be an effective epimorphism as well.
	\end{proof}
\end{lem}
We prove that the category $\mathrm{Cat}_\mathrm{lex}^\mathrm{Groth}$ has finite coproducts, which are computed underlyingly as products.
\begin{lem}\label{F-16}
	Let $(\mathcal{C}, J_{\mathcal{C}, \_}), (\mathcal{D}, J_{\mathcal{D}, \_}) \in \mathrm{Cat}_\mathrm{lex}^\mathrm{Groth}$.
	
	Then the smallest Grothendieck topology generated by the collections of sieves
	\begin{equation}\label{F-15}
		\mathcal{C}_{/c}^{(0)} \times \{d\} \cup \{c\} \times \mathcal{D}_{/d}^{(0)} \in \mathcal{C \times \mathcal{D}}_{/c \times d}
	\end{equation}
	for all $\mathcal{C}_{/c}^{(0)} \in J_{\mathcal{C}, c}$ and $\mathcal{D}_{/d}^{(0)} \in J_{\mathcal{D}, d}$ makes the functors $\mathcal{C} \xrightarrow{\mathrm{id} \times *} \mathcal{C} \times \mathcal{D} \xleftarrow{* \times \mathrm{id}} \mathcal{D}$ continuous morphisms of sites, which witness that $\mathcal{C} \times \mathcal{D}$ is their coproduct in $\mathrm{Cat}_\mathrm{lex}^\mathrm{Groth}$.
	\begin{proof}[Proof sketch]
		It remains to prove two things:
		\begin{enumerate}
			\item There exists a minimal topology containing the required generating set described in \eqref{F-15}.
			\item The coproduct in $\mathrm{Cat}_\mathrm{lex}$ is the product of categories.
		\end{enumerate}
		\textit{For 1}: We just need to show that Grothendieck topologies on a fixed category $\mathcal{F} \in \mathrm{Cat}_\mathrm{lex}$ are closed under intersection, and that there is a maximal "discrete" Grothendieck topology: The latter is given by taking all sieves in $\mathcal{F}_{/d}$ for all $d \in \mathcal{F}$. This clearly satisfies all conditions of a Grothendieck topology. Given Grothendieck topologies $\{(\mathcal{F}_i, J_{i, \_})\}_{i \in I}$ it is obvious that $(\mathcal{F}, \cap_{i \in I} J_{i, \_})$ is again a Grothendieck topology. Let us check the pullback condition: Let $\mathcal{F}_{/c}^{(0)} \in \cap_{i \in I} J_{i, c}$. Then by the pullback condition of all $J_{i, \_}$ the sieve $g^*\mathcal{F}_{/c}^{(0)}$ lies in $J_{i, d}$ for all morphisms $g: d \to c$ in $\mathcal{F}_{/d}$ and $i \in I$; so it also lies in the intersection.
		
		\textit{For 2}: For $\mathcal{C}, \mathcal{D} \in \mathrm{Cat}_\mathrm{lex}$ denote by $\mathrm{Fun}_\mathrm{lex}(\mathcal{C}, \mathcal{D}) \subset \mathrm{Fun}(\mathcal{C}, \mathcal{D})$ the full subcategory of left exact functors. We just observe that we have an explicit inverse of the functor $\mathrm{Fun}_\mathrm{lex}(\mathcal{C} \times \mathcal{D}, \mathcal{F}) \to \mathrm{Fun}_\mathrm{lex}(\mathcal{C}, \mathcal{F}) \times \mathrm{Fun}_\mathrm{lex}(\mathcal{D}, \mathcal{F})$:
		\begin{equation*}
			\mathrm{Fun}_\mathrm{lex}(\mathcal{C}, \mathcal{F}) \times \mathrm{Fun}_\mathrm{lex}(\mathcal{D}, \mathcal{F}) \xrightarrow{\times} \mathrm{Fun}_\mathrm{lex}(\mathcal{C} \times \mathcal{D}, \mathcal{F} \times \mathcal{F}) \xrightarrow{(\_ \times_{\mathcal{F}} \_)_*} \mathrm{Fun}_\mathrm{lex}(\mathcal{C} \times \mathcal{D}, \mathcal{F})
		\end{equation*}
		- taking the product of functors followed by postcomposition with the left exact functor $\mathcal{F} \times \mathcal{F} \xrightarrow{\times} \mathcal{F}$.
	\end{proof}
\end{lem}
\begin{fact}
	Hypercompletion of topoi assembles into the reflector onto the full subcategory $\mathrm{LTop}^\wedge \subset \mathrm{LTop}$ of hypercomplete $\infty$-topoi \cite[Cor. 6.5.2.13]{Lur09}.
\end{fact}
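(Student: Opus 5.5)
The plan is to produce, for every $\infty$-topos $\mathcal{X}$, a morphism $\mathcal{X} \to \mathcal{X}^\wedge$ in $\mathrm{LTop}$ that is initial among morphisms from $\mathcal{X}$ into hypercomplete $\infty$-topoi. A pointwise universal arrow of this kind automatically assembles into a left adjoint of the inclusion $\mathrm{LTop}^\wedge \subset \mathrm{LTop}$ by the pointwise criterion for adjoints \cite[Prop.5.2.7.8]{Lur09}.

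\textbf{Step 1: the unit.} Recall that $\mathcal{X}^\wedge \subset \mathcal{X}$ is the full subcategory of objects local with respect to the class $S$ of $\infty$-connective morphisms. By \cite[Sect.6.5.2]{Lur09} this is an accessible left exact localization. Hence the localization functor $L_\mathrm{hyp}\colon \mathcal{X} \to \mathcal{X}^\wedge$ is colimit preserving and left exact, and $\mathcal{X}^\wedge$ is again an $\infty$-topos. So $L_\mathrm{hyp}$ is the left adjoint part of a geometric morphism, i.e.\ a morphism $\mathcal{X} \to \mathcal{X}^\wedge$ in $\mathrm{LTop}$. Moreover $\mathcal{X}^\wedge$ is hypercomplete \cite[Lemma 6.5.2.12]{Lur09}.

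\textbf{Step 2: the universal property.} Let $\mathcal{Y}$ be hypercomplete and let $f^*\colon \mathcal{X} \to \mathcal{Y}$ be left exact and colimit preserving. I claim $f^*$ sends $S$ to equivalences.
\begin{enumerate}
	\item An $n$-connective morphism is characterized by being an effective epimorphism whose iterated diagonals are effective epimorphisms up to level $n$; equivalently, its truncations $\tau_{\leq n-1}$ relative to the target are equivalences.
	\item Both descriptions only involve finite limits and colimits. These are preserved by $f^*$, so $f^*$ preserves $n$-connective morphisms for every $n$ \cite[Prop.6.5.1.16]{Lur09}, and hence $\infty$-connective ones.
	\item In the hypercomplete $\mathcal{Y}$ every $\infty$-connective morphism is an equivalence.
\end{enumerate}
By the universal property of Bousfield localizations in $\mathrm{Pr}^L$ \cite[Prop.5.5.4.20]{Lur09}, restriction along $L_\mathrm{hyp}$ therefore identifies $\mathrm{Fun}^L(\mathcal{X}^\wedge, \mathcal{Y})$ with the full subcategory of $\mathrm{Fun}^L(\mathcal{X}, \mathcal{Y})$ on functors inverting $S$, and we just showed this subcategory contains all left exact ones.

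It remains to check that left exactness matches on both sides. The factorization $\bar f^*$ of a left exact $f^*$ is $f^* \circ i$, where $i\colon \mathcal{X}^\wedge \subset \mathcal{X}$ is the inclusion. This is left exact because $i$ preserves limits. Conversely, $\bar f^* \circ L_\mathrm{hyp}$ is left exact because $L_\mathrm{hyp}$ is. Thus $\mathrm{Fun}_*(\mathcal{X}^\wedge, \mathcal{Y}) \simeq \mathrm{Fun}_*(\mathcal{X}, \mathcal{Y})$, which is the required universal property.

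\textbf{Main obstacle.} I expect the only nonformal point to be Step 2(2): that geometric morphisms preserve $\infty$-connectivity. I would argue this via the characterization of $n$-connective morphisms through iterated diagonals being effective epimorphisms. Effective epimorphisms are preserved by $f^*$ because they are detected by \v{C}ech nerves \cite[Cor.6.2.3.5]{Lur09}, as already used in Definition \ref{P-7}, and diagonals are preserved by left exactness. Once this is in place, the assembly into a functor $\mathrm{LTop} \to \mathrm{LTop}^\wedge$ that is left adjoint to the inclusion is purely formal.
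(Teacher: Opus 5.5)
Your argument is correct and is essentially Lurie's own proof of the cited result: exhibit $\mathcal{X}^\wedge$ as the left exact localization at the $\infty$-connective morphisms, use \cite[Prop.~6.5.1.16]{Lur09} together with hypercompleteness of the target, and conclude with \cite[Prop.~5.5.4.20]{Lur09}. The paper does not prove this fact but simply cites it. One small slip: effective epimorphisms are detected by geometric realizations of \v{C}ech nerves, which are not finite colimits, but $f^*$ preserves all small colimits, so the argument is unaffected.
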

\begin{fact}
	The non-full subcategory $\mathrm{LTop} \subset \mathrm{Pr}^L$ is closed under the Lurie tensor product and therefore promotes to a symmetric monoidal category \cite[Ex.4.8.1.19]{Lur17}. Also by \cite[Ex.4.8.1.19]{Lur17}, this monoidal structure coincides with the cocartesian monoidal structure on $\mathrm{LTop}$. The cocartesian symmetric monoidal structures clearly make the left adjoint $\mathrm{Shv}^\text{un}(\_)$ symmetric monoidal. On the source the coproduct is just the product of categories underlyingly, as proved in Lemma \ref{F-16}. Since hypercompletion is a left adjoint and therefore symmetric monoidal with respect to the cocartesian structure $(\_)^\wedge: \mathrm{LTop} \to \mathrm{LTop}^\wedge \subset \mathrm{LTop}$ is lax symmetric monoidal.
\end{fact}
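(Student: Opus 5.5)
The plan is to assemble the statement from three formal ingredients: Lurie's identification of the tensor product on $\mathrm{LTop}$, the adjunction of Lemma \ref{F-17}, and the fact that (co)cartesian monoidal structures are characterised by a universal property. I would verify the claims in the order stated.

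\textit{Tensor product equals coproduct.} First I would quote \cite[Ex.4.8.1.19]{Lur17}. It says that for $\infty$-topoi $\mathcal{X},\mathcal{Y}$ the Lurie tensor product $\mathcal{X}\otimes\mathcal{Y}$ is again an $\infty$-topos. It also says that the canonical functors $\mathcal{X}\to\mathcal{X}\otimes\mathcal{Y}\leftarrow\mathcal{Y}$ are left adjoints of geometric morphisms exhibiting $\mathcal{X}\otimes\mathcal{Y}$ as the coproduct in $\mathrm{LTop}$. The unit $\mathrm{An}$ is initial in $\mathrm{LTop}$. Together these show that $\mathrm{LTop}\subset\mathrm{Pr}^L$ is closed under $\otimes$ and that the induced symmetric monoidal structure is the cocartesian one, using the uniqueness of cocartesian structures \cite[Cor.2.4.3.10]{Lur17} in its dual form.

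\textit{Symmetric monoidality of $\mathrm{Shv}^\mathrm{un}$.} By Lemma \ref{F-16}, $\mathrm{Cat}_\mathrm{lex}^\mathrm{Groth}$ has finite coproducts: they are products of underlying categories with the generated topology, and the terminal category is initial. So it carries a cocartesian symmetric monoidal structure. By Lemma \ref{F-17}, $\mathrm{Shv}^\mathrm{un}(\_)$ is a left adjoint and therefore preserves finite coproducts. A functor between cocartesian symmetric monoidal categories that preserves finite coproducts has an essentially unique symmetric monoidal refinement, by the dual of \cite[Prop.2.4.1.7]{Lur17} (or directly from \cite[Cor.2.4.3.10]{Lur17}). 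This gives $\mathrm{Shv}^\mathrm{un}(\mathcal{C}\times\mathcal{D})\simeq\mathrm{Shv}^\mathrm{un}(\mathcal{C})\otimes\mathrm{Shv}^\mathrm{un}(\mathcal{D})$.

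\textit{Hypercompletion.} The inclusion $\mathrm{LTop}^\wedge\subset\mathrm{LTop}$ has the left adjoint $(\_)^\wedge$ by \cite[Cor.6.5.2.13]{Lur09}. Hence $\mathrm{LTop}^\wedge$ has finite coproducts, computed as $(\mathcal{X}\otimes\mathcal{Y})^\wedge$, and $(\_)^\wedge:\mathrm{LTop}\to\mathrm{LTop}^\wedge$ is symmetric monoidal for the cocartesian structures by the same argument as above. The inclusion is right adjoint to a symmetric monoidal functor, so it acquires a lax symmetric monoidal structure \cite[Cor.7.3.2.7]{Lur17}. Explicitly, its structure maps are the hypercompletion units $\mathcal{X}\otimes\mathcal{Y}\to(\mathcal{X}\otimes\mathcal{Y})^\wedge$ for hypercomplete $\mathcal{X},\mathcal{Y}$. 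The composite $\mathrm{LTop}\to\mathrm{LTop}^\wedge\subset\mathrm{LTop}$ is therefore lax symmetric monoidal, being a composite of a symmetric monoidal and a lax symmetric monoidal functor.

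The only point needing care is the last step. One must check that the cocartesian structure on $\mathrm{LTop}^\wedge$ is really the one transported along the reflector, and that the lax structure on the inclusion is the one whose structure maps are the units described above. I would settle this by invoking the general fact that a reflective subcategory of a cocartesian monoidal category inherits a cocartesian structure, with the localisation symmetric monoidal \cite[Prop.2.2.1.9]{Lur17}. This fact applies because $(\_)^\wedge$ preserves coproducts. The compatibility hypothesis of that proposition is automatic in the cocartesian case: if $\mathcal{X}\to\mathcal{X}'$ is inverted by $(\_)^\wedge$, then so is $\mathcal{X}\sqcup\mathcal{Y}\to\mathcal{X}'\sqcup\mathcal{Y}$, since $(\_)^\wedge$ preserves coproducts.
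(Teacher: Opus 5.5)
Your proposal is correct and follows the same route as the paper. It cites \cite[Ex.4.8.1.19]{Lur17} to identify the tensor product on $\mathrm{LTop}$ with the coproduct, uses Lemmas \ref{F-16} and \ref{F-17} to make $\mathrm{Shv}^\mathrm{un}(\_)$ a coproduct-preserving (hence symmetric monoidal) functor between cocartesian structures, and uses that hypercompletion, being a left adjoint, is symmetric monoidal, so the inclusion and hence the composite are lax. You only make explicit what the paper leaves implicit: the initial objects, the uniqueness of monoidal refinements, and the lax structure on $\mathrm{LTop}^\wedge \subset \mathrm{LTop}$ via \cite[Cor.7.3.2.7]{Lur17} or \cite[Prop.2.2.1.9]{Lur17}.
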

\begin{constr}
	$\mathrm{Mod}_G(*_\proet)=BG_\proet$ is a $*_\proet$-module in categories: On objects it is given by the following construction: Let $X \in BG_\proet = \mathrm{Mod}_G(*_\proet)$ and $Y \in *_\proet$. Then the cartesian product $X \times Y$ inherits a $G$-module via acting on the left-hand side factor.
	
	We claim this promotes to a module structure in $\mathrm{Cat}^\mathrm{Groth}_\mathrm{lex}$ with respect to the cartesian monoidal structure: We need to show that the multiplication map $BG_\proet \times *_\proet \to BG_\proet$ is continuous:
	\begin{enumerate}
		\item It preserves covering families/is continuous: But coverings in $BG_\proet$ are detected by the underlying functor $\mathrm{Mod}_G(*_\proet) \to *_\proet$ under which the multiplication map is just given by taking the product.
		\item It is left exact: Finite limits are created and detected by the underlying functor. As underlyingly, the multiplication is just given by the product functor it is clearly left exact, since limits commute with arbitrary products.
	\end{enumerate}
\end{constr}
\begin{constr}
	Applying the symmetric monoidal composite
	\begin{equation*}
		\mathrm{Cat}^\mathrm{Groth}_\mathrm{lex} \to \mathrm{LTop} \to \mathrm{LTop}^\wedge
	\end{equation*}
	to the $*_\proet$-module $BG_\proet$ we obtain that $\mathrm{Shv}(BG_\proet)$ is a $\mathrm{Shv}(*_\proet)$-module. Moreover, since $u: \mathrm{Mod}_G(*_\proet) \to *_\proet$ is clearly $*_\proet$-linear, this makes $u^*: \mathrm{Shv}(BG_\proet) \to \mathrm{Shv}(*_\proet)$ into a $\mathrm{Shv}(*_\proet)$-linear functor.
\end{constr}
\begin{cor}\label{Cond-27}
	$u^*$ is conservative and corepresented by $G$.
	\begin{proof}
		\textit{For conservativity}: By the adjunction $\mathrm{ind}_\mathbf{1}^G \dashv u$ we have a factorization of the geometric morphism $\prod_{X \in \Set}((G \times X)^\wedge)^*$
		\begin{equation*}
			\mathrm{PSh}(BG_\proet) \xrightarrow{\mathrm{lke}(u^\op) = \mathrm{res}(\mathrm{ind}_\mathbf{1}^G)} \mathrm{PSh}(*_\proet) \xrightarrow{\prod_{X \in \Set} (X^\wedge)^*} \prod_{X \in \Set} \mathrm{An}
		\end{equation*}
		This also passes to the hypercompletion of sheaf categories, by the universal property of hypercompletion. As already proven, the composite and the morphism on the right are conservative when passing to hypercomplete sheaf categories and therefore the leftmost morphism is as well. But upon passing to the hypercompletions of sheaf categories, the left-hand side functor becomes $u^*$, proving the claim.
		
		\textit{For corepresentability}: Denote by $L_\proet$ hypersheafification. Observe that on presheaf-categories $\underline{\mathrm{Map}}(G, \_) = \mathrm{lke}(u^\op)$. Now $\mathrm{lke}(u^\op)$ descends to hypercomplete sheaves, whereas the right-hand side preserves sheaves and hypercomplete objects. Thus there is an equivalence
		\begin{equation*}
			L_\proet\underline{\mathrm{Map}}(G, \_) \simeq \underline{\mathrm{Map}}(G, L_\proet\_): \mathrm{PSh}(BG_\proet) \to \mathrm{PSh}(*_\proet)
		\end{equation*}
		Restricting to $\mathrm{Shv}(*_\proet)$ on the source, we obtain that $u^*$ is corepresented by $G$, which is discrete, hence, hypercomplete.
	\end{proof}
\end{cor}
\begin{lem}\label{P-2}
	Let $G$ be a profinite group viewed as a group object in $\mathrm{Cond}\Spc$. Then the functor $BG_\proet \subset \Mod_G(\mathrm{Cond}\Set) \subset \Mod_G(\mathrm{Cond}\Spc)$ induces an equivalence
	\begin{equation}\label{Cond-5}
		\mathrm{Shv}(BG_\proet) \to \Mod_{G}(\mathrm{Cond}\Spc)
	\end{equation}
	
	Thus applying $\_ \otimes_{\mathrm{Pr}^L} \mathcal{D}$ for any $\mathcal{D} \in \mathrm{CAlg}(\mathrm{Pr}^L)$, yields a symmetric monoidal equivalence:
	\begin{align*}
		\mathrm{Shv}(BG_\proet; \mathcal{D}) \to \Mod_{f(G)}(\mathrm{Cond}\mathcal{D})
	\end{align*}
	where $f$ denotes the canonical symmetric monoidal functor $\mathrm{Cond}\Spc \to \mathrm{Cond}\mathcal{D}$.
	Here we define the left-hand side as in \cite[Def. 1.3.1.4]{Lur18} and endow the right-hand side with the monoidal structure from Fact \ref{Cond-20}.
	\begin{proof}
		Set $\mathcal{C} = \mathrm{Cond}\Spc$. We just check the conditions of relative Schwede-Shipley \cite[Prop. 4.8.5.8]{Lur17}. This proposition identifies the essential image of the fully faithful functor $\mathrm{Mod}_{\_}(\mathcal{C}): \mathrm{Alg}(\mathcal{C}) \to \mathrm{Mod}_{\mathcal{C}}(\mathrm{Pr}^L)_{\mathcal{C}/}$ defined in \cite[Cor.4.8.5.13]{Lur17}. The only condition for an object $\mathcal{M} \in \mathrm{Mod}_{\mathcal{C}}(\mathrm{Pr}^L)_{\mathcal{C}/}$ to lie in the essential image is that the right adjoint to the pointing $\mathcal{C} \to \mathcal{M}$ is conservative and colimit-preserving. In our case, we choose the pointing by $\mathrm{Cond}\Spc$ to be $G$ (not the tensor unit for the cartesian monoidal structure).
		\begin{enumerate}
			\item $\mathrm{Shv}(BG_\proet)$ is, in particular, presentably tensored over $\mathrm{Shv}(*_\proet)$ thus admits $\mathrm{Shv}(*_\proet)$-mapping objects $\underline{\mathrm{Map}}(X, Y) \in \mathrm{Shv}(*_\proet)$ for $X, Y \in \mathrm{Shv}(BG_\proet)$. So it is an object in the target of the $\mathrm{Mod}_{(\_)}(\mathcal{C})$-functor.
			\item By Corollary \ref{Cond-27} $u^* \simeq \underline{\mathrm{Map}}(G, \_)$ is conservative and colimit-preserving.
			\item The endomorphism algebra as an object in $\mathrm{Cond}\Spc$ computes to be
			\begin{equation*}
				\underline{\mathrm{Map}}(G, G) \simeq \mathrm{Map}_{\mathrm{Mod}_G(*_\proet)}(G \times \_, G) \simeq G
			\end{equation*}
			by subcanonicity of the proétale topology \cite[Lemma 4.2.4]{BS14} and the fact that representables are discrete and therefore hypercomplete objects. Here we identify $G$ with its image under the Yoneda embedding.
		\end{enumerate}
		Thus we conclude with Schwede-Shipley, that the canonical $\mathrm{Shv}(*_\proet)$-linear $\mathrm{Mod}_G(\mathrm{Shv}(*_\proet)) \to \mathrm{Shv}(BG_\proet)$ map mapping $G$ to $G$ is an equivalence.
		
		\textit{Proof of the addendum}: By \cite[Prop.4.8.1.17]{Lur17}, $\mathrm{Shv}(BG_\proet) \otimes_{\mathrm{Pr}^L} \mathcal{C}$ coincides with the definition of $\mathcal{C}$-valued sheaves in \cite[Def.1.3.1.4]{Lur18} for any $\mathcal{C} \in \mathrm{Pr}^L$ as symmetric monoidal categories.
		
		The equivalence $\Mod_{f(G)}(\mathrm{Cond}\mathcal{D}) \simeq \mathrm{Mod}_G(\mathrm{Cond}\Spc) \otimes_{\mathrm{Pr}^L} \mathcal{D}$ is a special case of \cite[Thm.4.8.4.6]{Lur17}. That the symmetric monoidal structure is the stated one follows from it being base changed from the cartesian symmetric monoidal equivalence (see \ref{H-3} for the Hopf monoidal structure being stable under basechange; see \ref{H-11} that it coincides with the cartesian monoidal structure, when the underlying category is cartesian monoidal).
	\end{proof}
\end{lem}
	\bibliographystyle{alpha}
	\bibliography{bibliography}
\end{document}